\newtheorem{thm}{Theorem}[section]
\newtheorem{cor}[thm]{Corollary}
\newtheorem{lem}[thm]{Lemma}
\newtheorem{prop}[thm]{Proposition}
\newtheorem{obs}[thm]{Observation}
\theoremstyle{definition}
\newtheorem{exe}[thm]{Example}
\newtheorem{rem}[thm]{Remark}
\newtheorem{numer}[thm]{Numerical note}
\newtheorem{ques}[thm]{Question}
\newcommand{\F}{\mathbf F}
\newcommand{\N}{\mathbf N}
\newcommand{\Z}{\mathbf Z}
\newcommand{\Q}{\mathbf Q}
\newcommand{\ko}{\mathbf k}
\newcommand{\T}{\mathbf T}
\newcommand{\Fnil}{\operatorname{Fnil}}
\newcommand{\Fsol}{\operatorname{Fsol}}
\newcommand{\W}{\mathrm W}
\newcommand{\FC}{\mathrm{FC}}
\newcommand{\Sub}{\mathrm{Sub}}
\newcommand{\llag}{\langle\!\langle}
\newcommand{\rrag}{\rangle\!\rangle} 
\newcommand{\id}{\mathrm{id}}
\newcommand{\Ker}{\mathrm{Ker}}
\newcommand{\End}{\mathrm{End}}
\newcommand{\MS}{\operatorname{MS}}
\newcommand{\MA}{\operatorname{MA}}
\newcommand{\MH}{\operatorname{MH}}
\newcommand{\Soc}{\operatorname{Soc}}
\newcommand{\SocA}{\operatorname{SocA}}
\newcommand{\SocH}{\operatorname{SocH}}
\title[Groups with irreducibly unfaithful subsets]
{Groups with irreducibly unfaithful subsets
\\
for unitary representations}
\subjclass[2000]{43A65, 22D10.}	
\keywords{Countable group,
unitary representation, irreducible representation, faithful representation, factor representation,
finite elementary abelian normal subgroup}
\date{October 16, 2019}
\author{Pierre-Emmanuel Caprace and Pierre de la Harpe}
\thanks{P.-E.C.\ is a F.R.S.-FNRS senior research associate.
This work was started at the Bernoulli Center of EPFL,
during the project \textit{Descriptive set theory and Polish groups};
the authors are grateful for support and hospitality}
\address{Pierre-Emmanuel Caprace:
UCLouvain -- IRMP,
Chemin du Cyclotron 2, box L7.01.02,
B-1348 Louvain-la-Neuve.}
\email{pe.caprace@uclouvain.be}
\address{Pierre de la Harpe:
Section de math\'ematiques, 
Universit\'e de Gen\`eve,
C.P.~64, 
CH--1211 Gen\`eve 4.}
\email{Pierre.delaHarpe@unige.ch}
\begin{document}

\maketitle

\begin{abstract}
Let $G$ be a group. A subset $F \subset G$ is called irreducibly faithful
if there exists an irreducible unitary representation $\pi$ of $G$
such that $\pi(x) \neq \id$ for all $x \in F \smallsetminus \{e\}$.
Otherwise $F$ is called irreducibly unfaithful.
Given a positive integer $n$, we say that $G$ has Property $P(n)$
if every subset of size $n$ is irreducibly faithful. 
Every group has $P(1)$, by a classical result of Gelfand and Raikov.
Walter proved that every group has $P(2)$.
It is easy to see that some groups do not have~$P(3)$.
\par

We provide a complete description of the irreducibly unfaithful subsets of size~$n$
in a countable group $G$ (finite or infinite) with Property $P(n-1)$:
it turns out that such a subset is contained
in a finite elementary abelian normal subgroup of $G$ of a particular kind. 
We deduce a characterization of Property~$P(n)$ purely in terms of the group structure.
It follows that, if a countable group $G$ has $P(n-1)$ and does not have $P(n)$,
then $n$ is the cardinality of a projective space over a finite field.
\par

A group $G$ has Property $Q(n)$ if,
for every subset $F \subset G$ of size at most $n$,
there exists an irreducible unitary representation $\pi$ of $G$
such that $\pi(x) \ne \pi(y)$ for any distinct $x, y$ in $F$.
Every group has $Q(2)$.
For countable groups,
it is shown that Property $Q(3)$ is equivalent to $P(3)$,
Property $Q(4)$ to $P(6)$, and Property $Q(5)$ to $P(9)$.
For $m, n \ge 4$, the relation between Properties $P(m)$ and $Q(n)$
is closely related to a well-documented open problem in additive combinatorics.
\end{abstract}


\setcounter{tocdepth}{1}

\tableofcontents
%
%
%
%
%
%
%
%
%

\begin{flushright}
\begin{minipage}{0.60\linewidth}
\itshape
Fid\`ele, infid\`ele ?\\
Qu'est-ce que \c ca fait,\\
Au fait ?\\
Puisque toujours dispose \`a couronner mon z\`ele\\
Ta beaut\'e sert de gage \`a mon plus cher souhait.\\
\smallskip
\hfill \upshape (Paul Verlaine, \emph{Chansons pour elle}, 1891.)
\end{minipage}
\end{flushright}

\vskip.2cm

\section{Introduction}
\label{Section:intro}

\subsection{Irreducibly unfaithful subsets}\label{subsec:1.a}
A subset $F$ of a group $G$ is called
\textbf{irreducibly faithful} if there exists
an irreducible unitary representation $\pi$ of $G$ in a Hilbert space $\mathcal H$
such that $\pi(x) \ne \id$ for all $x \in F$ with $x \ne e$.
(We denote by~$e$ the identity element of the group,
and by $\id$ the identity operator on the space~$\mathcal H$.)
Otherwise $F$ is called \textbf{irreducibly unfaithful}.
For $n \ge 1$, we say that $G$ has \textbf{Property}~$\mathbf{P(n)}$
if every subset of size at most $n$ is irreducibly faithful.
\par

\textit{Every group has Property $P(1)$:}
this is the particular case for discrete groups of a foundational result established
for all locally compact groups and continuous unitary representations
by Gelfand and Raikov~\cite{GeRa--43}
(see also the exposition in \cite[13.6.6]{Dixm--69},
and another proof for second-countable locally compact groups
in \cite[Pages 109--110]{Mack--76}).
\par

The following refinement of the Gelfand--Raikov Theorem is due to Walter: 
\textit{Every group has Property $P(2)$.}
In other words, \textit{in a group, every couple is irreducibly faithful} (!).
See \cite[Proposition 2]{Walt--74}, as well as \cite{Sasv--91} and \cite[1.8.7]{Sasv--95}.
\par

It is clear that Property $P(3)$ does not hold for all groups.
Indeed, Klein's Vierergruppe,
the direct product $C_2 \times C_2$ of two copies of the group of order~$2$,
does not have $P(3)$. 
\par
 
The first goal of this article is to characterize groups with $P(n)$ for all $n \ge 3$.
We focus on \emph{countable groups}, i.e.,
groups that are either finite or countably infinite.
What follows can be seen as a quantitative refinement of results in \cite{BeHa--08},
quoted in Theorem \ref{thm:Gasch} below.

\vskip.2cm

Before stating our main result, we need the following preliminaries.
Let $\ko$ be a finite field of order $q$;
in case $q = p$ is a prime, we write $\F_p$ instead of $\ko$.
For a group $G$, we denote by $\ko[G]$ its group algebra over $\ko$.
We recall that any abelian group $U$ whose exponent is a prime $p$
carries the structure of a vector space over~$\F_p$,
which is invariant under all group automorphisms of $U$.
In other words, the group structure on $U$ canonically determines an $\F_p$-linear structure.
In particular, an abelian normal subgroup $U$ of exponent $p$ in a group $G$
may be viewed, in a canonical way, as an $\F_p[G]$-module.
Moreover, $U$ is minimal as a normal subgroup of $G$
if and only if $U$ is simple as an $\F_p[G]$-module.
(We rather use $W$ instead of $U$ when such a simple module appears below,
and $V$ for direct sums of particular numbers of copies of simple modules.)
\par

Let $G$ be a group and $U$ an $\F_pG]$-module.
The \textbf{centralizer} of $U$ is the $\F_p$-algebra
$$
\mathcal L_{\F_p[G]}(U) =
\{\alpha \in \End_{\F_p}(U) \mid g.\alpha (u) = \alpha(g.u)
\hskip.2cm \text{for all} \hskip.2cm
g \in G, \ u \in U\} .
$$
If $W$ is a simple $\F_p[G]$-module, Schur's lemma ensures
that $\mathcal L_{\F_p[G]}(W)$ is a division algebra over $\F_p$
\cite[\S~4, Proposition~2]{Bo--A8}.
If in addition $W$ is finite,
then the algebra $\mathcal L_{\F_p[G]}(W)$ is a finite field extension of $\F_p$,
by Wedderburn's Theorem \cite[\S~11, no.~1]{Bo--A8}.
In this case, $W$ is a vector space over $\mathcal L_{\F_p[G]}(W)$,
and the dimension of $W$ over $\mathcal L_{\F_p[G]}(W)$
is the quotient of $\dim_{\F_p}(W)$
by the degree of the extension $\mathcal L_{\F_p[G]}(W)$ of $ \F_p$.
\par

For example, consider a finite field extension $\ko$ of $\F_p$,
a positive integer $m$, 
the vector space $W = \ko^m$,
and the general linear group $\mathrm{GL}(W) = \mathrm{GL}_m(\ko)$
together with its natural action on $W$.
View $W$ as a vector space over $\F_p$, and as an $\F_p\mathrm{GL}(W)]$-module.
Then $\mathcal L_{\F_p[\mathrm{GL}(W)]}(W)$
and $\ko$ can be identified,
and $\dim_{\ko}(W) = m$.
\par

Our main result reads as follows; the proof is in Section~\ref{Section:proofof1.1}.

\begin{thm}
\label{thm:CharP(n)}
Let $G$ be a countable group and $n$ a positive integer.
The following assertions are equivalent. 
\begin{enumerate}[label=(\arabic*)]
\item\label{1DEthm:CharP(n)}
$G$ does not have $P(n)$.
\item\label{2DEthm:CharP(n)}
There exist a prime $p$,
a finite normal subgroup $V$ in $G$
which is an elementary abelian $p$-group,
and a finite simple $\F_p[G]$-module $W$,
such that the following properties hold,
where $\ko$ denotes the centralizer field $\mathcal L_{\F_p[G]}(W)$,
 $m = \dim_{\ko}(W)$, and $q = \vert \ko \vert$:
\begin{enumerate}[label=(\roman*)]
\item
$V$ is isomorphic to the direct sum of $m+1$ copies of $W$, as an $\F_pG]$-module;
\item
$q$ is a power of $p$ and
$q^m + q^{m-1} + \dots + q + 1 \le n$.
\end{enumerate}
\end{enumerate}
\end{thm}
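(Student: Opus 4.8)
The plan is to prove the two implications separately: \ref{2DEthm:CharP(n)}$\Rightarrow$\ref{1DEthm:CharP(n)} by an explicit construction, and the converse by translating irreducible unfaithfulness into a covering problem for subspaces and invoking the Bose--Burton theorem of finite geometry. For \ref{2DEthm:CharP(n)}$\Rightarrow$\ref{1DEthm:CharP(n)}, identify $\ko = \mathcal L_{\F_p[G]}(W)$ with $\End_{\F_p[G]}(W)$, so that $\operatorname{Hom}_{\F_p[G]}(W, V) \cong \ko^{m+1}$ becomes an $(m+1)$-dimensional $\ko$-space and the minimal submodules of $V \cong W^{m+1}$ are in bijection with the lines of $\ko^{m+1}$; there are exactly $q^m + \dots + q + 1 =: N \le n$ of them, pairwise meeting only in $\{e\}$. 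The local input I would isolate as a lemma is that every cyclic $\F_p[G]$-submodule of $W^{m+1}$ has length at most $m$: by the Jacobson density theorem the image of $\F_p[G]$ in $\End_{\F_p}(W)$ is the full matrix algebra $\End_\ko(W)$, and a rank computation then shows $\F_p[G]\,v \cong W^{\,r}$ with $r$ the $\ko$-dimension of the span of the coordinates of $v$, so $r \le m$. Given any irreducible unitary representation $\pi$ of $G$, Clifford theory (applicable since $V$ is finite and normal) shows that $\pi|_V$ is supported on a single $G$-orbit of characters, so $\Ker(\pi) \cap V$ is the annihilator of a cyclic submodule of the dual module; by the lemma $V/(\Ker(\pi)\cap V)$ then has length at most $m$, whence $\Ker(\pi)\cap V$ contains one of the minimal submodules. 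Choosing one non-identity element in each of the $N$ minimal submodules yields a set $F$ of size $N \le n$ on which no irreducible representation is faithful, so $G$ does not have $P(n)$.

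For the converse I would start from an irreducibly unfaithful subset of size at most $n$ of minimal cardinality $k$, which may be assumed to avoid $e$; it then suffices to produce $W$ and a normal subgroup $V \cong W^{m+1}$ with $N = q^m + \dots + 1 \le k$. The first step, which I expect to be the main obstacle, is to confine $F$ to a finite elementary abelian normal subgroup: here I would invoke the Gasch\"utz-type theorem of \cite{BeHa--08} quoted in Theorem~\ref{thm:Gasch}, to the effect that the obstruction to irreducible faithfulness is carried by the abelian part of the socle, and combine it with the minimality of $F$ to force every element of $F$ into a single finite normal $p$-subgroup $V$ that it generates as an $\F_p[G]$-module. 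Writing $U_i = \F_p[G]\,x_i$ and running the Clifford description above in both directions (every character of $V$ being subsumed, via induction, into an irreducible representation of $G$ realizing the corresponding kernel), the unfaithfulness of $F$ becomes equivalent to the assertion that the dual module is covered by the proper submodules $U_i^{\perp}$, namely $V^* = \bigcup_{i=1}^k U_i^{\perp}$.

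Finally I would read off the numerics. A semisimple module is cyclic precisely when every isotypic multiplicity $d_W$ satisfies $d_W \le m_W := \dim_{\ko_W}(W)$, and a cyclic module has a generator lying in no proper submodule; hence the cover forces some simple $W$ to occur in $V$ with multiplicity $d_W \ge m_W + 1$, and I put $m = m_W$, $q = |\ko_W|$. After restricting the cover to the dual of the $W$-isotypic component $W^{d_W}$ and choosing $\ko$-coordinates, the covering condition says that the row spaces $R_i \subseteq \ko^{d_W}$ of the $x_i$, each of dimension at most $m$, are such that every subspace of codimension $m$ contains some $R_i$; replacing each $R_i$ by a line inside it, this is exactly the statement that at most $k$ points of $\mathrm{PG}(d_W-1, q)$ meet every flat of projective dimension $d_W - m - 1$. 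By the Bose--Burton theorem such a blocking set has at least $\frac{q^{m+1}-1}{q-1} = N$ points, with equality only when they form a sub-$\mathrm{PG}(m,q)$; this gives $N \le k \le n$, proving (ii), and the extremal $(m+1)$-dimensional $\ko$-subspace produces the submodule $V \cong W^{m+1}$ of (i).

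The delicate point in this last step, which the minimality of $F$ should resolve, is to guarantee that no element of $F$ lies outside the $W$-isotypic component: a single such element would cover that component for free and Bose--Burton could not be applied. Handling the cross-terms between distinct isotypic components — by choosing $W$ to minimise $N$ and separating the remaining components with a suitable character, or by an induction on the socle — is where I would expect the bookkeeping to be heaviest, and it is the place where the quantitative refinement genuinely goes beyond the qualitative statement of Theorem~\ref{thm:Gasch}.
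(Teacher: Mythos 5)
Your implication \ref{2DEthm:CharP(n)}$\Rightarrow$\ref{1DEthm:CharP(n)} is correct and essentially complete: the Clifford-theoretic decomposition of $\pi\vert_V$ into a single $G$-orbit of characters (valid for unitary representations of a countable group because $V$ is finite), together with your Jacobson-density lemma that cyclic submodules of $W^{m+1}$ have length at most $m$, is a legitimate replacement for the paper's route through factor representations and direct integrals (Lemma~\ref{FromL9}, Proposition~\ref{lem14BEHA}, Lemmas~\ref{lem:Isotypical} and~\ref{lem:counting}); likewise the Bose--Burton theorem correctly substitutes for the counting arguments of Lemmas~\ref{lem:counting} and~\ref{lem:CyclicQuotientModule}. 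The converse, however, is where the substance of the theorem lies, and there your proposal has two genuine gaps, both of which you flag but neither of which you close.

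First, the confinement step. Theorem~\ref{thm:Gasch} characterizes when $G$ itself has a faithful irreducible representation; it says nothing directly about which subsets are irreducibly faithful, and a priori an element of a minimal unfaithful set $F$ in an infinite group need not even have finite normal closure. The paper needs a separate chain of results (Proposition~\ref{prop:IrredFaithful:sol} and Corollary~\ref{cor:MinimalIrreduciblyUnfaithful}) to place a minimal unfaithful set inside $\Fsol(G)$: one must construct, from a representation faithful on $F \cap \Fsol(G)$, a representation faithful on all of $F$, by partitioning $F$ according to whether $\llag x \rrag_G$ is finite soluble, finite non-soluble, or infinite, and invoking Lemma~\ref{lem:G-faithful-normal-subgroup:Kernel<sol(G)}; this is then followed by a further replacement of each $x \in F$ by an element $b_x$ generating an abelian mini-foot. ``Minimality plus Gasch\"utz'' does not by itself produce any of this. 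Second, the cross-isotype problem you identify at the end is precisely the hard core of the paper's proof, not bookkeeping: without knowing that $\llag F \rrag_G$ is isotypic (a single simple $W$ and, in particular, a single prime $p$), one element of $F$ with vanishing component in the $W$-isotypic part makes its annihilator contain the whole dual isotypic component, so the covering hypothesis restricted there is vacuous and Bose--Burton yields no lower bound on $\vert F \vert$ at all. The paper settles this with the single-isotype claim inside Lemma~\ref{lem:UnfaithfulSubsets+mini-feet} (gluing representations across isotypes via Lemma~\ref{lem:IsotypicalDecomposition}) and then with the $U' = U$ argument of Theorem~\ref{thm:UnfaithfulSetSize-n}, both of which use Property $P(n-1)$ and the minimality of $F$ in an essential, quantitative way. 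Your suggested remedies (choosing $W$ to minimize $N$, separating components with a character, induction on the socle) are not developed and are exactly the missing content, so the implication \ref{1DEthm:CharP(n)}$\Rightarrow$\ref{2DEthm:CharP(n)} remains unproven in your proposal.
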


Notice that the inequality $q^m + q^{m-1} + \dots + q + 1 \ge 3$ always holds,
since $q \ge p \ge 2$ and $m \ge 1$.
Therefore, in the particular cases of $n = 1$ and $n = 2$,
Theorem \ref{thm:CharP(n)} recovers for countable groups
0the results of Gelfand--Raikov and Walter quoted above.
In the particular case of $n = 3$, the theorem shows
that the only obstruction to $P(3)$ can be expressed
in terms of Klein's Vierergruppe:

\begin{cor}
\label{cor:P(3)}
A countable group has $P(3)$ if and only if
its centre does not contain any subgroup isomorphic to $C_2 \times C_2$.
\end{cor}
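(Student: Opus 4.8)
The plan is to deduce Corollary~\ref{cor:P(3)} from Theorem~\ref{thm:CharP(n)} by specializing to $n=3$. Since every group has $P(1)$ and $P(2)$, the failure of $P(3)$ is equivalent to the existence of the data $(p,V,W)$ in assertion~\ref{2DEthm:CharP(n)} with the size constraint $q^m+q^{m-1}+\dots+q+1\le 3$. First I would analyze this inequality: because $q\ge p\ge 2$ and $m\ge 1$, we always have $q^m+\dots+q+1\ge q+1\ge 3$, so the inequality forces equality $q+1=3$ together with $m=1$. This pins down $q=2$, hence $p=2$ and $\ko=\F_2$, and $m=\dim_\ko(W)=1$. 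Thus $W$ is a simple $\F_2[G]$-module that is one-dimensional over its centralizer field $\F_2$, so $W\cong C_2$ as a group and $G$ acts trivially on it.

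Next I would unpack what condition~(i) says in this case. Since $V$ is the direct sum of $m+1=2$ copies of $W$, we get $V\cong W\oplus W\cong C_2\times C_2$ as an $\F_2[G]$-module, and because $G$ acts trivially on $W$ it acts trivially on $V$. A normal subgroup on which $G$ acts trivially by conjugation is precisely a central subgroup, so $V$ is a subgroup of the centre of $G$ isomorphic to Klein's Vierergruppe. Conversely, if the centre of $G$ contains a copy of $C_2\times C_2$, I would exhibit this subgroup as the required $V$: take $p=2$, let $W=\F_2$ be the trivial simple module (its centralizer field is $\F_2$, so $q=2$ and $m=1$), and note $V\cong W\oplus W$ satisfies both~(i) and~(ii) since $q^m+1=3\le 3$. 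By the theorem this produces a failure of $P(3)$.

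The two directions together give: $G$ fails $P(3)$ if and only if its centre contains a subgroup isomorphic to $C_2\times C_2$, which is the contrapositive of the stated corollary. The main thing to verify carefully is that, in the arithmetic step, the inequality genuinely has the unique solution $q=2,m=1$ and does not admit any larger configuration collapsing to $3$; this is immediate once one observes that the left-hand side is strictly increasing in both $q$ and $m$ and already equals $3$ at the minimal values. A minor point needing attention is the identification of ``$G$ acts trivially on $V$'' with ``$V$ is central'': this uses that for an elementary abelian normal subgroup the $\F_p[G]$-module structure is given by the conjugation action of $G$, so triviality of the module action is exactly centrality. No step here is a serious obstacle; the entire content is a direct specialization of the main theorem, and the work is purely in reading off the unique numerical solution and translating the module-theoretic data back into group-theoretic language.
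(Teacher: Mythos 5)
Your proposal is correct and follows essentially the same route as the paper: specialize Theorem~\ref{thm:CharP(n)} to $n=3$, observe that the inequality $q^m+\dots+q+1\le 3$ forces $p=q=2$ and $m=1$, so that $V=W\oplus W$ is a $2$-dimensional $\F_2[G]$-module with trivial $G$-action, i.e.\ a central copy of $C_2\times C_2$. The only difference is that you spell out the converse direction (exhibiting a central Klein four-group as the data $(p,V,W)$ of the theorem) which the paper leaves implicit in the ``if and only if'' of Theorem~\ref{thm:CharP(n)}; this is a harmless and indeed slightly more complete presentation of the same argument.
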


\begin{proof}
For $n=3$, we have $p = q = 2$ and $m = 1$
in \ref{2DEthm:CharP(n)} of Theorem \ref{thm:CharP(n)}.
Hence $V = W \oplus W$ is an $\F_2[G]$-module of dimension $2$ over $\F_2$.
Moreover, the action of $G$ is trivial on $W$,
therefore also on $V$, and this means that, as a normal subgroup of $G$,
the group $V$ is central.
\end{proof}

\begin{cor}
\label{finitenormalcentral}
Let $n$ be a positive integer and $G$ a countable group.
Assume that every minimal finite abelian normal subgroup of $G$ is central.
\par

Then $G$ does not have $P(n)$ if and only if
$G$ contains a central subgroup isomorphic to $C_p \times C_p$
for some prime $p \le n-1$.
\end{cor}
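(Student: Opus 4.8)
The plan is to derive both implications directly from Theorem~\ref{thm:CharP(n)}, using the standing hypothesis only to pin down the nature of the simple module~$W$ that the theorem produces. Throughout I would use the fact recalled just before the statement of the theorem: a finite abelian normal subgroup of $G$ of exponent~$p$ is minimal as a normal subgroup precisely when it is simple as an $\F_p[G]$-module, the two structures matching under the canonical $\F_p$-linear structure carried by an elementary abelian $p$-group.

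For the implication from right to left, I would observe that the hypothesis is not even needed. Suppose $G$ contains a central subgroup $V \cong C_p \times C_p$ with $p \le n-1$. Being central, $V$ is a normal elementary abelian $p$-subgroup on which $G$ acts trivially, so as an $\F_p[G]$-module it is the trivial module of $\F_p$-dimension~$2$. I would then take $W = \F_p$ (the trivial simple module); this gives $\ko = \mathcal L_{\F_p[G]}(W) = \End_{\F_p}(\F_p) = \F_p$, hence $q = p$ and $m = \dim_\ko(W) = 1$. Condition~(i) holds because $V \cong W \oplus W = W^{m+1}$, and condition~(ii) holds because $q+1 = p+1 \le n$. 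Thus assertion~\ref{2DEthm:CharP(n)} of Theorem~\ref{thm:CharP(n)} is satisfied and $G$ does not have $P(n)$. This is exactly the mechanism used in the proof of Corollary~\ref{cor:P(3)}.

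For the implication from left to right, I would suppose $G$ does not have $P(n)$ and invoke Theorem~\ref{thm:CharP(n)} to obtain $p$, the module $V \cong W^{m+1}$, and the simple module $W$, with $q = \vert\ko\vert$ and $m = \dim_\ko(W)$. Here the hypothesis enters. Since $V$ is a normal subgroup of $G$ and $V \cong W^{m+1}$ as $\F_p[G]$-modules, any one of the $m+1$ copies of $W$ is a $G$-submodule of $V$, hence a normal subgroup of $G$ contained in $V$; as $W$ is simple this copy is minimal as a normal subgroup, and it is finite and abelian of exponent~$p$. By the standing hypothesis this copy of $W$ is central in $G$, that is, $G$ acts trivially on $W$. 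A trivial simple $\F_p[G]$-module has $\dim_{\F_p}(W) = 1$, so $W \cong \F_p$; consequently $\ko = \End_{\F_p}(\F_p) = \F_p$, giving $q = p$ and $m = 1$. Then $V \cong W^2 \cong C_p \times C_p$ is central, and condition~(ii) reads $q+1 = p+1 \le n$, i.e.\ $p \le n-1$, which produces the desired central subgroup isomorphic to $C_p \times C_p$ with $p \le n-1$.

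The only delicate point — and the one I expect to be the main obstacle — is the reduction in the forward direction: one must make sure that the abstract simple module $W$ furnished by the theorem is genuinely realized as a normal subgroup of $G$, so that the hypothesis on minimal normal subgroups actually applies to it. I would handle this by passing through the module isomorphism $V \cong W^{m+1}$ and using that submodules of the normal subgroup $V$ correspond to normal subgroups of $G$ contained in $V$. Once $W$ is known to be central, the remaining deductions ($\dim_{\F_p}(W) = 1$, $\ko = \F_p$, $m = 1$) are immediate.
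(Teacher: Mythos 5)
Your proof is correct, and it splits naturally into two halves when compared with the paper's. The forward direction is essentially the paper's own argument: invoke Theorem~\ref{thm:CharP(n)}, then use the hypothesis to force $\dim_{\F_p}(W)=1$, hence $m=1$, $\ko=\F_p$, and $V \cong C_p \times C_p$ central, with $p+1 \le n$. You are in fact more careful than the paper on the one delicate point you flag: the paper simply asserts that the hypothesis applies to $W$, whereas you justify that a copy of $W$ inside the normal subgroup $V$ is an $\F_p[G]$-submodule, hence a minimal finite abelian normal subgroup of $G$, so the centrality hypothesis genuinely applies to it. Where you diverge is the converse. The paper does not route it through Theorem~\ref{thm:CharP(n)} at all: given a central $V \cong C_p \times C_p$ with $p \le n-1$, it takes the set $F$ of $p+1$ generators of the non-trivial cyclic subgroups of $V$ and notes that, by Schur's Lemma, any irreducible unitary representation restricts on $V$ to a character $\chi \colon C_p \times C_p \to \T$, whose kernel is necessarily non-trivial; hence $F \cap \Ker \chi \not\subset \{e\}$ and $F$ is irreducibly unfaithful. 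You instead feed the trivial simple module $W = \F_p$ into assertion \ref{2DEthm:CharP(n)} of Theorem~\ref{thm:CharP(n)} and use the implication \ref{2DEthm:CharP(n)}~$\Rightarrow$~\ref{1DEthm:CharP(n)}. Both are valid, and yours has the merit of uniformity, deriving everything from the main theorem; but note that the implication you invoke is proved via Lemma~\ref{firstpartproofMainTheorem}, which rests on the direct-integral machinery (Lemma~\ref{FromL9}) and on countability of $G$, whereas the paper's character argument is elementary and works for groups of arbitrary cardinality --- a portability the paper exploits later, since the proof of Proposition~\ref{prop:Abelian}, which concerns abelian groups that need not be countable, refers back to precisely this converse argument.
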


In the following proof, and later, we denote by $\T$
the group of complex numbers of modulus one.
Recall that, for any irreducible unitary
representation $\pi$ of a group $G$
with centre $Z$ on a Hilbert space $\mathcal H$, 
there exists by Schur's Lemma
a unitary character $\chi \hskip.1cm \colon Z \to \T$
such that $\pi(g) = \chi(g) \id$ for every $g \in Z$.

\begin{proof}
Suppose that $G$ does not have Property $P(n)$.
Let $p$ be a prime and $V, W$
as in Assertion \ref{2DEthm:CharP(n)} of Theorem \ref{thm:CharP(n)}.
Since the action by conjugation of $G$
on minimal finite abelian normal subgroups is trivial by hypothesis,
the $\F_p[G]$-module $W$, which is simple,
is of dimension $1$ over $\F_p$.
With the notation of Theorem \ref{thm:CharP(n)},
this implies that $m = 1$ and $\ko = \F_p$.
It follows that $V = W \oplus W \cong C_p \times C_p$.
\par

Conversely, if $G$ contains 
a central subgroup $V$ isomorphic to $C_p \times C_p$ for some prime $p \le n-1$,
consider a subset $F$ of $G$ of size~$p+1$
containing a generator of each of the $p+1$ non-trivial cyclic subgroups of $V$.
As recalled just before the present proof,
every irreducible unitary
representation of $G$ provides a unitary character
$\chi \hskip.1cm \colon C_p \times C_p \to \T$.
We have $F \cap \Ker \chi \not \subset \{e\}$,
hence $F$ is irreducibly unfaithful.
\end{proof}

\begin{exe}
\label{examplenormalcentral}

There are several classes of groups which have the property that
``every minimal finite abelian
normal subgroup is central'':

\vskip.2cm

(1)
Torsion-free groups have the property.

\vskip.2cm

(2)
Icc-groups, that is
infinite groups in which all conjugacy classes distinct from $\{e\}$ are infinite,
have the property.

\vskip.2cm

(3)
A group $G$ without non-trivial finite quotient has the property.
Indeed, if $N$ is a finite normal subgroup of $G$,
the action by conjugation of $G$ on $N$ provides
a homomorphism from $G$ to
the group of automorphisms of $N$;
since this homomorphism is trivial by hypothesis, $N$ is central.

\vskip.2cm

(4)
In a connected algebraic group $G$,
a Zariski-dense subgroup $\Gamma$
has the property.
To check this, it suffices to show that
the FC-centre $\FC(\Gamma)$ of $\Gamma$
coincides with the centre of $\Gamma$.
Recall that the \textbf{FC-centre} of a group
is the characterisitic subgroup consisting
of elements which have a finite conjugacy class;
the FC-centre of a group contains every finite normal subgroup.
\par

Recall that the centraliser of an element in an algebraic group is a Zariski-closed subgroup.
Given $\gamma \in \FC(\Gamma)$,
the centralizer $C_G(\gamma)$ is Zariski-closed and contains a finite index subgroup of $\Gamma$.
Since $\Gamma$ is Zariski-dense, it follows that $C_G(\gamma)$ is of finite index in $G$.
Therefore $C_G(\gamma) = G$ since $G$ is connected.
Hence $\gamma$ is in the centre $Z(G)$ of $G$.
It follows that $\gamma \in \Gamma \cap Z(G) = Z(\Gamma)$.
This shows that $\FC(\Gamma)$ is central in $\Gamma$,
and consequently that every finite normal subgroup of $\Gamma$ is central.
\par

In particular, this applies to all lattices in connected semi-simple groups
over non-discrete locally compact fields without compact factors, by the Borel Density Theorem.

\vskip.2cm

(5)
A nilpotent group has the property,
because any non-trivial normal subgroup of a nilpotent group
has a non-trivial intersection with the centre
\cite[Chap.~I, \S~6, no.~3]{Bo--A1-3}.
\end{exe}

\vskip.2cm

Theorem~\ref{thm:CharP(n)} also has the following immediate consequence:

\begin{cor}
\label{P(n-1)impliesP(n)}
Let $n$ be an integer, $n \ge 2$.
Suppose that there is no prime power $q$ and integer $m \ge 1$ such that 
$n = q^m + q^{m-1} + \dots + q + 1$. 
\par

Every countable group that has $P(n-1)$ also has $P(n)$. 
\end{cor}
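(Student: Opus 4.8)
The plan is to argue by contraposition and invoke Theorem~\ref{thm:CharP(n)} directly. Concretely, I would prove the logically equivalent statement: if a countable group $G$ does \emph{not} have $P(n)$, then it does not have $P(n-1)$ either.

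So suppose $G$ fails $P(n)$. By the implication \ref{1DEthm:CharP(n)}$\Rightarrow$\ref{2DEthm:CharP(n)} of Theorem~\ref{thm:CharP(n)}, there exist a prime $p$, a finite elementary abelian normal subgroup $V$ of $G$, and a finite simple $\F_p[G]$-module $W$, with associated data $\ko = \mathcal L_{\F_p[G]}(W)$, $m = \dim_\ko(W) \ge 1$, and $q = \vert\ko\vert$, satisfying condition (i), that $V$ is isomorphic to the direct sum of $m+1$ copies of $W$, and condition (ii), that $q$ is a power of $p$ with
$$
N := q^m + q^{m-1} + \dots + q + 1 \le n .
$$

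The crucial observation is then purely arithmetic: since $q$ is a prime power and $m \ge 1$, the integer $N$ is exactly of the excluded form $q^m + q^{m-1} + \dots + q + 1$. By the hypothesis on $n$, we therefore have $N \ne n$, and combined with $N \le n$ this forces $N \le n-1$. Hence the very same triple $(p, V, W)$ still satisfies conditions (i) and (ii) of assertion~\ref{2DEthm:CharP(n)} with $n$ replaced by $n-1$. Applying the implication \ref{2DEthm:CharP(n)}$\Rightarrow$\ref{1DEthm:CharP(n)} of Theorem~\ref{thm:CharP(n)}, now with $n-1$ in place of $n$, I conclude that $G$ does not have $P(n-1)$, which is what the contrapositive requires.

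I expect essentially no obstacle here: once Theorem~\ref{thm:CharP(n)} is available, the whole content of the corollary reduces to the elementary remark that a value which is $\le n$ and is of the prohibited arithmetic shape must actually be $\le n-1$ when $n$ itself is not of that shape. The only points needing verification are bookkeeping ones, namely that $m \ge 1$ and that $q$ is a genuine prime power; both are guaranteed by assertion~\ref{2DEthm:CharP(n)} (as $W$ is a nonzero simple module and $\ko$ is a finite extension of $\F_p$), and together they ensure that $N$ really does have the form excluded in the statement.
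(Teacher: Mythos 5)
Your argument is correct and is precisely the deduction the paper has in mind: it states the corollary as an ``immediate consequence'' of Theorem~\ref{thm:CharP(n)}, and the content of that immediacy is exactly your observation that the witness value $q^m + \dots + q + 1 \le n$, being of the excluded arithmetic form, must in fact be $\le n-1$, so the same triple $(p, V, W)$ certifies failure of $P(n-1)$ via the reverse implication of the theorem.
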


When $n = q^m + q^{m-1} + \dots + q + 1$, we have the following.

\begin{exe}
\label{exampleGqm}
Consider a prime $p$,
a power $q$ of $p$,
an integer $m \ge 1$,
a field $\ko$ of order $q$, 
the vector space $W = \ko^m$,
and the group $\mathrm{GL}(W) = \mathrm{GL}_m(\ko)$.
Let $V_0, V_1, \dots, V_m$ be $m+1$ copies of $W$.
The group $\mathrm{GL}(W)$ acts diagonally on $V := \bigoplus_{i=0}^m V_i$.
Since $V$ is an elementary abelian $p$-group,
it can be viewed as an $\F_p \mathrm{GL}(W)]$-module.
Define the semi-direct product group
$$
G_{(q, m)} = \mathrm{GL}(W) \ltimes V.
$$
\par

Let $N$ be a normal subgroup of $G_{(q, m)}$.
Assume that $N \cap V = \{e\}$.
On the one hand, $N$ commutes with $V$, hence acts trivially on $V$ by conjugation.
On the other hand, the triviality of $N \cap V$ implies that
$N$ maps injectively in the quotient $G_{(q, m)}/V \cong \mathrm{GL}(W)$,
whose conjugation action on $V$ is faithful.
Hence $N = \{e\}$.
This shows that every non-trivial normal subgroup of $G_{(q, m)}$
has a non-trivial intersection with $V$.
In particular, every minimal normal subgroup of $G_{(q, m)}$ is contained in $V$,
and thus is abelian.
Hence it corresponds to a simple $\F_p[G_{(q, m)}]$-submodule of $V$.
Therefore, every minimal abelian normal subgroup of $G_{(q, m)}$
is isomorphic to $W$ as an $\F_pG_{(q, m)}]$-module.
\par

We now set $n = q^m + q^{m-1} + \dots + q + 1$.
Then Theorem~\ref{thm:CharP(n)} implies that $G_{(q, m)}$
has Property $P(n-1)$ but not $P(n)$.
\end{exe}

Notice that the group $G_{(q, 1)}$ is the semi-direct product
$\ko^\times \ltimes (\ko \oplus \ko)$, where $\ko$ is a field of order~$q$.
The group $G_{(2,1)}$ is Klein's Vierergruppe.
The group $G_{(3,1)}$ appears in \cite[Note F]{Burn--11}
as an example of a finite group with trivial centre
which does not admit any faithful irreducible representation.
The group $G_{(4,1)}$ appears in \cite[Problem 2.19]{Isaa--76}
for the same reason.
Our groups $G_{(q,1)}$ appear in the historical review section of \cite{Szec--16},
where they are denoted by $G(2,q)$.
The tables
$$
{
\begin{array}{c|ccccccccccccccccccc}
q & 2 & 3 & 4 & 5 & 7 & 8 & 9 & 11
\\
\hline
\vert G_{(q,1)} \vert & 4 & 18 & 48 & 100 & 294 & 448 & 649 & 1110
\\
\end{array}
}
\hskip.5cm \text{and} \hskip.5cm
{
\begin{array}{c|ccccccccccccccccccc}
q & 2 & 3
\\
\hline
\vert G_{(q,2)} \vert & 384 & 34992
\\
\end{array} 
}
$$
give the orders of the $8$ smallest groups $G_{(q,1)}$ 
and the $2$ smallest groups $G_{(q,2)}$.

\begin{numer}
The sequence of positive integers
which are of the form $q^m + q^{m-1} + ... + q + 1$
for some prime power $q$ and positive integer $m$
is Sequence A258777 of \cite{OEIS}; the first $25$ terms are
$$
3, 4, 5, 6, 7, 8, 9, 10, 12, 13, 14, 15, 17, 18, 20, 21, 24, 26, 28, 30, 31, 32, 33, 38, 40
$$
(note that we start with $3$ whereas A258777 starts with $1$).
The first $10 \hskip.1cm 000$ terms appear on
\hskip.2cm
\url{https://oeis.org/A258777/b258777.txt}
\hskip.2cm
where the last term is $101 \hskip.1cm 808$.
For terms below $100$, the largest gap is between the $45$th term and the $46$th term,
i.e., between $91$ and $98$;
it follows from Corollary \ref{P(n-1)impliesP(n)}
that a group with Property $P(91)$ has necessarily Property $P(97)$.
\par

It is a consequence of the Prime Number Theorem that the asymptotic density
of this sequence is $0$.
In other words, if for $k \ge 1$ we denote by $R(k)$
the number of positive integers less than $k$ which are terms of this sequence,
then $\lim_{k \to \infty} R(k)/k = 0$.
See \cite[Appendix B]{Radu--17}.
\par

Note also that the $21$st term, which is $31$,
can be written in two ways justifying its presence in the sequence:
$31 = 2^4 + 2^3 + 2^2 + 2 + 1 = 5^2 + 5 + 1$.
It is a conjecture that there are no other terms with this property,
but this is still open.
Indeed, conjecturally, the Goormaghtigh equation
$$
\frac{x^M - 1}{x-1} = \frac{y^N-1}{y-1}
$$
has no solution in integers $x,y,M,N$ such that
$x,y \ge 2$, $x \ne y$, and $M, N \ge 3$,
except $31 = \frac{2^5-1}{2-1} = \frac{5^2-1}{5-1}$
and $8191 = \frac{2^{13}-1}{2-1} = \frac{90^3-1}{90-1}$.
We are grateful to Emmanuel Kowalski and Yann Bugeaud
for information on the relevant literature,
which includes \cite{Rafa--16, Goor--17, BuSh--02, He--09}.
\end{numer}

In a group which has $P(n-1)$ and not $P(n)$,
irreducibly unfaithful subsets of size $n$
are contained in finite normal subgroups of a very particular kind,
described in Theorem \ref{thm:UnfaithfulSetSize-n}.
Here is a partial statement of this theorem:

\begin{prop}
\label{partial4.5}
Let $G$ be a countable group and $n$ a positive integer.
Assume that $G$ has Property $P(n-1)$.
Let $F$ be a finite subset of $G$ of size $n$ which is irreducibly unfaithful,
and let $U$ denote the smallest normal subgroup of $G$ containing $F$.
\par
Then there exists a prime $p$ such that $U$ is a finite elementary abelian $p$-group,
and $U$ is contained in the mini-socle $\MA (G)$
 (as defined in Subsection \ref{subsec:Gaschutz} below).
 \end{prop}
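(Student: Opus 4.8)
The plan is to translate irreducible (un)faithfulness into a statement about quotients and feed it into the Gasch\"utz--Bekka--de la Harpe criterion for the existence of a \emph{faithful} irreducible representation (Theorem~\ref{thm:Gasch}). The elementary reformulation I would use is: for a finite $F \subseteq G$ with $e \notin F$, the set $F$ is irreducibly faithful if and only if there is a normal subgroup $N \trianglelefteq G$ with $F \cap N = \emptyset$ such that $G/N$ admits a faithful irreducible unitary representation (take $N = \Ker \pi$ in one direction, pull back along $G \to G/N$ in the other). First I would note that $e \notin F$: otherwise $F \smallsetminus \{e\}$ has size $n-1$, hence is irreducibly faithful by $P(n-1)$, and then so is $F$. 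Applying $P(n-1)$ to each $F \smallsetminus \{x\}$ and using that $F$ itself is unfaithful, I would produce for every $x \in F$ a normal subgroup $N_x$ with $N_x \cap F = \{x\}$ and $G/N_x$ carrying a faithful irreducible representation; by Theorem~\ref{thm:Gasch} this says that the mini-socle $\MA(G/N_x)$ satisfies the relevant monogenicity condition.

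The heart of the proof is to show $F \subseteq \MA(G)$; since $\MA(G) \trianglelefteq G$ this gives $U = \llag F \rrag \subseteq \MA(G)$ at once. Here I would exploit that $\bigcap_{x \in F} N_x$ meets $F$ trivially, so that $G$ has no faithful irreducible representation trivial on it, whereas each individual $G/N_x$ does: the obstruction to monogenicity must therefore sit in the mini-socle and be \emph{witnessed} by $F$. Making this precise is the main obstacle. It requires analysing, via Clifford theory with respect to the finite abelian normal subgroups and the module-theoretic description of characters recalled in the introduction, how $\MA$ and the monogenicity condition transform under the quotients $G/N_x$, and in particular ruling out that deleting an element lying \emph{outside} $\MA(G)$ could restore monogenicity -- which would contradict the forced membership $x \in N_x$. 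This is exactly the content packaged in the full Theorem~\ref{thm:UnfaithfulSetSize-n}, and it is what confines each $x \in F$ to the isotypic component of $\MA(G)$ responsible for the failure.

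Granting $F \subseteq \MA(G)$, the remaining assertions are structural. Distinct minimal normal subgroups intersect trivially and therefore centralise each other, so $\MA(G)$ is abelian and equals the (restricted) direct sum of the finite abelian minimal normal subgroups of $G$, each of which is a simple $\F_p[G]$-module, hence elementary abelian for its own prime. Every such summand being normal in $G$, conjugation keeps each $x \in F$ inside the finite sum of the finitely many summands supporting it; thus $U = \llag F \rrag$ lies in a finite direct sum of simple modules, and in particular $U$ is finite and abelian.

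Finally I would confine $U$ to a single prime. Writing $\MA(G) = \bigoplus_p M_p$ over its (all $G$-invariant) primary components, a character of $\MA(G)$ splits as a product over $p$, and the largest $\F_p[G]$-submodule contained in its kernel splits accordingly; hence an element is separated from $\id$ as soon as one of its primary components avoids the corresponding kernel. If $M_{p_0}$ satisfied the monogenicity condition for some prime $p_0$, one could separate, at $p_0$ alone, every element of $F$ with nonzero $p_0$-component; the minimality of $F$ (each $F \smallsetminus \{x\}$ being faithful) then forbids any such element, and the decoupling of the character choices over the primes leaves the support of $F$ in the unique non-monogenic primary component $M_p$. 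Hence $U$ is a finite elementary abelian $p$-group contained in $\MA(G)$, as claimed.
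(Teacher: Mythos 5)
Your opening reductions are fine: the reformulation of irreducible faithfulness via quotients, the observation that $e \notin F$, and the production, for each $x \in F$, of a kernel $N_x$ with $N_x \cap F = \{x\}$ and $G/N_x$ irreducibly faithful are all correct, as are the structural consequences you draw \emph{once} $F \subseteq \MA(G)$ is granted. But the containment $F \subseteq \MA(G)$ is the actual content of the proposition, and at exactly this point your text stops proving and starts describing: you write that making the argument precise ``is the main obstacle'' and that ``this is exactly the content packaged in the full Theorem~\ref{thm:UnfaithfulSetSize-n}'' --- that is, you appeal to the very statement (indeed its stronger form) that you are being asked to prove. Nothing in your proposal rules out the a priori possibility that some $x \in F$ has infinite normal closure $\llag x \rrag_G$, or a finite but non-abelian (or non-soluble) one; ``Clifford theory with respect to the finite abelian normal subgroups'' cannot even get started on such an element. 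The paper's route to this step is genuinely substantial: first Proposition~\ref{prop:IrredFaithful:sol} (irreducible faithfulness of a finite set can be tested on its intersection with $\Fsol(G)$, proved with the direct-integral machinery of Lemmas~\ref{FromL9}, \ref{lem:G-faithful-normal-subgroup} and \ref{lem:G-faithful-normal-subgroup:Kernel<sol(G)}), which via Corollary~\ref{cor:MinimalIrreduciblyUnfaithful} forces $F \subseteq \Fsol(G)$; then each $x$ is replaced by an element $b_x$ generating an abelian mini-foot, Lemma~\ref{lem:UnfaithfulSubsets+mini-feet} is applied to $F' = \{b_x\}$, and a long contradiction argument shows $\llag F' \rrag_G = \llag F \rrag_G$. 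None of this is reproduced or replaced in your proposal.

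Your final paragraph (confinement to a single prime) also does not close, even granting $F \subseteq \MA(G)$. From ``$M_{p_0}$ is monogenic'' you get a representation separating those elements of $F$ with non-trivial $p_0$-component, but that alone contradicts nothing: the elements with trivial $p_0$-component may still be killed, so ``the minimality of $F$ then forbids any such element'' is unjustified as stated. The correct mechanism is the one in Lemma~\ref{lem:UnfaithfulSubsets+mini-feet}: for each isotypic (in particular each primary) component one produces a submodule $K_Y$ with cyclic quotient avoiding the relevant elements, and then the quotients are combined using Lemma~\ref{lem:IsotypicalDecomposition} and Proposition~\ref{lem14BEHA} into a \emph{single} cyclic quotient $U/K$ with $K \cap F = \varnothing$, whence a single irreducible representation of $G$ faithful on $U/K$ --- contradicting the unfaithfulness of $F$. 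Likewise, the uniqueness of the ``non-monogenic'' primary component is asserted in your text but never derived; it is a conclusion of that same combination argument, not an input to it. So the proposal is an accurate map of where the difficulty lies, but it leaves the two essential steps --- $F \subseteq \MA(G)$ and the single-prime reduction --- unproved.
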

 
\subsection{Irreducibly faithful groups}

A group is \textbf{irreducibly faithful} if it has a faithful irreducible unitary representation. 
Clearly, an irreducibly faithful group $G$
 has $P(n)$ for all $n \ge 1$.
The problem of characterizing finite groups
which are irreducibly faithful
has been addressed by Burnside in \cite[Note F]{Burn--11},
where a sufficient condition is given.
Since then, various papers have been published on the subject,
providing various answers to Burnside's question;
see the historical review in \cite{Szec--16}.
\par

Gasch\"utz~\cite{Gasc--54} obtained a short proof of the following simple criterion:
\emph{a finite group $G$ admits a faithful irreducible representation
over an algebraically closed field of characteristic~$0$
if and only if the abelian part of the socle of $G$ is generated by a single conjugacy class}.
For unitary representations,
this result has been extended
to the class of all countable groups in \cite[Theorem~2]{BeHa--08};
see Subection~\ref{subsec:Gaschutz} below. As a consequence of Theorem~\ref{thm:CharP(n)},
we shall obtain the following supplementary characterization.

\begin{cor}
\label{cor:P(n)-For-All-n}
For a countable group $G$, the following conditions are equivalent:
\begin{enumerate}[label=(\roman*)]
\item\label{iDEcor:P(n)-For-All-n}
$G$ has a faithful irreducible unitary representation. 
\item\label{iiDEcor:P(n)-For-All-n}
$G$ has $P(n)$ for all $n \ge 1$.
\item\label{iiiDEcor:P(n)-For-All-n}
For any prime $p$,
the group $G$ does not contain any finite abelian normal subgroup $V$ of expontent $p$
with the following properties:
there exists a finite simple $\F_p[G]$-module $W$,
with associated centralizer $\ko = \mathcal L_{\F_p[G]}(W)$
and dimension $m = \dim_{\ko}(W)$, 
such that $V$ is isomorphic as an $\F_pG]$-module
to the direct sum of $m+1$ copies of $W$.
\end{enumerate}
\end{cor}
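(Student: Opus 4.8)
The plan is to close the cycle of implications \ref{iDEcor:P(n)-For-All-n} $\Rightarrow$ \ref{iiDEcor:P(n)-For-All-n} $\Leftrightarrow$ \ref{iiiDEcor:P(n)-For-All-n} $\Rightarrow$ \ref{iDEcor:P(n)-For-All-n}. The first implication is immediate: a faithful irreducible unitary representation $\pi$ satisfies $\pi(x) \ne \id$ for every $x \ne e$, and therefore simultaneously witnesses the irreducible faithfulness of every finite subset of $G$, so that $G$ has $P(n)$ for all $n$. The equivalence \ref{iiDEcor:P(n)-For-All-n} $\Leftrightarrow$ \ref{iiiDEcor:P(n)-For-All-n} will be read off Theorem \ref{thm:CharP(n)}, and the substantive step \ref{iiiDEcor:P(n)-For-All-n} $\Rightarrow$ \ref{iDEcor:P(n)-For-All-n} will be derived from the extension of Gasch\"utz's criterion to countable groups (Theorem \ref{thm:Gasch}).

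For \ref{iiDEcor:P(n)-For-All-n} $\Leftrightarrow$ \ref{iiiDEcor:P(n)-For-All-n} I would pass to the contrapositive of both assertions, reducing to the claim that $G$ fails $P(n)$ for some $n$ if and only if $G$ admits a configuration $(p, V, W)$ as forbidden in \ref{iiiDEcor:P(n)-For-All-n}. Theorem \ref{thm:CharP(n)} states that $G$ fails $P(n)$ for a given $n$ exactly when there exist a prime $p$, a finite elementary abelian normal $p$-subgroup $V$, and a finite simple $\F_p[G]$-module $W$ with $V$ isomorphic to the direct sum of $m+1$ copies of $W$ and $q^m + \dots + q + 1 \le n$, where $\ko = \mathcal L_{\F_p[G]}(W)$, $m = \dim_\ko(W)$ and $q = \vert \ko \vert$. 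Here $\ko$ is a finite extension of $\F_p$ by Wedderburn's theorem, so $q$ is automatically a power of $p$; and the inequality can always be satisfied by choosing $n = q^m + \dots + q + 1$. Hence the existence of some $n$ for which $G$ fails $P(n)$ is equivalent to the bare existence of such a triple $(p,V,W)$, which is precisely the negation of \ref{iiiDEcor:P(n)-For-All-n}.

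For \ref{iiiDEcor:P(n)-For-All-n} $\Rightarrow$ \ref{iDEcor:P(n)-For-All-n}, I would invoke Theorem \ref{thm:Gasch}, according to which $G$ is irreducibly faithful if and only if the abelian part of its socle --- made precise through the mini-socle $\MA(G)$ --- is generated by a single conjugacy class. I would reformulate this condition module-theoretically. Since the abelian socle is normal and abelian, the $G$-conjugacy class of one of its elements is its $G$-orbit, and the subgroup it generates is the cyclic $\F_p[G]$-submodule it spans; thus generation by a single conjugacy class means cyclicity of the abelian socle as a module. The key computation is that, for a finite simple $\F_p[G]$-module $W$ with centralizer field $\ko$ and $m = \dim_\ko(W)$, the direct sum of $k$ copies of $W$ is cyclic if and only if $k \le m$: by the Jacobson density theorem $\F_p[G]$ acts on $W$ through the full matrix algebra $\End_\ko(W) \cong M_m(\ko)$, whose left regular module is isomorphic to the direct sum of $m$ copies of $W$, so every cyclic isotypic module has at most $m$ simple summands. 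Consequently the abelian socle is generated by a single conjugacy class exactly when each simple summand $W$ occurs with multiplicity at most $m = \dim_\ko(W)$, and this is precisely the absence of a normal subgroup isomorphic to the direct sum of $m+1$ copies of $W$, i.e.\ condition \ref{iiiDEcor:P(n)-For-All-n}.

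The step I expect to be the main obstacle is this last reformulation: faithfully matching the group-theoretic phrasing of Gasch\"utz's criterion with module cyclicity, and carrying out the attendant bookkeeping when the abelian socle involves several primes and infinitely many pairwise non-isomorphic simple summands, so that a single generating element must serve all isotypic components at once. One must also verify, using the mini-socle, that an isotypic multiplicity $\ge m+1$ genuinely yields a finite normal subgroup isomorphic to the direct sum of $m+1$ copies of $W$, and conversely. Once the cyclicity criterion $k \le m$ is in place, comparison with \ref{iiiDEcor:P(n)-For-All-n} is routine.
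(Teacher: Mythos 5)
Your architecture matches the paper's in outline: \ref{iDEcor:P(n)-For-All-n} $\Rightarrow$ \ref{iiDEcor:P(n)-For-All-n} is trivial, \ref{iiDEcor:P(n)-For-All-n} $\Leftrightarrow$ \ref{iiiDEcor:P(n)-For-All-n} is read off Theorem~\ref{thm:CharP(n)} exactly as you say, and your Jacobson-density proof that a direct sum of $k$ copies of a finite simple $\F_p[G]$-module $W$ is cyclic if and only if $k \le \dim_{\ko}(W)$ is a correct (and arguably slicker) substitute for the paper's Goursat-based Lemma~\ref{lem:Isotypical}. The genuine gap is in how you invoke Gasch\"utz. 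You quote the criterion in its \emph{finite-group} form --- ``$G$ is irreducibly faithful iff the abelian mini-socle $\MA(G)$ is generated by a single conjugacy class'' --- whereas Theorem~\ref{thm:Gasch} for countable groups says that \emph{every finite normal subgroup of $G$ contained in $\MA(G)$} is generated by a single conjugacy class. The distinction is not cosmetic, and your claimed equivalence ``the abelian socle is generated by a single conjugacy class exactly when each simple summand occurs with multiplicity at most $m$'' is false when $\MA(G)$ is infinite. Take $G = \bigoplus_{i \ge 1} C_{p_i}$ with $p_1 < p_2 < \dots$ distinct primes: then $\MA(G) = G$, every isotypic multiplicity equals $1 = m$, so condition \ref{iiiDEcor:P(n)-For-All-n} holds, and $G$ is irreducibly faithful since it embeds in $\T$; yet $G$ is abelian, so ``generated by one conjugacy class'' means ``generated by one element'', and no single element generates $G$ because every element has finite support. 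Thus the intermediate statement your argument passes through is false for such groups, and your proposed resolution of the ``main obstacle'' --- finding ``a single generating element serving all isotypic components at once'' --- is aimed at producing something that does not exist. As written, the implication \ref{iiiDEcor:P(n)-For-All-n} $\Rightarrow$ \ref{iDEcor:P(n)-For-All-n} is therefore not established for any group whose mini-socle is not finitely generated.

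The repair is precisely the paper's localization: apply the cyclicity analysis not to $\MA(G)$ itself but to an arbitrary \emph{finite} normal subgroup $A \le \MA(G)$. Decompose $A$ into its $p$-Sylow parts $A_p$, each a finite semi-simple $\F_p[G]$-module; condition \ref{iiiDEcor:P(n)-For-All-n} together with Proposition~\ref{prop:cyclicsemi-simple} (or your density criterion, noting that a submodule of $A_p$ witnessing multiplicity $\ge m+1$ is itself a finite abelian normal subgroup of $G$ of exponent $p$) forces each $A_p$ to be cyclic; then one must still glue across the finitely many primes and non-isomorphic simple types occurring in $A$, via Lemma~\ref{lem:IsotypicalDecomposition} and the Chinese-remainder-type argument inside Proposition~\ref{lem14BEHA}, to conclude that $A$ is generated by a single conjugacy class. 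Theorem~\ref{thm:Gasch} then gives \ref{iDEcor:P(n)-For-All-n}. This localized bookkeeping is what makes the infinitely-many-summands case harmless, and it is the step missing from, and incompatible with, the global formulation in your proposal.
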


In the case of finite groups, the equivalence between
\ref{iDEcor:P(n)-For-All-n} and \ref{iiDEcor:P(n)-For-All-n} is trivial,
while the equivalence between
\ref{iDEcor:P(n)-For-All-n} and \ref{iiiDEcor:P(n)-For-All-n}
is due to Akizuki (see \cite[Page 207]{Shod--31}).
\par

Let $G$ be a countable group
in which every minimal finite abelian normal subgroup is central
(see Corollary \ref{finitenormalcentral}).
For such a group, Condition \ref{iiiDEcor:P(n)-For-All-n} above
can be reformulated as follows.
\begin{enumerate}
\item[(iii$'$)]
The group $G$ does not contain any central subgroup isomorphic to $C_p \times C_p$
for some prime $p$.
\end{enumerate}
\par

For uncountable groups,
some of the equivalences of Corollary \ref{cor:P(n)-For-All-n} may fail.
See Remark \ref{remcountablegps}.

\subsection{Abelian groups}
Corollary \ref{finitenormalcentral} applies in particular to countable abelian groups.
The following Proposition \ref{prop:Abelian}
shows that the conclusion holds for all abelian groups, countable or not.
Our proof does not rely on Theorem~\ref{thm:CharP(n)},
but uses the following result of Mira Bhargava,
which is Theorem 4 of \cite{Bhar--02}.

\begin{thm}[Mira Bhargava]
\label{thm:Bhargava}
For any group $G$ and any natural number $n$, the following conditions are equivalent:
\begin{enumerate}[label=(\roman*)]
\item
$G$ is the union of $n$ proper normal subgroups. 
\item
$G$ has a quotient isomorphic to $C_p \times C_p$, for some prime $p \le n-1$.
\end{enumerate}
\end{thm}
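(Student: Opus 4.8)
The plan is to prove the two implications separately, reducing everything to the abelian situation via one simple observation: since $C_p\times C_p$ is abelian, condition (ii) depends only on the abelianization, namely $G$ has a quotient isomorphic to $C_p\times C_p$ if and only if $\dim_{\F_p}(G^{\mathrm{ab}}/pG^{\mathrm{ab}})\ge 2$. For the implication (ii)$\Rightarrow$(i), I would fix a surjection $\varphi\colon G\to C_p\times C_p$ with $p\le n-1$ and pull back the $p+1$ subgroups of order $p$ of $C_p\times C_p$, i.e.\ the lines through the origin of the plane $\F_p^2$; these cover $C_p\times C_p$ because $(p+1)(p-1)+1=p^2$. Their preimages are $p+1\le n$ proper normal subgroups of $G$ whose union is $G$, and repeating one of them yields a covering by $n$ proper normal subgroups.

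The substance is the converse (i)$\Rightarrow$(ii). First I would replace the given covering by an irredundant covering $G=\bigcup_{i=1}^m M_i$ of minimal size $m\le n$, and pass to $G/\bigcap_i M_i$ (a $C_p\times C_p$ quotient of the quotient lifts to $G$). Since no group is the union of two proper subgroups, $m\ge 3$. I would then argue by strong induction on $m$ that such a $G$ has a quotient $C_p\times C_p$ with $p\le m-1$. The base case $m=3$ is the classical theorem of Scorza: a group that is the union of three proper subgroups has a quotient isomorphic to $C_2\times C_2$; concretely one checks that $G/M_1=\overline{M_2}\cup\overline{M_3}$ forces $M_iM_j=G$ for some pair, whence Goursat's lemma gives $G/(M_i\cap M_j)\cong G/M_i\times G/M_j$, and covering this product by its two ``axes'' together with the image of the third subgroup forces both factors to have order $2$. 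This yields $p=2=m-1$.

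For the inductive step with $m\ge 4$, I would examine $Q=G/M_1=\bigcup_{i=2}^m\overline{M_i}$ and split into two cases. If every image $\overline{M_i}$ is proper in $Q$, then $Q$ is a union of $m-1$ proper normal subgroups, and the induction hypothesis gives $Q\twoheadrightarrow C_p\times C_p$ with $p\le m-2$, which I compose with $G\to Q$. Otherwise some $\overline{M_2}=Q$, that is $M_1M_2=G$, and Goursat's lemma again produces a quotient $G\twoheadrightarrow G/M_1\times G/M_2$ with both factors nontrivial; here it suffices to exhibit a single prime $p\le m-1$ onto which both $G/M_1$ and $G/M_2$ surject.

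The hard part will be exactly this last case, since the two factors carry no a priori abelian quotient (they could even be perfect), so the common small prime must be extracted from the covering data itself. The approach I would pursue is fibrewise: fixing $x_0\ne e$ in $G/M_1$, the slice $\{x_0\}\times\bigl((G/M_2)\setminus\{e\}\bigr)$ is covered by the images of $M_3,\dots,M_m$, so $G/M_2$ is covered, up to the identity, by at most $m-2$ cosets of subgroups; B.H.\ Neumann's covering theorem then forces one of these subgroups to have finite index at most $m-1$, producing a nontrivial finite quotient of $G/M_2$, and symmetrically of $G/M_1$. Converting this into a \emph{common} prime $p$ with the sharp bound $p\le m-1$ --- equivalently, showing that a minimal normal covering forces the least ``non-cyclic prime'' $p$ of $G^{\mathrm{ab}}$ to satisfy $m=p+1$, and in particular excluding perfect quotients --- is the crux. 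I expect this to reduce cleanly to the elementary linear-algebra fact that covering $\F_p^r$ with $r\ge 2$ by hyperplanes requires at least $p+1$ of them, which is precisely what pins down the prime and closes the induction.
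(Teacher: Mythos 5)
First, a point of order: the paper does not prove Theorem \ref{thm:Bhargava}; it quotes it without proof from \cite{Bhar--02} (Theorem~4 there), so your proposal has to stand on its own. It does not, and you flag the gap yourself. The direction (ii)$\Rightarrow$(i), the reduction to a minimal irredundant covering, the Scorza base case $m=3$, and Case~A of your induction (every $\overline{M_i}$ proper in $G/M_1$) are all sound, modulo routine details (e.g.\ in the base case one needs the irredundancy consequence $M_1\cap M_2\subseteq M_3$ to know that the image of the third subgroup in $G/M_1\times G/M_2$ is proper). But Case~B ($M_1M_2=G$) is the entire substance of the theorem, and what you offer there is a plan ending in ``I expect this to reduce cleanly to \dots''. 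Moreover that plan, as stated, cannot close the case: B.H.\ Neumann's lemma applied to your slices only produces proper normal subgroups of index at most $m-1$ in $G/M_1$ and in $G/M_2$, and such finite quotients may perfectly well be nonabelian simple, so no $C_p$ quotient at all --- let alone a \emph{common} prime $p\le m-1$ for the two factors --- can be extracted from finite-index information alone. Nothing in your argument up to that point excludes, say, both $G/M_1$ and $G/M_2$ being perfect.

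The missing idea is the elimination of perfect quotients from a minimal covering, and it is structural rather than coset-counting. One standard route (in substance Bhargava's): by irredundancy and Neumann's lemma every member of the covering has finite index, so after passing to $G/\bigcap_i N_i$ one may assume $G$ finite; enlarge each member to a maximal normal subgroup and pass to an irredundant subcovering, so each quotient is simple; then show that the members with nonabelian simple quotient are redundant. Indeed, if $B_1,\dots,B_t$ are those members ($t\ge 1$) and $R=\bigcap_j B_j$, then $G/R\cong G/B_1\times\dots\times G/B_t$ (the kernels are distinct maximal normal subgroups, so Goursat-type arguments give the full product), which is perfect; writing $G'$ for the commutator subgroup and $C$ for the intersection of the members containing $G'$, the quotient $G/CR$ is both abelian and perfect, hence trivial, so $CR=G$ and $G\to G/C\times G/R$ is surjective. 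Choosing the $G/R$-coordinate with all entries nontrivial then shows that the members containing $G'$ already cover $G$ (or, if there are none, exhibits an uncovered element), contradicting irredundancy. Thus every member of an irredundant covering contains $G'$, the covering descends to the finite abelian group $G/G'$, and there your hyperplane fact (covering $\F_p^r$ with $r\ge 2$ by hyperplanes requires at least $p+1$ of them) does exactly what you hoped, producing a $C_p\times C_p$ quotient with $p+1\le n$. Without this step, or an equivalent one, your induction does not close.
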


\begin{prop}
\label{prop:Abelian}
Let $n$ be a positive integer and let $G$ be an abelian group.
\par

Then $G$ does not have $P(n)$
if and only if $G$ contains a subgroup isomorphic to $C_p \times C_p$
for some prime $p \le n-1$. 
\end{prop}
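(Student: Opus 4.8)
The plan is to start from the observation that for an abelian group $G$ the irreducible unitary representations are exactly the unitary characters $G \to \T$: by Schur's Lemma every $\pi(g)$ lies in the (scalar) commutant of an irreducible $\pi$, hence is a scalar, and a representation all of whose operators are scalars is one-dimensional. Writing $\widehat G := \mathrm{Hom}(G,\T)$, this means that a subset $F \subset G$ with $e \notin F$ (which we may assume) is irreducibly faithful if and only if there is a character $\chi \in \widehat G$ with $\chi(x) \ne 1$ for all $x \in F$, and irreducibly unfaithful if and only if $\widehat G = \bigcup_{x \in F} A_x$, where $A_x := \{\chi \in \widehat G : \chi(x) = 1\}$. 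Since the characters of an abelian group separate points (extend a nontrivial character of $\langle x\rangle$ to $G$ using that $\T$ is divisible), each $A_x$ is a \emph{proper} subgroup of $\widehat G$ as soon as $x \ne e$.

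For the easy implication, suppose $G$ contains $V \cong C_p \times C_p$ with $p \le n-1$. Mimicking the proof of Corollary \ref{finitenormalcentral}, I take $F$ to consist of one generator of each of the $p+1$ subgroups of order $p$ in $V$, so $\vert F\vert = p+1 \le n$. Every character of $G$ restricts to a character of $V$ valued in the $p$th roots of unity, i.e. to an $\F_p$-linear form on $V \cong \F_p^2$; its kernel is either all of $V$ or one of the $p+1$ lines, and in either case contains one of the chosen generators. Hence no character is nonvanishing on all of $F$, so $F$ is irreducibly unfaithful and $G$ fails $P(n)$.

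For the converse, assume $G$ fails $P(n)$ and fix an irreducibly unfaithful $F$ with $\vert F \vert \le n$ and $e \notin F$. By the first paragraph $\widehat G$ is the union of at most $n$ proper subgroups, so Bhargava's Theorem \ref{thm:Bhargava} applied to the abelian group $\widehat G$ yields a surjection $\widehat G \twoheadrightarrow C_p \times C_p$ for some prime $p \le n-1$; equivalently $\dim_{\F_p}(\widehat G/p\widehat G) \ge 2$. It then remains to transport this information from the dual back to $G$ itself, and this is the step I expect to be the crux.

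The descent works because $\widehat G$ is \emph{compact}: the subgroup $p\widehat G$ is the image of $\widehat G$ under the continuous multiplication-by-$p$ map, hence is compact and therefore closed, so the abstract quotient $\widehat G/p\widehat G$ coincides with a topological quotient to which Pontryagin duality applies. Computing the annihilator gives $\widehat{\,\widehat G/p\widehat G\,} = (p\widehat G)^\perp = \{g \in G : g^p = e\} = G[p]$, whence $\dim_{\F_p}(\widehat G/p\widehat G) = \dim_{\F_p} G[p]$. Combined with the previous paragraph this gives $\dim_{\F_p} G[p] \ge 2$, which is precisely the statement that $G$ contains a subgroup isomorphic to $C_p \times C_p$ with $p \le n-1$. (One can avoid invoking general Pontryagin duality by first reducing to $H = \langle F\rangle$: irreducible faithfulness of $F$ is unchanged when passing between $G$ and $H$, since characters of $H$ extend to $G$, and for the finitely generated group $H$ one has $\widehat H \cong \T^r \times \widehat T$ with $T$ finite, so any $C_p \times C_p$ quotient must come from the finite part $\widehat T \cong T$ and hence produces $C_p \times C_p \le T \le H \le G$.)
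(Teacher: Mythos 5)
Your proof is correct, and its skeleton is the same as the paper's: identify the irreducible unitary representations of an abelian group with its characters, note that irreducible unfaithfulness of $F$ means $\widehat G = \bigcup_{x \in F} A_x$ with each $A_x$ a proper subgroup, apply Bhargava's theorem to obtain a surjection $\widehat G \twoheadrightarrow C_p \times C_p$ with $p \le n-1$, and descend to $G$ by duality; the easy implication is handled identically in both. Where you differ is in the execution of the descent, and your version is in fact more careful than the paper's. The paper simply cites Bourbaki's duality theorem to dualize the surjection $\widehat G \twoheadrightarrow C_p \times C_p$; but that theorem concerns quotients by \emph{closed} subgroups, and the surjection produced by Bhargava's theorem is a priori only an abstract homomorphism, whose kernel is a finite-index subgroup of the compact group $\widehat G$ that need not be closed. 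Your detour through $\widehat G / p\widehat G$ (whose defining subgroup $p\widehat G$ \emph{is} closed, being a continuous image of a compact group) repairs exactly this point, as does your alternative reduction to the finitely generated subgroup $\langle F \rangle$. One small overstatement: the asserted equality $\dim_{\F_p}(\widehat G/p\widehat G) = \dim_{\F_p} G[p]$ can fail when these are infinite (the compact group $\prod_I C_p$ has strictly larger abstract $\F_p$-dimension than its dual $\bigoplus_I C_p$); but all you need is that $\dim_{\F_p} G[p] \le 1$ would force $\widehat G/p\widehat G$, which by reflexivity is the dual of $G[p]$, to have at most $p$ elements, contradicting that it surjects onto $C_p \times C_p$ — so the argument stands as written.
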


\begin{proof}
Assume that $G$ does not have Property $P(n)$.
Let $F \subset G \smallsetminus \{e\}$
be an irreducibly unfaithful subset of $G$ of size $\le n$.
Let $\widehat G$ be the Pontryagin dual of $G$,
namely the group of all unitary characters $G \to \T$.
For each $x \in F$,
let $H_x = \{\chi \in \widehat G \mid \chi(x) = 1\}$;
it is a subgroup of $\widehat G$.
Since $G$ has $P(1)$, we have $H_x \neq \widehat G$.
Since $F$ is irreducibly unfaithful we have $\widehat G = \bigcup_{x \in F} H_x$.
Since $\widehat G$ is abelian, every subgroup is normal,
and Theorem~\ref{thm:Bhargava} ensures that
$\widehat G$ maps onto $C_p \times C_p$,
for some prime $p \le \vert F \vert -1 \le n-1$.
By duality (see \cite[chap.\ II, \S~1, no~7, Th.~4]{Bo--TS}),
it follows that $G$ contains a subgroup isomorphic to the dual of $C_p \times C_p$,
i.e., a subgroup isomorphic to $C_p \times C_p$ itself.
\par

The proof of the converse implication is as in the proof of Corollary \ref{finitenormalcentral}.
\end{proof}

It is easy to characterize abelian groups having faithful unitary characters,
i.e., having faithful irreducible unitary representations.
We denote by $\mathfrak c$ the cardinality of the continuum, i.e., of~$\T$.

\begin{prop}
\label{almostinFuchs}
For an abelian group $G$, the following conditions are equivalent:
\begin{enumerate}[label=(\roman*)]
\item\label{iDEalmostinFuchs}
$G$ has a faithful unitary character, i.e.,
$G$ is isomorphic to a subgroup of~$\T$.
\item\label{iiDEalmostinFuchs}
The cardinality of $G$ is at most $\mathfrak c$,
and there is not any subgroup of $G$ isomorphic to $C_p \times C_p$,
for some prime $p$.
\end{enumerate}
\end{prop}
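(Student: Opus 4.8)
The plan is to notice that a faithful unitary character of the abelian group $G$ is exactly an injective homomorphism $G \hookrightarrow \T$, so that condition~\ref{iDEalmostinFuchs} asserts that $G$ is isomorphic to a subgroup of $\T = \R/\Z$. The whole statement then becomes a purely group-theoretic description of the subgroups of $\T$, which I would extract from the cardinality of $\T$ together with its torsion structure, via the classification of divisible abelian groups (this is why the result is ``almost in Fuchs'').

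The implication \ref{iDEalmostinFuchs}~$\Rightarrow$~\ref{iiDEalmostinFuchs} is the easy direction. If $G \le \T$ then $\vert G \vert \le \vert \T \vert = \mathfrak c$. The torsion subgroup of $\T$ is $\Q/\Z$, whose $p$-primary component is the Pr\"ufer group $\Z(p^\infty)$ of all elements of $p$-power order; the latter has a unique subgroup of order $p$, so $\T[p] \cong C_p$ has $\F_p$-dimension one. Hence $\T$, and a fortiori $G$, contains no copy of $C_p \times C_p$.

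For the substantial implication \ref{iiDEalmostinFuchs}~$\Rightarrow$~\ref{iDEalmostinFuchs} I would pass to the divisible hull $E(G)$ of $G$. By the classification of divisible abelian groups, $E(G) \cong \Q^{(r_0)} \oplus \bigoplus_p \Z(p^\infty)^{(r_p)}$, where $r_0$ is the torsion-free rank of $G$ and $r_p = \dim_{\F_p} G[p]$ is its $p$-rank; these ranks are unchanged by passing to $E(G)$. Writing $\T \cong \Q^{(\mathfrak c)} \oplus \bigoplus_p \Z(p^\infty)$ in the same form (using that $\T$ is divisible and that $\R/\Q$ is a $\Q$-vector space of dimension $\mathfrak c$), its invariants are $r_0(\T) = \mathfrak c$ and $r_p(\T) = 1$ for every $p$. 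Now condition~\ref{iiDEalmostinFuchs} gives $r_p(G) \le 1$ for all $p$ (no $C_p \times C_p$) and $r_0(G) \le \vert G \vert \le \mathfrak c$. Since these dominations make $E(G)$ isomorphic to a direct summand of $\T$, we obtain an embedding $E(G) \hookrightarrow \T$, and restricting it to $G \le E(G)$ yields the desired faithful character.

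The main obstacle is to set up the divisible-hull machinery cleanly: one must justify that $G$ embeds into the divisible group $\T$ as soon as $E(G)$ does (which is immediate once $E(G) \hookrightarrow \T$ is produced), and, crucially, that an injection between divisible abelian groups exists precisely when each rank of the source is $\le$ the corresponding rank of the target---equivalently, that a divisible group with dominated invariants is a direct summand of the other. Establishing $r_0(\T) = \mathfrak c$ and $r_p(\T) = 1$, and matching the no-$C_p \times C_p$ condition with $r_p(G) \le 1$, are the remaining points, both routine once the structure theory is in place.
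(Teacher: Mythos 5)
Your proposal is correct and follows essentially the same route as the paper's proof: both pass to the divisible hull, invoke the classification of divisible abelian groups and the invariance of the ranks $r_0$ and $r_p$, and compare with the decomposition $\T \cong \bigl( \bigoplus_{\mathfrak c} \Q \bigr) \oplus \bigoplus_p \Z(p^\infty)$ to reduce condition~\ref{iiDEalmostinFuchs} to the rank inequalities $r_0(G) \le \mathfrak c$ and $r_p(G) \le 1$. The only cosmetic difference is that you handle the implication \ref{iDEalmostinFuchs}~$\Rightarrow$~\ref{iiDEalmostinFuchs} by directly observing $\T[p] \cong C_p$, whereas the paper folds both directions into the single rank-theoretic equivalence.
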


\begin{proof}
Let $A$ be an abelian group.
Denote by $r_0(A)$ the torsion-free rank and,
for each prime $p$, by $r_p(A)$ the $p$-rank of $A$.
For a prime $p$, denote by $\Z(p^\infty)$ the quasicyclic group $\Z[1/p] / \Z$.
\par

Let $G$ be an abelian group; denote by $E$ a \textbf{divisible hull} of $G$.
Recall the following standard results on $G$ and $E$.
First, as any divisible group, $E$ is isomorphic to a direct sum
$$
E \cong 
\Big( \bigoplus_{r_0(E)} \Q \Big) \mathlarger{\oplus} \Big( \bigoplus_p
\Big( \bigoplus_{r_p(E)} \Z(p^\infty) \Big) \Big) ,
\leqno{(*)}
$$
and we also have
$$
\T \cong 
\Big( \bigoplus_{\mathfrak c} \Q \Big) \mathlarger{\oplus} \Big( \bigoplus_p
\Z(p^\infty) \Big) ;
\leqno{(**)}
$$
see \cite[Theorem 23.1]{Fuch--70}.
Second, a divisible group (for example $\T$) has a subgroup isomorphic to $G$
if and only if it has a subgroup isomorphic to $E$
\cite[Theorem 24.4]{Fuch--70}.
Furthermore, we have
$$
r_0(E) = r_0(G)
\hskip.5cm \text{and} \hskip.5cm
r_p(E) = r_p(G) \hskip.2cm \text{for all prime $p$} ;
\leqno{(***)}
$$
see \cite[Section 24]{Fuch--70}.
\par

It follows that $G$ satisfies Condition \ref{iDEalmostinFuchs}
if and only if $r_0(G) \le \mathfrak c$
and $r_p(G) \le 1$ for all primes $p$,
that is if and only if $G$ satisfies Condition \ref{iiDEalmostinFuchs}
\end{proof}

\begin{rem}
\label{remcountablegps}
Corollary \ref{cor:P(n)-For-All-n} does not extend
to groups of cardinality larger than~$\mathfrak c$.
Indeed, by Proposition \ref{prop:Abelian},
any torsion-free abelian group has $P(n)$ for all $n \ge 0$
(Condition \ref{iiDEcor:P(n)-For-All-n} of Corollary \ref{cor:P(n)-For-All-n}),
but cannot be isomorphic to a subgroup of $\T$
when its cardinality is larger than $\mathfrak c$
(negation of Property \ref{iDEcor:P(n)-For-All-n} of Corollary \ref{cor:P(n)-For-All-n}).
\par

We have not been able to decide whether
Conditions \ref{iDEcor:P(n)-For-All-n} and \ref{iiDEcor:P(n)-For-All-n}
of Corollary \ref{cor:P(n)-For-All-n}
are equivalent for all groups of cardinality at most $\mathfrak c$.
They are for abelian groups of cardinality at most $\mathfrak c$,
this is Proposition \ref{almostinFuchs}.
\par

Conditions \ref{iiDEcor:P(n)-For-All-n} and \ref{iiiDEcor:P(n)-For-All-n}
of Corollary \ref{cor:P(n)-For-All-n}
are equivalent for any abelian group,
this is Proposition \ref{prop:Abelian}.
\end{rem}

\subsection{Irreducible versus factor representations}

Recall that two unitary representations $\pi$, $\pi'$ of a group $G$
are called \textbf{disjoint} if there do not exist non-zero subrepresentations
$\rho$ of $\pi$ and $\rho'$ of $\pi'$ which are equivalent.
A unitary representation $\pi$ of a group $G$
is called a \textbf{factor representation} (or a \textbf{primary representation})
if it cannot be decomposed as the direct sum of two disjoint subrepresentations.
Equivalently the unitary representation $\pi$ is a factor representation
if the von Neumann algebra generated by $\pi(G)$ is a factor.
Every irreducible unitary representation is a factor representation.
The direct sum of several copies of a given irreducible unitary representation
is an example of a factor representation which is not irreducible.
However, some factor representations do not contain any irreducible subrepresentations.
The notion of factor representation plays a key role
in the theory of unitary representations on infinite-dimensional Hilbert spaces,
see \cite{Dixm--69} and \cite{Mack--76}.
We record here the following observation,
which implies that the results above remain unchanged
if one replaces the class of irreducible unitary representations
by the larger class of factor representations
(proof in Subsection \ref{subsec:SubsidiaryBeHa}).

\begin{prop}
\label{prop:Factor->Irrep}
Let $G$ be a countable group. For any factor representation $\pi$ of $G$,
there is an irreducible unitary representation $\sigma$ of $G$
such that $\Ker(\sigma) = \Ker(\pi)$. 	
\end{prop}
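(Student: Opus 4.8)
The plan is to pass through the factor representation $\pi$ via disintegration theory and its kernel, reducing the problem to the irreducible case. The key object is the kernel $\Ker(\pi)$, which is a closed normal subgroup of $G$; since $G$ is countable and discrete, $\Ker(\pi)$ is simply a normal subgroup. My strategy would be to produce an irreducible representation of the quotient $G/\Ker(\pi)$ that is itself faithful, then pull it back to $G$ along the quotient map $G \to G/\Ker(\pi)$, yielding an irreducible representation $\sigma$ of $G$ with $\Ker(\sigma) = \Ker(\pi)$ exactly.

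First I would observe that $\pi$ factors through $\bar\pi$, a factor representation of $H := G/\Ker(\pi)$ which is now \emph{faithful} as a representation of $H$, in the sense that $\Ker(\bar\pi) = \{e\}$. So the claim reduces to: every countable group $H$ admitting a faithful factor representation also admits a faithful irreducible unitary representation. The natural tool here is the central decomposition (disintegration) of the factor representation $\bar\pi$ of the countable, hence second-countable and type~I-friendly in the measurable sense, group $H$. By Mackey's theory (see \cite{Mack--76} and \cite[Ch.~8]{Dixm--69}), $\bar\pi$ disintegrates as a direct integral $\int_{\widehat H}^{\oplus} \rho_\xi \, d\mu(\xi)$ of irreducible representations over a standard Borel measure space, and the kernel of $\bar\pi$ is $\bigcap_{\xi \in \mathrm{supp}(\mu)} \Ker(\rho_\xi)$ up to a $\mu$-null set. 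Thus faithfulness of $\bar\pi$ means the almost-everywhere intersection of the kernels of the irreducible components is trivial.

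From here I would exploit countability to collapse the integral into a single irreducible representation. Because $H$ is countable, it has only countably many elements, so for each non-identity $h \in H$ the set $\{\xi : \rho_\xi(h) \neq \id\}$ has positive measure (otherwise $h$ would lie in $\Ker(\bar\pi)$). I would then select a countable family $(\rho_j)_{j \in J}$ of irreducible representations from the support whose kernels intersect trivially: enumerate $H \smallsetminus \{e\} = \{h_1, h_2, \dots\}$ and for each $h_i$ pick an irreducible $\rho_{j_i}$ with $\rho_{j_i}(h_i) \neq \id$. The direct sum $\bigoplus_i \rho_{j_i}$ is then a faithful (no longer irreducible) representation of $H$. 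To recover irreducibility, I would invoke the Gelfand--Raikov machinery in the refined form already available in this circle of ideas—namely the Baire-category / amalgamation argument used in \cite[Theorem~2]{BeHa--08} to produce a single faithful irreducible representation from a faithful one. Concretely, a countable group that admits \emph{some} faithful unitary representation whose irreducible constituents separate points admits a faithful irreducible one, by the same socle-theoretic criterion of Gasch\"utz--Bekka--de la Harpe.

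The step I expect to be the main obstacle is the passage from the countable faithful direct sum to a single faithful \emph{irreducible} representation. Merely having a direct integral or direct sum of irreducibles with trivial joint kernel does not immediately hand over one irreducible with trivial kernel. The cleanest route is to apply the characterization from \cite{BeHa--08}: the group $H$ is irreducibly faithful if and only if the abelian part of its socle is generated by a single conjugacy class (the countable-group extension of Gasch\"utz's criterion). I would verify that faithfulness of the factor representation $\bar\pi$ forces this socle condition on $H$—the factor condition ensures the center acts by a single character, which constrains the abelian socle enough that the conjugacy-class generation criterion is met. Establishing this implication carefully, rather than the disintegration bookkeeping, is where the real content lies; once it is in place, \cite{BeHa--08} delivers the faithful irreducible $\bar\sigma$ of $H$, and $\sigma := \bar\sigma \circ (G \to H)$ satisfies $\Ker(\sigma) = \Ker(\pi)$ as required.
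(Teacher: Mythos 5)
Your reduction to the quotient $H = G/\Ker(\pi)$ is exactly the paper's first step, and you have correctly located where the content lies: passing from a faithful factor representation of $H$ to a faithful irreducible one. But your proposal does not actually cross that gap, and both devices you offer for crossing it fail. First, the claim that a countable group admitting a faithful unitary representation whose irreducible constituents separate points must admit a faithful irreducible representation is false: by Gelfand--Raikov \emph{every} group has point-separating irreducible representations, so the direct sum of the four characters of $C_2 \times C_2$ is faithful with point-separating irreducible constituents, yet $C_2 \times C_2$ is not irreducibly faithful. Your countable direct sum $\bigoplus_i \rho_{j_i}$ therefore records nothing beyond Property $P(1)$ and is a dead end. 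Second, your proposed verification of the criterion of Theorem \ref{thm:Gasch} -- ``the factor condition ensures the centre acts by a single character, which constrains the abelian socle enough'' -- does not work: the abelian mini-socle is in general much larger than the centre (the group $G_{(3,1)}$ has trivial centre and abelian socle $\F_3 \oplus \F_3$), so knowing that $Z(H)$ embeds in $\T$ says nothing about whether an arbitrary finite normal subgroup $A \le \MA(H)$ is generated by a single conjugacy class.

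What is actually needed, and what the paper supplies as Lemma \ref{FromL9}, is a use of the factor property against \emph{every} normal subgroup simultaneously, inside the disintegration: if the irreducible components over a set of positive measure all kill a non-trivial normal subgroup $N_\ell$, then the $N_\ell$-invariant vectors form a non-zero $G$-invariant subspace; by Lemma \ref{lem:Disjoint} and Proposition \ref{prop:KernelPrimary} a factor representation admits no such proper decomposition, so $N_\ell \le \Ker(\bar\pi)$, contradicting faithfulness. Hence almost every irreducible component of $\bar\pi$ is faithful, and pulling one such component back to $G$ finishes the proof; this is precisely the paper's argument, which applies Lemma \ref{FromL9} with $N = H$ and never needs Theorem \ref{thm:Gasch}. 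Your socle route could be repaired by running the same disintegration argument on $\bar\pi\vert_A$ for each finite normal subgroup $A \le \MA(H)$, obtaining an $H$-faithful character of $A$ and then invoking Proposition \ref{lem14BEHA}; but either way, the disjointness argument for factor representations is the irreplaceable ingredient, and it is absent from your proposal.
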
	

In particular, a countable group is irreducibly faithful if and only if it is factorially faithful.

\subsection{Irreducibly injective subsets}
\label{IntroQ(n)}

A natural variation on the notion of irreducible (un)faith\-fulness can be defined as follows.
\par

A subset $F$ of a group $G$ is called \textbf{irreducibly injective}
if $G$ has an irreducible unitary representation $\pi$
such that the restriction $\pi \vert_F$ of $\pi$ to $F$ is injective.
We say that $G$ has Property $Q(n)$
if every subset of $G$ of size~$\le n$ is irreducibly injective.
It is a tautology that every group has Property $Q(1)$.
Though we do not know a characterization of countable groups which have Property $Q(n)$ for $n \ge 6$
in terms of the group structure,
some of the results we show in Section \ref{SectionQ(n)} can be summarized as follows.

\begin{prop}
\label{summingupSection6}
Let $G$ be a countable group and $n$ a positive integer.
\begin{enumerate}[label=(\roman*)]
\item\label{iDEsummingupSection6}
If $G$ has $P\left( \binom{n}{2} \right)$, then $G$ has $Q(n)$;
in particular, every countable group has $Q(2)$.
\item\label{iiDEsummingupSection6}
If $G$ has $Q(n)$, then $G$ has $P(n)$.
\item\label{iiiDEsummingupSection6}
$G$ has $Q(3)$ if and only if $G$ has $P(3)$.
\item\label{ivDEsummingupSection6}
$G$ has $Q(4)$ if and only if $G$ has $P(6)$.
\item\label{vDEsummingupSection6}
$G$ has $Q(5)$ if and only if $G$ has $P(9)$.
\end{enumerate}
\end{prop}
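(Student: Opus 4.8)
The plan is to convert each assertion about irreducible \emph{injectivity} into one about irreducible \emph{faithfulness} of an associated difference set, and then to feed the latter into Theorem~\ref{thm:CharP(n)}. The basic dictionary is the following: for a finite subset $F=\{x_1,\dots,x_k\}$ of $G$ and an irreducible unitary representation $\pi$, the restriction $\pi|_F$ is injective if and only if $\pi(x_ix_j^{-1})\ne\id$ for all $i\ne j$; since $\pi(g)\ne\id$ exactly when $\pi(g^{-1})\ne\id$, this holds if and only if $\pi$ is nontrivial on the set $D(F)=\{x_ix_j^{-1}\mid i<j\}$, which does not contain $e$ and has at most $\binom{k}{2}$ elements. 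Hence $F$ is irreducibly injective if and only if $D(F)$ is irreducibly faithful. Assertion~\ref{iDEsummingupSection6} is then immediate: if $|F|\le n$ then $|D(F)|\le\binom{n}{2}$, so $P(\binom{n}{2})$ makes $D(F)$ irreducibly faithful and therefore $F$ irreducibly injective; and $Q(2)$ follows from $P(1)=P(\binom{2}{2})$, which holds by Gelfand--Raikov.

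For Assertion~\ref{iiDEsummingupSection6} I would argue the contrapositive. If $G$ fails $P(n)$, Theorem~\ref{thm:CharP(n)} provides a prime $p$, a module $V\cong W^{m+1}$, and the threshold $N:=q^m+\dots+q+1\le n$. It then suffices to exhibit a subset $F\subseteq V$ with $|F|\le N$ whose difference set meets every line of the configuration (i.e.\ every one-dimensional $\ko$-subspace of the relevant space), for then $D(F)$ contains a minimal irreducibly unfaithful set and hence is itself irreducibly unfaithful, so $F$ is not irreducibly injective and $G$ fails $Q(n)$. For $m=1$ one has $V\cong\ko^2$ with $q+1$ lines, and the moment-curve set $\{(t,t^2)\mid t\in\ko\}\cup\{(0,1)\}$ of size $q+1=N$ has a difference set meeting all $q+1$ directions (a short check valid in every characteristic); the case $m\ge 2$ is analogous, using a rational normal curve in $V$. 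This proves~\ref{iiDEsummingupSection6}, and Assertion~\ref{iiiDEsummingupSection6} is simply \ref{iDEsummingupSection6} and \ref{iiDEsummingupSection6} taken at $n=3$, where $\binom{3}{2}=3$.

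Assertions~\ref{ivDEsummingupSection6} and~\ref{vDEsummingupSection6} are sharper than these general bounds and demand the precise extremal count. In each direction the dictionary reduces matters to the quantity $k(q,m)$, the least number of points of $V\cong W^{m+1}$ whose difference set meets every line; for $m=1$ this is the least number of points of the affine plane $\mathrm{AG}(2,q)$ that determine all $q+1$ directions. Indeed, starting from a group $G$ without $Q(n)$ and a witness $F$ with $|F|\le n$, the unfaithful set $D(F)$ has at most $\binom{n}{2}$ inverse-classes, and Proposition~\ref{partial4.5} together with Theorem~\ref{thm:UnfaithfulSetSize-n} localizes a minimal unfaithful subset inside a single module $V\cong W^{m+1}$; since $D(F)\subseteq V$, a translate of $F$ lies in $V$ and determines all lines of the corresponding geometry. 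Thus~\ref{ivDEsummingupSection6} amounts to the facts that $k(q,1)\le 4$ for $q\le 5$ (for instance $(0,0),(1,0),(0,1),(2,2)$ determine all six directions of $\mathrm{AG}(2,5)$) together with~\ref{iDEsummingupSection6}; and~\ref{vDEsummingupSection6} amounts to $k(q,1)\le 5$ for $q\le 8$, to $k(2,2)\le5$, and --- crucially --- to the fact that the configuration with $q=9$, for which $N=10=\binom{5}{2}$, cannot occur: no five points of $\mathrm{AG}(2,9)$ determine all ten directions.

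The existence constructions above are routine finite verifications. The hard part is the \emph{non-existence} statement for $q=9$: it is precisely what forces the threshold in~\ref{vDEsummingupSection6} to be $P(9)$ rather than $P(10)$, and it is the concrete shadow of the additive-combinatorial problem of describing the minimal point sets whose difference set meets every direction, which remains open for general $m,n\ge4$. I expect this extremal, finite-geometry input to be the main obstacle; everything else is bookkeeping around the difference-set dictionary and Theorem~\ref{thm:CharP(n)}.
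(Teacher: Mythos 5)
Your overall route is the same as the paper's: the dictionary of Lemma~\ref{lem:BasicPQ} converting irreducible injectivity of $F$ into irreducible faithfulness of a difference set, the structure results (Theorems~\ref{thm:CharP(n)} and~\ref{thm:UnfaithfulSetSize-n}), and the extremal constant you call $k(q,m)$, which is exactly the paper's $\alpha_{(q,m)}$ with the small values of Lemma~\ref{lem:alpha(q,m)small}. Parts \ref{iDEsummingupSection6} and \ref{iiiDEsummingupSection6} are correct as you argue them, and your explicit four-point set for $q=5$ does determine all six directions. However, two of your steps have genuine gaps. In \ref{iiDEsummingupSection6}, the construction ``for $m \ge 2$ \dots analogous, using a rational normal curve'' cannot work: distinct simple $\F_p[G]$-submodules of $V \cong W^{m+1}$ intersect in $\{0\}$, so each non-zero difference lies in at most one of them, whence any $F$ whose difference set meets all $q^m+\dots+q+1$ simple submodules must satisfy $\binom{\vert F \vert}{2} \ge q^m+\dots+q+1$. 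A rational normal curve supplies only about $q+1$ points, and $\binom{q+1}{2} = (q^2+q)/2 < q^2+q+1$, so for every $m \ge 2$ such a set is too small --- and $m \ge 2$ genuinely occurs here, e.g.\ $G_{(2,2)}$ has $P(6)$ but not $P(7)$. The paper's Lemma~\ref{lem:F-F} reaches size exactly $q^m+\dots+q+1$ by a different device: put $0$ into $F$ (so that $F \smallsetminus \{0\} \subseteq F-F$), take one non-zero point in each simple submodule not contained in $V_0\oplus V_1$, and cover the remaining $q+1$ submodules with the $q$-element set $\{(\lambda x, x) \mid \lambda \in \ko\}$ together with $0$.

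Part \ref{vDEsummingupSection6} is left unproven on two counts. The direction $P(9) \Rightarrow Q(5)$ rests precisely on the non-existence statement $\alpha_{(9,1)} \ge 6$ (no five points of the affine plane over $\F_9$ determine all ten directions), which you flag as ``the main obstacle'' but do not supply; the paper settles it by a computer-aided search (Lemma~\ref{lem:alpha(q,m)small}), and without it your argument is equally consistent with the equivalence reading $P(10)$ instead of $P(9)$. Moreover, the localization you invoke for that direction is off: Theorem~\ref{thm:UnfaithfulSetSize-n} places the normal closure $U$ of the difference set in a direct sum of $\ell+1$ copies of $W$ with $\ell \ge m$, not in $V \cong W^{m+1}$. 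Passing from $U$ down to $V$ while keeping a translate of $F$ inside it and injectively so requires constructing a surjection of $\F_p[G]$-modules $U \twoheadrightarrow V$ whose restriction to $F$ is injective; this is the inductive argument occupying most of the proof of Proposition~\ref{prop:nonQ(n)}, which uses Lemma~\ref{lem:counting} to find, whenever $\ell > m$, a simple submodule of $U$ avoiding the difference set. In summary: \ref{iDEsummingupSection6}, \ref{iiiDEsummingupSection6} and \ref{ivDEsummingupSection6} are essentially complete modulo routine finite checks, \ref{iiDEsummingupSection6} fails as written for $m \ge 2$, and \ref{vDEsummingupSection6} is missing its crucial ingredient.
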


Understanding Property $Q(n)$ for larger $n$
is closely related to a well-documented open problem
in additive combinatorics.
See Subsection \ref{AdditiveComb}.

\vskip.2cm

We are grateful to Yves Cornulier for his comments
on a previous version of our text.

\section{Irreducibly faithful groups and related facts}

\subsection{Feet, mini-feet and Gasch\"utz' Theorem}
\label{subsec:Gaschutz}

Theorem \ref{thm:Gasch} below is due to Gasch\"utz in the case of finite groups
\cite{Gasc--54} (see also \cite[Theorem 42.7]{Hupp--98}),
and has been generalized to countable groups in \cite[part of Theorem 2]{BeHa--08}.
First we remind some terminology.
\par

In a group $G$, a \textbf{mini-foot} is a minimal non-trivial finite normal subgroup;
we denote by $\mathcal M_G$ the set of all mini-feet of $G$.
The \textbf{mini-socle} of $G$ is the subgroup $\MS (G)$
generated by $\bigcup_{M \in \mathcal M_G} M$;
the mini-socle is $\{e\}$ if $\mathcal M_G$ is empty,
for example $\MS (\Z) = \{0 \}$.
\par

Let $\mathcal A_G$ denote the subset of $\mathcal M_G$
of abelian mini-feet, and $\mathcal H_G$
the complement of $\mathcal A_G$ in $\mathcal M_G$.
The \textbf{abelian mini-socle} of $G$ is the subgroup $\MA (G)$
generated by $\bigcup_{A \in \mathcal A_G} A$,
and the \textbf{semi-simple part} $\MH (G)$ of the mini-socle
is the subgroup generated by $\bigcup_{H \in \mathcal H (G)} H$.
For examples of $\MA (G)$, see \ref{exe:socletrunk} below.
\par

In the context of finite groups,
mini-foot and mini-socle are respectively called \textbf{foot} and \textbf{socle}.
We denote the socle of a finite group $G$ by $\Soc(G)$,
the abelian socle by $\SocA(G)$,
and the semi-simple part of the socle by $\SocH(G)$.
The structure of the socle is due to Remak \cite{Rema--30}.
\par

For general groups, finite or not, the structure of the mini-socle
can be described similarly, as follows.
We write $\prod'_{\iota \in I} G_\iota$ for the restricted sum
of a family of groups $(G_\iota)_{\iota \in I}$;
recall that it is the subgroup of the direct product
consisting of elements $(g_\iota)_{\iota \in I} \in \prod_{\iota \in I} G_\iota$
such that $g_\iota$ is the identity of $G_\iota$ for all but finitely many $\iota \in I$.

\begin{prop}
\label{structureminisocle}
Let $G$ be a group.
Let $\mathcal M_G, \MS(G), \mathcal A_G, \MA(G), \mathcal H_G, \MH(G)$
be as above.
\begin{enumerate}[label=(\arabic*)]
\item\label{1DEstructureminisocle}
Every abelian mini-foot $A$ in $\mathcal A_G$ is an elementary abelian $p$-group $(\F_p)^m$
for some prime $p$ and positive integer $m$.
\item\label{2DEstructureminisocle}
There exists a subset $\mathcal A'_G$ of $\mathcal A_G$
such that $\MA (G) = \prod'_{A \in \mathcal A'_G} A$.
In particular $\MA (G)$ is abelian.
\item\label{3DEstructureminisocle}
Every non-abelian mini-foot $H$ in $\mathcal H_G$
is a direct product of a finite number of isomorphic non-abelian simple groups,
conjugate with each other in $G$.
\item\label{4DEstructureminisocle}
$\MH (G)$ is the restricted sum $\prod'_{H \in \mathcal H_G} H$
of the mini-feet in $\mathcal H_G$.
\item\label{5DEstructureminisocle}
$\MS (G)$ is the direct product $\MA (G) \times \MH (G)$.
\item\label{6DEstructureminisocle}
Each of the subgroups $\MS (G)$, $\MA (G)$, $\MH (G)$ is characteristic
(in particular normal) in $G$.
\item\label{7DEstructureminisocle}
Let $r \hskip.1cm \colon G \twoheadrightarrow Q$
be a surjective homomorphism of $G$ onto a group $Q$.
Then, for every mini-foot $X$ of $G$, either $r(X)$ is trivial or $r(X)$ is a mini-foot of $Q$.
In particular $r$ maps $\MA(G)$ [respectively $\MH(G)$, $\MS(G)$]
to a subgroup of $\MA(Q)$ [respectively $\MH(Q)$, $\MS(Q)$] which is normal in $Q$. 
\end{enumerate}
\end{prop}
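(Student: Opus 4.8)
The plan is to build everything on one elementary observation, which I would record first: if $M$ and $N$ are distinct mini-feet of $G$, then $M \cap N$ is a finite normal subgroup strictly contained in each, so minimality forces $M \cap N = \{e\}$; consequently $[M,N] \subseteq M \cap N = \{e\}$, i.e. distinct mini-feet commute elementwise. This single fact is the workhorse behind the directness claims \ref{2DEstructureminisocle}, \ref{4DEstructureminisocle} and \ref{5DEstructureminisocle}.

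For \ref{1DEstructureminisocle}, given an abelian mini-foot $A$ I would note that its Sylow subgroups are characteristic in $A$, hence normal in $G$, so minimality forces $A$ to be a $p$-group; then $A[p] = \{a \in A : a^p = e\}$ is again characteristic and non-trivial, so $A[p] = A$ and $A$ is elementary abelian, isomorphic to some $(\F_p)^m$. Statement \ref{3DEstructureminisocle} is the classical structure of finite minimal normal subgroups: since any characteristic subgroup of $H$ is normal in $G$, the mini-foot $H$ is characteristically simple, hence a direct product of isomorphic simple groups; choosing a simple normal subgroup $S$ of $H$, its $G$-conjugates are simple, generate a normal subgroup of $G$ contained in $H$, and therefore generate all of $H$ by minimality, which yields both the transitivity of the conjugation action and the direct-product decomposition. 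Non-abelianity of $H$ forces the simple factors to be non-abelian.

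The heart of the matter is the contrast between \ref{2DEstructureminisocle} and \ref{4DEstructureminisocle}, and this is the step I expect to be the main obstacle. For the abelian mini-socle I would run a Zorn argument on subsets $\mathcal S \subseteq \mathcal A_G$ that are \emph{independent}, meaning the subgroup generated by $\bigcup_{A\in\mathcal S} A$ is the restricted direct product $\prod'_{A\in\mathcal S}A$ (directness following from the commuting-and-trivial-intersection observation, checked by verifying that each factor meets the subgroup generated by the others trivially). A maximal independent $\mathcal A'_G$ then satisfies $\MA(G) = \prod'_{A\in\mathcal A'_G}A$: for any $A\in\mathcal A_G$ the intersection of $A$ with this product is normal in $G$ and lies in the minimal $A$, hence equals $\{e\}$ or $A$; the former would contradict maximality, so $A$ lies in the product. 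For \ref{4DEstructureminisocle} the crucial point I want to stress is that the \emph{full} family $\mathcal H_G$ is already independent, with no passage to a subset. If a non-abelian mini-foot $H$ met $\langle H' : H' \in \mathcal H_G,\ H'\neq H\rangle$ non-trivially, minimality would give $H \subseteq \langle H' : H'\neq H\rangle$; but the right-hand side centralizes $H$ since distinct mini-feet commute, so $H$ would centralize itself and be abelian, a contradiction. This decisive use of non-abelianity is precisely what is absent in the abelian case, where $C_2\times C_2$ already displays three mini-feet any two of which generate the third; the danger is to be lulled by the abelian argument into passing to a proper subfamily when none is needed.

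Finally, for \ref{5DEstructureminisocle} I would use that $\MA(G)$ is abelian while $\MH(G)$, a product of non-abelian simple groups, is centreless: the intersection $\MA(G)\cap\MH(G)$ is abelian, normal, and centralizes $\MH(G)$ because $\MA(G)$ commutes with every non-abelian mini-foot, so it lies in $Z(\MH(G))=\{e\}$; as $\MA(G)$ and $\MH(G)$ commute and together generate $\MS(G)$, the product is direct. Statement \ref{6DEstructureminisocle} is then immediate, since any automorphism of $G$ permutes the mini-feet and preserves abelianity, hence stabilises $\MA(G)$, $\MH(G)$ and $\MS(G)$. For \ref{7DEstructureminisocle} I would first observe that minimality of a mini-foot $X$ forces $X\cap\Ker r \in \{\{e\},X\}$, so $r$ either kills $X$ or restricts to an isomorphism on it; in the latter case, given $\{e\}\neq N\trianglelefteq Q$ with $N\subseteq r(X)$, the subgroup $X\cap r^{-1}(N)$ is normal in $G$ and maps onto $N$, hence is non-trivial, so minimality gives $X\cap r^{-1}(N)=X$ and $r(X)=N$, showing $r(X)$ is a mini-foot. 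The assertions about $\MA$, $\MH$, $\MS$ then follow because these subgroups are generated by mini-feet and $r$ preserves abelian versus non-abelian type.
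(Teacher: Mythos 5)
Your proof is correct. There is, however, nothing in the paper to compare it against line by line: the paper does not prove this proposition internally, but simply refers to \cite[Proposition~1]{BeHa--08}, noting that the finite case goes back to Remak \cite{Rema--30}. Your self-contained argument is the classical Remak-style one, and it is soundly organized: the single observation that two distinct mini-feet intersect trivially and hence commute elementwise does indeed carry all the directness claims; the Zorn argument on independent subfamilies gives \ref{2DEstructureminisocle}, and the maximality contradiction you invoke does go through, because adjoining a mini-foot $A$ with $A \cap \prod'_{A' \in \mathcal A'_G} A' = \{e\}$ to an independent family again yields an independent family (any relation witnessing non-directness can be rearranged, using commutation, into an element of $A \cap \prod'_{A' \in \mathcal A'_G} A'$ and then a relation inside the old family) --- a verification you compressed but which is exactly the standard module-theoretic one transported to commuting normal subgroups. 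You also correctly isolate the genuine asymmetry between \ref{2DEstructureminisocle} and \ref{4DEstructureminisocle}: a non-abelian mini-foot cannot lie in the subgroup generated by the other mini-feet, since that subgroup centralizes it, whereas Klein's Vierergruppe shows a proper subfamily is unavoidable in the abelian case. The remaining parts (Sylow and characteristic-subgroup reasoning for \ref{1DEstructureminisocle}, characteristic simplicity plus generation by conjugates for \ref{3DEstructureminisocle}, centrelessness of $\MH(G)$ for \ref{5DEstructureminisocle}, and the dichotomy $X \cap \Ker r \in \{\{e\}, X\}$ for \ref{7DEstructureminisocle}) are all handled by correct standard arguments, so your write-up can serve as a replacement for the outsourced proof.
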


We refer to \cite[Proposition 1]{BeHa--08} for the proof.
\par

The next result is a slight reformulation of the equivalence between (i) and (iv)
in \cite[Theorem~2]{BeHa--08}.

\begin{thm}
\label{thm:Gasch}
For a countable group $G$, the following assertions are equivalent. 
\begin{enumerate}[label=(\roman*)]
\item
$G$ has a faithful irreducible unitary representation.
\item
Every finite normal subgroup of $G$ contained in the abelian mini-socle
is generated by a single conjugacy class. 
\end{enumerate}
\end{thm}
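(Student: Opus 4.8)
The plan is to recast condition (ii) in the language of $\mathbf Z[G]$-modules and to exploit a self-duality of cyclicity for finite semisimple modules. By Proposition \ref{structureminisocle}, the abelian mini-socle $\MA(G)$ is a restricted sum of abelian mini-feet, each of which is a simple $\F_p[G]$-module for some prime $p$; hence any finite normal subgroup $N \subseteq \MA(G)$ is, prime by prime, a finite semisimple $\F_p[G]$-module. Writing $N$ additively, the $\mathbf Z[G]$-submodule generated by an element $x$ is exactly the subgroup generated by the conjugacy class of $x$, so ``$N$ is generated by a single conjugacy class'' is synonymous with ``$N$ is a cyclic $\mathbf Z[G]$-module''. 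The key algebraic input I would isolate first is the following: \emph{for a finite semisimple module, cyclicity is preserved under passing to the contragredient.} Decomposing into isotypic components $N \cong \bigoplus_i W_i^{\oplus n_i}$ with $W_i$ simple and $D_i = \End_{\F_p[G]}(W_i)$ the (finite, by Wedderburn) centralizer field, cyclicity of $N$ is equivalent to $n_i \le \dim_{D_i}(W_i)$ for every $i$, since $\dim_{D_i}(W_i)$ is the multiplicity of $W_i$ in the regular module. As $\End(W_i^\ast) \cong D_i^{\mathrm{op}}$ and $\dim_{D_i^{\mathrm{op}}}(W_i^\ast) = \dim_{D_i}(W_i)$, the dual $N^\ast \cong \bigoplus_i (W_i^\ast)^{\oplus n_i}$ satisfies the same inequalities; thus $N$ is cyclic if and only if its Pontryagin dual $\widehat N$ is cyclic.

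Granting this, the implication (i)$\Rightarrow$(ii) is short. Let $\pi$ be a faithful irreducible unitary representation and $N \subseteq \MA(G)$ a finite normal subgroup. Since $N$ is finite abelian, $\widehat N$ is finite and $\pi|_N$ decomposes as a finite orthogonal sum of isotypic subspaces indexed by a set $S \subseteq \widehat N$ of characters. The conjugation action of $G$ permutes these subspaces, and irreducibility of $\pi$ forces transitivity, so $S = G \cdot \chi_0$ is a single orbit (Clifford theory). Faithfulness of $\pi$ gives $\bigcap_{\chi \in S} \Ker \chi = \{e\}$, i.e.\ $S$ generates $\widehat N$; as $S$ is one $G$-orbit, this says precisely that $\widehat N$ is a cyclic $\mathbf Z[G]$-module. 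By the duality of the previous paragraph, $N$ is cyclic, i.e.\ generated by a single conjugacy class.

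The substantial direction is (ii)$\Rightarrow$(i), where one must \emph{construct} a faithful irreducible representation, and I would proceed in two stages. First, build an irreducible representation faithful on the whole mini-socle $\MS(G) = \MA(G) \times \MH(G)$: condition (ii), via the duality above, yields a character of $\MA(G)$ whose $G$-conjugates have trivial common kernel, and the non-abelian part $\MH(G)$ (a restricted sum of non-abelian simple groups, by Proposition \ref{structureminisocle}) presents no obstruction, since any non-trivial conjugacy class already generates such a factor; Mackey's construction—extend a suitable irreducible of $\MS(G)$ to its stabilizer in $G$ and induce—then produces an irreducible $\pi$ whose restriction to $\MS(G)$ is faithful. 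The second stage, upgrading faithfulness on $\MS(G)$ to faithfulness on all of $G$, is the main obstacle, and it is exactly here that countability is indispensable: an infinite group may have non-trivial normal subgroups meeting the mini-socle trivially (e.g.\ torsion-free ones), so faithfulness on the socle does not formally imply global faithfulness. I would resolve this by a Baire-category argument in the Polish space of irreducible representations on a fixed separable Hilbert space: enumerating $G \smallsetminus \{e\} = \{g_1, g_2, \dots\}$, one shows that under (ii) each set $\{\pi : \pi(g_k) \ne \id\}$ is open and dense, the density being the crux and requiring a perturbation detecting $g_k$ (using $P(1)$ when $g_k$ lies outside the socle, and the cyclicity ``room'' from (ii) when it lies inside a finite normal subgroup). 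A countable intersection of dense open sets is non-empty, yielding the faithful irreducible representation; alternatively one may first produce a faithful \emph{factor} representation and invoke Proposition \ref{prop:Factor->Irrep} to replace it by an irreducible one with the same trivial kernel. Verifying the density claim—equivalently, that (ii) removes every local obstruction to simultaneous detection—is the heart of the matter.
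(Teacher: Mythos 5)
The paper does not actually prove Theorem \ref{thm:Gasch}: it quotes it from \cite{BeHa--08} (Gasch\"utz's theorem in the finite case), and reproduces the ingredients of that proof in Section 2 as Lemma \ref{FromL9}, Lemma \ref{lem:G-faithful-normal-subgroup}, Lemma \ref{lem:semi-simple_feet_G-faithful} and Proposition \ref{lem14BEHA}. Measured against that argument, your direction (i)$\Rightarrow$(ii) is correct and is essentially the standard one: Clifford decomposition of $\pi\vert_N$ into finitely many character-isotypic subspaces, transitivity of the $G$-action on them forced by irreducibility, triviality of $\bigcap_{\chi}\Ker\chi$ by faithfulness, and the self-duality of cyclicity for finite semi-simple modules (Proposition \ref{lem14BEHA} together with Lemma \ref{lem:Isotypical} in the paper).

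The direction (ii)$\Rightarrow$(i) has two genuine gaps. First, in your stage 1, condition (ii) concerns only \emph{finite} normal subgroups, whereas $\MA(G)$ may be an infinite restricted sum of mini-feet; your duality argument applies only to finite modules, so the existence of a $G$-faithful character of $\MA(G)$ is exactly the non-trivial step (in \cite{BeHa--08} it is a compactness/Baire argument in the compact dual $\widehat{\MA(G)}$, one of the two places where countability enters), and the Mackey extend-and-induce construction you invoke is not available for an infinite discrete normal subgroup (cocycle obstructions, no irreducibility guarantee). Second, and fatally, the density claim underlying your stage 2 is false, not merely unverified. Take $G = C_3 \times \Gamma$ with $\Gamma$ a countable icc group (e.g.\ a non-abelian free group), and let $g$ generate the central factor $C_3$; this $G$ satisfies (ii). By Schur's lemma every irreducible $\pi$ has $\pi(g) = \lambda\,\id$ with $\lambda^3 = 1$, so if $\pi_0$ is any irreducible representation killing $C_3$ (pull one back from $\Gamma$), then every irreducible $\pi'$ with $\Vert \pi'(g)\xi - \xi \Vert < 1$ for a single unit vector $\xi$ must again satisfy $\pi'(g) = \id$, since $\vert \lambda - 1 \vert = \sqrt{3}$ for $\lambda \neq 1$. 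Hence $\{\pi : \pi(g) \neq \id\}$ is open but its complement contains a non-empty open set; it is not dense, although $G$ does have faithful irreducible representations. The same rigidity (spectra confined to roots of unity, scalarity forced by Schur) occurs precisely at the elements this theorem is about, namely elements of finite normal subgroups, and for finite $G$ (which the theorem covers, ``countable'' meaning finite or countably infinite) it makes every set $\{\pi : \pi(g_k) \neq \id\}$ clopen. Your fallback via a faithful factor representation does not help either: producing one is just as hard, and Proposition \ref{prop:Factor->Irrep} itself rests on Lemma \ref{FromL9}. The genericity that actually works is measure-theoretic, not topological, and lives on a different space: one induces a $G$-faithful irreducible representation of $\MS(G)$ up to $G$, decomposes the induced representation as a direct integral over a measure space $\Omega$, and shows (Lemma \ref{lem:G-faithful-normal-subgroup}) that almost every component is faithful on $\MS(G)$ with kernel inside $\W(G)$; such a kernel is then trivial because every non-trivial normal subgroup contained in $\W(G)$ contains a mini-foot, hence would meet $\MS(G)$ non-trivially.
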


This result is a crucial tool for the proof of Theorem~\ref{thm:CharP(n)}.
Moreover, we shall also need subsidiary facts that we will extract from~\cite{BeHa--08}.
They will be presented in Section~\ref{subsec:SubsidiaryBeHa} below.

\subsection{On characteristic subgroups that are directed unions of finite normal subgroups}

The next lemma, whose straightforward proof is left to the reader,
ensures that every group has a unique largest normal subgroup
that is the directed union of all its finite normal subgroups
[respectively all its soluble finite normal subgroups].
In particular these subgroups are characteristic.
\par

For an element $g \in G$ and a subset $F \subset G$,
we denote by $\llag g \rrag_G$ the normal subgroup of $G$ generated by $\{g\}$,
and by $\llag F \rrag_G$ that generated by $F$.

\begin{lem}
\label{lem:N1Nk}
Let $G$ be a group.
\begin{enumerate}[label=(\roman*)]
\item\label{iDElem:N1Nk}
Let $k$ be a positive integer and $N_1, \hdots, N_k$ finite normal subgroups of $G$.
The subgroup of $G$ generated by $\bigcup_{j=1}^k N_j$ is the product $N_1N_2 \dots N_k$,
in particular it is a finite normal subgroup of $G$.
\item\label{iiDElem:N1Nk}
The subset 
$$
\W(G) = \{ g \in G \mid \text{the normal subgroup $\llag g \rrag_G$ is finite} \} 
$$
is a characteristic subgroup of $G$, and is 
the directed union of all finite normal subgroups of $G$.
\item\label{iiiDElem:N1Nk}
The subset
$$
\hskip1cm
\Fsol (G) = \{ g \in G \mid \text{the normal subgroup $\llag g \rrag_G$ is finite and soluble} \}
$$
is a characteristic subgroup of $G$, and is
the directed union of all soluble finite normal subgroups of $G$.
\end{enumerate}
\end{lem}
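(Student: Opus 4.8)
The plan is to treat the three parts in order, using part \ref{iDElem:N1Nk} as the engine for the other two.

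For \ref{iDElem:N1Nk}, I would first recall that the product of normal subgroups is again a subgroup: since each $N_j$ is normal, $N_iN_j = N_jN_i$, and an induction on $k$ shows that $N_1N_2\cdots N_k$ is closed under multiplication, hence is a subgroup. It is normal because conjugation by any $g\in G$ preserves each factor $N_j$, hence preserves the product. It is finite because $\vert N_1N_2\cdots N_k\vert \le \prod_{j=1}^k \vert N_j\vert$. Finally it contains each $N_j$ (insert the identity in the remaining factors), so it contains $\bigcup_j N_j$; conversely any subgroup containing $\bigcup_j N_j$ contains every product of elements drawn from the $N_j$, hence contains $N_1N_2\cdots N_k$. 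Thus $N_1N_2\cdots N_k$ is exactly the subgroup generated by $\bigcup_j N_j$.

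For \ref{iiDElem:N1Nk}, the key observation is that $\llag g\rrag_G = \llag g^{-1}\rrag_G$ (each of $g, g^{-1}$ lies in the normal closure of the other), so $\W(G)$ is stable under inversion, and of course $e \in \W(G)$ since $\llag e\rrag_G = \{e\}$. For closure under multiplication, if $g,h \in \W(G)$ then $\llag g\rrag_G$ and $\llag h\rrag_G$ are finite normal subgroups, so by \ref{iDElem:N1Nk} the product $\llag g\rrag_G \llag h\rrag_G$ is a finite normal subgroup containing $gh$; hence $\llag gh\rrag_G \le \llag g\rrag_G\llag h\rrag_G$ is finite and $gh \in \W(G)$. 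That $\W(G)$ is characteristic follows from the identity $\phi(\llag g\rrag_G) = \llag \phi(g)\rrag_G$ for any automorphism $\phi$, which preserves finiteness. For the description as a directed union I would argue that every finite normal subgroup $N$ lies in $\W(G)$ (for $g\in N$ one has $\llag g\rrag_G\le N$ finite), while conversely each $g\in \W(G)$ lies in the finite normal subgroup $\llag g\rrag_G$; the family is directed because \ref{iDElem:N1Nk} produces, for any two finite normal subgroups, a finite normal subgroup containing both.

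Part \ref{iiiDElem:N1Nk} would follow the same pattern, and the only extra ingredient — which I expect to be the single point requiring an actual argument — is that solubility is preserved by the product construction of \ref{iDElem:N1Nk}. Concretely, if $N$ and $M$ are soluble normal subgroups, then $M$ is soluble and $NM/M \cong N/(N\cap M)$ is a quotient of the soluble group $N$, hence soluble, so $NM$ is a soluble extension of a soluble group and is therefore soluble. With this in hand, the product $\llag g\rrag_G\llag h\rrag_G$ of two finite soluble normal subgroups is again finite, normal and soluble, and the verification that $\Fsol(G)$ is a characteristic subgroup equal to the directed union of all finite soluble normal subgroups proceeds verbatim as for $\W(G)$, replacing ``finite normal'' by ``finite soluble normal'' throughout.
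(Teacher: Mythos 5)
Your proof is correct, and it is the standard argument the authors have in mind: the paper explicitly leaves this lemma's proof to the reader as straightforward, so there is no written proof to diverge from. All the key points are in place — the product $N_1\cdots N_k$ of finite normal subgroups as a finite normal subgroup, the identity $\llag g \rrag_G = \llag g^{-1} \rrag_G$, the containment $\llag gh \rrag_G \le \llag g \rrag_G \llag h \rrag_G$, the equivariance $\phi(\llag g \rrag_G) = \llag \phi(g) \rrag_G$ under automorphisms, directedness via part (i), and the fact that a product of two soluble normal subgroups is soluble via $NM/M \cong N/(N\cap M)$ and closure of solubility under extensions.
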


The FC-centre $\FC(G)$ has been defined in Example \ref{examplenormalcentral}.
The characteristic subgroup $\W (G)$ is the \textbf{torsion FC-centre} of $G$.
According to Dicman's Lemma \cite[14.5.7]{Robi--96},
which ensures that every element of finite order in the FC-centre of $G$
has a finite normal closure,
$\W (G)$ is also the set of elements of finite order in $\FC(G)$.
The inclusions
$$
\MA(G) \le \Fsol (G) \le \W(G) \le \FC(G)
$$
and 
$$
\MS(G) \le \W(G)
$$
follow from the definitions.

\vskip.2cm

We illustrate those notions by discussing several examples.

\begin{exe}[Abelian mini-socles and other charactersitic subgroups]
\label{exe:socletrunk}

(1)
Let $p$ be a prime. In the cyclic group $\Z / p^2\Z$,
the abelian socle $\Z / p\Z$ (which is also the socle)
is a proper subgroup of $\Fsol (\Z / p^2\Z) = \Z / p^2\Z$.
\par

More generally, for a torsion abelian group $G$,
the abelian mini-socle (which is also the mini-socle)
is generated by the elements of prime order,
while $\Fsol (G) = G$. 

\vskip.2cm

(2)
If $G$ is the restricted sum of an infinite family $(G_n)_{n \ge 1}$ of soluble finite groups,
then $\Fsol (G)$ is the whole group $G$;
note that $\Fsol (G)$ is soluble
if and only if the supremum over all $n$ of the derived length of $G_n$ is finite.
\par

If $G$ is a finite group, then $\Fsol (G)$ is the largest soluble normal subgroup of~$G$,
known as the \textbf{soluble radical} of $G$.

\vskip.2cm

(3)
Let $G$ be a torsion-free group.
Then $\W(G) = \{e\}$, so that $\MA(G) = \MS(G) = \Fsol (G) = \{e\}$.

\vskip.2cm

(4)
Let $G$ be a group for which
Assertion \ref{2DEthm:CharP(n)} in Theorem \ref{thm:CharP(n)} holds true.
Then, with the notation of this Theorem,
the finite normal subgroup $V$ of $G$ is contained
in the abelian mini-socle $\MA (G)$.

\vskip.2cm

(5)
Let $p$ be a prime, $d$ an integer, $d \ge 2$, and $q = p^d$.
Let $C_q$ be the cyclic group $\Z / q\Z$;
denote by $c_q \in C_q$ the class modulo $q\Z$ of an integer $c \in \Z$.
Let $H_q$ be the group of triples $(a, b, c) \in \Z \times \Z \times C_q$
with the multiplication defined by
$$
(a, b, c) (a', b', c') = (a + a', b + b', c + c' + (ab')_q) .
$$
We identify the cyclic group $C_p$ of order $p$ with a subgroup of $C_q$,
and the group $C_q$ to the subgroup of $H_q$ of triples of the form $(0, 0, c)$.
Observe that all conjugacy classes in $H_q$ are finite,
i.e., $H_q$ is its own FC-centre (it is a so-called FC-group).
Moreover, the torsion FC-centre $W(H_q)$
coincides with the central subgroup $C_q$ of $H_q$,
and also with $\Fsol (H_q)$.
The following five subgroups of $H_q$
constitute a strictly ascending chain of characteristic subgroups:
\par
the trivial group $\{e\}$,
\par
the mini-socle $\MS (H_q) = \MA (H_q) = C_p$,
\par
the group $\Fsol (H_q) = \W(H_q) = C_q$, 
\par
the centre $q\Z \times q\Z \times C_q$,
\par
and the group $H_q$ itself. 

\vskip.2cm

(6)
Let $G$ be a non-trivial nilpotent group.
Since minimal normal subgroups of $G$ are central,
as recalled in Example \ref{examplenormalcentral}(5),
it follows that the mini-socle of~$G$ is the subgroup
generated by the central elements of prime order.
\par

Recall also that the set $\tau(G)$ of elements of finite order in $G$
is a subgroup of~$G$, indeed a charactersitic subgroup,
and that $G/\tau(G)$ is torsion-free.
When $G$ is moreover finitely generated then $\tau(G)$ is finite
\cite[Chapter 1, Corollary 10]{Sega--83}.
\par

It follows that, for a finitely generated nilpotent group $G$,
we have $\Fsol (G) = \W(G) = \tau(G)$,
and $\W(G / \W(G)) = \{e\}$.
The next example shows that the finite generation condition cannot be deleted.
\par

\vskip.2cm

(7)
For each integer $n \ge 1$, let 
$$
H_n = \langle x_n, y_n, z_n \mid 
x_n^3, \hskip.1cm y_n^3, \hskip.1cm
[x_n, y_n]z_n^{-1}, \hskip.1cm [x_n, z_n], \hskip.1cm [y_n, z_n] \rangle
$$
be a copy of the Heisenberg group over the field $\F_3$.
We form the full direct product $P = \prod_{n \ge 1} H_n$
and, for each $n$, we identify $x_n, y_n$, and $z_n$ with their natural images in $P$.
We also set $x = (x_n)_{n \ge 1} \in P$, and define 
$$
G = \langle x, y_n \mid n \ge 1 \rangle \le P.
$$
The group $G$ is countable, of exponent $3$, and nilpotent of class~$2$.
Observe that $z_n = [x_n, y_n]$ is in $G$ for all $n \ge 1$.
\par

For $n \ge 1$, let $A_n$ be the group generated by $y_n$ and $z_n$.
It is an abelian $3$-group of order $9$, which is normal in each of $H_n$, $P$, and $G$.
Let $A$ be the subgroup of $G$ generated by $\bigcup_{n \ge 1} A_n$,
which is normal in $G$.
Observe that $G/A$ is a cyclic group of order $3$, generated by the class of $x$ modulo $A$.
\par

We have $A \le \Fsol (G)$.
Indeed, let $t = (t_n)_{n \ge 1} \in A$.
There exists $C \ge 1$ such that $t_n = e$ whenever $n \ge C$,
so that $t$ is in the normal subgroup $\prod_{n=1}^C A_n$ of $G$, which is finite and abelian.
Hence $t \in \Fsol (G)$.
\par

For all $n \ge 1$, we have $[x, y_n] = z_n \in G$,
so that the normal subroup $\llag x \rrag_G$ generated by $x$
contains $\{z_n \mid n \ge 1\}$, and thus is infinite.
It follows that $x$ is not in the FC-centre of $G$,
and in particular that $x$ is not in $\Fsol (G)$.
\par

We have shown that $A = \Fsol (G) \lneqq G$,
and that $G / \Fsol (G)$ is a cyclic group of order $3$.
In particular, $\Fsol \big( G / \Fsol (G) \big) = G / \Fsol (G) \cong C_3$ is not trivial.
\par

Since $\W (-)$ and $\Fsol (-)$ coincide for $G$ and its quotients
(indeed for any soluble group), this can be written
$A = \W(G) \lneqq G$ and $\W(G/\W(G)) = G/\W(G) \cong C_3$.
\end{exe}

The last example shows that $\Fsol (G)$
does not behave as a \textit{radical} in general,
in the sense that $\Fsol (G / \Fsol (G))$ can be non-trivial.
Similarly $W(G / \W(G))$ can be non-trivial.
However, it is easy to see that, if $\W(G)$ is finite [respectively $\Fsol (G)$ is finite],
then $\W(G / \W(G)) =\{e\}$ [respectively $\Fsol (G / \Fsol (G)) = \{e\}$].

The following proposition will be used in Remark \ref{TrunkSufFiur}(2).

\begin{prop}
\label{MSMATofproduct}
For any two groups $G_1$ and $G_2$, we have:
\begin{enumerate}[label=(\roman*)]
\item\label{iDEMSMATofproduct}
$\MS(G_1 \times G_2) = \MS(G_1) \times \MS(G_2)$,
\item\label{iiDEMSMATofproduct}
$\MA(G_1 \times G_2) = \MA(G_1) \times \MA(G_2)$,
\item\label{iiiDEMSMATofproduct}
$ \Fsol (G_1 \times G_2) = \Fsol (G_1) \times \Fsol (G_2)$.
\item\label{ivDEMSMATofproduct}
$ \W(G_1 \times G_2) = \W(G_1) \times \W (G_2)$.
\end{enumerate}
\end{prop}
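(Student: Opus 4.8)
The plan is to prove (iii) and (iv) first, since by Lemma~\ref{lem:N1Nk} the subgroups $\W(-)$ and $\Fsol(-)$ are characterised elementwise through the normal closures $\llag g \rrag$, and then to reduce (i) and (ii) to an analysis of the mini-feet of the product. For (iv) I would argue by double inclusion. The inclusion $\W(G_1) \times \W(G_2) \le \W(G_1 \times G_2)$ is immediate: if $g_1 \in \W(G_1)$ then $\llag (g_1, e) \rrag_{G_1 \times G_2} = \llag g_1 \rrag_{G_1} \times \{e\}$ is finite, so $(g_1, e) \in \W(G_1 \times G_2)$, and likewise $(e, g_2) \in \W(G_1 \times G_2)$; since the latter is a subgroup it contains the product $(g_1, g_2)$. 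For the reverse inclusion, if $(g_1, g_2) \in \W(G_1 \times G_2)$ then the image of the finite group $\llag (g_1, g_2) \rrag_{G_1 \times G_2}$ under the projection $\mathrm{pr}_1 \colon G_1 \times G_2 \to G_1$ is exactly $\llag g_1 \rrag_{G_1}$, which is therefore finite, so $g_1 \in \W(G_1)$, and symmetrically $g_2 \in \W(G_2)$. Statement (iii) follows by the identical argument, using in addition that finiteness \emph{and} solubility pass to the subgroup $\llag g_i \rrag_{G_i} \times \{e\}$ of $\llag (g_1,g_2)\rrag$ and to the homomorphic images $\mathrm{pr}_i\big(\llag (g_1,g_2)\rrag\big) = \llag g_i \rrag_{G_i}$.

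For (i) and (ii) the crux is a description of the mini-feet of $G_1 \times G_2$. Let $M$ be such a mini-foot. By minimality each of the normal subgroups $M \cap (G_1 \times \{e\})$ and $M \cap (\{e\} \times G_2)$ equals $\{e\}$ or $M$, and they cannot both equal $M$; this gives three cases: (a) $M \le G_1 \times \{e\}$, in which case $M = M_1 \times \{e\}$ for a mini-foot $M_1$ of $G_1$ (minimality of $M_1$ transfers, since any nontrivial normal subgroup of the product inside $M_1 \times \{e\}$ is of the form $N_1 \times \{e\}$ with $N_1$ normal in $G_1$); (b) the symmetric situation $M \le \{e\} \times G_2$; and (c) both intersections are trivial. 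In case (c), the inclusions $[M, G_1 \times \{e\}] \le M \cap (G_1 \times \{e\}) = \{e\}$ and its mirror image show that $M$ is central in $G_1 \times G_2$; being a minimal finite central subgroup it is cyclic of prime order $p$, and since both projections $\mathrm{pr}_i$ are injective on $M$, the images $M_i = \mathrm{pr}_i(M)$ are central subgroups of order $p$ in $G_i$, hence abelian mini-feet, with $M \le M_1 \times M_2$.

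With this in hand I would prove (i) by double inclusion. For $\supseteq$, the argument in case (a) shows that $M_1 \times \{e\}$ is a mini-foot of $G_1 \times G_2$ whenever $M_1$ is one of $G_1$, so $\MS(G_1) \times \{e\} \le \MS(G_1 \times G_2)$ and symmetrically, whence $\MS(G_1) \times \MS(G_2) \le \MS(G_1 \times G_2)$. For $\subseteq$, every mini-foot of the product lies in $\MS(G_1) \times \MS(G_2)$ by the case analysis: directly in (a) and (b), and in (c) because $M \le M_1 \times M_2 \le \MA(G_1) \times \MA(G_2) \le \MS(G_1) \times \MS(G_2)$. Since $\MS(G_1 \times G_2)$ is generated by its mini-feet, the inclusion follows. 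Statement (ii) is obtained by running the same two inclusions while retaining only the \emph{abelian} mini-feet, noting that in case (c) the foot $M$ is automatically abelian, so the abelian mini-feet of the product are again trapped inside $\MA(G_1) \times \MA(G_2)$.

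I expect case (c) to be the only genuinely subtle point. One must recognise that a ``diagonal'' minimal normal subgroup — such as an off-axis copy of $C_p$ in $C_p \times C_p$ — need not itself be a product of mini-feet, yet the minimality forces it to be central of prime order and hence contained in $\MA(G_1) \times \MA(G_2)$; this is what keeps the mini-socle equality intact despite the existence of non-product mini-feet. Everything else is a routine transfer of minimality, finiteness, and solubility across the two projections.
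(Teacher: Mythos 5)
Your proof is correct and follows essentially the same route as the paper's: the projections and normal closures give both inclusions in (iii) and (iv), and for (i) and (ii) the same case analysis on a mini-foot $M$ of the product, with the ``diagonal'' case handled by the commutator argument $[M, G_1 \times \{e\}] \le M \cap (G_1 \times \{e\}) = \{e\}$ forcing $M$ to be central of prime order and hence trapped in $\MA(G_1) \times \MA(G_2)$. The only differences are cosmetic: you phrase the centrality step via commutator subgroups where the paper works element-wise, and you order the parts differently.
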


\begin{proof}
We identify $G_1$ and its subgroups with subgroups of $G_1 \times G_2$,
and similarly for $G_2$ and its subgroups.
For $j \in \{1, 2\}$, we denote by $e_j$ the neutral element of $G_j$
and by $r_j \hskip.1cm \colon G_1 \times G_2 \twoheadrightarrow G_j$
the canonical projection.

\vskip.2cm

\ref{iDEMSMATofproduct}
The inclusion $\MS(G_1) \times \MS(G_2) \le \MS(G_1 \times G_2)$ is straightforward,
because any minimal non-trivial finite normal subgroup of $G_1$ or of $G_2$
is a minimal non-trivial finite normal subgroup of $G_1 \times G_2$.

\par

To check the reverse inclusion,
consider a minimal non-trivial finite normal subgroup $N$ of $G_1 \times G_2$,
and distinguish two cases.
First, if $N \le G_1$ or $N \le G_2$, then $N \le \MS(G_1) \times \MS(G_2)$.
Second, if $N \nleq G_1$ and $N \nleq G_2$,
then $N$ does not contain any element of the form $(x_1, e_2)$ or $(e_1, x_2)$
with $x_1 \ne e_1$ and $x_2 \ne e_2$, by minimality.
If $N$ did contain an element $x = (x_1, x_2)$ with $x_1$ non central in $G_1$,
then $N$ would contain $(y_1, e_2)^{-1} x^{-1} (y_1, e_2) x = ([y_1, x_1], e_2)$
for some $y_1 \in G_1$ such that $[y_1, x_1] \ne e_1$,
in contradiction with the hypothesis on $N$;
and similarly for $N \ni (x_1, x_2)$ with $x_2$ non central in $G_2$; 
hence $r_1(N)$ is central in $G_1$ and $r_2(N)$ is central in $G_2$.
It follows that $N$ is central in $G_1 \times G_2$,
and that there exists a prime $p$ such that $N$ is a cyclic group of order~$p$.
Hence $N$ is of the form $\langle (x', x'') \rangle_{G_1 \times G_2}$
with $x'$ of order $p$ in $G_1$ and $x''$ of order $p$ in $G_2$.
In particular,
$N \le \llag x' \rrag_{G_1} \times \llag x'' \rrag_{G_2} \le \MS (G_1) \times \MS (G_2)$.
It follows that $\MS (G_1 \times G_2) \le \MS (G_2) \times \MS (G_2)$.

\vskip.2cm

An argument of the same kind shows that \ref{iiDEMSMATofproduct} holds.

\vskip.2cm

\ref{ivDEMSMATofproduct}
Given $x \in \W(G_1 \times G_2)$,
the normal closure $\llag x \rrag_{G_1 \times G_2}$ is finite by definition.
Therefore $r_j(\llag x \rrag_{G_1 \times G_2}) = \llag r_j(x) \rrag_{G_j}$ is finite,
so that $r_j(x) \in \W(G_j)$, for $j = 1, 2$.
This proves that $x \in \W(G_1) \times \W(G_2)$,
hence $\W(G_1 \times G_2) \le \W(G_1) \times \W(G_2)$. 
\par

Let $j \in \{1,2\}$ and $x_j \in G_j$.
The group $G_{3-j}$ commutes with $x_j$,
so that $\llag x_j \rrag_{G_1 \times G_2} = \llag x_j \rrag_{G_j}$.
Assume in addition that $x_j \in \W(G_j)$;
then by definition $\llag x_j \rrag_{G_j}$ is finite,
hence $\llag x_j \rrag_{G_1 \times G_2}$ is finite as well. 
Therefore $x_j \in \W(G_1 \times G_2)$.
This proves that $\W(G_j) \le \W(G_1 \times G_2)$.
Therefore $\W(G_1) \times \W(G_2) \le \W(G_1 \times G_2)$, which ends the proof of \ref{ivDEMSMATofproduct}.

\vskip.2cm

An argument of the same kind shows that \ref{iiiDEMSMATofproduct} holds.
\end{proof}

\subsection{A basic property of factor representations}
\label{subsec:factor}

\begin{lem}
\label{lem:Disjoint}
Let $\pi$ be a unitary representation of a group $G$ in a Hilbert space $\mathcal H$
and $N$ a normal subgroup of $G$.
Let $\pi_1$ [respectively $\pi_2$] be the subrepresentation of $\pi$
given by the $G$-action on the subspace $\mathcal H^N$ of $\mathcal H$
consisting of the $N$-invariant vectors [respectively on its orthogonal complement].
\par

Then $\pi_1$ and $\pi_2$ are disjoint. 
\end{lem}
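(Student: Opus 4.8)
The plan is to exploit the fact that $\pi_1$ and $\pi_2$ are sharply distinguished by their restrictions to $N$: on $\mathcal H^N$ the subgroup $N$ acts trivially, whereas on the orthogonal complement $N$ fixes no nonzero vector. Two equivalent subrepresentations must have equivalent restrictions to $N$, and these two behaviours are incompatible.

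First I would check that $\pi_1$ and $\pi_2$ are genuinely (closed) subrepresentations of $\pi$, i.e.\ that $\mathcal H^N$ and its orthogonal complement are $G$-invariant. The space $\mathcal H^N = \bigcap_{n \in N} \ker(\pi(n) - \id)$ is closed, being an intersection of kernels of continuous operators. For $G$-invariance, the normality of $N$ is the crucial point: given $v \in \mathcal H^N$, $g \in G$ and $n \in N$, one computes $\pi(n)\pi(g)v = \pi(g)\pi(g^{-1}ng)v = \pi(g)v$, using $g^{-1}ng \in N$. Hence $\pi(g)\mathcal H^N \subseteq \mathcal H^N$, and since $\pi$ is unitary the orthogonal complement $(\mathcal H^N)^\perp$ is $G$-invariant as well. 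By construction $N$ acts as the identity on the space of $\pi_1$, while the only $N$-invariant vector in the space of $\pi_2$ is $0$, since $\mathcal H^N \cap (\mathcal H^N)^\perp = \{0\}$.

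Then I would argue by contradiction, following the definition of disjointness. Suppose $\rho$ is a nonzero subrepresentation of $\pi_1$ and $\rho'$ a nonzero subrepresentation of $\pi_2$ with $\rho$ equivalent to $\rho'$, via a $G$-equivariant unitary isomorphism $T$. As $T$ intertwines the $G$-actions, it in particular intertwines the restricted $N$-actions. Since the space of $\rho$ lies inside $\mathcal H^N$, the group $N$ acts trivially there; transporting this triviality through $T$, the group $N$ also acts trivially on the space of $\rho'$. Thus the nonzero space of $\rho'$ consists entirely of $N$-invariant vectors while sitting inside $(\mathcal H^N)^\perp$, forcing it into $\mathcal H^N \cap (\mathcal H^N)^\perp = \{0\}$, a contradiction. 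I expect no serious obstacle here beyond careful bookkeeping; the only subtlety worth flagging is the verification that $\mathcal H^N$ is $G$-stable, which is precisely where the normality hypothesis on $N$ enters and without which the statement would fail.
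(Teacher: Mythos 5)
Your proof is correct and follows essentially the same route as the paper: the paper notes that any nonzero subrepresentation of $\pi_1$ has kernel containing $N$, while a nonzero subrepresentation of $\pi_2$ cannot (else its space would lie in $\mathcal H^N \cap (\mathcal H^N)^\perp = \{0\}$), so no two such subrepresentations are equivalent; your intertwiner argument is just this same observation phrased through the unitary $T$ instead of through kernels. The additional verification that $\mathcal H^N$ is closed and $G$-invariant, which the paper leaves implicit in the statement, is a harmless and correct supplement.
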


\begin{proof}
Let $\rho_1$ be a non-zero subrepresentation of $\pi_1$
and $\rho_2$ a non-zero subrepresentation of $\pi_2$.
On the one hand, the kernel of $\rho_1$ contains $N$.
On the other hand, if the kernel of $\rho_2$ did contain $N$,
the space of $\rho_2$ would be contained in $\mathcal H^N$,
hence it would be $\{0\}$ by the definition of $\pi_2$.
This is preposterous. Therefore the representations $\rho_1$ and $\rho_2$
have different kernels, and thus they are not equivalent.
\end{proof}

Two unitary representations $\pi$, $\pi'$ of a group $G$ are called \textbf{quasi-equivalent}
if no non-zero subrepresentation of $\pi$ is disjoint from $\pi'$, and vice-versa.
	
\begin{prop}
\label{prop:KernelPrimary}
Let $\pi$ be a factor representation of a group $G$.
\par

For every non-zero subrepresentation $\rho \le \pi$, we have $\Ker(\rho) = \Ker(\pi)$.
\par

In particular, if $\pi'$ is any factor representation quasi-equivalent to $\pi$,
then $\Ker(\pi) = \Ker(\pi')$.
\end{prop}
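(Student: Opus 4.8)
The plan is to derive both assertions from the factor property by means of Lemma \ref{lem:Disjoint}. For the first assertion, the inclusion $\Ker(\pi) \subseteq \Ker(\rho)$ is immediate: an element $g$ acting as $\id$ on all of $\mathcal H$ in particular acts trivially on the $G$-invariant subspace carrying $\rho$. So the content lies entirely in the reverse inclusion. I would set $N = \Ker(\rho)$, which is a normal subgroup of $G$, and apply Lemma \ref{lem:Disjoint} to this $N$. Writing $\pi_1$ for the subrepresentation of $\pi$ on the space $\mathcal H^N$ of $N$-invariant vectors and $\pi_2$ for the one on its orthogonal complement, the lemma furnishes a decomposition of $\pi$ as an orthogonal direct sum $\pi_1 \oplus \pi_2$ of two \emph{disjoint} subrepresentations.

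The crucial point is that the space on which $\rho$ acts is contained in $\mathcal H^N$: since $N = \Ker(\rho)$, every vector of that space is fixed by $N$. Hence $\pi_1 \neq 0$. Because $\pi$ is a factor representation, it cannot be written as the direct sum of two disjoint non-zero subrepresentations; as $\pi_1 \neq 0$, we are forced to have $\pi_2 = 0$, i.e. $\mathcal H^N = \mathcal H$. This says precisely that $N$ acts trivially on all of $\mathcal H$, that is $N \subseteq \Ker(\pi)$, which is the desired inclusion $\Ker(\rho) \subseteq \Ker(\pi)$. Combining the two inclusions yields $\Ker(\rho) = \Ker(\pi)$.

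For the second assertion I would invoke the definition of quasi-equivalence together with the first part. Pick any non-zero subrepresentation of $\pi$; by quasi-equivalence it is not disjoint from $\pi'$, so there exist non-zero subrepresentations $\sigma \le \pi$ and $\sigma' \le \pi'$ with $\sigma$ equivalent to $\sigma'$. Applying the first assertion to the factor representation $\pi$ gives $\Ker(\sigma) = \Ker(\pi)$, and applying it to the factor representation $\pi'$ gives $\Ker(\sigma') = \Ker(\pi')$. Since equivalent representations have equal kernels, $\Ker(\sigma) = \Ker(\sigma')$, and the three equalities combine to $\Ker(\pi) = \Ker(\pi')$.

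There is no serious obstacle here; the argument is a direct exploitation of the factor hypothesis once Lemma \ref{lem:Disjoint} is in place. The only points deserving care are bookkeeping rather than substance: one must check that $N = \Ker(\rho)$ is genuinely normal (so that Lemma \ref{lem:Disjoint} applies and the decomposition $\pi = \pi_1 \oplus \pi_2$ is by $G$-invariant subspaces), and that the factor property is being used in its correct form, namely the impossibility of splitting into two \emph{disjoint} non-zero pieces. Both are handled automatically by the cited lemma and by the definition of the kernel of a representation.
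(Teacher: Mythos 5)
Your proof is correct, and the first assertion is handled exactly as in the paper: set $N = \Ker(\rho)$, observe $\mathcal H_\rho \subseteq \mathcal H_\pi^N$, and use Lemma \ref{lem:Disjoint} together with the factor hypothesis to force $\mathcal H_\pi^N = \mathcal H_\pi$. Where you genuinely diverge is in the second assertion. The paper invokes a nontrivial external result, namely Mackey's comparability theorem for quasi-equivalent factor representations (\cite[Theorem 1.7, Page 20]{Mack--76}), which says that up to equivalence one of $\pi \le \pi'$ or $\pi' \le \pi$ holds; the conclusion then follows by applying the first assertion once. You instead work directly from the definitions: quasi-equivalence means $\pi$ is not disjoint from $\pi'$, so there exist equivalent non-zero subrepresentations $\sigma \le \pi$ and $\sigma' \le \pi'$, and applying the first assertion \emph{twice} (to $\pi$ with $\sigma$, and to $\pi'$ with $\sigma'$) gives $\Ker(\pi) = \Ker(\sigma) = \Ker(\sigma') = \Ker(\pi')$. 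Your route is more elementary and self-contained: it needs nothing beyond the definitions of disjointness and quasi-equivalence plus the first part of the proposition, whereas the paper's route buys brevity at the cost of citing a comparatively deep structural theorem. Note that your argument uses symmetrically that both $\pi$ and $\pi'$ are factor representations, which is indeed part of the hypothesis, so nothing is lost.
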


\begin{proof}
Set $N = \Ker(\rho)$. 
Denote by $\mathcal H_\pi$ the Hilbert space of $\pi$
and by $\mathcal H_\rho$ that of~$\rho$.
\par

Since $\rho \le \pi$, we have $\Ker(\pi) \le N$.
When $N = \{e\}$, there is nothing more to prove.
We assume now that $N \ne \{e\}$.
\par

The space $\mathcal H_\rho$ is contained in $\mathcal H_\pi^N$;
in particular $\mathcal H_\pi^N \ne \{0\}$ since $\rho$ is non-zero.
Since $\pi$ is a factor representation,
$\mathcal H_\pi^N = \mathcal H_\pi$ by Lemma~\ref{lem:Disjoint}.
It follows that $N \le \Ker(\pi)$, hence that $N = \Ker(\pi)$.
\par

Let $\pi'$ be a factor representation of $G$ which is quasi-equivalent to $\pi$.
By \cite[Theorem 1.7, Page 20]{Mack--76},
up to equivalence we have $\pi \le \pi'$ or $\pi' \le \pi$.
Hence $\Ker(\pi) = \Ker(\pi')$ by the assertion that we have already established.
\end{proof}

\subsection{On \texorpdfstring{$G$}{G}-faithful representations
of subgroups of \texorpdfstring{$G$}{G}}
\label{subsec:SubsidiaryBeHa}

Given a group $G$ and a normal subgroup $N$,
a unitary character or a {unitary representation
$\rho$ of~$N$ is called \textbf{$G$-faithful}
if the intersection over all $g \in G$ of the kernels $\Ker (\rho^g)$ is trivial,
where $\rho^g (x) = \rho(gxg^{-1})$ for all $x \in N$. 
\par

The following lemma generalizes \cite[Lemma~9]{BeHa--08}.
More precisely, the statement from loc.\ cit.\ assumes that $\pi$ is irreducible and faithful,
whereas we only require that $\pi$ is a factor representation
and that the restriction $\pi \vert_N$ is faithful. 

\begin{lem}
\label{FromL9}
Let $G$ be a countable group, $N$ a normal subgroup of $G$,
and $\pi$ a factor representation of $G$
such that the restriction $\pi \vert_N$ is faithful.
\par

Then $N$ has an irreducible unitary representation $\rho$ which is $G$-faithful.
\end{lem}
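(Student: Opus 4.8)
The plan is to reduce the statement to the case already handled in \cite[Lemma~9]{BeHa--08}, where the ambient representation is genuinely irreducible and faithful. First I would invoke Proposition \ref{prop:Factor->Irrep} to replace the factor representation $\pi$ by an irreducible unitary representation $\sigma$ of $G$ with $\Ker(\sigma) = \Ker(\pi)$. The hypothesis that $\pi\vert_N$ is faithful says precisely that $\Ker(\pi) \cap N = \{e\}$, and hence $\Ker(\sigma) \cap N = \{e\}$; in other words $\sigma\vert_N$ is faithful as well.

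The obstacle is that $\sigma$ need not be faithful on all of $G$, so \cite[Lemma~9]{BeHa--08} does not apply to $(G, N, \sigma)$ directly. To remedy this I would pass to the quotient $\bar G = G / \Ker(\sigma)$, which is again countable, and denote by $q \colon G \twoheadrightarrow \bar G$ the projection. The representation $\sigma$ factors through $q$ as a faithful irreducible representation $\bar\sigma$ of $\bar G$. Since $N \cap \Ker(\sigma) = \{e\}$, the restriction $q\vert_N$ is injective, so $\bar N := q(N)$ is a normal subgroup of $\bar G$ isomorphic to $N$. Thus $(\bar G, \bar N, \bar\sigma)$ satisfies the hypotheses of the original lemma.

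Applying \cite[Lemma~9]{BeHa--08} to $\bar G$, $\bar N$ and $\bar\sigma$ produces an irreducible unitary representation $\bar\rho$ of $\bar N$ that is $\bar G$-faithful. I would then set $\rho := \bar\rho \circ (q\vert_N)$, an irreducible representation of $N$, and check that it is $G$-faithful. The verification is the one routine computation: for $g \in G$ and $x \in N$ one has $\rho^g(x) = \bar\rho^{\,q(g)}(q(x))$ because $q$ is a homomorphism, so $\Ker(\rho^g) = (q\vert_N)^{-1}\big(\Ker(\bar\rho^{\,q(g)})\big)$. Intersecting over $g \in G$ and using that $q$ is surjective (so that $q(g)$ ranges over all of $\bar G$) together with the injectivity of $q\vert_N$, one obtains $\bigcap_{g \in G}\Ker(\rho^g) = (q\vert_N)^{-1}\big(\bigcap_{\bar g \in \bar G}\Ker(\bar\rho^{\,\bar g})\big) = (q\vert_N)^{-1}(\{\bar e\}) = \{e\}$, which is exactly $G$-faithfulness.

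The only genuinely new point compared with the original lemma is this reduction: the weaker hypotheses (factor representation, faithful merely on $N$) are absorbed by combining Proposition \ref{prop:Factor->Irrep} with the passage to $G/\Ker(\sigma)$. The main thing to watch is that $N$ embeds into that quotient --- which is exactly where the faithfulness of $\pi\vert_N$ is used --- and that $G$-faithfulness transfers back along the isomorphism $N \cong \bar N$.
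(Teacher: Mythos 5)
There is a genuine gap, and it is one of circularity rather than of computation. Your very first step invokes Proposition \ref{prop:Factor->Irrep} to replace the factor representation $\pi$ by an irreducible representation $\sigma$ with $\Ker(\sigma) = \Ker(\pi)$. But in the paper, Proposition \ref{prop:Factor->Irrep} is not independent of the statement you are proving: its proof consists precisely of applying Lemma \ref{FromL9} to the group $H = G/\Ker(\pi)$ with $N = H$. Indeed, Proposition \ref{prop:Factor->Irrep} is essentially equivalent to the special case $N = G$ of Lemma \ref{FromL9}, so assuming it amounts to assuming a case of the lemma that is in no way easier than the general one. The paper's logical order is the reverse of yours: Lemma \ref{FromL9} is proved first, by a direct argument --- take a direct integral decomposition of $\pi\vert_N$ into irreducibles, enumerate the countably many normal subgroups $N_j$ generated by non-trivial $G$-conjugacy classes inside $N$, show via \cite[Lemma~8]{BeHa--08} that the bad sets $\Omega_j$ are measurable, and use Proposition \ref{prop:KernelPrimary} (every non-zero subrepresentation of a factor representation has the same kernel) to show each $\Omega_j$ is null --- and only then is Proposition \ref{prop:Factor->Irrep} deduced as a corollary. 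Your argument, read inside the paper, therefore proves nothing.

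The rest of your reduction is fine and would be worth keeping if the first step could be repaired: passing to $\bar G = G/\Ker(\sigma)$, noting that $q\vert_N$ is injective because $\pi\vert_N$ (hence $\sigma\vert_N$) is faithful, applying \cite[Lemma~9]{BeHa--08} to $(\bar G, \bar N, \bar\sigma)$, and pulling back $\bar G$-faithfulness along the surjection $q$ are all correct, routine verifications. To make the whole proof non-circular you would need an independent proof that a factor representation of a countable group shares its kernel with some irreducible representation --- for instance the C*-algebraic theorem that prime ideals of separable C*-algebras are primitive, applied to $C^*(G)$. That is a legitimate route, but it imports a substantially deeper external result than anything used in the paper, and you neither state nor prove it; as written, the proposal rests on a proposition whose only available proof is the lemma itself.
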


\begin{proof}
The proof is a small modification of that in \cite[Lemma~9]{BeHa--08}.
We reproduce the details since our hypotheses are slightly more general.
\par

We assume that $N$ is non-trivial, since otherwise there is nothing to prove. 
Set $\sigma := \pi \vert_N$
and let $\sigma = \int_\Omega^\oplus \sigma_\omega d\mu(\omega)$
be a direct integral decomposition of $\sigma$ into irreducible unitary representations,
implemented by an isomorphism
$\mathcal H_\sigma \cong \int_\Omega^\oplus \mathcal H_\omega d\mu(\omega)$.
Denote by $\{C_j\}_{j \in J}$ the family of $G$-conjugacy classes
contained in $N$ distinct from $\{e\}$.
For each $j$, let $N_j \le N$ be the subgroup generated by $C_j$;
note that $N_j$ is normal in $G$.
The family $\{C_j\}_{j \in J}$ is countable and non-empty.
Every non-trivial normal subgroup of $G$ contained in $N$
must contain $N_j$ for some $j \in J$.
Therefore, given $\omega \in \Omega$, we see that
$\sigma_\omega$ is not $G$-faithful if and only if 
$\Ker \big( \bigoplus_{g \in G} (\sigma_\omega)^g \big)$ contains $N_j$ for some $j \in J$. 
\par

Set now
$\Omega_j = \big\{ \omega \in \Omega \mid
N_j \le \Ker \big( \bigoplus_{g \in G} (\sigma_\omega)^g \big) \big\}$
and $\widetilde \Omega = \bigcup_{j \in J} \Omega_j$.
It follows that $\widetilde \Omega$ is the subset consisting of these $\omega \in \Omega$
such that $\sigma_\omega$ is not $G$-faithful.
By \cite[Lemma~8]{BeHa--08}, each $\Omega_j$ is measurable.
Since $J$ is countable, $\widetilde \Omega$ is also measurable.
\par

In order to finish the proof, it suffices to show that $\mu(\widetilde \Omega) = 0$.
Suppose for a contradiction that $\mu(\widetilde \Omega)>0$.
Since $J$ is countable, we have $\mu(\Omega_\ell)>0$ for some $\ell \in J$.
For each $\omega \in \Omega_\ell$, we have $N_\ell \le \Ker(\sigma_\omega)$,
so that the subspace $\int_{\Omega_\ell}^\oplus \mathcal H_\omega$ of $\mathcal H_\sigma$,
which is non-zero since $\mu(\Omega_\ell)>0$, consists of $N_\ell$-invariant vectors.
Since $N_\ell$ is normal in $G$, the set of $N_\ell$-invariant vectors is $G$-invariant,
and thus corresponds to a subrepresentation of $\pi$.
Since $\pi$ is a factor representation,
we have $N_\ell \le \Ker(\pi)$ by Proposition~\ref{prop:KernelPrimary}.
Since $N_\ell \le N$, this contradicts the hypothesis that $\pi \vert_N$ is faithful. 
\par

We have just shown that {almost all} irreducible unitary representations $\sigma_\omega$ of $N$
occurring in a direct integral decomposition of $\sigma$ are $G$-faithful.
In particular there exists $\omega \in \Omega$
such that the irreducible unitary representation $\rho := \sigma_\omega$ is $G$-faithful.
\end{proof}

\begin{proof}[Proof of Proposition~\ref{prop:Factor->Irrep}]
Let $\pi$ be a factor representation of the countable group $G$.
View $\pi$ as a faithful representation of the group $H := G / \Ker(\pi)$.
By Lemma~\ref{FromL9} applied to $H$ and its trivial normal subgroup $N = H$,
the group $H$ has an irreducible unitary representation $\rho$ which is faithful.
We may now view $\rho$ as a representation of $G$
and the proposition follows. 
\end{proof}

\begin{lem}
\label{lem:G-faithful-normal-subgroup}
Let $G$ be a countable group, $N$ a normal subgroup of $G$,
and $\sigma$ an irreducible unitary representation of $N$
which is $G$-faithful.
\par

Then $G$ has an irreducible unitary representation $\pi$ with the following properties:
the restriction $\pi \vert_N$ is faithful,
and every element of $\Ker(\pi)$ is contained in a finite normal subgroup of $G$,
i.e., $\Ker(\pi)$ is contained in the torsion FC-centre $\W(G)$ of $G$.
\end{lem}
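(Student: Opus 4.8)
The plan is to produce the desired $\pi$ as a constituent of the induced representation $\rho := \mathrm{Ind}_N^G(\sigma)$, mirroring the direct-integral strategy of the proof of Lemma~\ref{FromL9}. Model $\rho$ on $\mathcal H_\rho = \bigoplus_{gN \in G/N} \mathcal H_\sigma$; since $\mathcal H_\sigma$ is separable and $G/N$ is countable, $\mathcal H_\rho$ is separable, so $\rho$ admits a direct integral decomposition $\rho = \int_\Omega^\oplus \pi_\omega \, d\mu(\omega)$ into irreducible unitary representations. For a normal subgroup $M \trianglelefteq G$ set $\Omega_M = \{\omega \in \Omega \mid M \le \Ker(\pi_\omega)\} = \bigcap_{m \in M}\{\omega \mid \pi_\omega(m) = \id\}$, which is measurable by \cite[Lemma~8]{BeHa--08} since $M$ is countable. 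On $\Omega_M$ the representation $\pi_\omega$ is trivial on $M$, so $\int_{\Omega_M}^\oplus \mathcal H_\omega \, d\mu(\omega)$ is contained in the space $\mathcal H_\rho^M$ of $M$-fixed vectors. Hence the whole argument reduces to the two vanishing statements: $\mathcal H_\rho^M = \{0\}$ whenever (a) $M \le N$ is a non-trivial normal subgroup of $G$, or (b) $M$ is an infinite normal subgroup of $G$; each of these forces $\mu(\Omega_M) = 0$.

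The heart of the proof is these two computations. Using the explicit restriction $\rho\vert_N = \bigoplus_{gN \in G/N} \sigma^g$, a vector of $\mathcal H_\rho$ is a square-summable family $(\xi_{gN})$. For case (a), an $M \le N$ normal in $G$ preserves each coset $gN$ and acts on the corresponding copy of $\mathcal H_\sigma$ through $\sigma^g$, so $\mathcal H_\rho^M = \bigoplus_{gN} \mathcal H_\sigma^{\sigma^g(M)}$; since $g^{-1}Mg = M$, each summand equals $\mathcal H_\sigma^{\sigma(M)}$. The space $\mathcal H_\sigma^{\sigma(M)}$ is $N$-invariant (as $M \trianglelefteq N$), hence is $\{0\}$ or $\mathcal H_\sigma$ by irreducibility of $\sigma$; it cannot be all of $\mathcal H_\sigma$, for that would give $M \le \Ker(\sigma)$, whence $M \le \bigcap_{g \in G}\Ker(\sigma^g) = \{e\}$ by $G$-faithfulness, contradicting $M \neq \{e\}$. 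Thus $\mathcal H_\rho^M = \{0\}$. For case (b), if $M \cap N \neq \{e\}$ then $M \cap N$ is a non-trivial normal subgroup of $G$ inside $N$, and $\mathcal H_\rho^M \subseteq \mathcal H_\rho^{M \cap N} = \{0\}$ by case (a); if $M \cap N = \{e\}$, then the stabiliser in $M$ of any coset $gN$ is $M \cap gNg^{-1} = M \cap N = \{e\}$, so $M$ acts freely, hence with infinite orbits, on $G/N$. As $\rho(m)$ moves the $gN$-component isometrically to the $mgN$-component, any $M$-fixed family $(\xi_{gN})$ has norm constant along each (infinite) $M$-orbit, forcing $\xi_{gN} = 0$ by square-summability; again $\mathcal H_\rho^M = \{0\}$.

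To conclude, for each $x \in N \smallsetminus \{e\}$ the normal closure $\llag x \rrag_G$ is a non-trivial normal subgroup contained in $N$, so $\mu\big(\Omega_{\llag x \rrag_G}\big) = 0$ by case (a); since $\{\omega \mid x \in \Ker(\pi_\omega)\} \subseteq \Omega_{\llag x \rrag_G}$ and $N$ is countable, for almost all $\omega$ the restriction $\pi_\omega\vert_N$ is faithful. Likewise, recalling from Lemma~\ref{lem:N1Nk} that $\W(G) = \{g \in G \mid \llag g \rrag_G \text{ is finite}\}$, for each $g \in G$ with $\llag g \rrag_G$ infinite case (b) gives $\mu\big(\Omega_{\llag g \rrag_G}\big) = 0$, and $\{\omega \mid g \in \Ker(\pi_\omega)\} \subseteq \Omega_{\llag g \rrag_G}$; as $G$ is countable, for almost all $\omega$ the kernel $\Ker(\pi_\omega)$ contains no element with infinite normal closure, i.e. $\Ker(\pi_\omega) \le \W(G)$. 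Since these two conditions each hold off a null set, a generic $\omega$ yields the required $\pi := \pi_\omega$. I expect the main obstacle to be the free-action computation of case (b) together with the measurability bookkeeping needed to pass from the fibrewise vanishing of invariant vectors to the almost-everywhere statements, exactly as in the delicate part of Lemma~\ref{FromL9}.
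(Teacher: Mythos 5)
Your proof is correct and follows essentially the same route as the paper's: induce $\sigma$ up to $G$, decompose $\mathrm{Ind}_N^G(\sigma)$ as a direct integral of irreducibles, show that the two exceptional sets of fibres (those not faithful on $N$, and those whose kernel contains an element with infinite normal closure) are null, and pick a generic $\omega$. The only difference is one of packaging: where the paper invokes ``Claim 1'' and ``Claim 2'' from the proof of Lemma~10 in \cite{BeHa--08}, you prove the corresponding vanishing statements directly --- the computation $\mathcal H_\rho^M = \bigoplus_{gN} \mathcal H_\sigma^{\sigma(M)} = \{0\}$ via $G$-faithfulness for non-trivial normal $M \le N$, and the free-action/square-summability argument for infinite normal $M$ with $M \cap N = \{e\}$ --- so your write-up is a correct, self-contained unpacking of the same argument.
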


\begin{proof}
Let $\rho = \mathrm{Ind}_N^G(\sigma)$ be the unitary representation of $G$
induced from $\sigma$.
Let $\rho = \int_\Omega^\oplus \rho_\omega d\mu(\omega)$
be a direct integral decomposition of $\rho$ into irreducible unitary representations. 
Set
$$
\widetilde \Omega = \{\omega \in \Omega
\mid
\rho_\omega \vert_N \text{ is not faithful} \}
$$
and
$$
\widehat \Omega = \{\omega \in \Omega
\mid
\text{there exists} \hskip.2cm
g \in \Ker(\rho_\omega)
\hskip.2cm \text{such that} \hskip.2cm
\llag g \rrag_G \hskip.2cm \text{is infinite} \}.
$$
We claim that $\mu(\widetilde \Omega) = \mu( \widehat \Omega) = 0$;
to show this, we argue as in the proof of \cite[Lemma~10]{BeHa--08}.
\par

To show that $\mu(\widetilde \Omega) = 0$, 
we proceed by contradiction.
We assume that there exists
a conjugacy class $C_\ell \ne \{e\}$ of $G$ contained in $N$,
generating a subgroup $G_\ell$ of $G$ which is normal and contained in $N$,
and defining a measurable subset
$\Omega_\ell = \{ \omega \in \Omega \mid G_\ell \le \Ker (\rho_\omega) \}$,
such that $\mu(\Omega_\ell) > 0$.
Then, as in `Claim 1' in the proof of \cite[Lemma 10]{BeHa--08}
we show that $G_\ell \cap N = \{e\}$, in contradiction with $G_\ell \le N$.
\par

To show that $\mu(\widehat \Omega) = 0$,
also by contradiction, 
we assume this time that there exists
a conjugacy class $C_m \ne \{e\}$ of $G$
generating an infinite subgroup $G_m$ of $G$,
and defining a measurable subset
$\Omega_m = \{ \omega \in \Omega \mid G_m \le \Ker (\rho_\omega) \}$,
such that $\mu(\Omega_m) > 0$,
and we arrive at a contradiction.
Indeed, `Claim 1' in the proof already quoted shows that $G_m \cap N = \{e\}$,
and `Claim 2' in the same proof shows that $G_m$ is finite, 
in contradiction with the hypothesis.
\par

Consequently, the complement of $\widetilde \Omega \cup \widehat \Omega$ in $\Omega$
has full measure, and thus is non-empty.
For any $\omega \in \Omega \smallsetminus (\widetilde \Omega \cup \widehat \Omega)$,
the representation $\pi := \rho_\omega$ is an irreducible unitary representation of $G$
that has the required properties.
\end{proof}

A strengthening of Lemma~\ref{lem:G-faithful-normal-subgroup}
will be established in Lemma~\ref{lem:G-faithful-normal-subgroup:Kernel<sol(G)} below.
 
\begin{lem}
\label{lem:semi-simple_feet_G-faithful}
Let $G$ be a group and $N, A, S$ normal subgroups of $G$
such that $N = A \times S$.
Assume that $A$ is abelian,
and that $S$ is the restricted sum
of a collection $\{S_i\}$ of non-abelian simple finite groups. Then:
\begin{enumerate}[label=(\roman*)]
\item
$S$ has a faithful irreducible unitary representation;
\item
$N$ has a $G$-faithful irreducible unitary representation if and only if
$A$ has a $G$-faithful unitary character.
\end{enumerate}
\end{lem}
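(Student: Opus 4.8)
The plan is to prove the two assertions in turn, with assertion (i) serving as a warm-up that feeds into assertion (ii).

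\smallskip

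For assertion (i), I would first invoke the structure of $S$ as a restricted sum $S = \prod'_i S_i$ of non-abelian simple finite groups. Each finite simple group $S_i$ admits a faithful irreducible unitary representation: indeed, a finite simple group has trivial abelian socle (its socle is the whole group, which is non-abelian and simple), so by Gasch\"utz' criterion (Theorem~\ref{thm:Gasch}, or directly by Akizuki's criterion mentioned after Corollary~\ref{cor:P(n)-For-All-n}) it is irreducibly faithful. Pick for each $i$ a faithful irreducible unitary representation $\tau_i$ of $S_i$ on a Hilbert space $\mathcal H_i$. The natural candidate for a faithful irreducible representation of $S$ is the restricted tensor product $\bigotimes'_i (\mathcal H_i, \xi_i)$ with respect to a choice of unit vectors $\xi_i$, on which $S = \prod'_i S_i$ acts by $\bigotimes'_i \tau_i$. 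I would check that this is irreducible (a tensor product of irreducibles of the factors of a direct sum is irreducible, because the commutant of the $S$-action is the tensor product of the commutants, each of which is $\C\id$ by Schur) and faithful (an element $(s_i)_i$ with some $s_i \ne e$ is moved by $\tau_i$, hence by the tensor product). The main thing to verify carefully is that the restricted tensor product is well-defined and carries a genuine unitary representation of the restricted sum; since each $g \in S$ has all but finitely many coordinates equal to $e$, and $\tau_i(e) = \id$ fixes $\xi_i$, the action is well-defined on the dense span of elementary tensors and extends unitarily.

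\smallskip

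For assertion (ii), the forward direction is easy: if $N = A \times S$ has a $G$-faithful irreducible unitary representation $\rho$, then restricting to the direct factor $A$ and using that $\rho$ is $G$-faithful, one extracts a $G$-faithful character of $A$. The substantive direction is the converse. Assume $A$ has a $G$-faithful unitary character $\chi$. By assertion (i), $S$ has a faithful irreducible unitary representation $\tau$; since $S$ is a restricted sum of non-abelian simple finite groups, it is in fact characteristically simple-like and $G$ permutes the factors $S_i$, so I would upgrade $\tau$ to be $G$-faithful by the same restricted-tensor-product construction, now indexing over all $G$-conjugates of the factors so that the resulting representation of $S$ has $G$-invariant kernel equal to $\{e\}$. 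The key algebraic input is that $N = A \times S$ is an \emph{internal direct product} of normal subgroups of $G$, so a representation of $N$ can be built as an external tensor product $\chi \otimes \tau$ of a representation of $A$ with one of $S$; this is irreducible because $\chi$ is one-dimensional and $\tau$ is irreducible. I would then verify $G$-faithfulness of $\chi \otimes \tau$: since $(\chi \otimes \tau)^g = \chi^g \otimes \tau^g$ and the kernel of a tensor product of a character with a representation is governed by the kernels of the two tensor factors, the intersection $\bigcap_{g \in G}\Ker\big((\chi \otimes \tau)^g\big)$ meets $A$ in $\bigcap_g \Ker(\chi^g) = \{e\}$ and meets $S$ in $\bigcap_g \Ker(\tau^g) = \{e\}$, and since $N = A \times S$ this forces the whole intersection to be trivial.

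\smallskip

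The main obstacle I anticipate is the bookkeeping in the converse direction: ensuring that the irreducible representation of $S$ can simultaneously be made faithful \emph{and} $G$-faithful while respecting the $G$-action that permutes the simple factors $S_i$. One must be careful that $G$ acts on the index set $\{i\}$ with finite orbits (each $\mathcal H_G$-mini-foot being a finite product of $G$-conjugate simple groups, by Proposition~\ref{structureminisocle}\ref{3DEstructureminisocle}), so that the conjugate representations $\tau^g$ range over only finitely many isomorphism types up to relabeling and the restricted tensor product over a $G$-stable family is well-defined. Once the tensor product is set up $G$-equivariantly, checking that kernels intersect trivially coordinate-by-coordinate is routine, and the decomposition $N = A \times S$ reduces the whole computation to the two factors separately.
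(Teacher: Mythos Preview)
Your approach is correct and is essentially the natural argument (the paper itself does not give a proof but refers to Lemma~13 of \cite{BeHa--08}, whose proof follows the same tensor-product strategy). Two points deserve tightening.

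First, your worry in the final paragraph is misplaced: once $\tau$ is \emph{faithful} on $S$, it is automatically $G$-faithful. Indeed $S$ is normal in $G$, so $\tau^g(s) = \tau(gsg^{-1})$ defines a representation of $S$ with $\Ker(\tau^g) = g^{-1}\Ker(\tau)g = \{e\}$; hence $\bigcap_g \Ker(\tau^g) = \{e\}$ with no extra construction needed. The discussion of $G$-orbits on the index set and of Proposition~\ref{structureminisocle}\ref{3DEstructureminisocle} is unnecessary here.

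Second, the last step of your kernel computation has a small gap. From $K := \bigcap_g \Ker\big((\chi\otimes\tau)^g\big)$ you correctly get $K \cap A = \{e\}$ and $K \cap S = \{e\}$, but ``since $N = A \times S$ this forces $K = \{e\}$'' is not a general fact about direct products (think of the diagonal in $C_2 \times C_2$). What saves you is that $Z(S) = \{e\}$, since each $S_i$ is non-abelian simple. One clean way: $K$ and $S$ are both normal in $N$, so $[K,S] \le K \cap S = \{e\}$, and likewise $[K,A] \le K \cap A = \{e\}$; hence $K \le Z(N) = A \times Z(S) = A$, and then $K = K \cap A = \{e\}$. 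Equivalently, one can compute directly that $(a,s) \in \Ker(\chi^g \otimes \tau^g)$ forces $\tau^g(s)$ to be scalar, hence $s \in Z(S) = \{e\}$, so $\Ker\big((\chi\otimes\tau)^g\big) = \Ker(\chi^g) \times \{e\}$.
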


\begin{proof}[Proof:]
see Lemma 13 and its proof in \cite{BeHa--08}. 
\end{proof}

We end this section with some subsidiary facts.
Given an abelian group $A$,
denote by $\widehat A$ the \textbf{Pontrjagin dual} of $A$,
namely the space of all unitary characters
$A \to \T$,
with the compact open topology.
Recall that $\widehat A$ is a compact abelian group.

\begin{lem}
\label{lem:Pontrjagin}
Let $G$ be a discrete group, $A$ an abelian normal subgroup of $G$,
and $\chi$ a unitary character of $A$.
\par

Then $\chi$ is $G$-faithful if and only if
the subgroup generated by 
$\chi^G = \{\chi^g \mid g \in G\}$ is dense in $\widehat A$.
\end{lem}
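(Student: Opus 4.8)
The plan is to reduce the statement to the standard annihilator correspondence of Pontryagin duality. Write $\chi^g(x) = \chi(gxg^{-1})$ for $g \in G$ and $x \in A$; since $A$ is abelian and normal, each $\chi^g$ is again a unitary character of $A$, so $\chi^G = \{\chi^g \mid g \in G\}$ is a subset of $\widehat A$. I would let $\Gamma = \langle \chi^G \rangle$ be the subgroup of $\widehat A$ it generates and $H = \overline{\Gamma}$ its closure. Because $A$ is discrete, $\widehat A$ is a compact abelian group, and I would freely use the canonical identification $\widehat{\widehat A} \cong A$, under which $x \in A$ corresponds to the evaluation character $\mathrm{ev}_x \colon \psi \mapsto \psi(x)$ on $\widehat A$.

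First I would record the annihilator computation. For $x \in A$, the character $\mathrm{ev}_x$ is trivial on $\Gamma$ if and only if $\chi^g(x) = 1$ for every $g \in G$, because $\mathrm{ev}_x$ is a homomorphism and the $\chi^g$ generate $\Gamma$; hence the annihilator of $\Gamma$ in $A$ is exactly
$$
\Gamma^\perp := \{x \in A \mid \psi(x) = 1 \text{ for all } \psi \in \Gamma\} = \bigcap_{g \in G} \Ker(\chi^g).
$$
By the definition recalled just above the lemma, the right-hand side is trivial precisely when $\chi$ is $G$-faithful, so it remains to prove that $H = \widehat A$ if and only if $\Gamma^\perp = \{e\}$.

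This last equivalence is the genuine content of Pontryagin duality, and it is the step I would treat with the most care. The closed subgroup $H$ equals $\widehat A$ if and only if the compact abelian quotient $\widehat A / H$ is trivial, which holds if and only if $\widehat A / H$ admits no non-trivial continuous character, i.e.\ every character of $\widehat A$ trivial on $H$ is itself trivial. Now a continuous character of $\widehat A$ is trivial on $H = \overline{\Gamma}$ exactly when it is trivial on $\Gamma$ (continuity lets one pass to the closure, and the homomorphism property lets one pass from the generators to all of $\Gamma$), and the continuous characters of $\widehat A$ are precisely the $\mathrm{ev}_x$ with $x \in A$; so this condition says exactly that $\Gamma^\perp = \{e\}$.

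Combining the two equivalences yields $\chi$ is $G$-faithful $\iff \Gamma^\perp = \{e\} \iff H = \widehat A \iff \Gamma$ is dense in $\widehat A$, which is the assertion. The only non-formal input is the duality fact that a non-trivial compact (Hausdorff) abelian group has a non-trivial continuous character (equivalently, that characters separate the points of $\widehat A / H$); once this is granted, the remainder is a routine manipulation of annihilators and does not present a real obstacle.
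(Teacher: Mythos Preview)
Your proof is correct and follows exactly the approach the paper indicates: the paper's proof simply says ``This follows from Pontrjagin duality'' and refers to \cite{BeHa--08}, and you have spelled out the annihilator argument that makes this work. There is nothing to add.
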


\begin{proof}
This follows from Pontrjagin duality.
See the proof of the equivalence between (i) and (ii) of Lemma 14 in in \cite{BeHa--08}.
\end{proof}

Before the last proposition of this section, we recall
the natural module structure on abelian normal subgroups,
the definition of cyclic modules,
and we state a lemma which is helpful for translating
from the language of abelian groups to that of modules.

\begin{rem}
\label{remabnormalmodule}
Let $G$ be a group, $V$ an abelian normal subgroup of $G$,
and $\Z[G]$ the group ring of $G$ over the integers.
Then $V$ has a canonical structure of $\Z[G]$-module.
Moreover, $V$ is simple as a $\Z[G]$-module if and only if
$V$ is minimal as abelian normal subgroup of $G$.
\par

Compare with the reminder on simple $\F_p[G]$-modules
just before Theorem \ref{thm:CharP(n)}.
\end{rem}

For a ring $R$ and a module $V$,
the module~$V$ is \textbf{cyclic} if there exists $v \in V$ such that $V = Rv$.
This terminology is used below
for $R$ the group ring $\Z[G]$ and $V$ an abelian normal subgroup of $G$,
and for $R$ the group algebran $\F_pG]$
and $V$ a $p$-elementary abelian normal subgroup of $G$ for some prime $p$.
\par

The proof of the next lemma is straightforward,
and left to the reader.

\begin{lem}
\label{lem:cyclic1conjclass}
Let $G$ be a group and $V$ an abelian normal subgroup of $G$.
\par

Then $V$ is generated as a group by one $G$-conjugacy class
if and only if $V$ as a $\Z[G]$-module is cyclic.
\par

Suppose moreover that $V$ is an elementary abelian $p$-group.
Then $V$ is generated as a group by one $G$-conjugacy class
if and only if $V$ as an $\F_p[G]$-module is cyclic.
\end{lem}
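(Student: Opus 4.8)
The plan is to prove Lemma~\ref{lem:cyclic1conjclass} by unwinding both sides in terms of the module action, observing that the two stated equivalences are in fact the same elementary argument carried out over two different coefficient rings. The key observation is that for an abelian normal subgroup $V$ of $G$, the $\Z[G]$-module structure is given by $g.v = gvg^{-1}$ (written additively), so that the $G$-conjugacy class of an element $v \in V$ is exactly the set $\{g.v \mid g \in G\}$. The submodule $\Z[G]v$ is by definition the set of all $\Z$-linear combinations of these conjugates, which is precisely the \emph{subgroup} of $V$ generated by the conjugacy class of $v$. This is the crux, and it is purely formal once one records that taking $\Z$-linear combinations of elements of an abelian group is the same as taking the subgroup they generate.

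For the first equivalence, I would argue as follows. Suppose $V = \llag v \rrag$ is generated as a group by the single conjugacy class of $v$; then the subgroup generated by $\{g.v \mid g \in G\}$ is all of $V$, which by the identification above says $\Z[G]v = V$, i.e.\ $V$ is cyclic as a $\Z[G]$-module with generator $v$. Conversely, if $V = \Z[G]v$ for some $v$, the same identification shows $V$ is the subgroup generated by the conjugacy class of $v$, hence generated as a group by one conjugacy class. Both directions are immediate once the dictionary between ``subgroup generated by a conjugacy class'' and ``cyclic submodule'' is in place.

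For the second equivalence, assume additionally that $V$ is an elementary abelian $p$-group, so that $V$ also carries the canonical $\F_p[G]$-module structure recalled before Theorem~\ref{thm:CharP(n)}. Here I would note that, because $V$ has exponent $p$, the $\Z$-span and the $\F_p$-span of any subset of $V$ coincide: the integer coefficients only matter modulo $p$, since $p.w = 0$ for every $w \in V$. Consequently $\Z[G]v = \F_p[G]v$ as subsets of $V$ for every $v$, and so $V$ is cyclic as a $\Z[G]$-module if and only if it is cyclic as an $\F_p[G]$-module. Combining this with the first equivalence yields that $V$ is generated as a group by one conjugacy class if and only if $V$ is cyclic as an $\F_p[G]$-module.

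There is no real obstacle here; the statement is flagged as straightforward and the content is entirely the translation between group-theoretic and module-theoretic vocabulary. The only point requiring a moment's care is the reduction from $\Z[G]$ to $\F_p[G]$ in the elementary abelian case, where one must check that the group-generation is insensitive to whether coefficients are taken in $\Z$ or in $\F_p$; this follows at once from $pV = \{e\}$.
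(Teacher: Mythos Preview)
Your proof is correct. The paper itself does not give a proof of this lemma, explicitly leaving it to the reader as straightforward; your argument is exactly the direct unwinding of definitions that the authors have in mind.
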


The following classical result will be frequently used in the sequel,
without further notice.
For a proof, see \cite[\S~3, no.\ 3]{Bo--A8}.

\begin{prop}
\label{prop:semi-simple}
Let $R$ be a ring and $U$ a $R$-module.
The following conditions are equivalent:
\begin{enumerate}[label=(\roman*)]
\item\label{iDEprop:semi-simple}
$U$ is generated by simple submodules.
\item\label{iiDEprop:semi-simple}
$U$ is a direct sum of a family of simple submodules.
\item\label{iiiDEprop:semi-simple}
Every submodule of $U$ is a direct summand. 
\end{enumerate}
If $U$ satisfies these conditions, then
\begin{enumerate}[label=(\alph*)]
\item
every submodule of $U$ satisfies
Conditions \ref{iDEprop:semi-simple} to \ref{iiiDEprop:semi-simple},
\item
every quotient module of $U$ satisfies
Conditions \ref{iDEprop:semi-simple} to \ref{iiiDEprop:semi-simple}.
\end{enumerate}
\end{prop}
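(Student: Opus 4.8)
The plan is to establish the cycle of implications (i)$\Rightarrow$(ii)$\Rightarrow$(iii)$\Rightarrow$(i). The first two implications each rest on a single application of Zorn's lemma, together with the elementary dichotomy that, for a simple submodule $S$ and an arbitrary submodule $M$, the intersection $S \cap M$ is either $0$ or all of $S$; the last implication requires more care. Once the three conditions are known to be equivalent, the two supplementary assertions (a) and (b) follow with little extra work.

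For (i)$\Rightarrow$(ii), write $U = \sum_{i \in I} S_i$ with each $S_i$ simple, and apply Zorn's lemma to the family of subsets $J \subseteq I$ for which the sum $\sum_{i \in J} S_i$ is direct; this family is inductive because directness is a condition on finite subsets. Let $J$ be maximal. For any $k \in I$ the submodule $S_k \cap \sum_{i \in J} S_i$ is $0$ or $S_k$; it cannot be $0$, for otherwise $J \cup \{k\}$ would contradict maximality, so $S_k$ lies in $\sum_{i \in J} S_i$. Hence $U = \bigoplus_{i \in J} S_i$. The implication (ii)$\Rightarrow$(iii) is the same argument relativised to a given submodule $N$: starting from $U = \bigoplus_{i \in I} S_i$, choose $J$ maximal among subsets with $N \cap \bigoplus_{i \in J} S_i = 0$; the same dichotomy forces $S_k \subseteq N \oplus \bigoplus_{i \in J} S_i$ for every $k$, so this submodule equals $U$ and exhibits $\bigoplus_{i \in J} S_i$ as a complement of $N$.

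The implication (iii)$\Rightarrow$(i) is where the real work lies, and I expect it to be the main obstacle. The strategy is twofold: first, to show that condition (iii) passes to submodules; second, to show that every nonzero module satisfying (iii) contains a simple submodule. For the first point, if $P \subseteq N \subseteq U$, writing $U = P \oplus Q$ by (iii) and applying the modular law gives $N = P \oplus (Q \cap N)$, so (iii) holds for $N$. For the second point, pick $0 \neq u \in U$; the cyclic submodule $Ru$ has a maximal proper submodule $M$ by Zorn's lemma (a chain of submodules omitting $u$ has union omitting $u$), and $Ru/M$ is simple. Since (iii) holds for $Ru$ by the first point, $M$ is a direct summand, so $Ru = M \oplus C$ with $C \cong Ru/M$ simple. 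Finally, let $U_0$ be the sum of all simple submodules of $U$ and write $U = U_0 \oplus W$ by (iii); if $W \neq 0$ then $W$ contains a simple submodule, which must lie in $U_0$, contradicting $U_0 \cap W = 0$. Hence $W = 0$ and $U = U_0$ is generated by simple submodules, giving (i).

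For the supplementary statements, assertion (a) is now immediate: a submodule satisfies (iii) by the first point of the previous paragraph, hence satisfies all three equivalent conditions. For assertion (b), given a submodule $N$ write $U = N \oplus C$ by (iii); then the quotient $U/N$ is isomorphic to $C$, a submodule of $U$, which satisfies the conditions by (a), and therefore so does $U/N$.
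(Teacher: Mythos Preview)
Your proof is correct and is essentially the standard argument; the paper itself does not prove this classical result but simply refers to Bourbaki, \emph{Alg\`ebre}, Chapitre~VIII, \S~3, no.~3, where the same Zorn's-lemma proof appears. There is nothing to add.
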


A module $U$ satisfying Conditions \ref{iDEprop:semi-simple} to \ref{iiiDEprop:semi-simple}
is called \textbf{semi-simple}. 

\vskip.2cm

Proposition \ref{lem14BEHA} will be needed in Section~\ref{Section:proofof1.1}.

\begin{prop}
\label{lem14BEHA}
Let $G$ be a group,
$A$ a finite normal subgroup of $G$ contained in $\MA(G)$,
and $p$ a prime.
\par

The following properties are equivalent:
\begin{enumerate}[label=(\roman*)]
\item \label{iDElem14BEHA}
The group $A$ has a $G$-faithful unitary character.
\item \label{iiDElem14BEHA}
The group $A$ is generated by a single conjugacy class.
\item \label{iiiDElem14BEHA}
The $\Z[G]$-module $A$ is cyclic.
\end{enumerate}
Suppose moreover that $A$ is an elementary abelian $p$-group.
Then Properties \ref{iDElem14BEHA} to \ref{iiiDElem14BEHA} are equivalent to:
\begin{enumerate}[label=(\roman*)]
\addtocounter{enumi}{3}
\item \label{ivDElem14BEHA}
The $\F_p[G]$-module $A$ is cyclic.
\end{enumerate}
\end{prop}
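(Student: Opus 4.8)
The plan is to prove the equivalences by establishing a cycle among \ref{iDElem14BEHA}, \ref{iiDElem14BEHA}, \ref{iiiDElem14BEHA}, and then treating \ref{ivDElem14BEHA} separately under the elementary-abelian hypothesis. The equivalence \ref{iiDElem14BEHA} $\Leftrightarrow$ \ref{iiiDElem14BEHA} is immediate from Lemma~\ref{lem:cyclic1conjclass}, which translates ``generated by a single conjugacy class'' into ``cyclic as a $\Z[G]$-module'' verbatim; similarly, when $A$ is elementary abelian of exponent $p$, the same lemma gives \ref{iiDElem14BEHA} $\Leftrightarrow$ \ref{ivDElem14BEHA}. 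So the content of the proposition reduces to linking the representation-theoretic condition \ref{iDElem14BEHA} to the purely algebraic condition \ref{iiDElem14BEHA}.

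For the implication \ref{iDElem14BEHA} $\Rightarrow$ \ref{iiDElem14BEHA}, I would invoke Lemma~\ref{lem:Pontrjagin}: if $\chi$ is a $G$-faithful unitary character of the finite abelian group $A$, then the subgroup generated by the orbit $\chi^G = \{\chi^g \mid g \in G\}$ is dense in $\widehat A$. Since $A$ is finite, $\widehat A$ is finite (and discrete), so density forces the subgroup generated by $\chi^G$ to be all of $\widehat A$. Dualizing this statement back to $A$ should then yield that $A$ is generated by a single $G$-conjugacy class: the key point is that the $G$-action on $A$ and the contragredient $G$-action on $\widehat A$ are matched under Pontrjagin duality, and ``the orbit of one character generates $\widehat A$'' is the dual assertion to ``the orbit of one element generates $A$.'' This duality bookkeeping is the step I expect to require the most care, since one must verify that the finite-group duality respects the module structure correctly.

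For the converse \ref{iiDElem14BEHA} $\Rightarrow$ \ref{iDElem14BEHA}, I would again use Lemma~\ref{lem:Pontrjagin} in the other direction. Assuming $A$ is generated by a single conjugacy class, i.e.\ cyclic as a $\Z[G]$-module, I would produce a character $\chi$ whose orbit $\chi^G$ generates $\widehat A$, which by the lemma is $G$-faithful. The natural candidate is a character that ``detects'' a generating element: since the $\Z[G]$-module $A$ is cyclic with generator $v$, the dual module $\widehat A$ should be cocyclic, and one selects $\chi$ to be a character not vanishing on the relevant cyclic structure. Here I would lean on the hypothesis $A \le \MA(G)$ together with the structure of the abelian mini-socle from Proposition~\ref{structureminisocle}\ref{2DEstructureminisocle}, which presents $\MA(G)$ as a restricted sum of abelian mini-feet; this semisimplicity (Proposition~\ref{prop:semi-simple}) lets me decompose $A$ into simple $\Z[G]$-submodules and choose the detecting character componentwise.

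The main obstacle will be the duality argument connecting cyclicity of $A$ to density/generation of $\chi^G$ in $\widehat A$, because Pontryagin duality for the $G$-action is contravariant and one must keep straight that a cyclic module dualizes to a cocyclic module, so that ``single generator'' on the $A$-side corresponds to ``single cogenerator,'' i.e.\ a character whose orbit generates, on the $\widehat A$-side. Once that correspondence is pinned down, the finiteness of $A$ makes the topological density condition of Lemma~\ref{lem:Pontrjagin} collapse to honest generation, and the cycle closes. I note that the inclusion $A \le \MA(G)$ is used precisely to guarantee semisimplicity of $A$ as a $\Z[G]$-module, which is what makes the componentwise construction of the detecting character available; without it the module could fail to be a sum of simple submodules and the argument would need modification.
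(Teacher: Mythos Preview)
Your approach is essentially the paper's: reduce \ref{iiDElem14BEHA}$\Leftrightarrow$\ref{iiiDElem14BEHA} and \ref{iiDElem14BEHA}$\Leftrightarrow$\ref{ivDElem14BEHA} to Lemma~\ref{lem:cyclic1conjclass}, and handle \ref{iDElem14BEHA}$\Leftrightarrow$\ref{iiDElem14BEHA} via Lemma~\ref{lem:Pontrjagin} plus a duality argument exploiting semisimplicity of $A$ (from $A\le\MA(G)$). Two points deserve tightening. First, your phrasing ``cyclic dualizes to cocyclic'' is a red herring: what you actually need, and what the paper invokes (citing \cite[Lemma~3.2]{Szec--16}), is that a \emph{finite semisimple} $\F_p[G]$-module is cyclic if and only if its $\F_p$-linear dual is cyclic; cocyclicity is the wrong notion here, and without semisimplicity the equivalence fails. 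Second, and more substantively, that cited fact lives over $\F_p$, so before you can use it you must reduce the general finite $A$ to its $p$-Sylow pieces $A_p$; the paper does this explicitly, using the Chinese Remainder Theorem to show that $\widehat A=\bigoplus_p\widehat{A_p}$ is generated by a single $G$-orbit if and only if each $\widehat{A_p}$ is, and then identifying $\widehat{A_p}$ with the contragredient $\F_p[G]$-module $A_p^*$. Your outline skips this reduction, and without it the $\F_p$-linear duality argument does not literally apply to $A$.
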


\begin{proof}
For the equivalence of \ref{iDElem14BEHA} and \ref{iiDElem14BEHA},
we follow the arguments of the proof of	Lemma~14 in \cite{BeHa--08}
(whose formal statement is however insufficient for our purposes). 
\par 

By \ref{2DEstructureminisocle} in Proposition \ref{structureminisocle},
$A$ is a finite abelian group and is therefore
a direct sum $A = \bigoplus_{p \in P} A_p$,
where $P$ is the set of primes $p$ for which $A$ has elements of order $p$,
and where $A_p$ is the $p$-Sylow subgroup of $A$.
Moreover $A_p$ is an elementary abelian $p$-group for each $p \in P$,
by (1) of the same proposition. 
Notice that $A_p$ is semi-simple by Proposition~\ref{prop:semi-simple},
since $A$ is contained in $\MA(G)$.
 (For comparison with \cite[Lemma~14]{BeHa--08}, note that
it follows from Proposition \ref{prop:semi-simple} applied to each $A_p$
that there exists a finite set $\{A_i\}_{i \in E}$ of abelian mini-feet in $G$
such that $A = \bigoplus_{i \in I} A_i$;
each $A_i$ is isomorphic to $(\F_p)^n$ for some prime $p$ and some $n \ge 1$.)
Observe that the Pontryagin dual of $A = \bigoplus_{p \in P} A_p$
is canonically isomorphic to $\bigoplus_{p \in P} \widehat A_p$.
\par

We know by Lemma~\ref{lem:Pontrjagin} that
$A$ has a $G$-faithful unitary character if and only if
$\widehat A$ is generated by one $G$-orbit.
By the Chinese Remainder Theorem,
the group $\widehat A = \bigoplus_{p \in P} \widehat A_p$
is generated by a single $G$-orbit
if and only each of its $p$-Sylow subgroups $\widehat A_p$
is generated by a single $G$-orbit
(this can alternatively be deduced from Lemma~\ref{lem:cyclic1conjclass}
together with Lemma~\ref{lem:IsotypicalDecomposition} below).
Using Lemma~\ref{lem:Pontrjagin} again,
we deduce that $A$ has a $G$-faithful unitary character
if and only if $A_p$ has a $G$-faithful character for each $p \in P$.
\par

Consequently, it suffices to prove 
the equivalence of \ref{iDElem14BEHA} and \ref{iiDElem14BEHA}
when $A = A_p$ for one prime $p$.
By Lemma~\ref{lem:cyclic1conjclass},
the group $A_p$ is generated by a single conjugacy class
if and only if $A_p$ is cyclic as an $\F_p[G]$-module. 
Under the natural identification of $\widehat A_p$
with the dual $A_p^* := \text{Hom}_{\F_p}(A_p, \F_p)$,
the $G$-action on $\widehat A_p$ corresponds to
the dual (or contragredient) action of $G$ on $A_p^*$.
Thus we may identify $\widehat A_p$ with $A_p^*$ as $\F_p[G]$-modules.
A finite semi-simple $\F_p[G]$-module is cyclic
if and only if its dual is cyclic (see Lemma 3.2 in \cite{Szec--16}).
Since the dual $A_p^*$ is canonically isomorphic to the Pontrjagin dual $\widehat A_p$,
and since $A_p$ is semi-simple,
we deduce from Lemma~\ref{lem:Pontrjagin}
that $A_p$ is generated by a single conjugacy class
if and only if $A_p$ has a $G$-faithful unitary character. 

\vskip.2cm

The equivalence of \ref{iiDElem14BEHA} and \ref{iiiDElem14BEHA}
holds by Lemma \ref{lem:cyclic1conjclass}.

\vskip.2cm

In the particular case of $A$ an elementary abelian $p$-group, similarly,
the equivalence of \ref{iiDElem14BEHA} and \ref{ivDElem14BEHA}
holds by Lemma \ref{lem:cyclic1conjclass}.
\end{proof}

\subsection{Irreducible representations whose kernel
is contained in \texorpdfstring{$\Fsol (G)$}{FSol (G)}}

The goal of this subsection is to establish the following result of independent interest. 

\begin{prop}
\label{prop:Kernel}
Any countable group $G$
admits an irreducible unitary representation $\pi$ such that,
for every element $g \in \Ker(\pi)$,
the normal closure $\llag g \rrag_G$ is a soluble finite subgroup of $G$.
\par

In other words, $G$ has an irreducible unitary representation
whose kernel is contained in the characteristic subgroup $\Fsol (G)$.
\end{prop}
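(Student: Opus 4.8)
The plan is to reduce the statement to Gasch\"utz' criterion (Theorem~\ref{thm:Gasch}) applied to a suitable quotient. Observe first that $G$ admits an irreducible unitary representation $\pi$ with $\Ker(\pi)\le\Fsol(G)$ if and only if there is a normal subgroup $K\le\Fsol(G)$ such that the countable group $G/K$ has a \emph{faithful} irreducible unitary representation: the pull-back of such a representation has kernel exactly $K\le\Fsol(G)$, and conversely one may take $K=\Ker(\pi)$. By Theorem~\ref{thm:Gasch}, it therefore suffices to produce a normal subgroup $K\le\Fsol(G)$ for which every finite normal subgroup of $G/K$ contained in the abelian mini-socle $\MA(G/K)$ is generated by a single conjugacy class.

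To build such a $K$, I would proceed by a recursion that removes, layer by layer, the obstruction to Gasch\"utz' condition. By Proposition~\ref{lem14BEHA}, a finite normal subgroup $A\le\MA(\cdot)$ fails to be generated by one conjugacy class precisely when $A$ is non-cyclic as an $\F_p[G]$-module, that is, when some isotypic component carries too many copies of a simple module; the offending ``excess copies'' span a proper $G$-submodule of the abelian mini-socle, and factoring it out is an elementary abelian quotient. Let $K$ be the union of the increasing family of normal subgroups obtained in this way. Then $K$ carries an ascending normal series with elementary abelian factors, so every \emph{finite} subgroup of $K$ acquires a finite subnormal series with abelian factors and is therefore soluble. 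Hence every finite normal subgroup of $G$ contained in $K$ is soluble, and since $K$ is a directed union of such subgroups (Lemma~\ref{lem:N1Nk}\ref{iDElem:N1Nk}), we get $K\le\Fsol(G)$ by Lemma~\ref{lem:N1Nk}\ref{iiiDElem:N1Nk}.

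The main obstacle is the termination of this recursion: one must show that removing the excess in the abelian mini-socle makes the Gasch\"utz obstruction genuinely decrease, rather than reappear in ever higher socle layers, and that no new obstruction is created in the quotient. This is the technical heart of the matter, and it is exactly the module-theoretic phenomenon quantified by Theorem~\ref{thm:CharP(n)} (through Proposition~\ref{lem14BEHA}). A more robust, representation-theoretic route is to invoke the $G$-faithful induction mechanism of Lemma~\ref{lem:G-faithful-normal-subgroup}, together with Lemma~\ref{FromL9} and Lemma~\ref{lem:semi-simple_feet_G-faithful}, after strengthening its measurable-set argument so that the exceptional fibres $\omega$ whose kernel $\Ker(\rho_\omega)$ meets a non-soluble finite normal subgroup of $G$ also form a null set. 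Here one must be careful that faithfulness on the semi-simple mini-socle $\MH(G)$ alone does \emph{not} suffice: a non-soluble finite normal subgroup of $G$ may have purely abelian $G$-socle (for instance a subgroup of the shape $V\rtimes\mathrm{SL}_2(4)$ with $V$ the natural module), and then it intersects $\MH(G)$ trivially. Exposing such hidden non-abelian sections inside $\MH$ of an appropriate quotient is precisely what the recursion on the abelian mini-socle accomplishes, and reconciling the two viewpoints is where the remaining work lies.
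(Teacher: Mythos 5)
Your proposal is an outline with genuine unresolved gaps, two of which you acknowledge and one of which you do not. First, the recursion in your main route is neither shown to be well defined (the ``excess copies'' are a non-canonical choice of complement, and quotienting can create new mini-feet and new finite normal subgroups invisible in $G$) nor shown to terminate; you call this ``the technical heart of the matter'' and leave it open, so no proof is on the table. Second---and this is the gap you did not flag---even granting a limit $K$ with $G/K$ satisfying Gasch\"utz' criterion, your argument that $K\le\Fsol(G)$ breaks down. Membership in $\Fsol(G)$ requires each $g\in K$ to have a normal closure $\llag g\rrag_G$ that is \emph{finite} (and soluble). Your ascending-series argument only shows that finite subgroups of $K$ are soluble; it says nothing about finiteness of normal closures, i.e.\ about $K\le\W(G)$. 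The assertion that ``$K$ is a directed union of finite normal subgroups of $G$'' is precisely what needs proof: from stage one onward, the preimage in $G$ of a mini-foot of $G/K_\alpha$ can have infinite normal closure when $K_\alpha$ is infinite, and Lemma~\ref{lem:N1Nk}\ref{iDElem:N1Nk} gives only directedness of the family of finite normal subgroups, not that this family exhausts $K$. Your second route is likewise only a hope: the direct-integral argument behind Lemma~\ref{lem:G-faithful-normal-subgroup} excludes kernel elements with infinite normal closure via `Claim 2' in the proof of \cite[Lemma~10]{BeHa--08}, and there is no analogous measure-theoretic claim excluding finite \emph{non-soluble} normal closures; as you say, reconciling the two viewpoints ``is where the remaining work lies.''

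The paper's proof sidesteps both difficulties by post-processing a kernel rather than constructing one. It first applies Lemma~\ref{lem:G-faithful-normal-subgroup} with $N=\{e\}$ to obtain an irreducible representation whose kernel $K$ lies in $\W(G)$, and then proves (Lemma~\ref{lem:ShrinkKernel}) that $G/(K\cap\Fsol(G))$ is still irreducibly faithful, so that the pulled-back representation has kernel $K\cap\Fsol(G)\le\Fsol(G)$. The Gasch\"utz verification for $G/(K\cap\Fsol(G))$ rests on a single trick: if $x\in K$ has image lying in a finite abelian normal subgroup $A\le\MA\bigl(G/(K\cap\Fsol(G))\bigr)$, then $\llag x\rrag_G$ is finite (because $K\le\W(G)$) and soluble-by-abelian (its intersection with $\Fsol(G)$ is soluble, and the quotient is abelian since $A$ is), hence soluble; so $x\in\Fsol(G)$ and $x$ dies in $G/(K\cap\Fsol(G))$. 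Thus the projection $G/(K\cap\Fsol(G))\twoheadrightarrow G/K$ is injective on $A$, and the single-conjugacy-class property transfers back from the irreducibly faithful quotient $G/K$ via Theorem~\ref{thm:Gasch} and Proposition~\ref{lem14BEHA}. Note that it is exactly the hypothesis $K\le\W(G)$---the very point your construction fails to secure---that makes this argument run; this is why the paper reaches $\W(G)$ first and only then cuts down to $\Fsol(G)$, instead of aiming at $\Fsol(G)$ directly.
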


We need the following. 

\begin{lem}
\label{lem:ShrinkKernel}
Let $G$ be a countable group and $K$ a normal subgroup of $G$
contained in the torsion FC-centre $\W(G)$. 
\par 

If $G / K$ is irreducibly faithful, then $G / (K \cap \Fsol (G))$ is also irreducibly faithful. 
\end{lem}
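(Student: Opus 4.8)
The plan is to reduce everything to Gasch\"utz' criterion (Theorem \ref{thm:Gasch}). Write $L = K \cap \Fsol(G)$ and $\bar G = G/L$. Since $L \le K$, the natural projection $q \colon \bar G \twoheadrightarrow G/K$ has kernel $\bar K := K/L$, and the hypothesis that $G/K$ is irreducibly faithful tells me, again through Theorem \ref{thm:Gasch}, that every finite normal subgroup of $G/K$ contained in $\MA(G/K)$ is generated by a single conjugacy class. I want to verify the same condition for $\bar G$, so the whole proof amounts to transporting this single-conjugacy-class property back up along $q$.

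The crucial input is the claim that \emph{$\bar K = K/L$ contains no non-trivial abelian subgroup that is normal in $\bar G$.} To prove it I would first record that for any finite normal subgroup $N$ of $G$ the intersection $N \cap \Fsol(G)$ equals the soluble radical of $N$: every finite subgroup of $\Fsol(G)$ is soluble, because a finitely generated subgroup lies in a finite product of finite soluble normal subgroups, and such a product is soluble and finite by Lemma \ref{lem:N1Nk}\ref{iDElem:N1Nk}; conversely the soluble radical of $N$ is characteristic, hence a finite soluble normal subgroup of $G$, so lies in $\Fsol(G)$. Granting this, take $1 \neq \bar a$ in such an abelian normal subgroup and lift it to $a \in K \le \W(G)$. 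The $G$-normal closure $N = \llag a \rrag_G$ is then finite and contained in $K$, and its image in $\bar G$ is the $\bar G$-normal closure of $\bar a$, hence abelian. Thus $N/(N \cap L) = N/(\text{soluble radical of } N)$ is abelian, which forces $N$ to be soluble, whence $N \cap L = N$ and $\bar a = e$ --- a contradiction.

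With the claim in hand, let $\bar V$ be a finite normal subgroup of $\bar G$ contained in $\MA(\bar G)$; it is abelian by Proposition \ref{structureminisocle}\ref{2DEstructureminisocle}. Then $\bar V \cap \bar K$ is a finite abelian subgroup of $\bar K$ that is normal in $\bar G$, hence trivial by the claim, so $q$ restricts to a $\bar G$-equivariant isomorphism $\bar V \xrightarrow{\sim} q(\bar V)$. By Proposition \ref{structureminisocle}\ref{7DEstructureminisocle}, $q(\bar V)$ is a finite normal subgroup of $G/K$ contained in $\MA(G/K)$; since $G/K$ is irreducibly faithful, Theorem \ref{thm:Gasch} together with the equivalence of \ref{iiDElem14BEHA} and \ref{iiiDElem14BEHA} in Proposition \ref{lem14BEHA} show that $q(\bar V)$ is cyclic as a $\Z[G/K]$-module. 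Pulling this back through the surjection $\Z[\bar G] \twoheadrightarrow \Z[G/K]$ and the equivariant isomorphism, $\bar V$ is cyclic as a $\Z[\bar G]$-module, hence generated by a single conjugacy class by Proposition \ref{lem14BEHA} again (as $\bar V \le \MA(\bar G)$). By Theorem \ref{thm:Gasch}, $\bar G = G/L$ is irreducibly faithful.

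The main obstacle is the crucial claim, and within it the identification of $N \cap \Fsol(G)$ with the soluble radical: this is exactly where both hypotheses are used, namely $K \le \W(G)$ (so that elements of $K$ have \emph{finite} normal closure and one may argue with finite subgroups) and the specific choice $L = K \cap \Fsol(G)$ (which isolates precisely the soluble part, leaving $\bar K$ free of $\bar G$-normal abelian subgroups). Once the claim trivialises $\bar V \cap \bar K$, transporting Gasch\"utz' condition along $q$ through the module language of Proposition \ref{lem14BEHA} is routine.
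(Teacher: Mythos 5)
Your proposal is correct and follows essentially the same route as the paper: reduce via Gasch\"utz' criterion (Theorem \ref{thm:Gasch}), show the projection onto $G/K$ is injective on the relevant finite normal subgroups of $G/(K \cap \Fsol(G))$ contained in the abelian mini-socle, and transport the single-conjugacy-class condition back through the $\Z[G]$-module language of Proposition \ref{lem14BEHA}. Your only departure is cosmetic: you package the injectivity step as a standalone claim (no non-trivial abelian $\bar G$-normal subgroup of $K/L$, via the identification of $N \cap \Fsol(G)$ with the soluble radical of a finite normal subgroup $N$), whereas the paper verifies the same triviality of $A \cap \Ker(r_2)$ directly by the identical soluble-by-abelian argument.
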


\begin{proof}
Set $S = \Fsol (G)$. 
In order to show that $G / (K \cap S)$ is irreducibly faithful,
it suffices by Theorem~\ref{thm:Gasch} to consider
an arbitrary finite normal subgroup $A$ of $G / (K \cap S)$
contained in $\MA \big(G / (K \cap S) \big)$
and to show that $A$ is generated by a single conjugacy class.
\par

Let $r_1 \hskip.1cm \colon G \twoheadrightarrow G / (K \cap S)$
and $r_2 \hskip.1cm \colon G / (K \cap S) \twoheadrightarrow G / K$
be the canonical projections.
We claim that the restriction $r_2 \vert_A$ is injective.
Indeed, let $x \in G$ be such that $r_1(x) \in \Ker(r_2\vert_A) = A \cap \Ker(r_2)$;
note that $r_1(x) \in A$ and $x \in K$.
We have
$$
\llag r_1(x) \rrag_{G / (K \cap S)} \cong
\llag x \rrag_G / \big( \llag x \rrag_G \cap (K \cap S) \big) =
\llag x \rrag_G / \big( \llag x \rrag_G \cap S \big) .
$$
Since $K \le \W(G)$ by hypothesis,
the normal closure $\llag x \rrag_G$ is finite.
By the definition of $S$, every finite normal subgroup of $G$ contained in $S$ is soluble;
hence $\llag x \rrag_G \cap S$ is soluble.
Moreover $\llag r_1(x) \rrag_{G / K \cap S}$ is abelian
beause $r_1(x) \in A$ and $A$ is abelian normal in $G / (K \cap S)$.
It follows that $\llag x \rrag_G$ is soluble-by-abelian, hence soluble.
We infer that $x \in S$. Therefore $r_1(x) = e$, which proves the claim.
\par

Since $G / (K \cap S)$ is a quotient of $G$, we may view $A$ as a $\Z[G]$-module,
and we must show that this module is cyclic (see Proposition~\ref{lem14BEHA}).
The claim implies that $r_2$ induces an isomorphism of $\Z[G]$-modules $A \to r_2(A)$.
Since $G / K$ is irreducibly faithful by hypothesis,
and since $r_2(A) \le \MA(G / K)$ by Proposition~\ref{structureminisocle}\ref{7DEstructureminisocle},
we deduce from Theorem~\ref{thm:Gasch}
that $r_2(A)$ is generated by a single conjugacy class in $G / K$.
Thus $r_2(A)$ is a cyclic $\Z[G]$-module by Proposition~\ref{lem14BEHA},
from which it finally follows that $A$ is a cyclic $\Z[G]$-module, as required. 
\end{proof}

\begin{proof}[Proof of Proposition~\ref{prop:Kernel}]
The group $G$ has an irreducible unitary representation
whose kernel $K$ is contained in $\W(G)$,
by Lemma~\ref{lem:G-faithful-normal-subgroup} applied with $N = \{e\}$.
By Lemma~\ref{lem:ShrinkKernel},
it follows that $G$ also has an irreducible unitary representation
whose kernel is contained in $\Fsol (G)$. 
\end{proof}

\begin{rem}
\label{BrolineGarrison}
For a finite group $G$,
Proposition \ref{prop:Kernel} implies that $G$
has an irreducible representation with soluble kernel.
This falls quite short of a theorem
due to Broline and Garrison \cite[Corollary 12.20]{Isaa--76}
which establishes that $G$ has an irreducible representation with nilpotent kernel.
More precisely:
\par

\emph{
Let $G$ be a finite group and let $\pi$ be an irreducible representation of $G$ over $\mathbf C$
satisfying either of the following conditions:
(i) the degree of $\pi$ is maximal among the degrees of all irreducible representations of $G$,
(ii) the kernel of $\pi$ is minimal among the kernels of all irreducible representations of $G$.
Then the kernel of $\pi$ is nilpotent.
}
\par

There are groups without any irreducible representation having abelian kernel.
This is well-known to experts, and we are convinced that examples exist in the literature,
but we have not been able to find a precise reference;
one specific example can be found in Appendix \ref{Section:appendixA}.
\end{rem}

\begin{rem}
\label{TrunkSufFiur}
Let $G$ be a countable group.

\vskip.2cm

(1)
It follows from Proposition \ref{prop:Kernel} that the
complement $G \smallsetminus \Fsol (G)$ of $\Fsol (G)$ 
in a countable group $G$ is irreducibly faithful.
(A refinement of that statement will be established
in Proposition~\ref{prop:IrredFaithful:sol}.)

\vskip.2cm

(2)
However, the quotient $G / \Fsol (G)$
need not have a faithful irreducible unitary representation.
\par

Indeed, let $H$ be a countable group
such that $H / \Fsol (H) \cong C_3$ is cyclic of order $3$;
see Example \ref{exe:socletrunk}(7).
Set $G = H \times H$.
By Proposition~\ref{MSMATofproduct}, we have
$$
G / \Fsol (G) \cong (H \times H) / (\Fsol (H) \times \Fsol (H)) \cong C_3 \times C_3 ,
$$
so that $G / \Fsol (G)$ does not have any faithful irreducible unitary representation.

\vskip.2cm

(3)
In \cite[Corollary 3]{BeHa--08}, it is noted that
each of the following conditions on $G$ is sufficient
to imply that $G$ has a faithful irreducible unitary representation:
\begin{enumerate}[label=(\roman*)]
\item\label{iDETrunkSufFiur}
$G$ is torsion-free,
\item\label{iiDETrunkSufFiur}
all conjugacy classes in $G$ distinct from $\{e\}$ are infinite.
\end{enumerate}
Proposition \ref{prop:Kernel} shows that the following condition,
weaker than both \ref{iDETrunkSufFiur} and \ref{iiDETrunkSufFiur},
is also sufficient:
\begin{enumerate}[label=(\roman*)]
\addtocounter{enumi}{2}
\item\label{iiiDETrunkSufFiur}
$\Fsol (G) = \{e\}$.
\end{enumerate}
Here are two families of groups for which \ref{iiiDETrunkSufFiur} holds,
but neither \ref{iDETrunkSufFiur} nor \ref{iiDETrunkSufFiur} does.
\par

(a)
Any restricted sum $H$ of finite non-abelian simple groups.
More generally,
any direct product $G \times H$ of an irreducibly faithful group $G$
with a restricted sum $H$ of finite non-abelian simple groups.
\par

(b)
Let $G$ be one of the groups defined by B.H.\ Neumann in 1937
to show that there are uncoutably many pairwise non-isomorphic groups
which are finitely generated and not finitely presented;
see \cite{Neum--37}, as well as 
\cite[Section III.B and in particular no~35]{Harp--00}.
Recall that, in $G$, the FC-centre is a restricted sum
$N := \prod'_n H_n$, where each $H_n$ is a finite simple alternating group,
and $\vert H_1 \vert < \dots < \vert H_n \vert < \vert H_{n+1} \vert < \dots$,
and the quotient $G/N$ is the permutation group of $\Z$
generated by translations and even finitely supported permutations.
Observe that $G$ is neither torsion-free,
nor with all conjugacy classes other than $\{e\}$ infinite.
The subgroup $\Fsol (G)$ is trivial, and therefore $G$ has
a faithful irreducible unitary representation.
\end{rem}

We finish by recording the following strengthening
of Lemma~\ref{lem:G-faithful-normal-subgroup},
which will be needed in Section~\ref{Section:proofof1.1}. 

\begin{lem}
\label{lem:G-faithful-normal-subgroup:Kernel<sol(G)}
Let $G$ be a countable group, $N$ a normal subgroup of $G$,
and $\sigma$ an irreducible unitary representation of $N$
which is $G$-faithful.
\par

Then $G$ has an irreducible unitary representation $\pi$
such that $\Ker(\pi) \cap N =\{e\}$ and $\Ker(\pi) \le \Fsol (G)$.
\end{lem}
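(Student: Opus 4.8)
The plan is to deduce this strengthening by combining Lemma~\ref{lem:G-faithful-normal-subgroup} with Lemma~\ref{lem:ShrinkKernel}, so that essentially no new analytic input is required: the direct-integral argument has already been carried out in those two lemmas, and here we only need to organize the kernels correctly.

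First I would apply Lemma~\ref{lem:G-faithful-normal-subgroup} to the given $G$-faithful irreducible representation $\sigma$ of $N$. This yields an irreducible unitary representation $\pi_0$ of $G$ whose restriction $\pi_0 \vert_N$ is faithful and whose kernel $K := \Ker(\pi_0)$ is contained in the torsion FC-centre $\W(G)$. Since $\pi_0 \vert_N$ is faithful, we have $K \cap N = \{e\}$. Moreover $\pi_0$ factors through a faithful irreducible representation of the quotient $G / K$, so $G / K$ is irreducibly faithful.

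Next I would invoke Lemma~\ref{lem:ShrinkKernel} with this subgroup $K$: its hypotheses are satisfied, since $K$ is a normal subgroup of $G$ contained in $\W(G)$ and $G / K$ is irreducibly faithful. The conclusion is that $G / (K \cap \Fsol (G))$ is irreducibly faithful. Pulling back a faithful irreducible representation of $G / (K \cap \Fsol (G))$ along the canonical projection then produces an irreducible unitary representation $\pi$ of $G$ whose kernel is precisely $\Ker(\pi) = K \cap \Fsol (G)$.

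Finally I would verify the two asserted properties. By construction $\Ker(\pi) = K \cap \Fsol (G) \le \Fsol (G)$, which is the second requirement. For the first, $\Ker(\pi) \le K$, whence $\Ker(\pi) \cap N \le K \cap N = \{e\}$. The only point deserving a moment's attention is that shrinking the kernel from $K$ down to $K \cap \Fsol (G)$ does not disturb the triviality of the intersection with $N$; but this is automatic, since any subgroup of $K$ meets $N$ trivially as soon as $K$ does. I do not expect a genuine obstacle here: all the substance lies in the two earlier lemmas, and the present statement is a matter of bookkeeping with the inclusions $K \cap N = \{e\}$ and $K \cap \Fsol (G) \le \Fsol (G)$.
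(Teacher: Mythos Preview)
Your proof is correct and follows essentially the same approach as the paper: apply Lemma~\ref{lem:G-faithful-normal-subgroup} to obtain an irreducible representation with kernel $K \le \W(G)$ and $K \cap N = \{e\}$, then invoke Lemma~\ref{lem:ShrinkKernel} to replace $K$ by $K \cap \Fsol(G)$. The paper's proof is just a terser version of yours, omitting the explicit verification of the two kernel conditions at the end.
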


\begin{proof}
Let $K$ be the kernel of the irreducible unitary representation of $G$
afforded by applying Lemma~\ref{lem:G-faithful-normal-subgroup} to $\sigma$.
Thus $K \cap N = \{e\}$ and $K \le \W(G)$.
The desired conclusion now follows from Lemma~\ref{lem:ShrinkKernel}.
\end{proof}

\section{Cyclic semi-simple \texorpdfstring{$\F_p[G]$}{Fp[G]}-modules}
\label{Section:cyclic}

Let $R$ be a ring.
The following lemma is the module version
of a result often stated for groups and known as Goursat's Lemma. 
The module version appears, for example, in \cite[Page 171]{Lamb--76}.
More on this lemma can be consulted in \cite{BaSZ--15}.

\begin{lem}
\label{lem:Goursat}
Let $A = A_0 \oplus A_1$ be the direct sum of two $R$-modules.
For $i = 0, 1$, let $r_i \hskip.1cm \colon A \twoheadrightarrow A_i$
be the canonical projection.
Let $M \le A$ be a submodule such that $r_i(M) = A_i$ for $i = 0, 1$.
Set $M_i = M \cap A_i$. 
\par

Then the $R$-modules $A_0/M_0$ and $A_1/M_1$ are isomorphic,
and the the canonical image of $M$ in $A_0/M_0 \oplus A_1/M_1$
is the graph of an isomorphism of $R$-modules $A_0/M_0 \to A_1/M_1$. 
\end{lem}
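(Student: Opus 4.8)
The plan is to build the asserted isomorphism by hand and then observe that the image of $M$ is literally its graph. First I would use the surjectivity hypotheses to define a map $\phi \colon A_0/M_0 \to A_1/M_1$: given $a_0 \in A_0$, the condition $r_0(M) = A_0$ furnishes some $a_1 \in A_1$ with $(a_0, a_1) \in M$, and I set $\phi(a_0 + M_0) := a_1 + M_1$. Everything then reduces to checking that this recipe is unambiguous and bijective.

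The crucial step is well-definedness, and this is where the submodule structure of $M$ does all the work. Suppose $(a_0, a_1), (a_0', a_1') \in M$ with $a_0 - a_0' \in M_0$. Since $M_0 = M \cap A_0$, the element $(a_0 - a_0', 0)$ lies in $M$; subtracting it from the difference $(a_0 - a_0', a_1 - a_1') \in M$ shows $(0, a_1 - a_1') \in M$, whence $a_1 - a_1' \in M \cap A_1 = M_1$. Thus $\phi$ is well-defined, and it is $R$-linear because $M$ is closed under the module operations. Surjectivity of $\phi$ is immediate from $r_1(M) = A_1$, and injectivity follows by the symmetric argument: if $(a_0, a_1) \in M$ with $a_1 \in M_1$, then $(0, a_1) \in M$ forces $(a_0, 0) \in M$, so $a_0 \in M_0$ and $a_0 + M_0 = 0$. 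This proves $A_0/M_0 \cong A_1/M_1$.

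For the second assertion, I would let $q \colon A \to (A_0/M_0) \oplus (A_1/M_1)$ be the canonical quotient map. By construction every $(a_0, a_1) \in M$ satisfies $a_1 + M_1 = \phi(a_0 + M_0)$, so $q(M) \subseteq \{(x, \phi(x)) \mid x \in A_0/M_0\}$; conversely, surjectivity of $r_0$ shows that every such pair is attained. Hence $q(M)$ is exactly the graph of the isomorphism $\phi$.

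I do not expect a genuine obstacle here: the statement is a routine diagram chase, and the only point demanding care is the well-definedness argument, where one must exploit that $(a_0 - a_0', 0) \in M$ precisely because $M_0$ was defined as the intersection $M \cap A_0$. The remainder is symmetric bookkeeping, and the same computation simultaneously yields injectivity of $\phi$ and the identification of $q(M)$ with its graph.
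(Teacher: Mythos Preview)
Your argument is correct: the well-definedness step is exactly the heart of the matter, and you handle it cleanly by exploiting that $M_0 = M \cap A_0$ forces $(a_0 - a_0', 0) \in M$. The remaining verifications (linearity, bijectivity, identification of $q(M)$ with the graph) are routine and carried out accurately.

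There is nothing to compare against, however: the paper does not supply its own proof of this lemma. It simply states the module version of Goursat's Lemma and refers the reader to Lambek's \emph{Lectures on rings and modules} and to the survey of Bauer--Sen--Zvengrowski. Your write-up is precisely the standard proof one finds in those sources, so in that sense you have reproduced what the cited references contain.
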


Let now $p$ be a prime and $G$ a group.
The goal of this section is to characterize
when a finite semi-simple $\F_p[G]$-module is cyclic.
This will be achieved in Proposition~\ref{prop:cyclicsemi-simple} below,
after some preparatory steps. 
Proposition~\ref{prop:cyclicsemi-simple} is well-known to experts:
see Lemma 3.1 in \cite{Szec--16}.
It can be seen as a version over $\F_p$
of a result for cyclic unitary representations of compact groups
which appears in Greenleaf and Moskowitz \cite[Proposition 1.8]{GrMo--71}.

\begin{lem}
\label{lem:Goursat-for-2}
Let $W$ be a finite simple $\F_p[G]$-module.
Let $\ko = \mathcal L_{\F_p[G]}(W)$ be its centralizer,
which is a finite field extension of $\F_p$.
Let $V_0, V_1$ be two copies of $W$. 
\par

Every simple $\F_p[G]$-submodule $M$ of $V_0 \oplus V_1$
such that $M \cap V_0 = \{0\}$ is of the form 
$$
M = \{(\lambda x, x) \mid x \in V_1\}
$$
for some $\lambda \in \ko$.
\end{lem}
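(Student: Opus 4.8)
The plan is to exploit the simplicity of $M$ together with the identification of $\End_{\F_p[G]}(W)$ with the centralizer field $\ko$, rather than to invoke Goursat's Lemma (Lemma~\ref{lem:Goursat}) directly; the latter would force an artificial case distinction at $\lambda = 0$. Write $r_i \colon V_0 \oplus V_1 \to V_i$ (for $i = 0, 1$) for the two canonical projections.

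First I would show that the restriction $r_1 \vert_M$ is an isomorphism of $\F_p[G]$-modules onto $V_1$. Its kernel is $\{m \in M \mid r_1(m) = 0\} = M \cap V_0$, which is $\{0\}$ by hypothesis, so $r_1 \vert_M$ is injective. Its image $r_1(M)$ is a submodule of the simple module $V_1 \cong W$, hence is either $\{0\}$ or $V_1$; the first possibility would give $M \subseteq V_0$ and thus $M = M \cap V_0 = \{0\}$, contradicting the simplicity (in particular non-triviality) of $M$. Therefore $r_1(M) = V_1$, and $r_1 \vert_M \colon M \to V_1$ is bijective.

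Next I would exhibit $M$ as a graph. Set $\psi = r_0 \circ (r_1 \vert_M)^{-1} \colon V_1 \to V_0$; being a composite of $\F_p[G]$-morphisms, $\psi$ is itself an $\F_p[G]$-morphism. For $x \in V_1$, the element $m = (r_1 \vert_M)^{-1}(x)$ of $M$ satisfies $r_1(m) = x$ and $r_0(m) = \psi(x)$, so $m = (\psi(x), x)$; hence $M = \{(\psi(x), x) \mid x \in V_1\}$ is precisely the graph of $\psi$. In contrast with the approach via Goursat's Lemma, nothing here requires $\psi$ to be an isomorphism, so the case $\psi = 0$ (equivalently $M = V_1$) is subsumed automatically.

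Finally, under the identifications $V_0 \cong W \cong V_1$, the morphism $\psi$ becomes an element of $\End_{\F_p[G]}(W)$, which by the very definition recalled before Theorem~\ref{thm:CharP(n)} is the centralizer $\mathcal L_{\F_p[G]}(W) = \ko$. Thus $\psi$ is the action of a single scalar $\lambda \in \ko$, and $M = \{(\lambda x, x) \mid x \in V_1\}$, as claimed. I do not expect any genuine obstacle: the only points requiring minimal care are the verification that $r_1(M) = V_1$ (where one uses that a simple module is non-zero) and the identification of $\End_{\F_p[G]}(W)$ with the centralizer field $\ko$, both of which are immediate.
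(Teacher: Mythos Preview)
Your proof is correct. The paper's own proof is a single sentence deferring to Goursat's Lemma (Lemma~\ref{lem:Goursat}); your direct argument via the graph of $\psi = r_0 \circ (r_1\vert_M)^{-1}$ is essentially what Goursat's Lemma amounts to in this special case, combined with the identification $\End_{\F_p[G]}(W) = \ko$ from Schur's Lemma. As you rightly note, your formulation has the small advantage of treating $M = V_1$ (i.e.\ $\lambda = 0$) uniformly: a literal application of Goursat's Lemma presupposes $r_0(M) = V_0$, so one must separately handle the degenerate case $r_0(M) = \{0\}$. Beyond this cosmetic difference the two arguments are the same in substance.
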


\begin{proof}
This is a straightforward consequence of Lemma~\ref{lem:Goursat}.
\end{proof}

The following extension to a direct sum of $\ell+1$ components will be useful. 

\begin{lem}
\label{lem:Goursat-for-ell+1}
Let $W$ be a finite simple $\F_p[G]$-module.
Let $\ko = \mathcal L_{\F_p[G]}(W)$.
Let $\ell \ge 0$; for each $i = 0, \dots, \ell$, let $V_i$ be a copy of $W$.
Set $U = V_0 \oplus V_1 \oplus \dots \oplus V_\ell$. 
\par

Every maximal $\F_p[G]$-submodule $M \lneqq U$
such that $M \cap V_0 = \{0\}$
is of the form 
$$
M = \Big\{ \Big( \sum_{i=1}^\ell \lambda_i x_i, x_1, x_2, \dots, x_\ell \Big)
\hskip.2cm \Big\vert \hskip.2cm
(x_1, \dots, x_\ell) \in V_1 \oplus \dots \oplus V_\ell \Big\}
$$
for some $(\lambda_1, \dots, \lambda_\ell) \in \ko^\ell$. 
\end{lem}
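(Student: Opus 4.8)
The plan is to realise $M$ as the graph of an $\F_p[G]$-homomorphism from $V_1 \oplus \dots \oplus V_\ell$ into $V_0$, and then to use Schur's lemma to express that homomorphism as an $\ell$-tuple of scalars in the centralizer field $\ko$. Write $U' = V_1 \oplus \dots \oplus V_\ell$ and let $\pi' \colon U \twoheadrightarrow U'$ be the projection with kernel $V_0$. First I would observe that the restriction $\pi'\vert_M$ is injective, since its kernel is $M \cap V_0 = \{0\}$ by hypothesis.

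The key step is to upgrade this to an isomorphism $\pi'\vert_M \colon M \to U'$. Since $U$ is the direct sum of $\ell+1$ copies of the finite simple module $W$, it is semi-simple (Proposition~\ref{prop:semi-simple}) and has length $\ell+1$; as $M \lneqq U$ is a \emph{maximal} proper submodule, $U/M$ is simple and $M$ has length $\ell$ by Jordan--H\"older. The module $U'$ also has length $\ell$. An injective $\F_p[G]$-homomorphism between semi-simple modules of equal finite length is necessarily an isomorphism, because its image is a submodule of $U'$ of length $\ell = \mathrm{length}(U')$ and hence all of $U'$. Thus $\pi'\vert_M$ is an isomorphism; note that this formulation covers the degenerate possibility $M = U'$ without any special treatment. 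I would then set $\psi = (\pi'\vert_M)^{-1} \colon U' \to U$. Since $\pi' \circ \psi = \id_{U'}$, the element $\psi(y)$ has $U'$-component equal to $y$, so $\psi(y) = (\phi(y), y)$ for a unique $\F_p[G]$-homomorphism $\phi \colon U' \to V_0$, and
$$
M = \psi(U') = \{ (\phi(y), y) \mid y \in U' \}.
$$

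It remains only to identify $\phi$. Decomposing along $U' = \bigoplus_{i=1}^\ell V_i$, each restriction $\phi\vert_{V_i}$ lies in $\operatorname{Hom}_{\F_p[G]}(V_i, V_0)$; identifying both $V_i$ and $V_0$ with $W$, this is $\End_{\F_p[G]}(W) = \mathcal L_{\F_p[G]}(W) = \ko$ by the very definition of the centralizer. Hence $\phi\vert_{V_i}$ is multiplication by a scalar $\lambda_i \in \ko$, so that $\phi(x_1, \dots, x_\ell) = \sum_{i=1}^\ell \lambda_i x_i$, which gives $M$ exactly the asserted form. The main obstacle is the surjectivity half of the isomorphism step: I would extract it from maximality via the length count above rather than from Goursat's Lemma, because Goursat would additionally require $M$ to surject onto $V_0$, and this fails precisely in the degenerate case $\lambda_1 = \dots = \lambda_\ell = 0$ where $M = U'$. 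Alternatively, one could argue by induction on $\ell$, the base case $\ell = 1$ being Lemma~\ref{lem:Goursat-for-2}, by peeling off one summand $V_\ell$ at a time; but the direct argument seems cleaner and avoids case distinctions.
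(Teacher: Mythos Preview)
Your proof is correct and follows the same overall architecture as the paper's: both show that the projection $\pi'\vert_M \colon M \to U'$ is an isomorphism, then identify $M$ as the graph of an $\F_p[G]$-map $U' \to V_0$ whose components are scalars in $\ko$. The differences are in the justifications of the two nontrivial steps. For surjectivity, the paper observes directly that maximality together with $M \cap V_0 = \{0\}$ forces $U = V_0 \oplus M$, whence $\pi'\vert_M$ is an isomorphism; your Jordan--H\"older length count achieves the same thing and is equally clean. For the scalar identification, the paper sets $M_i = (\pi'\vert_M)^{-1}(V_i) \le V_0 \oplus V_i$ and applies Lemma~\ref{lem:Goursat-for-2} to each $M_i$ separately, obtaining $M = M_1 \oplus \dots \oplus M_\ell$; you bypass Lemma~\ref{lem:Goursat-for-2} by invoking Schur's lemma directly on $\phi\vert_{V_i}$, which is arguably more transparent since Lemma~\ref{lem:Goursat-for-2} is itself just Schur in disguise. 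Your remark about the degenerate case $M = U'$ is apt: the paper's route through Lemma~\ref{lem:Goursat-for-2} also handles it uniformly (that lemma allows $\lambda = 0$), so neither argument actually needs a case split.
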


\begin{proof}
Let $r \hskip.1cm \colon U \twoheadrightarrow V_1 \oplus \dots \oplus V_\ell$
be the canonical projection.
Let $M \lneqq U$ be a maximal $\F_p[G]$-submodule
such that $M \cap V_0 = \{0\}$. Then the restriction $r \vert_M$ is injective.
Since $M$ is maximal, we have $U = V_0 \oplus M$,
so that $r \vert_M \hskip.1cm \colon M \to V_1 \oplus \dots \oplus V_\ell$
is an isomorphism of $\F_p[G]$-modules. 
\par

Given $i \in \{1, \dots, \ell\}$, let $M_i = (r \vert_M)^{-1}(V_i)$.
Then $M_i$ is isomorphic to $V_i$,
hence it is a simple $\F_p[G]$-submodule of $M$ contained in $V_0 \oplus V_i$.
Moreover $M_i \cap V_0 = \{0\}$. By Lemma~\ref{lem:Goursat-for-2},
there exists $\lambda_i \in \ko$ such that
$M_i \cong \{(\lambda_i x_i, x_i) \mid x_i \in V_i\} \le V_0 \oplus V_i$.
Since $r \vert_M \hskip.1cm \colon M \to V_1 \oplus \dots \oplus V_\ell$ is an isomorphism,
we deduce that 
\begin{align*}
M & = M_1 \oplus \dots \oplus M_\ell
\\
& = \Big\{ \Big( \sum_{i=1}^\ell \lambda_i x_i, x_1, x_2, \dots, x_\ell \Big)
\hskip.2cm \Big\vert \hskip.2cm
(x_1, \dots, x_\ell) \in V_1 \oplus \dots \oplus V_\ell \Big\}
\end{align*}
as required. 
\end{proof}

We can now characterize
when a direct sum of copies of a given simple $\F_p[G]$-module is cyclic.

\begin{lem}
\label{lem:Isotypical}
Retain the notation of Lemma \ref{lem:Goursat-for-ell+1}.
\par

The $\F_p[G]$-module $U$ is cyclic if and only if $\ell < \dim_{\ko} (W)$. 
\end{lem}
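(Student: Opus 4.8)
The plan is to reformulate cyclicity in terms of maximal submodules, and then to dualize the problem by means of the Hom-space $\End$-pairing between $U$ and $W$. Since $U$ is finite, it has finite length, so every proper submodule is contained in a maximal one; consequently a vector $u \in U$ generates $U$ as an $\F_p[G]$-module if and only if $u$ lies in no maximal submodule of $U$. The first step is therefore to enumerate all maximal submodules. Because $U = V_0 \oplus \dots \oplus V_\ell$ is semi-simple and isotypic of type $W$, every simple quotient of $U$ is isomorphic to $W$; hence each maximal submodule is the kernel of a surjection $U \twoheadrightarrow W$. Conversely, any nonzero $f \in \mathrm{Hom}_{\F_p[G]}(U,W)$ has image a nonzero submodule of the simple module $W$, so $f$ is onto and $\Ker(f)$ is maximal. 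Two such homomorphisms share a kernel exactly when they differ by an automorphism of $W$, that is by an element of $\ko^\times$; so the maximal submodules are parametrized by the lines in $\mathrm{Hom}_{\F_p[G]}(U,W)$.

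Next I would compute this Hom-space explicitly. Restriction to the summands gives
\[
\mathrm{Hom}_{\F_p[G]}(U,W) = \bigoplus_{i=0}^{\ell} \mathrm{Hom}_{\F_p[G]}(V_i,W) \cong \ko^{\ell+1},
\]
where $f \mapsto (f_0,\dots,f_\ell)$ records the components $f_i = f\vert_{V_i} \in \mathcal L_{\F_p[G]}(W) = \ko$ under the fixed identifications $V_i \cong W$. For a candidate generator $u = (w_0,\dots,w_\ell)$ one then has $f(u) = \sum_{i=0}^{\ell} f_i w_i \in W$, which is a genuine $\ko$-\emph{linear} combination of the $w_i$, the point being that $\ko$ is commutative (it is a finite field, by Wedderburn, as recalled before Theorem~\ref{thm:CharP(n)}), so left and right module structures agree and ordinary linear algebra over $\ko$ applies. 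Combining with the first step, $u$ generates $U$ if and only if $\sum_i f_i w_i \neq 0$ for every nonzero tuple $(f_0,\dots,f_\ell) \in \ko^{\ell+1}$, i.e.\ if and only if $w_0,\dots,w_\ell$ are $\ko$-linearly independent in $W$.

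It remains only to observe that $W$ is a $\ko$-vector space of dimension $\dim_\ko(W)$, so a $\ko$-linearly independent family of length $\ell+1$ exists precisely when $\ell + 1 \le \dim_\ko(W)$, that is $\ell < \dim_\ko(W)$. This yields both implications of the lemma at once: such a generator $u$ exists (making $U$ cyclic) exactly in that range. I would close by relating this to the earlier lemmas: the maximal submodules $M$ with $f_0 \neq 0$ are exactly those with $M \cap V_0 = \{0\}$, which form the affine chart parametrized by $\ko^\ell$ in Lemma~\ref{lem:Goursat-for-ell+1} (the remaining ones contain $V_0$), so the homomorphism description above is just the projective completion of that picture, built up componentwise from Lemma~\ref{lem:Goursat-for-2}.

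The step I expect to be the main obstacle — or at least the one requiring care — is establishing the full correspondence between maximal submodules and lines in $\mathrm{Hom}_{\F_p[G]}(U,W)$: one must check that \emph{every} maximal submodule arises as a kernel (not merely those meeting $V_0$ trivially, which is all that Lemma~\ref{lem:Goursat-for-ell+1} supplies directly) and must handle the $\ko^\times$-ambiguity so that the dual pairing between $\ko^{\ell+1}$ and the $\ko$-span of $\{w_i\}$ is genuinely symmetric. This is exactly where commutativity of $\ko$ is indispensable; without it the ``independence of the components'' condition would not dualize cleanly to ``$f(u)\neq 0$ for all $f$,'' and the counting of obstructions would break down.
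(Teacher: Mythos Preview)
Your proof is correct and takes a route genuinely different from the paper's. The paper treats the two implications separately: for $\ell \ge \dim_\ko(W)$ it notes that the components $(v_0,\dots,v_\ell)$ of any element of $U$ are $\ko$-linearly dependent, reorders so that $v_0$ lies in the $\ko$-span of the rest, and exhibits the element inside one of the explicit maximal submodules of Lemma~\ref{lem:Goursat-for-ell+1}; for the converse it argues by induction on $\ell$, extending a generator of $V_1\oplus\dots\oplus V_\ell$ by a $v_0$ independent of the others and invoking Lemma~\ref{lem:Goursat-for-ell+1} once more to show the resulting cyclic submodule cannot miss $V_0$. Your argument replaces both the induction and the reordering by a single dualization: parametrizing \emph{all} maximal submodules at once by the projective space of $\mathrm{Hom}_{\F_p[G]}(U,W)\cong\ko^{\ell+1}$ converts ``$u$ avoids every maximal submodule'' into ``the $\ko$-linear map $(f_i)\mapsto\sum_i f_i w_i$ has trivial kernel,'' which is exactly $\ko$-linear independence of the $w_i$. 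What your approach buys is conceptual clarity and a non-inductive, symmetric treatment of both directions; what the paper's buys is that it never needs the full classification of maximal submodules, only those disjoint from a chosen summand (supplied explicitly by Lemma~\ref{lem:Goursat-for-ell+1}), at the price of the reordering trick. Your closing remark is apt: Lemma~\ref{lem:Goursat-for-ell+1} describes precisely the affine chart $\{f_0\neq 0\}$ of your projective parametrization.
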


\begin{proof}
Assume first that $\ell \ge \dim_{\ko} (W)$.
Let $(v_0, \dots, v_\ell) \in U$.
Since $V_i = W$ for all $i$, we may view $v_i$ as an element of $W$.
Then, upon reordering the summands $V_0, \dots, V_\ell$,
we may assume that there exists $(\lambda_1, \dots, \lambda_\ell) \in \ko^\ell$
such that $v_0 = \sum_{i=1}^\ell \lambda_i v_i$.
It follows that $(v_0, \dots, v_\ell)$ belongs to 
$$
\Big\{ \Big( \sum_{i=1}^\ell \lambda_i x_i, x_1, x_2, \dots, x_\ell \Big) 
\hskip.2cm \Big\vert \hskip.2cm
(x_1, \dots, x_\ell) \in V_1 \oplus \dots \oplus V_\ell \Big\},
$$
which is a proper $\F_p[G]$-submodule of $U$. Hence $U$ is not cyclic.
(In a context of characteristic zero, an argument of this kind is used
for the proof of \cite[Lemma~15.5.3]{Dixm--69}.)
\par

In order to prove the converse, we proceed by induction on $\ell$.
In case $\ell =0$, we have $0 = \ell < \dim_{\ko} (W)$
and $U = V_0 = W$ is simple, hence cyclic.
\par

We now assume that $0 < \ell < \dim_{\ko} (W)$.
The induction hypothesis ensures that the $\F_p[G]$-module $V_1 \oplus \dots \oplus V_\ell$ is cyclic.
Let $(v_1, \dots, v_\ell)$ be a generator.
Viewing all $v_i$ as elements of $W$, the hypothesis that $\ell < \dim_{\ko} (W)$
ensures the existence of an element $v_0 \in W$
which does not belong to the $\ko$-subspace of $W$ spanned by $\{v_1, \dots, v_\ell\}$.
Let $M$ be the $\F_p[G]$-submodule of $U$ spanned by $(v_0, v_1, \dots, v_\ell)$.
Let $r \hskip.1cm \colon U \twoheadrightarrow V_1 \oplus \dots \oplus V_\ell$
denote the canonical projection.
The image $r(M)$ coincides with the $\F_p[G]$-submodule
generated by $(v_1, \hdots, v_\ell)$, i.e., with $V_1 \oplus \dots \oplus V_\ell$.
If one had $M \cap V_0 = \{0\}$,
then $M$ would be a maximal proper $\F_p[G]$-submodule of $U$,
and Lemma~\ref{lem:Goursat-for-ell+1} would then ensures that
$v_0$ is a $\ko$-linear combination of $\{v_1, \dots, v_\ell\}$, a contradiction.
Hence $M \cap V_0 \ne \{0\}$. Since $V_0$ is simple, $M$ contains $V_0$,
so that $U = M$; this shows that $U$ is indeed cyclic. 
\end{proof}

The following basic counting lemma will also be useful. 

\begin{lem}
\label{lem:counting}
Retain the notation of Lemma \ref{lem:Goursat-for-ell+1}.
Moreover, set $q = \vert \ko \vert$.
\par

The number of simple $\F_p[G]$-submodules of $U$ is 
$$
q^\ell + q^{\ell-1} + \dots + q + 1 = \frac{q^{\ell+1}-1}{q-1}. 
$$
\end{lem}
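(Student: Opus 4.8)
The plan is to identify the simple $\F_p[G]$-submodules of $U$ with the lines through the origin in an $(\ell+1)$-dimensional $\ko$-vector space, and then to count those lines. First I would observe that, since $U = V_0 \oplus \dots \oplus V_\ell$ is a direct sum of copies of the simple module $W$, it is semi-simple and isotypic of type $W$, so every simple $\F_p[G]$-submodule $M \le U$ is isomorphic to $W$. Hence each such $M$ is the image of a nonzero homomorphism $W \to U$; conversely, by Schur's lemma a nonzero homomorphism out of the simple module $W$ is injective, so its image is a simple submodule isomorphic to $W$. Thus the simple submodules of $U$ are exactly the images of the nonzero elements of $\operatorname{Hom}_{\F_p[G]}(W, U)$.

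Next I would compute this Hom-space by decomposing along the summands:
$$
\operatorname{Hom}_{\F_p[G]}(W, U) = \bigoplus_{i=0}^{\ell} \operatorname{Hom}_{\F_p[G]}(W, V_i) = \ko^{\ell+1},
$$
since each $V_i$ is a copy of $W$ and $\operatorname{Hom}_{\F_p[G]}(W, W) = \mathcal L_{\F_p[G]}(W) = \ko$ by the definition of the centralizer field; in particular there are exactly $q^{\ell+1}-1$ nonzero homomorphisms. To pass from homomorphisms to their images, I would use that $\Aut_{\F_p[G]}(W) = \ko^\times$, of order $q-1$, acts on the set of nonzero homomorphisms by precomposition $\alpha \cdot \phi = \phi \circ \alpha$; this action is free because a nonzero $\phi$ is injective, and two nonzero homomorphisms have the same image exactly when they lie in a common orbit. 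The simple submodules therefore biject with the orbits, of which there are
$$
\frac{q^{\ell+1}-1}{q-1} = q^\ell + q^{\ell-1} + \dots + q + 1 ,
$$
which is the asserted count.

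The argument is routine; the only point needing justification---not a genuine obstacle---is that equal images force the two homomorphisms into a common $\ko^\times$-orbit, which follows by comparing two isomorphisms $W \to M$ onto a shared image $M$. A more concrete alternative, closer in spirit to Lemma~\ref{lem:Goursat-for-ell+1}, is to stratify the simple submodules by the least index $j$ for which the projection $r_j$ restricts to an isomorphism on $M$: Schur's lemma then forces $M = \{(0, \dots, 0, x, \lambda_{j+1} x, \dots, \lambda_\ell x) \mid x \in W\}$ for a unique tuple $(\lambda_{j+1}, \dots, \lambda_\ell) \in \ko^{\ell-j}$, giving $q^{\ell-j}$ submodules with pivot $j$ and the total $\sum_{j=0}^{\ell} q^{\ell-j}$, as required.
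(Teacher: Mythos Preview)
Your proof is correct, but it follows a different route from the paper. The paper argues by induction on $\ell$: for $\ell \ge 1$ it separates off the single submodule $V_0$, observes that every other simple $S$ satisfies $S \cap V_0 = \{0\}$, and then uses Goursat's lemma (as in Lemma~\ref{lem:Goursat-for-2}) to see that such $S$ are parametrized by pairs $(S', \lambda)$ with $S'$ a simple submodule of $V_1 \oplus \dots \oplus V_\ell$ and $\lambda \in \ko$, giving the recursion $|\text{simples in } U| = q \cdot |\text{simples in } V_1 \oplus \dots \oplus V_\ell| + 1$.

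Your main argument is more conceptual: you identify the set of simple submodules with the projective space $\mathbb P^{\ell}(\ko)$ via the $\ko^\times$-action on $\operatorname{Hom}_{\F_p[G]}(W,U) \cong \ko^{\ell+1}$, and read off the count in one step. This buys a cleaner global picture (and explains \emph{why} the answer is the number of points of a projective space), at the cost of not interfacing with the explicit Goursat-style parametrizations that the paper has already set up. Your pivot-stratification alternative is in the same spirit as the paper's inductive argument, but partitions by the first nonvanishing coordinate rather than by whether $S$ meets $V_0$; it amounts to unrolling the recursion in one go.
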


\begin{proof}
We proceed by induction on $\ell$.
In case $\ell = 0$, the $\F_p[G]$-module $U = V_0$ is simple, so the result is clear. 
Assume now that $\ell \ge 1$.
Consider
\par
the collection $\text{\large$\mathcal S$}$
of all simple $\F_p[G]$-submodules of $U$,
\par
the complement $\text{\large$\mathcal S$}_0$
of $V_0$ in $\text{\large$\mathcal S$}$, 
i.e., $\text{\large$\mathcal S$}_0 = 
\{ S \in \text{\large$\mathcal S$} \mid S \cap V_0 = \{0\} \}$,
\par
and the collection $\text{\large$\mathcal S'$}$
of all simple $\F_p[G]$-submodules of $V_1 \oplus \dots \oplus V_\ell$.
\par\noindent
Denote by $r$ the canonical projection
$U \twoheadrightarrow V_1 \oplus \dots \oplus V_\ell$.
Each $S \in \text{\large$\mathcal S$}_0$ determines
its image $S' = r(S) \in \text{\large$\mathcal S'$}$,
which can be viewed as a submodule of $U$
contained in $V_1 \oplus \dots \oplus V_\ell$,
and there exists by Lemma \ref{lem:Goursat} an element
$\lambda \in \ko$
such that $S = \{(\lambda x, x) \mid x \in S'\} \le V_0 \oplus S'$.
Conversely, $S' \in \text{\large$\mathcal S'$}$ and $\lambda \in \ko$
determine $S$.
This shows that
$\vert \text{\large$\mathcal S$} \vert
= \vert \text{\large$\mathcal S$}_0 \vert + 1
= q \vert \text{\large$\mathcal S'$} \vert + 1$.
Since $\vert \text{\large$\mathcal S'$} \vert = q^{\ell-1} + \dots + q + 1$
by the induction hypothesis, this ends the proof.
\end{proof}

\begin{lem}
\label{lem:CyclicQuotientModule}
Retain the notation of Lemma \ref{lem:Goursat-for-ell+1}.
Moreover, set $q = \vert \ko \vert$,
denote by $m$ the dimension of $W$ over $\ko$,
and assume that $\ell \ge m$.
Let $\mathcal Z$ be a set of simple $\F_p[G]$-submodules of $U$
of cardinality $\vert \mathcal Z \vert < q^m + \dots + q + 1$.
\par

There is an $\F_p[G]$-submodule $B \le U$ with $B \cap Z = \{0\}$ for all $Z \in \mathcal Z$,
and such that $U/B$ is cyclic.
\end{lem}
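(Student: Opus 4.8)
The plan is to translate the statement into a question of finite linear geometry over $\ko$, using the standard description of the submodule lattice of an isotypic semisimple module, and then to settle that geometric question by an averaging argument run inductively on $m$.

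First I would set up the dictionary. Since $W$ is simple with $\End_{\F_p[G]}(W) = \ko$, the isotypic module $U = V_0 \oplus \dots \oplus V_\ell \cong W^{\ell+1}$ has its $\F_p[G]$-submodules in order-preserving bijection with the $\ko$-subspaces of a fixed $(\ell+1)$-dimensional $\ko$-vector space $E$ (the multiplicity space $\mathrm{Hom}_{\F_p[G]}(W, U)$); concretely one sends a $\ko$-subspace $F \le E$ to $N_F := \sum_{\varphi \in F} \varphi(W) \le U$, with inverse $N \mapsto \mathrm{Hom}_{\F_p[G]}(W, N)$. Under this bijection a submodule isomorphic to $W^{(k)}$ corresponds to a $k$-dimensional subspace of $E$, so in particular the simple submodules correspond to the lines of $E$, consistently with the count of Lemma~\ref{lem:counting} (this correspondence can also be extracted directly from Lemma~\ref{lem:Goursat-for-ell+1}). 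Writing $L_Z \le E$ for the line attached to $Z \in \mathcal Z$, and invoking Lemma~\ref{lem:Isotypical} for the cyclicity criterion, the goal becomes: find a $\ko$-subspace $F \le E$ of codimension $m$ (so that $U/N_F \cong W^{(m)}$ is cyclic) with $F \cap L_Z = \{0\}$, i.e. $L_Z \not\le F$, for every $Z \in \mathcal Z$. Then $B := N_F$ does the job, since the bijection preserves intersections.

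I am thus reduced to the following purely linear claim, which I would prove by induction on $m$ (for every ambient dimension $\ge m$): if $E'$ is a $\ko$-space of dimension $d \ge m$ and $\mathcal L$ is a set of lines of $E'$ with $\vert \mathcal L \vert < \frac{q^{m+1}-1}{q-1}$, then some subspace of codimension $m$ contains no member of $\mathcal L$. The base case $m = 0$ is vacuous, since the hypothesis forces $\mathcal L = \emptyset$ and $F = E'$ works. For the inductive step I would produce one good hyperplane and recurse inside it. Among the $\frac{q^{d}-1}{q-1}$ hyperplanes of $E'$ (here $\dim E' = d$), each line lies in exactly $\frac{q^{d-1}-1}{q-1}$ of them, so the average number of lines of $\mathcal L$ contained in a hyperplane equals $\vert \mathcal L \vert \cdot \frac{q^{d-1}-1}{q^{d}-1}$. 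Using the integer bound $\vert \mathcal L \vert \le \frac{q^{m+1}-1}{q-1} - 1 = q \cdot \frac{q^{m}-1}{q-1}$ together with the elementary inequality $q \cdot \frac{q^{d-1}-1}{q^{d}-1} < 1$ (valid because $q \ge 2$), this average is strictly less than $\frac{q^{m}-1}{q-1}$. Hence some hyperplane $H$ contains fewer than $\frac{q^{m}-1}{q-1}$ lines of $\mathcal L$; since $\dim H = d-1 \ge m-1$, the induction hypothesis in $H$ (with parameter $m-1$) yields a subspace $F \le H$ of codimension $m-1$ in $H$, hence codimension $m$ in $E'$, avoiding all lines of $\mathcal L$ contained in $H$. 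The remaining lines of $\mathcal L$ are avoided automatically, as they are not contained in $H \supseteq F$. This closes the induction, and applying it with $E' = E$, $d = \ell + 1$ finishes the proof.

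The step I expect to be the crux is the averaging estimate in the inductive step: everything hinges on the exact threshold $\frac{q^{m+1}-1}{q-1}$, and the argument works precisely because passing from the strict bound on $\vert \mathcal L \vert$ to the integer bound $\vert \mathcal L \vert \le q \cdot \frac{q^{m}-1}{q-1}$ exactly absorbs the factor $q$ lost when averaging over hyperplanes. Setting up the submodule–subspace dictionary so that dimensions, intersections, and the cyclicity criterion of Lemma~\ref{lem:Isotypical} all transport correctly is routine but must be done with care; alternatively one can bypass the abstract multiplicity space and run the identical induction directly on the submodules of $U$, using Lemmas~\ref{lem:Goursat-for-ell+1} and~\ref{lem:counting} in place of the line-and-hyperplane count in $E$.
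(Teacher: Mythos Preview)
Your argument is correct, but it follows a genuinely different route from the paper's. The paper argues by maximality: take $B$ maximal among submodules of $U$ satisfying $B \cap Z = \{0\}$ for all $Z \in \mathcal Z$; if $U/B$ were not cyclic then, by Lemma~\ref{lem:Isotypical}, $U/B$ would be a sum of at least $m+1$ copies of $W$, hence by Lemma~\ref{lem:counting} would contain at least $q^m + \dots + q + 1$ simple submodules, strictly more than $\vert \mathcal Z \vert$, so one of them is not the image of any $Z$; lifting it back to $U$ enlarges $B$ while preserving the disjointness condition, contradicting maximality. This is a three-line pigeonhole-plus-maximality argument that stays entirely inside the module language of Lemmas~\ref{lem:Isotypical} and~\ref{lem:counting}. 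Your approach instead passes to the multiplicity space $E \cong \ko^{\ell+1}$ and runs a constructive induction on $m$, using an averaging bound over hyperplanes to peel off one codimension at a time. Both arguments hinge on the exact threshold $\frac{q^{m+1}-1}{q-1}$; the paper's makes the role of that threshold transparent via the simple-submodule count in the quotient, while yours gives a more explicit (in principle algorithmic) construction of $B$ and highlights the projective-geometric content. Your version is longer and relies on the submodule--subspace dictionary being a lattice isomorphism (which is standard, and in any case recoverable from Lemma~\ref{lem:Goursat-for-ell+1}), but it is sound.
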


\begin{proof}
Let $\text{\large$\mathcal B$}$ be the collection of all $\F_p[G]$-submodules $B$ of $U$
such that $B \cap Z = \{0\}$ for all $Z \in \mathcal Z$.
Let also $B \in \text{\large$\mathcal B$}$ be an element which is maximal for the inclusion relation.
Note that the $\F_p[G]$-module $U/B$ is semi-simple.
If $U/B$ were not cyclic,
then $U/B$ would be isomorphic to a direct sum of at least $m+1$ copies of $W$ by Lemma~\ref{lem:Isotypical}.
Therefore $U/B$ would contain at least $q^m + \dots + q + 1$
simple $\F_p[G]$-submodules by Lemma~\ref{lem:counting}.
In particular $U/B$ would contain at least one simple $\F_p[G]$-submodule $C$
which is different from the canonical image of $Z$ in $U/B$ for all $Z \in \mathcal Z$.
Denoting by $B'$ the preimage of $C$ in $U$,
we obtain $B \lneqq B'$ and $B' \in \text{\large$\mathcal B$}$.
This contradicts the maximality of $B$. Hence $U/B$ is cyclic.
\end{proof}

Given an additive group $V$ and a subset $F \subseteq V$, we set 
$$
F - F = \{c \in V \mid c = a-b \hskip.2cm \text{for some} \hskip.2cm a, b \in F \} .
$$
The following result will be needed in Section~\ref{SectionQ(n)}.

\begin{lem}
\label{lem:F-F}
Retain the notation from Lemma~\ref{lem:counting}.
\par

If $\ell \ge 1$,
there is a subset $F \subseteq U$ of size~$q^\ell + q^{\ell-1} + \dots + q + 1$
such that $F-F$ contains a non-zero element
of each of the simple $\F_p[G]$-submodules of $U$.
\end{lem}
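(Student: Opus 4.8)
The plan is to transport the statement into the coordinate space $\ko^{\ell+1}$ by fixing a single nonzero vector of $W$, thereby reducing the lemma to an elementary ``cover every direction'' problem over $\ko$, and then to settle that problem by an explicit construction followed by a padding argument.

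First I would recall that $\End_{\F_p[G]}(W) = \ko$, so that, by Schur's lemma (equivalently, by Lemmas~\ref{lem:Goursat-for-2} and~\ref{lem:Goursat-for-ell+1}), every simple $\F_p[G]$-submodule of $U$ has the form
$$
W_{\underline\alpha} = \{(\alpha_0 x, \alpha_1 x, \dots, \alpha_\ell x) \mid x \in W\}
$$
for some nonzero $\underline\alpha = (\alpha_0, \dots, \alpha_\ell) \in \ko^{\ell+1}$, with $W_{\underline\alpha}$ depending only on the class $[\underline\alpha] \in \mathbb{P}^\ell(\ko)$; Lemma~\ref{lem:counting} confirms that there are $q^\ell + \dots + q + 1 = |\mathbb{P}^\ell(\ko)|$ of them. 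Next I would fix $w \in W \smallsetminus \{0\}$ and introduce the $\ko$-linear injection
$$
\iota \colon \ko^{\ell+1} \to U, \qquad \iota(\underline a) = (a_0 w, a_1 w, \dots, a_\ell w).
$$
The key point is the membership criterion: for $\underline a \ne 0$ one has $\iota(\underline a) \in W_{\underline\alpha} \smallsetminus \{0\}$ if and only if $[\underline a] = [\underline\alpha]$. Indeed, from $a_i w = \alpha_i x$ for all $i$, choosing an index $i_0$ with $\alpha_{i_0} \ne 0$ gives $x = (\alpha_{i_0}^{-1} a_{i_0}) w \in \ko w$, whence $\underline a$ is a nonzero $\ko$-multiple of $\underline\alpha$; the converse is immediate. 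Thus each nonzero element of the image of $\iota$ lies in exactly one simple submodule, the one indexed by its own direction.

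This reduces the lemma to a purely combinatorial assertion over $\ko$: there is a subset $S \subseteq \ko^{\ell+1}$ with $|S| = q^\ell + \dots + q + 1$ such that $S - S$ meets every $\ko$-line through the origin, i.e. $\{[\underline a - \underline b] \mid \underline a, \underline b \in S,\ \underline a \ne \underline b\} = \mathbb{P}^\ell(\ko)$. Granting this, I set $F := \iota(S)$: it has size $|S|$ by injectivity of $\iota$, and since $\iota(\underline a) - \iota(\underline b) = \iota(\underline a - \underline b)$, the membership criterion guarantees that $F - F$ contains a nonzero element of every $W_{\underline\alpha}$, as required.

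Finally I would construct $S$ explicitly. Writing $\ko^{\ell+1} = \ko \oplus \ko^\ell$, set
$$
S_0 = \{0\} \cup \{(1, \underline u) \mid \underline u \in \ko^\ell\},
$$
of size $q^\ell + 1$. Every direction is already realized by $S_0 - S_0$: an affine direction $[1 : \underline u]$ arises as $(1,\underline u) - 0$, while a direction at infinity $[0 : \underline v]$ with $\underline v \ne 0$ arises as $(1,\underline v) - (1, 0) = (0, \underline v)$. Since $q^\ell + 1 \le q^\ell + q^{\ell-1} + \dots + q + 1 \le q^{\ell+1} = |\ko^{\ell+1}|$ for $\ell \ge 1$ and $q \ge 2$, I may enlarge $S_0$ to a set $S$ of the exact required cardinality; adjoining elements only enlarges the difference set, so the coverage is preserved. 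I expect the reduction carried out in the first two paragraphs—the identification of the simple submodules with $\mathbb{P}^\ell(\ko)$ together with the membership criterion for $\iota$—to be the substantive step, the combinatorial construction being then routine.
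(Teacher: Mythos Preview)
Your proof is correct and takes a different route from the paper's. The paper works directly in $U$: it picks one nonzero element from each of the $q^\ell + \dots + q^2$ simple submodules \emph{not} contained in $V_0 \oplus V_1$, and then handles the remaining $q+1$ submodules inside $V_0 \oplus V_1$ with the $q+1$ elements $\{0\} \cup \{(\lambda x, x, 0, \dots, 0) \mid \lambda \in \ko\}$ for a fixed nonzero $x \in W$; almost every submodule is thus hit by a single element of $F$ (as $f - 0$), and only $V_0$ is reached via a genuine difference of two elements of $E'$. This produces a set of exactly the stated size, with no padding. You instead transport the problem to $\ko^{\ell+1}$ via the embedding $\iota$ along a fixed $w \in W \smallsetminus \{0\}$, identify the simple submodules with $\mathbb{P}^\ell(\ko)$, and cover all projective directions by the difference set of the affine patch $\{0\} \cup (\{1\} \times \ko^\ell)$, which has only $q^\ell + 1$ elements. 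Your argument is more conceptual and in fact proves a sharper statement (a set of size $q^\ell + 1$ already suffices), at the price of the coordinatization and a padding step; the paper's construction is more hands-on and avoids any parametrization, but makes almost no use of the difference-set freedom beyond the single subtraction needed for $V_0$.
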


\begin{proof}
For each subset $I \subseteq \{0, 1, \dots, \ell\}$,
we view the direct sum $\bigoplus_{i \in I} V_i$ as a submodule of $U$. 
\par
 	
Let $\text{\large$\mathcal S$}$ be the collection of all simple submodules of $U$
and $\text{\large$\mathcal S$}'$ be the subcollection consisting of
those $S \in \text{\large$\mathcal S$}$ which are contained in $V_0 \oplus V_1$.
By Lemma~\ref{lem:counting}, we have
$\vert \text{\large$\mathcal S$} \vert = q^\ell + q^{\ell-1} + \dots + q + 1$
and $\vert \text{\large$\mathcal S$}' \vert = q+1$.
\par

By definition, each element of $\text{\large$\mathcal S$}$ is a simple module,
so that any two distinct elements of $\text{\large$\mathcal S$}$ have intersection $\{0\}$.
Choosing a non-zero element in each member of
$\text{\large$\mathcal S$} \smallsetminus \text{\large$\mathcal S$}'$,
we obtain a set $E$ of size $q^\ell + q^{\ell-1} + \dots + q^2$.
\par

Choose now a non-zero $x \in V_1$, and set
$E' = \{(\lambda x, x) \mid \lambda \in k\} \subseteq V_0 \oplus V_1$.
Thus $\vert E' \vert = \vert \ko \vert = q$.
By Lemma~\ref{lem:Goursat-for-2}, the set $E'$ contains a non-zero element
in each member of $\text{\large$\mathcal S$}' \smallsetminus \{V_0\}$.
\par

Finally, we set $F = E \cup E' \cup \{0\}$.
Observe that $\vert F \vert = q^\ell + q^{\ell-1} + \dots + q + 1$.
Moreover, we have $E \cup E' \subset F \smallsetminus \{0\} \subset F-F$,
so that $F-F$ contains a non-zero element
of each member of $\text{\large$\mathcal S$} \smallsetminus \{V_0\}$.
Since
$$
V_0 \ni (x, 0) = (x, x)-(0, x) \in E' - E' \subset F-F,
$$
we see that $F-F$ also contains a non-zero element of $V_0$.
Thus the set $F$ has the required properties. 
\end{proof}

Given a semi-simple $R$-module $U$ and a simple $R$-module $W$,
the submodule of $U$ generated by all simple submodules isomorphic to $W$
is called the \textbf{isotypical component of type $W$} of $U$.
Every semi-simple $R$-module is the direct sum of its isotypical components
\cite[\S~3, Proposition~9]{Bo--A8}.

\begin{lem}
\label{lem:IsotypicalDecomposition}
A finite semi-simple $\F_p[G]$-module $U$ is cyclic
if and only if each of its isotypical components is cyclic. 
\end{lem}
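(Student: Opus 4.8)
The plan is to exploit the canonical decomposition $U = \bigoplus_{j} U_j$ of the semi-simple module $U$ into its isotypical components, recalled just before the statement; here the $U_j$ are the isotypical components of pairwise non-isomorphic types $W_j$, each $U_j$ is an $\F_p[G]$-submodule, and (since $U$ is finite) there are only finitely many of them. Consequently the associated projections $\pi_j \colon U \to U_j$ are $\F_p[G]$-module homomorphisms, which is what will let me transfer generators back and forth.

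The forward implication is immediate: if $U = \F_p[G]\, u$ is cyclic, then applying $\pi_j$ gives $U_j = \pi_j(U) = \F_p[G]\, \pi_j(u)$, so each $U_j$ is cyclic, generated by $\pi_j(u)$.

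For the converse, the step I would isolate first is a structural fact: every $\F_p[G]$-submodule $M \le U$ splits as $M = \bigoplus_j (M \cap U_j)$. This is precisely where the hypothesis that the $U_j$ are of pairwise distinct types is used. Indeed, $M$ is a submodule of a semi-simple module, hence semi-simple by Proposition~\ref{prop:semi-simple}, so it is generated by its simple submodules; each such simple submodule is isomorphic to exactly one $W_j$ and therefore, by the very definition of the isotypical component of type $W_j$, is contained in $U_j$. Thus $M$ is the sum of the $M \cap U_j$, and the sum is direct because the $U_j$ are.

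Granting this, the converse is quick. Choose a generator $u_j$ of each (cyclic) $U_j$, set $u = \sum_j u_j$ (a finite sum, as there are finitely many components) and $M = \F_p[G]\, u$. On the one hand $\pi_j(M) = \F_p[G]\, u_j = U_j$; on the other hand the splitting of $M$ yields $\pi_j(M) = M \cap U_j$. Comparing the two, $M \cap U_j = U_j$ for every $j$, so $U_j \le M$ for all $j$ and therefore $M = U$, i.e.\ $U$ is cyclic. The only genuine content is the structural splitting of submodules along the isotypical decomposition — that is the step I expect to require the most care to state and justify cleanly, since it is what upgrades ``$M$ surjects onto each component'' to ``$M$ contains each component.''
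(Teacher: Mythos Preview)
Your argument is correct, and it takes a genuinely different route from the paper's. For the converse implication, the paper proceeds by induction on the number $\ell$ of isotypical components: with $M$ the submodule generated by $v = (v_1, \dots, v_\ell)$, the inductive hypothesis gives that $M$ surjects onto $A_0 = M_1 \oplus \dots \oplus M_{\ell-1}$, and clearly $M$ surjects onto $A_1 = M_\ell$; since $A_0$ and $A_1$ share no simple constituent, Goursat's Lemma (Lemma~\ref{lem:Goursat}) forces $M = A_0 \oplus A_1 = U$. You instead prove directly the structural fact that \emph{every} submodule $M \le U$ decomposes as $\bigoplus_j (M \cap U_j)$, and then read off $M \cap U_j = \pi_j(M) = U_j$ in one step. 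Your approach is arguably cleaner and more conceptual --- the splitting of submodules along isotypical components is a standard and reusable fact --- while the paper's approach has the minor advantage of reusing Lemma~\ref{lem:Goursat}, which is already in place for the surrounding lemmas and keeps the toolkit of Section~\ref{Section:cyclic} self-contained.
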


\begin{proof}
The `only if' part is clear since any quotient of a cyclic module is cyclic. 
\par

Let $U = M_1 \oplus \dots \oplus M_\ell$ be the decomposition of $U$
as the direct sum of its isotypical components. 
Assume that $M_i$ is cyclic for all $i \in \{1, \dots, \ell\}$ and let $v_i \in M_i$ be a generator.
We claim that $v = (v_1, \dots, v_\ell)$ is a generator of $U$.
We prove this by induction on $\ell$.
The base case $\ell = 1$ is trivial.
Assume now that ${\ell \ge 2}$ and let $M$ be the submodule generated by $v$.
The induction hypothesis ensures that
the canonical projection of $M$ to $A_0 = \bigoplus_{i=1}^{\ell-1} M_i$ is surjective.
Clearly, the projection of $M$ to $A_1 = M_\ell$ is surjective.
Since $A_0$ and $A_1$ are disjoint
(i.e., they do not contain any non-zero isomorphic summands),
it follows from Lemma~\ref{lem:Goursat} that $M = A_0 \oplus A_1 = U$.
\end{proof}

\begin{prop}
\label{prop:cyclicsemi-simple}
Let $U$ be a finite semi-simple $\F_p[G]$-module.
The following properties are equivalent:
\begin{enumerate}[label=(\roman*)]
\item \label{iDEprop:cyclicsemi-simple}
$U$ is not cyclic. 
\item\label{iiDEprop:cyclicsemi-simple}
There exist a finite simple $\F_p[G]$-module $W$
of dimension $m \ge 1$ over $\ko = \mathcal L_{\F_p[G]}(W)$,
and a submodule $V \le U$ isomorphic to a direct sum of $m+1$ copies of $W$.
\end{enumerate}
\end{prop}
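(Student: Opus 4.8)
The plan is to reduce everything to the two structural lemmas already proved: Lemma~\ref{lem:Isotypical}, which says exactly when a direct sum of copies of a single simple module $W$ is cyclic (namely, when the number of copies does not exceed $\dim_\ko(W)$), and Lemma~\ref{lem:IsotypicalDecomposition}, which reduces cyclicity of a finite semi-simple module to cyclicity of each of its isotypical components. I note at the outset that any simple $\F_p[G]$-module $W$ is a nonzero vector space over $\ko = \mathcal L_{\F_p[G]}(W)$, so $m = \dim_\ko(W) \ge 1$ automatically; thus the constraint $m \ge 1$ in \ref{iiDEprop:cyclicsemi-simple} is never an obstruction.

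For the implication \ref{iiDEprop:cyclicsemi-simple} $\Rightarrow$ \ref{iDEprop:cyclicsemi-simple} I would argue by contradiction. Suppose $U$ is cyclic and that $V \le U$ is a submodule isomorphic to the direct sum of $m+1$ copies of $W$. Since $U$ is semi-simple, Proposition~\ref{prop:semi-simple} makes $V$ a direct summand, hence a quotient, of $U$; and a quotient of a cyclic module is cyclic. On the other hand, $V$ is a direct sum of $m+1$ copies of $W$, i.e.\ of $\ell+1$ copies with $\ell = m \ge \dim_\ko(W)$, so Lemma~\ref{lem:Isotypical} says $V$ is \emph{not} cyclic. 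This contradiction shows $U$ is not cyclic.

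For the converse \ref{iDEprop:cyclicsemi-simple} $\Rightarrow$ \ref{iiDEprop:cyclicsemi-simple} I would apply Lemma~\ref{lem:IsotypicalDecomposition}: if $U$ is not cyclic, then at least one of its isotypical components, say the component of type $W$, is not cyclic. That component is isomorphic to a direct sum of some number $\ell+1$ of copies of $W$, and its failure to be cyclic forces $\ell \ge \dim_\ko(W) = m$ by Lemma~\ref{lem:Isotypical}, whence $\ell + 1 \ge m+1$. Selecting $m+1$ of these summands yields a submodule $V \le U$ isomorphic to the direct sum of $m+1$ copies of $W$, which is exactly what \ref{iiDEprop:cyclicsemi-simple} requires.

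I do not expect any genuine obstacle here: the proposition is essentially a repackaging of Lemmas~\ref{lem:Isotypical} and~\ref{lem:IsotypicalDecomposition}, with the substantive work having been carried out in proving those (in particular the Goursat-type Lemma~\ref{lem:Goursat-for-ell+1} underlying the cyclicity criterion). The only points needing a moment's care are confirming $m \ge 1$ from simplicity, and translating ``not cyclic'' for a single isotypical component correctly through Lemma~\ref{lem:Isotypical} into ``at least $m+1$ copies of $W$''.
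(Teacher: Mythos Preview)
Your proof is correct and follows essentially the same route as the paper: both directions are reduced to Lemma~\ref{lem:Isotypical} and Lemma~\ref{lem:IsotypicalDecomposition}, using that a submodule of a semi-simple module is a direct summand (hence a quotient). You supply a bit more detail than the paper (noting $m\ge 1$ is automatic and explicitly selecting $m+1$ summands from the non-cyclic isotypical component), but the argument is the same.
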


\begin{proof}
Assume that Property \ref{iiDEprop:cyclicsemi-simple} holds.
In view of Lemma~\ref{lem:Isotypical},
the module $V$ afforded by \ref{iiDEprop:cyclicsemi-simple} is not cyclic.
Since $V$ is a direct summand of $U$,
and therefore isomorphic to a quotient of $U$,
it follows that $U$ is not cyclic. 
\par

Assume conversely that $U$ is not cyclic.
Then $U$ has a non-cyclic isotypical component
by Lemma~\ref{lem:IsotypicalDecomposition},
and it follows from Lemma~\ref{lem:Isotypical}
that Condition \ref{iiDEprop:cyclicsemi-simple} holds.
\end{proof}

\section{On the structure of minimal unfaithful subsets}
\label{Section:proofof1.1}

The goal of this section is to prove Theorem~\ref{thm:CharP(n)}.
In fact, we shall establish a finer statement
that describes precisely the structure of the normal closure
of an irreducibly unfaithful subset of size~$n$ in a countable group with Property $P(n-1)$,
see Theorem~\ref{thm:UnfaithfulSetSize-n} below.
We shall however start
with the proof of the easier implication in Theorem~\ref{thm:CharP(n)}.

\subsection{Proof of \ref{2DEthm:CharP(n)} 
\texorpdfstring{$\Rightarrow$}{=>}
\ref{1DEthm:CharP(n)}
in Theorem~\ref{thm:CharP(n)}}

\begin{lem}
\label{firstpartproofMainTheorem}
Let $G$ be a countable group.
Suppose that there exist a prime $p$,
a~finite normal subgroup $V$ of $G$ which is an elementary abelian $p$-group,
and a finite simple $\F_p[G]$-module $W$,
with centralizer field $\ko = \mathcal L_{\F_p[G]}(W)$ and dimension $m = \dim_{\ko} (W)$,
such that $V$ is isomorphic as $\F_p[G]$-module to a direct sum of $m+1$ copies of~$W$.
Set $q = \vert \ko \vert$. Then:
\begin{enumerate}[label=(\roman*)]
\item\label{iDEfirstpartproofMainTheorem}
For every irreducible unitary representation $\pi$ of $G$,
the kernel $\Ker(\pi)$ contains at least one
of the $q^m + \dots + q + 1 $ simple submodules of $V$.
\item\label{iiDEfirstpartproofMainTheorem}
A subset $F \subset V$ is irreducibly faithful in $G$
if and only if $F \cap K \subset\{e\}$ for some
simple $\F_p[G]$-submodule $K$ of $V$.
\item\label{iiiDEfirstpartproofMainTheorem}
There is a subset $F \subset V$ of size $q^m + \dots + q + 1 $
which is not irreducibly faithful.
\end{enumerate}
\end{lem}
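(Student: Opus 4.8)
The plan is to treat the three assertions in order, combining the structure theory of cyclic semi-simple $\F_p[G]$-modules from Section~\ref{Section:cyclic} with Clifford theory and Pontryagin duality for the finite abelian group $V$.

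For \ref{iDEfirstpartproofMainTheorem} I would start from an arbitrary irreducible unitary representation $\pi$ and analyse its restriction to the finite abelian normal subgroup $V$. The restriction $\pi\vert_V$ decomposes as a direct sum of characters, and the characters that occur form a single $G$-orbit: if $\chi$ occurs with non-zero eigenspace $\mathcal H_\chi$, then $\bigoplus_{g\in G}\pi(g)\mathcal H_\chi$ is a non-zero $G$-invariant subspace, hence all of the representation space by irreducibility, so every occurring character lies in $\{\chi^g \mid g\in G\}$. Under the identification of the Pontryagin dual $\widehat V$ with $V^\ast=\operatorname{Hom}_{\F_p}(V,\F_p)$, the common kernel $\Ker(\pi)\cap V=\bigcap_{g}\Ker(\chi^g)$ is exactly the annihilator in $V$ of the cyclic $\F_p[G]$-submodule $\langle\chi\rangle\le V^\ast$ generated by $\chi$. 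The crucial point is that $V^\ast\cong (W^\ast)^{\oplus(m+1)}$ with $W^\ast$ simple and $\dim_\ko(W^\ast)=m$, so $V^\ast$ is \emph{not} cyclic by Lemma~\ref{lem:Isotypical}; therefore the cyclic submodule $\langle\chi\rangle$ is proper, and its annihilator $\Ker(\pi)\cap V$ is a non-zero submodule of the semi-simple module $V$. Since any non-zero submodule of a semi-simple module contains a simple submodule, $\Ker(\pi)$ contains one of the $q^m+\dots+q+1$ simple submodules of $V$, their number being given by Lemma~\ref{lem:counting}.

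For \ref{iiDEfirstpartproofMainTheorem} the forward implication is immediate from \ref{iDEfirstpartproofMainTheorem}: if $\pi$ is irreducible with $F\cap\Ker(\pi)\subseteq\{e\}$ and $K\le V$ is a simple submodule contained in $\Ker(\pi)$, then $F\cap K\subseteq\{e\}$. For the converse, given a simple submodule $K$ with $F\cap K\subseteq\{e\}$, I would build an irreducible representation whose kernel meets $V$ exactly in $K$. The quotient $V/K\cong W^{\oplus m}$ is cyclic by Lemma~\ref{lem:Isotypical} (now with $\ell=m-1<m$), and it is a normal elementary abelian $p$-subgroup of $G/K$ contained in $\MA(G/K)$ since it is semi-simple. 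By Proposition~\ref{lem14BEHA} it then carries a $(G/K)$-faithful unitary character, and Lemma~\ref{lem:G-faithful-normal-subgroup} upgrades this to an irreducible representation $\bar\pi$ of $G/K$ whose restriction to $V/K$ is faithful. Inflating $\bar\pi$ along $G\twoheadrightarrow G/K$ produces an irreducible $\pi$ with $\Ker(\pi)\cap V=K$; since $F\subseteq V$, this gives $F\cap\Ker(\pi)=F\cap K\subseteq\{e\}$, so $F$ is irreducibly faithful.

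Finally, \ref{iiiDEfirstpartproofMainTheorem} is a counting consequence of \ref{iiDEfirstpartproofMainTheorem}: by that equivalence, a subset $F\subseteq V$ is irreducibly unfaithful precisely when it meets every simple submodule of $V$ in a non-trivial element. Because distinct simple submodules intersect only in $\{0\}$, choosing one non-zero element from each of the $q^m+\dots+q+1$ simple submodules yields a set $F$ of exactly that cardinality which meets every simple submodule non-trivially, hence is irreducibly unfaithful. I expect the main obstacle to lie in \ref{iDEfirstpartproofMainTheorem}: correctly identifying $\Ker(\pi)\cap V$ with the annihilator of a cyclic submodule of the dual $V^\ast$, and verifying that passage to the dual preserves the hypotheses (simplicity of $W^\ast$ and the equality $\dim_\ko(W^\ast)=m$) so that the non-cyclicity criterion of Lemma~\ref{lem:Isotypical} genuinely applies.
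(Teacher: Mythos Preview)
Your proof is correct. Parts \ref{iiDEfirstpartproofMainTheorem} and \ref{iiiDEfirstpartproofMainTheorem} match the paper's argument essentially line for line: the paper also quotes Lemma~\ref{lem:Isotypical} for the cyclicity of $V/K$, then Proposition~\ref{lem14BEHA} and Lemma~\ref{lem:G-faithful-normal-subgroup} to produce the irreducible representation of $G/K$ faithful on $V/K$, and for \ref{iiiDEfirstpartproofMainTheorem} simply picks one non-zero element per simple submodule after counting them with Lemma~\ref{lem:counting}.

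For \ref{iDEfirstpartproofMainTheorem} your route differs from the paper's. The paper argues by contrapositive through two black boxes: $V$ is not cyclic (Lemma~\ref{lem:Isotypical}), hence has no $G$-faithful character (Proposition~\ref{lem14BEHA}), hence $\pi\vert_V$ cannot be faithful for any factor representation $\pi$ (Lemma~\ref{FromL9}). That last lemma is proved via direct integral decomposition and applies to arbitrary normal subgroups. You instead exploit that $V$ is \emph{finite} abelian: the spectral decomposition of $\pi\vert_V$ is an honest direct sum over $\widehat V$, irreducibility forces a single $G$-orbit of characters, and $\Ker(\pi)\cap V$ becomes the annihilator of the cyclic submodule $\F_p[G]\chi\le V^\ast$. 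The duality check you flag---that $W^\ast$ is simple with centralizer $\ko^{\mathrm{op}}=\ko$ and hence $\dim_\ko W^\ast=m$---is exactly what is needed to invoke Lemma~\ref{lem:Isotypical} on $V^\ast$, and it goes through because $\ko$ is a (commutative) field. Your argument is more elementary and self-contained for this particular $V$; the paper's is shorter given its earlier machinery and has the advantage of not re-deriving the Clifford/duality step each time.
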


\begin{proof}
\ref{iDEfirstpartproofMainTheorem}
Since $V$ is not cyclic as an $\F_p[G]$-module by Lemma~\ref{lem:Isotypical},
it follows that $V$ has no $G$-faithful character by Proposition~\ref{lem14BEHA}.
In view of Lemma~\ref{FromL9}, 
for every irreducible unitary representation $\pi$ of $G$,
the restriction $\pi \vert_V$ cannot be faithful.
In particular $\Ker(\pi)$ contains at least one of the simple $\F_p[G]$-submodules of $V$.
\par

\ref{iiDEfirstpartproofMainTheorem}
Let $F$ be a subset of $V$.
If $F$ contains a non-trivial element in each of the simple $\F_p[G]$-submodules of $V$,
it follows from \ref{iDEfirstpartproofMainTheorem}
that $F$ is not irreducibly faithful in $G$.
Conversely,
if there is a simple $\F_p[G]$-submodule $K$ in $V$ such that $F \cap K \subset \{e\}$,
then every non-trivial element of $F$ has a non-trivial image in the quotient $V/K$.
Since $V/K$ is a cyclic $\F_p[G]$-module by Lemma~\ref{lem:Isotypical},
it follows from Proposition~\ref{lem14BEHA} and Lemma~\ref{lem:G-faithful-normal-subgroup}
that $G/K$ has an irreducible unitary representation whose restriction to $V/K$ is faithful.
Terefore $F$ is irreducibly faithful in $G$.
\par

\ref{iiiDEfirstpartproofMainTheorem}
By Lemma~\ref{lem:counting},
the number of simple submodules in $V$ equals $q^m + \dots + q + 1$.
Thus $V$ contains a subset of size $q^m + \dots + q + 1 $ which is not irreducibly faithful. 
\end{proof}

\begin{proof}[Proof of \ref{2DEthm:CharP(n)} $\Rightarrow$ \ref{1DEthm:CharP(n)}
in Theorem~\ref{thm:CharP(n)}]
If $G$ satisfies \ref{2DEthm:CharP(n)} in Theorem~\ref{thm:CharP(n)},
then $G$ contains a set $F \subseteq V$ of size $q^m + \dots + q + 1$
which is not irreducibly faithful by Lemma~\ref{firstpartproofMainTheorem}.
so that $G$ does not have Property $P( q^m + \dots + q + 1)$,
Since $n \ge q^m + \dots + q + 1$, the group $G$ does not have Property $P(n)$.
\end{proof}

\subsection{Minimal unfaithful subsets are contained in \texorpdfstring{$\Fsol (G)$}{FSol (G)}}

The following result shows that, in a countable group $G$,
the irreducible faithfulness of a subset $F$ can be checked
on the intersection of $F$ with $\Fsol (G)$.

\begin{prop}
\label{prop:IrredFaithful:sol}
Let $G$ be a countable group 
and $F$ a subset of $G$.
If $F \cap \Fsol (G)$ is finite and irreducibly faithful, then $F$ is irreducibly faithful. 
\par

In particular, a finite subset $F \subseteq G$ is irreducibly faithful
if and only if the intersection $F \cap \Fsol (G)$ is irreducibly faithful. 
\end{prop}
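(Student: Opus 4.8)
The plan is to split $F$ according to $S := \Fsol(G)$, writing $F_1 := F \cap S$ (finite and irreducibly faithful by hypothesis) and $F_2 := F \smallsetminus S$, and to manufacture a \emph{single} irreducible representation $\pi$ of $G$ whose kernel is contained in $S$ --- which, exactly as in Proposition~\ref{prop:Kernel} and Remark~\ref{TrunkSufFiur}(1), makes $\pi$ nontrivial on all of $F_2$ --- and which at the same time avoids the finite set $F_1 \smallsetminus \{e\}$.

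First I would use the hypothesis to fix an irreducible unitary representation $\rho$ of $G$ with $\rho(x) \ne \id$ for all $x \in F_1 \smallsetminus \{e\}$. As $F_1$ is finite and contained in $S$, each of its elements has finite soluble normal closure, so by Lemma~\ref{lem:N1Nk}\ref{iDElem:N1Nk} the subgroup $N := \llag F_1 \rrag_G$ is a finite soluble normal subgroup of $G$. The point to keep in mind is that $\rho$ need not be faithful on all of $N$ --- only on $F_1$ --- and indeed $N$ may admit no $G$-faithful representation at all (this already happens when $N \cong C_p \times C_p$ is central). I would sidestep this by setting $L := \Ker(\rho) \cap N$, a finite soluble normal subgroup of $G$ with $L \cap (F_1 \smallsetminus \{e\}) = \emptyset$, and passing to $\bar G := G/L$. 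Writing $\bar N := N/L$ and letting $\bar\rho$ be the (irreducible) representation of $\bar G$ through which $\rho$ factors, one has $\Ker(\bar\rho) \cap \bar N = (\Ker(\rho) \cap N)/L = \{e\}$, so that the restriction $\bar\rho\vert_{\bar N}$ is now \emph{faithful}.

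At this stage the subsidiary results of Subsection~\ref{subsec:SubsidiaryBeHa} take over. Since $\bar\rho$ is a factor representation of $\bar G$ whose restriction to $\bar N$ is faithful, Lemma~\ref{FromL9} furnishes a $\bar G$-faithful irreducible representation of $\bar N$, to which Lemma~\ref{lem:G-faithful-normal-subgroup:Kernel<sol(G)} applies to yield an irreducible representation $\bar\pi$ of $\bar G$ with $\Ker(\bar\pi) \cap \bar N = \{e\}$ and $\Ker(\bar\pi) \le \Fsol(\bar G)$. I would then take $\pi$ to be the pullback of $\bar\pi$ along $G \twoheadrightarrow \bar G$. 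For $x \in N$ one has $x \in \Ker(\pi)$ exactly when its image lies in $\Ker(\bar\pi) \cap \bar N = \{e\}$, i.e.\ when $x \in L$; hence $\Ker(\pi) \cap N = L$ is disjoint from $F_1 \smallsetminus \{e\}$ and $\pi$ is faithful on $F_1$.

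The one computation worth isolating --- and the only genuine obstacle beyond the structural idea of quotienting by $L$ --- is that $\Ker(\pi) \le S$, so that $\pi$ is nontrivial on $F_2$ as well. Since $\Ker(\pi)$ is the preimage of $\Ker(\bar\pi) \le \Fsol(\bar G)$, it suffices to check that the preimage in $G$ of $\Fsol(G/L)$ lies in $\Fsol(G)$: if $\llag gL \rrag_{G/L} = \llag g \rrag_G L / L$ is finite and soluble, then, $L$ being finite and soluble, so is $\llag g \rrag_G$, whence $g \in \Fsol(G)$. Granting this, $\pi$ is irreducible and faithful on $F_1 \cup F_2 = F$, which is the main assertion. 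The final ``in particular'' is then immediate: for finite $F$ the intersection $F \cap \Fsol(G)$ is automatically finite, one implication is the statement just proved, and the other follows by restricting to $F \cap \Fsol(G)$ any irreducible representation witnessing the faithfulness of $F$.
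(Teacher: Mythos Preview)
Your proof is correct and follows the same overall architecture as the paper's: quotient by $L = \Ker(\rho) \cap \llag F_1 \rrag_G$, apply Lemma~\ref{FromL9} followed by Lemma~\ref{lem:G-faithful-normal-subgroup:Kernel<sol(G)} in the quotient, and pull back. Your version is slightly more streamlined in two respects. First, the paper passes to the abelian socle $\Soc(r(A))$ before invoking Lemma~\ref{FromL9}, so as to obtain a $G$-faithful \emph{character}; you apply Lemma~\ref{FromL9} directly to $\bar N = N/L$, which is legitimate since Lemma~\ref{lem:G-faithful-normal-subgroup:Kernel<sol(G)} only requires a $G$-faithful irreducible representation, and this spares you the extra step of checking that faithfulness on the socle propagates to faithfulness on $r(A)$. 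Second, the paper splits $F \smallsetminus S$ into two further pieces $F_H$ (finite non-soluble normal closure) and $F_\infty$ (infinite normal closure) and argues for each that its image avoids $\Fsol(Q)$; your single observation that the preimage of $\Fsol(G/L)$ lies in $\Fsol(G)$ (since $L$ is finite soluble) handles all of $F \smallsetminus S$ at once. Neither change alters the substance of the argument.
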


\emph{Note~:} 
We know already the particular case of this proposition for $F$ disjoint from $\Fsol (G)$,
for example for $F = G \smallsetminus \Fsol (G)$, see Remark \ref{TrunkSufFiur}(1).

\begin{proof}
Set $S = \Fsol (G)$. 
Let $F \subseteq G$ be such that $F \cap S$ is finite and irreducibly faithful.
We aim at proving that $F$ is irreducibly faithful.
To this end, we partition $F$ into three subsets,
$F = F_S \sqcup F_H \sqcup F_\infty$, where:
\begin{enumerate}
\item[]
$F_S = \{ x \in F \mid 
\llag x \rrag_G \hskip.2cm \text{is finite soluble} \} = F \cap S$,
\item[]
$F_H = \{ x \in F \mid 
\llag x \rrag_G \hskip.2cm \text{is finite non-soluble} \} = (F \cap \W(G)) \smallsetminus S$,
\item[]
$F_\infty = \{ x \in F \mid \llag x \rrag_G \hskip.2cm \text{is infinite} \}
= F \smallsetminus \left( F_S \sqcup F_H \right)$.
\end{enumerate}
\par

By hypothesis, there exists an irreducible unitary representation $\rho$ of $G$
such that $\rho(x) \neq \id$ for all $x \in F_S \smallsetminus \{e\}$.
Since $F_S$ is finite by hypothesis,
the normal subgroup $A = \llag F_S \rrag_G$ is finite soluble
(Lemma \ref{lem:N1Nk}).
Let $K = A \cap \Ker(\rho)$, which is a finite soluble normal subgroup of $G$,
and let $r \hskip.1cm \colon G \twoheadrightarrow Q = G / K$ be the canonical projection.
Note that $r(x) \neq e$ for all $x \in F_S \smallsetminus \{e\}$.
\par

Since $A$ is soluble, its image $\rho(A)$ is soluble as well.
Therefore the socle $\Soc(\rho(A))$ is abelian. 
Since $\rho(G)$ is irreducibly faithful,
the socle $\Soc(\rho(A))$
has a $\rho(G)$-faithful irreducible unitary character by Lemma~\ref{FromL9}.
\par

The homomorphism $\rho$ induces
an isomorphism $\rho_A \hskip.1cm \colon A / K \overset{\cong}{\longrightarrow} \rho(A)$,
and similarly $r$ induces
an isomorphism $r_A \hskip.1cm \colon A / K \overset{\cong}{\longrightarrow} r(A)$.
Moreover, the action by conjugation of $G$ on $A$
induces $G$-actions on $\rho(A)$ and $r(A)$,
and the isomorphism $r_A \rho_A^{-1} \hskip.1cm \colon \rho(A) \overset{\cong}{\longrightarrow} r(A)$
is $G$-equivariant.
Hence the group $N = \Soc(r(A))$, which is normal in $Q$,
is abelian and has a $Q$-faithful unitary character, say $\sigma$.
\par

We now invoke Lemma~\ref{lem:G-faithful-normal-subgroup:Kernel<sol(G)},
which affords an irreducible unitary representation $\pi$ of $Q$
whose restriction to $N$ is faithful,
and such that $\Ker(\pi)$ is contained in $\Fsol (Q)$.
\par 

The composite map $\pi' = \pi \circ r$
is an irreducible unitary representation of $G$.
\par 

We claim that $\pi'(x) \neq \id$
for all $x \in \big( F_S \smallsetminus \{e\} \big)$.
We know that the representation $\pi \vert_N$ is faithful.
Since $N = \Soc (r(A))$,
it follows that $\pi \vert_{r(A)}$ is also faithful.
As noted above,
for every $x \in F_S \smallsetminus \{e\}$,
we have $r(x) \ne e$, and therefore $\pi'(x) = \pi(r(x)) \ne \id$.
\par

We next claim that $\pi'(x) \neq \id$ for all $x \in F_H$.
Indeed, for $x \in F_H$, we have
$x \neq e$ and $\llag x \rrag_G \not \le S$,
since $\llag x \rrag_G$ is not soluble.
But $K \cap \llag x \rrag_G$ is finite soluble, because $K$ is so.
Therefore
$ \llag r(x) \rrag_Q = r(\llag x \rrag_G) \cong \llag x \rrag_G / (K \cap \llag x \rrag_G)$
is not soluble, hence $r(x) \notin \Fsol (Q)$.
Since the kernel of $\pi$ is contained in $\Fsol (Q)$,
this shows that $r(x) \notin \Ker (\pi)$, and therefore that $x \notin \Ker (\pi')$,
as claimed.
\par 

Given $x \in F_\infty$, the normal closure $\llag x \rrag_G$ is infinite.
Since $K$ is finite, it follows that $\llag r(x) \rrag_Q$ is infinite as well.
In particular $r(x)$ is not contained in the kernel of $\pi$,
which is contained in $\Fsol (Q) \le \W(Q)$. Hence $\pi'(x) \neq 1$.
This proves that $\pi'(x) \neq 1$ for all $x \in F \smallsetminus \{e\}$.
\par

Thus $F$ is irreducibly faithful, as required. 
\end{proof}

\begin{cor}
\label{cor:MinimalIrreduciblyUnfaithful}
Let $G$ be a countable group
and $F \subseteq G$ be a finite subset which is irreducibly unfaithful.
\par

If every proper subset of $F$ is irreducibly faithful,
then $F$ is contained in $\Fsol (G)$.
\end{cor}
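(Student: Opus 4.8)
The plan is to deduce the statement directly from Proposition~\ref{prop:IrredFaithful:sol}, arguing by contraposition. Suppose that $F$ is \emph{not} contained in $\Fsol (G)$; the goal is then to show that $F$ must be irreducibly faithful, contradicting the hypothesis.

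First I would observe that, under this assumption, the intersection $F \cap \Fsol (G)$ is a \emph{proper} subset of $F$, since $F$ contains at least one element lying outside $\Fsol (G)$. By the hypothesis that every proper subset of $F$ is irreducibly faithful, it follows that $F \cap \Fsol (G)$ is irreducibly faithful. Moreover $F \cap \Fsol (G)$ is finite, being a subset of the finite set $F$.

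Next I would invoke Proposition~\ref{prop:IrredFaithful:sol}: since the finite set $F \cap \Fsol (G)$ is irreducibly faithful, that proposition yields that $F$ itself is irreducibly faithful. This contradicts the assumption that $F$ is irreducibly unfaithful, and therefore $F \subseteq \Fsol (G)$, as desired.

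This argument is essentially immediate once Proposition~\ref{prop:IrredFaithful:sol} is available, so there is no genuine obstacle; the only point requiring (trivial) care is the verification that $F \cap \Fsol (G)$ is a proper subset of $F$, which is precisely what permits the faithfulness hypothesis on proper subsets to be applied. In the degenerate case $F \cap \Fsol (G) = \emptyset$, one may alternatively appeal to the special case recorded in the Note following Proposition~\ref{prop:IrredFaithful:sol}.
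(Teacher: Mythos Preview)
Your argument is correct and matches the paper's own proof: both proceed by contraposition, observing that if $F \not\subseteq \Fsol(G)$ then $F \cap \Fsol(G)$ is a proper subset of $F$, hence irreducibly faithful by hypothesis, and then Proposition~\ref{prop:IrredFaithful:sol} forces $F$ itself to be irreducibly faithful. The paper's version is simply more terse, leaving the properness observation implicit.
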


\begin{proof}
Let $F$ be a finite subset of $G$ of which every proper subset is irreducibly faithful.
If $F$ was not contained in $\Fsol (G)$, then $F$ would be irreducibly faithful by Proposition~\ref{prop:IrredFaithful:sol}.
Therefore $F \subseteq \Fsol (G)$.
\end{proof}

\subsection{Unfaithful subsets of size \texorpdfstring{$n$}{n}
in countable groups with \texorpdfstring{$P(n-1)$}{P(n-1)}}

\begin{lem}
\label{lem:UnfaithfulSubsets+mini-feet}
Let $n$ be a positive integer and $G$ a countable group.
Let $F \subset G$ be an irreducibly unfaithful subset of size $n$ such that:
\begin{enumerate}[label=(\alph*)]
\item\label{aDElem:UnfaithfulSubsets+mini-feet}
every proper subset of $F$ is irreducibly faithful;
\item\label{bDElem:UnfaithfulSubsets+mini-feet}
every element of $F$ is contained in an abelian mini-foot of $G$.
\end{enumerate}
Let $U = \llag F \rrag_G$ the normal subgroup of $G$ generated by $F$.
\par

Then there exist a prime $p$,
and a simple $\F_p[G]$-module $W$,
such that the following assertions hold,
where $\ko$ denotes the centralizer field $\mathcal L_{\F_p[G]}(W)$,
and $m = \dim_{\ko}(W)$, and $q = \vert \ko \vert$~:
\begin{enumerate}[label=(\roman*)]
\item\label{iDElem:UnfaithfulSubsets+mini-feet}
$U$ is a finite elementary abelian $p$-group,
 contained in the abelian mini-socle $\MA (G)$;
\item\label{iiDElem:UnfaithfulSubsets+mini-feet}
$U$ is isomorphic as an $\F_p[G]$-module
to the direct sum of a number $\ell+1$ of copies of $W$, and $\ell \ge m$;
\item\label{iiiDElem:UnfaithfulSubsets+mini-feet}
$q^m + q^{m-1} + \dots + q + 1 \le n$. 
\end{enumerate}
\end{lem}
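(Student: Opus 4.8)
The plan is to translate irreducible (un)faithfulness of subsets of $\MA(G)$ into a purely module-theoretic condition, and then to run two ``independent components'' reductions followed by a counting argument. First I would record the structure of $U$. By hypothesis \ref{bDElem:UnfaithfulSubsets+mini-feet} each nontrivial $x\in F$ lies in an abelian mini-foot $A_x$, which is a finite normal subgroup equal to $\llag x\rrag_G$ and, by Proposition~\ref{structureminisocle}\ref{1DEstructureminisocle}, is elementary abelian and simple as an $\F_p[G]$-module. Hence $A_x\subseteq\MA(G)$, and by Lemma~\ref{lem:N1Nk}\ref{iDElem:N1Nk} the group $U=\llag F\rrag_G=\prod_{x\in F}A_x$ is a finite normal subgroup contained in $\MA(G)$; being a submodule of $\MA(G)$ it is a finite semi-simple $\Z[G]$-module (Proposition~\ref{prop:semi-simple}). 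The engine of the whole argument is the following equivalence, valid for any finite normal $V\le\MA(G)$ and any $F\subseteq V$: the set $F$ is irreducibly faithful in $G$ if and only if there is a submodule $K\le V$ with $F\cap K\subseteq\{e\}$ and $V/K$ cyclic. I would prove the forward direction by taking $K=\Ker(\pi)\cap V$ for a witnessing irreducible $\pi$, descending to $G/K$, and invoking Lemma~\ref{FromL9} together with Proposition~\ref{lem14BEHA} (using Proposition~\ref{structureminisocle}\ref{7DEstructureminisocle} to see $V/K\le\MA(G/K)$) to deduce that $V/K$ is cyclic; the converse uses Proposition~\ref{lem14BEHA} to produce a $(G/K)$-faithful character of $V/K$ and then Lemma~\ref{lem:G-faithful-normal-subgroup} to inflate it to an irreducible representation of $G$ that is nontrivial on each element of $F\smallsetminus\{e\}$.

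Next I would perform the reductions. Write $U=\bigoplus_j M_j$ for the decomposition into isotypical components over $\Z[G]$ (equivalently: first split into $p$-primary parts, as in the Chinese Remainder argument of Proposition~\ref{lem14BEHA}, then into $\F_p[G]$-isotypical parts). Since distinct isotypical components share no simple factor, every submodule $K$ splits as $\bigoplus_j(K\cap M_j)$, and $U/K$ is cyclic if and only if each $M_j/(K\cap M_j)$ is cyclic (Lemma~\ref{lem:IsotypicalDecomposition}); moreover each nontrivial $x\in F$ lies in the single component containing its mini-foot $A_x$. Feeding the equivalence above into this product structure shows that $F$ is irreducibly faithful if and only if each $F\cap M_j$ is. As the empty set is always faithful, unfaithfulness of $F$ forces some $F\cap M_j$ to be unfaithful; by the minimality hypothesis \ref{aDElem:UnfaithfulSubsets+mini-feet} this nonempty set cannot be a proper subset of $F$, so exactly one component is used and $F\subseteq M_j$ for a single simple type $W:=W_j$. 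Consequently $U=\llag F\rrag_G$ is a submodule of an isotypical component of type $W$, hence $U\cong W^{\ell+1}$ for some $\ell\ge 0$; this gives assertion \ref{iDElem:UnfaithfulSubsets+mini-feet} and the isomorphism type in \ref{iiDElem:UnfaithfulSubsets+mini-feet}.

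Finally I would pin down $\ell$ and count. By Lemma~\ref{lem:Isotypical}, $W^{\ell+1}$ is cyclic exactly when $\ell<m$; if that held, taking $K=\{0\}$ in the equivalence would make $F$ faithful, a contradiction, so $\ell\ge m$, completing \ref{iiDElem:UnfaithfulSubsets+mini-feet}. For \ref{iiiDElem:UnfaithfulSubsets+mini-feet}, associate to each nontrivial $x\in F$ the simple submodule $A_x=\F_p[G]x\le U$ and set $\mathcal Z=\{A_x\mid x\in F\smallsetminus\{e\}\}$, so that $\vert\mathcal Z\vert\le n$. Unfaithfulness, via the equivalence, says that every submodule $B\le U$ with $U/B$ cyclic must contain some $x\in F\smallsetminus\{e\}$, hence some $Z\in\mathcal Z$ with $Z\cap B\ne\{0\}$. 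If we had $\vert\mathcal Z\vert<q^m+\dots+q+1$, then Lemma~\ref{lem:CyclicQuotientModule} (applicable since $\ell\ge m$) would supply a submodule $B$ with $U/B$ cyclic and $B\cap Z=\{0\}$ for all $Z\in\mathcal Z$, a contradiction. Therefore $q^m+\dots+q+1\le\vert\mathcal Z\vert\le n$. I expect the main obstacle to be the careful justification of the equivalence recorded above — in particular controlling the behaviour of the abelian mini-socle and of cyclicity under the quotient $G\to G/K$ — whereas the two reductions and the concluding count are then essentially formal consequences of the module lemmas of Section~\ref{Section:cyclic}.
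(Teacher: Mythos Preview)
Your proof is correct and follows the same strategy as the paper: establish that $U$ is a finite semi-simple $\Z[G]$-module contained in $\MA(G)$, reduce to a single isotypical component via the interplay between irreducible faithfulness and cyclicity of quotients (your abstracted equivalence is exactly what the paper invokes implicitly through Lemma~\ref{FromL9}, Proposition~\ref{lem14BEHA}, and Lemma~\ref{lem:G-faithful-normal-subgroup}), deduce $\ell \ge m$ from non-cyclicity of $U$, and conclude \ref{iiiDElem:UnfaithfulSubsets+mini-feet} via Lemma~\ref{lem:CyclicQuotientModule}. The only difference is organizational: you isolate the faithfulness $\Leftrightarrow$ cyclic-quotient equivalence as a standalone tool and apply it uniformly, whereas the paper inlines each instance; one small omission is that you should note at the outset, as the paper does, that hypothesis~\ref{aDElem:UnfaithfulSubsets+mini-feet} forces $e\notin F$, so that the partition of $F$ among the isotypical components is genuinely disjoint.
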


\begin{proof}
The hypothesis \ref{aDElem:UnfaithfulSubsets+mini-feet} on $F$
implies that $F$ does not contain the neutral element $e$, since otherwise $F$ would be irreducibly faithful. 
\par

By Proposition \ref{structureminisocle},
the normal subgroup $U$ is abelian and finite.
The conjugation action of $G$ on $U$ allows us to view $U$ as a $\Z[G]$-module.
Since $U$ is generated by mini-feet of $G$, 
it follows from Proposition~\ref{prop:semi-simple} that $U$ is a semi-simple $\Z[G]$-module.
Let $\mathcal Y$ denote the set of isomorphism classes
of simple $\Z[G]$-submodules of $U$.
For each $Y \in \mathcal Y$, let $U_Y$ be the submodule of $U$
generated by the simple submodules isomorphic to $Y$;
note that $U_Y$ is a finite abelian normal subgrouop of $U$.
We have the isotypical direct sum decomosition
$U = \bigoplus_{Y \in \mathcal Y} U_Y$.
\par 

For $x \in F$, the normal closure $\llag x \rrag_G \le U$
is an abelian mini-foot of $G$ by hypothesis, hence a simple $\Z[G]$-module.
Thus it is isomorphic to some $Y \in \mathcal Y$ and $\llag x \rrag_G \le U_Y$.
Setting $F_Y = F \cap U_Y$ for all $Y \in \mathcal Y$,
we obtain a partition of $F$ as $F = \bigsqcup_{Y \in \mathcal Y} F_Y$. 

\vskip.2cm

We claim that $\mathcal Y$ contains a single element.
Indeed, assume this is not the case.
For each $Y \in \mathcal Y$,
the subset $F_Y$ is striclty contained in $F$,
hence is irreducibly faithful by the hypotheses made on $F$.
Let $\pi_Y$ be an irreducible unitary representation of $G$
witnessing the faithfulness of $F_Y$,
and set $K_Y = U_Y \cap \Ker(\pi_Y)$.
Thus every element of $F_Y$ has a non-trivial image in $U_Y / K_Y$.
Moreover, we may view $\pi_Y$ as an irreducible unitary representation of $G / K_Y$
whose restriction to $U_Y / K_Y$ is faithful.
By Lemma~\ref{FromL9},
$U_Y / K_Y$ has a $G / K_Y$-faithful unitary character.
Therefore it is a cyclic $\F_p[G / K_Y]$-module by Proposition~\ref{lem14BEHA},
where $p$ is the exponent of $Y$. In particular it is a cyclic $\Z[G]$-module
\par

Let now $K = \left\langle \bigcup_{Y \in \mathcal Y} K_Y \right\rangle$.
Thus $K$ is a normal subgroup of $G$,
and we have a natural direct sum decomposition $K \cong \bigoplus_{Y \in \mathcal Y} K_Y$
(see Lemma \ref{lem:N1Nk}).
In particular $K \cap U_Y = K_Y$ for all $Y \in \mathcal Y$.
Moreover, we have $K \cap F = \varnothing$,
since otherwise $K \cap F_Y$ would be non-empty for some $Y \in \mathcal Y$,
which would imply that $K_Y$ contains an element of $F_Y$.
This contradicts the definition of $K_Y$.
Therefore, every element of $F$ has a non-trivial image in $G / K$.
\par

We may view the quotient $U / K$ as a $\Z[G]$-module.
It is semi-simple by Proposition~\ref{prop:semi-simple}, as a quotient module of $U$.
Moreover, the direct sum decomposition $U / K \cong \bigoplus_{Y \in \mathcal Y} U_Y / K_Y$
is the isotypical decomposition of $U / K$.
We have seen above that the isotypical component $U_Y / K_Y$ of $U / K$
is a cyclic $\Z[G]$-module for each $Y \in \mathcal Y$. 
It follows from Lemma~\ref{lem:IsotypicalDecomposition} that $U / K$ is cyclic.
By Proposition~\ref{lem14BEHA},
this means that $U / K$ has a $G / K$-faithful unitary character.
By Lemma~\ref{lem:G-faithful-normal-subgroup},
$G / K$ has an irreducible unitary representation $\pi$
whose restriction to $U / K$ is faithful.
Since every element of $F$ has a non-trivial image in $G / K$,
precomposing $\pi$ with the projection $G \twoheadrightarrow G / K$
yields an irreducible unitary representation of $G$
mapping every element of $F$ to a non-trivial operator.
Thus $F$ is irreducibly faithful, a contradiction. This proves the claim. 

\vskip.2cm

We denote the single element of $\mathcal Y$ by $W$,
From now on, denote by $p$ the exponent of $W$.
Since the $\Z[G]$-module $W$ is simple, $p$ is a prime. 
Thus $W$ is a simple $\F_p[G]$-module,
and $U = U_W$ is isomorphic to a direct sum of $\ell+1$ copies of $W$
for some integer $\ell \ge 0$.
Since $F$ is not irreducibly faithful,
it follows that the restriction to $U$ of every irreducible unitary representation of $G$
cannot be faithful.
Therefore $U$ has no $G$-faithful character by Lemma~\ref{lem:G-faithful-normal-subgroup}.
Hence $U$ is not a cyclic $\F_p[G]$-module by Proposition~\ref{lem14BEHA}.
In view of Proposition \ref{prop:cyclicsemi-simple},
this implies that $\ell \ge m$,
where $m$ is the dimension of $W$ over $\ko = \mathcal L_{\F_p[G]}(W)$.
This proves \ref{iDElem:UnfaithfulSubsets+mini-feet} and \ref{iiDElem:UnfaithfulSubsets+mini-feet}. 

\vskip.2cm

It remains to prove that $n = \vert F \vert \ge q^m + \dots + q +1$.
Recall from the hypothesis that
$\llag x \rrag_G $ is a simple $\F_p[G]$-submodule of $U$ for all $x \in F$. 
Assume for a contradiction that $n < q^m + \dots + q +1$.
Then, by Lemma~\ref{lem:CyclicQuotientModule},
there is an $\F_p[G]$-submodule $B \le U$
with $B \cap \llag x \rrag_G = \{e\}$ for all $x \in F$, such that $U/B$ is cyclic.
It then follows from Lemma~\ref{lem:G-faithful-normal-subgroup} and Proposition~\ref{lem14BEHA}
that $G/B$ has an irreducible unitary representation whose restriction to $U/B$ is faithful.
Viewing that representation as a representation of $G$,
we obtain a contradiction with the fact that $F$ is irreducibly unfaithful.
This proves \ref{iiiDElem:UnfaithfulSubsets+mini-feet}.
\end{proof}

In the introduction, Property $P(n)$ was defined for all $n \ge 1$. 
For the sake of uniformity in the forthcoming arguments, 
we extend the definition to the case $n=0$.
Thus every group has Property $P(0)$, tautologically.
The main result of this section is the following. 

\begin{thm}
\label{thm:UnfaithfulSetSize-n}
Let $n$ be a positive integer and $G$ a countable group with Property $P(n-1)$.
Let $F \subset G$ be an irreducibly unfaithful subset of size $n$,
and $U = \llag F \rrag_G$ the normal subgroup of $G$ generated by $F$.
\par

Then there exist a prime $p$
and a finite simple $\F_p[G]$-module $W$
such that the following assertions hold,
where $\ko$ denotes the centralizer field $\mathcal L_{\F_p[G]}(W)$,
and $m = \dim_{\ko}(W)$, and $q = \vert \ko \vert$~:
\begin{enumerate}[label=(\roman*)]
\item\label{iDEthm:UnfaithfulSetSize-n}
$U$ is a finite elementary abelian $p$-group,
contained in the abelian mini-socle $\MA (G)$;
\item\label{iiDEthm:UnfaithfulSetSize-n}
$U$ is isomorphic as an $\F_p[G]$-module
to the direct sum of a number $\ell+1$ of copies of $W$, and $\ell \ge m$;
\item\label{iiiDEthm:UnfaithfulSetSize-n}
$q$ is a power of $p$ and
$q^m + q^{m-1} + \dots + q + 1 = n$.
\end{enumerate}
\end{thm}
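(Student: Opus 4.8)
The plan is to derive Theorem~\ref{thm:UnfaithfulSetSize-n} from Lemma~\ref{lem:UnfaithfulSubsets+mini-feet} by verifying its two hypotheses and then upgrading the resulting inequality to an equality. First I would record what $P(n-1)$ buys. Every proper subset of $F$ has size at most $n-1$ and is therefore irreducibly faithful, which is precisely hypothesis~\ref{aDElem:UnfaithfulSubsets+mini-feet} of Lemma~\ref{lem:UnfaithfulSubsets+mini-feet}; the same remark forces $e\notin F$, since otherwise $F\smallsetminus\{e\}$ would already be a faithful witness for $F$. With hypothesis~\ref{aDElem:UnfaithfulSubsets+mini-feet} in place, Corollary~\ref{cor:MinimalIrreduciblyUnfaithful} applies and yields $F\subseteq\Fsol(G)$, so that $U=\llag F\rrag_G$ is a finite soluble normal subgroup and each $\llag x\rrag_G$ is finite soluble.

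The crux is hypothesis~\ref{bDElem:UnfaithfulSubsets+mini-feet}: that every $x\in F$ lies in an abelian mini-foot, equivalently that $U\le\MA(G)$ and each $\llag x\rrag_G$ is a minimal normal subgroup (a simple $\Z[G]$-module). To prove this I would exploit minimality through the witnesses it provides: for each $x\in F$ there is an irreducible $\pi_x$ that is nontrivial on $F\smallsetminus\{x\}$ but, because $F$ is unfaithful, satisfies $\pi_x(x)=\id$, so $\llag x\rrag_G\le\Ker(\pi_x)$ while $\llag y\rrag_G\not\le\Ker(\pi_x)$ for $y\ne x$; by Proposition~\ref{prop:Kernel} and Lemma~\ref{lem:G-faithful-normal-subgroup:Kernel<sol(G)} such a $\pi_x$ can be chosen with $\Ker(\pi_x)\le\Fsol(G)$. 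Using the dictionary between $G$-faithful characters of abelian normal subgroups, generation by a single conjugacy class, and cyclicity of the associated $\Z[G]$- and $\F_p[G]$-modules (Lemma~\ref{FromL9}, Lemma~\ref{lem:G-faithful-normal-subgroup}, Proposition~\ref{lem14BEHA}, Lemma~\ref{lem:cyclic1conjclass}), I would argue that if some $\llag x\rrag_G$ failed to be a simple module sitting inside $\MA(G)$ — i.e.\ if it had a proper nonzero $G$-submodule, or a non-abelian or non-semisimple layer — then one could enlarge the kernel of a witness on $U$ while still separating $F\smallsetminus\{x\}$, or even detect all of $F$ simultaneously, contradicting minimality or the unfaithfulness of $F$. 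This reduction of $U$ to the abelian mini-socle is exactly the content of Proposition~\ref{partial4.5}, and I expect it to be the main obstacle, drawing on the representation-theoretic facts of Section~\ref{subsec:SubsidiaryBeHa}.

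Once hypotheses~\ref{aDElem:UnfaithfulSubsets+mini-feet} and~\ref{bDElem:UnfaithfulSubsets+mini-feet} are established, Lemma~\ref{lem:UnfaithfulSubsets+mini-feet} furnishes a prime $p$ and a finite simple $\F_p[G]$-module $W$ for which conclusions~\ref{iDEthm:UnfaithfulSetSize-n} and~\ref{iiDEthm:UnfaithfulSetSize-n} hold ($U$ is a finite elementary abelian $p$-group contained in $\MA(G)$, isomorphic to the direct sum of $\ell+1$ copies of $W$ with $\ell\ge m$), together with the inequality $q^m+\dots+q+1\le n$; here $q=\vert\ko\vert$ is a power of $p$ since $\ko=\mathcal L_{\F_p[G]}(W)$ is a finite extension of $\F_p$. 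It remains only to prove the reverse inequality $q^m+\dots+q+1\ge n$.

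For this I would invoke $P(n-1)$ a second time, now to rule out small unfaithful sets. Fix a decomposition $U\cong\bigoplus_{i=0}^{\ell}V_i$ into simple submodules $V_i\cong W$; since $\ell+1\ge m+1$, the submodule $V=\bigoplus_{i=0}^{m}V_i$ is a finite elementary abelian normal subgroup of $G$ isomorphic to the direct sum of $m+1$ copies of $W$. Lemma~\ref{firstpartproofMainTheorem}\ref{iiiDEfirstpartproofMainTheorem} then produces a subset $F^{\ast}\subseteq V\subseteq G$ of size $q^m+\dots+q+1$ that is not irreducibly faithful. If $q^m+\dots+q+1$ were strictly smaller than $n$, this $F^{\ast}$ would be an irreducibly unfaithful set of size at most $n-1$, contradicting $P(n-1)$. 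Hence $q^m+\dots+q+1\ge n$, and together with the inequality from Lemma~\ref{lem:UnfaithfulSubsets+mini-feet} this gives conclusion~\ref{iiiDEthm:UnfaithfulSetSize-n}, namely $q^m+\dots+q+1=n$, completing the argument.
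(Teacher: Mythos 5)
Your overall skeleton (hypothesis~\ref{aDElem:UnfaithfulSubsets+mini-feet} from $P(n-1)$, Corollary~\ref{cor:MinimalIrreduciblyUnfaithful} to get $F\subseteq\Fsol(G)$, then Lemma~\ref{lem:UnfaithfulSubsets+mini-feet}, then Lemma~\ref{firstpartproofMainTheorem} to upgrade the inequality $q^m+\dots+q+1\le n$ to an equality) matches the paper's strategy, and your first and last steps are correct. The genuine gap is the middle step: hypothesis~\ref{bDElem:UnfaithfulSubsets+mini-feet} of Lemma~\ref{lem:UnfaithfulSubsets+mini-feet} --- that \emph{every element of $F$ itself} lies in an abelian mini-foot of $G$ --- is never proved, and it is not something you can expect to extract cheaply. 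First, it is strictly stronger than anything in the theorem's conclusions: the theorem only places $U=\llag F\rrag_G$ inside $\MA(G)$ with $U\cong W^{\ell+1}$, whereas an element $x$ of such a $U$ lies in a mini-foot only if the cyclic submodule $\llag x\rrag_G$ is \emph{simple}; when $m=\dim_{\ko}(W)\ge 2$, a cyclic submodule of $W^{\ell+1}$ can be isomorphic to $W^j$ for any $j\le m$, so membership of each $x\in F$ in a mini-foot is an additional claim requiring its own argument. Your sketch of that argument (``enlarge the kernel of a witness while still separating $F\smallsetminus\{x\}$\dots contradicting minimality or the unfaithfulness of $F$'') is a plan, not a proof. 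Worse, the appeal to Proposition~\ref{partial4.5} is circular: in the paper that proposition is announced as ``a partial statement'' of Theorem~\ref{thm:UnfaithfulSetSize-n}, i.e.\ a consequence of the very theorem you are proving, so it is not available as an input --- and even if it were, it only gives $U\le\MA(G)$, which, as just explained, does not imply hypothesis~\ref{bDElem:UnfaithfulSubsets+mini-feet}.

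The paper's proof is organized precisely so as never to need hypothesis~\ref{bDElem:UnfaithfulSubsets+mini-feet} for $F$ itself. Since each $\llag x\rrag_G$ is finite soluble, its socle is abelian and contains an abelian mini-foot of $G$; one chooses $b_x\in\llag x\rrag_G$ with $\llag b_x\rrag_G$ a mini-foot, checks (using $P(n-1)$ and $b_x\in\llag x\rrag_G$) that $F'=\{b_x\mid x\in F\}$ is again irreducibly unfaithful of size exactly $n$ with all proper subsets faithful, and applies Lemma~\ref{lem:UnfaithfulSubsets+mini-feet} to $F'$. This yields conclusions \ref{iDEthm:UnfaithfulSetSize-n}--\ref{iiiDEthm:UnfaithfulSetSize-n} for $U'=\llag F'\rrag_G$; the remaining --- and substantial --- half of the paper's proof is devoted to showing $U=U'$, i.e.\ $F\subseteq U'$, by assuming some $y\in F$ lies outside $U'$, passing to a quotient $Q=G/B$, producing an auxiliary mini-foot $\llag b'_y\rrag_Q$, and deriving a contradiction from the module $N=r(U')\times\llag b'_y\rrag_Q$ via Lemmas~\ref{lem:Isotypical}, \ref{lem:counting}, \ref{lem:IsotypicalDecomposition} and Proposition~\ref{lem14BEHA}. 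Your proposal contains no counterpart of this identification step; it has been traded for the unproven claim~\ref{bDElem:UnfaithfulSubsets+mini-feet}, which is exactly where the difficulty of the theorem lives.
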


\begin{proof}
It follows from the hypotheses that
every proper subset of $F$ is irreducibly faithful; in particular $e \not \in F$.
Hence $F \le \Fsol (G)$ by Corollary~\ref{cor:MinimalIrreduciblyUnfaithful}.
For every $x \in F$, it follows that the group $\llag x \rrag_G$ is finite soluble,
and therefore has an abelian socle.
Since socles are characteristic subgroups,
this socle is a finite abelian normal subgroup of $G$,
hence it contains an abelian mini-foot of $G$.
We may therefore choose $b_x \in \llag x \rrag_G$
such that $\llag b_x \rrag_G$ is an abelian mini-foot of $G$. 
\par

We set $F' = \{b_x \mid x \in F\}$, so that $\vert F' \vert \le \vert F \vert = n$.
Since $b_x \in \llag x \rrag_G$,
we see that $F'$ is irreducibly unfaithful, because $F$ itself has that property.
If $\vert F' \vert < n$,
then $F'$ would be faithful since $G$ has $P(n-1)$, a contradiction.
Thus $\vert F' \vert = n$, and every proper subset of $F'$ is irreducibly faithful.
We may therefore apply Lemma~\ref{lem:UnfaithfulSubsets+mini-feet} to the set $F'$.
We denote by $p$, $U' = \llag F' \rrag_G$, $W$, $m$, $q$
the various objects afforded in that way. 
Then Properties
\ref{iDEthm:UnfaithfulSetSize-n} and \ref{iiDEthm:UnfaithfulSetSize-n}
are satisfied by $U'$.
Moreover we have $q^m + q^{m-1} + \dots + q + 1 \le n$.
If we had $q^m + q^{m-1} + \dots + q + 1 \le n-1$,
then $G$ would not have Property $P(n-1)$ by Lemma~\ref{firstpartproofMainTheorem}.
Therefore Property~\ref{iiiDEthm:UnfaithfulSetSize-n}
is also satisfied by $U'$.
It remains to show that $U' = U$, i.e., that $U' = \llag F \rrag_G$. 

\vskip.2cm

Since $b_x \in \llag x \rrag_G$ for all $x \in F$,
we have $U' = \llag F' \rrag_G \le \llag F \rrag_G$.
Thus it suffices to show that $F$ is contained in $U'$.
Assume for a contradiction that this is not the case,
and let $y \in F$ be such that $y \notin U'$.
Since $F' \smallsetminus \{b_y\}$ is irreducibly faithful,
there exists an irreducible unitary representation $\rho$ of $G$
such that $\rho(b_x) \neq 1$ for all $x \in F \smallsetminus \{y\}$.
Let $B = U' \cap \Ker(\rho)$.
By Lemma~\ref{FromL9} and Proposition~\ref{lem14BEHA},
the $\F_p[G]$-module $U'/B$ is cyclic.
In particular, it is isomorphic to a direct sum of $j$ copies of $W$,
for some $j \in \{1, \dots, m\}$, by Lemma~\ref{lem:Isotypical}. 
\par

Let $r \hskip.1cm \colon G \twoheadrightarrow G/B = Q$ be the canonical projection.
We have seen that $r(U') = U'/B$ is a cyclic $\F_p[Q]$-module.
Thus $U'/B$ has a $Q$-faithful unitary character by Proposition~\ref{lem14BEHA}. 
\par 

Since $y \not \in U'$, we have $r(y) \notin r(U')$.
Since $F$ is contained in $\Fsol (G)$,
it follows that $\llag r(y) \rrag_Q$ is soluble finite, i.e., $r(y) \in \Fsol (Q)$.
In particular the socle of $\llag r(y) \rrag_Q$ is abelian. 
We may therefore choose $b'_y \in \llag r(y) \rrag_Q$
such that $\llag b'_y \rrag_Q$ is an abelian mini-foot of $Q$,
not contained in $r(U')$.
Now we discuss the structure of $\llag b'_y \rrag_Q$,
in order to achieve a contradiction.

\vskip.2cm

Since $\llag b'_y \rrag_Q$ is a mini-foot of $Q$,
it follows that $\llag b'_y \rrag_Q$ may be viewed as a simple $\Z[Q]$-module
(see Remark \ref{remabnormalmodule}).
In particular the normal subgroup 
$$
N = r(U') \times \llag b'_y \rrag_Q
$$
is a semi-simple $\Z[Q]$-module, by Proposition~\ref{prop:semi-simple}.
\par

We claim that $N$ is not cyclic as a $\Z[Q]$-module.
Suppose by contradiction that $N$ is cyclic.
Then $N$ has a $Q$-faithful unitary character by Proposition~\ref{lem14BEHA}. 
Therefore $Q$ has an irreducible unitary representation
whose restriction to $N$ is faithful,
by Lemma~\ref{lem:G-faithful-normal-subgroup}.
It follows that the set $r(F' \smallsetminus \{b_y\}) \cup \{b'_y\}$
is irreducibly faithful in~$Q$.
Notice that,
if the kernel of a unitary representation of $Q$ contains $r(x)$
for some $x$ in $F \smallsetminus \{y\}$,
then it contains $r(b_x)$ since $b_x \in \llag x \rrag_G$.
Similarly, if that kernel contains $r(y)$, then it contains $b'_y$. 
Since $r(F' \smallsetminus \{b_y\}) \cup \{b'_y\}$ is irreducibly faithful in $Q$,
we infer that the set $r(F)$ is irreducibly faithful in $Q$.
Hence $F$ is irreducibly faithful in $G$, a contradiction. 
This confirms that the $\Z[Q]$-module $N$ is not cyclic.
\par

Since $Q$ is a quotient of $G$, we may view any $\Z[Q]$-module as a $\Z[G]$-module.
We have seen above that $r(U')$ is a cyclic $\F_p[Q]$-module,
hence a cyclic $\Z[G]$-module.
Since $r(U')$ is a quotient of $U'$, it is isomorphic, as a $\Z[G]$-module,
to a direct sum of copies of $W$.
If $\llag b'_y \rrag_Q$ were not isomorphic to $W$ as a $\Z[G]$-module,
it would follow that the decomposition $N = r(U') \times \llag b'_y \rrag_Q$
would be the isotypical decomposition of $N$.
Since $\llag b'_y \rrag_Q$ is a simple module, it is cyclic,
and it would follow from Lemma~\ref{lem:IsotypicalDecomposition}
that $N$ is a cyclic $\Z[G]$-module as well.
This contradicts the claim above.
We infer that $\llag b'_y \rrag_Q$ is abelian of exponent $p$,
and isomorphic to $W$ as an $\F_p[G]$-module.
\par 

For each $x \in F \smallsetminus \{y\}$,
the image $r(b_x)$ is contained in a simple $\F_p[G]$-submodule of $N$ contained in $r(U')$.
Since $r(U') = U'/B$ is a direct sum of $j \le m$ copies of $W$,
we deduce from Lemma~\ref{lem:counting}
that $r(U')$ contains $q^{j-1} + \dots + q +1$ simple submodules.
Since $N$ is a direct sum of $j+1$ copies of $W$,
it contains $q^j + q^{j-1} + \dots + q +1$ simple submodules.
Since $q^j \ge 2$, we deduce that $N$ contains a simple $\F_p[G]$-submodule $C$
which is neither contained in $r(U')$ nor equal to $\llag b'_y \rrag_Q$.
The quotient $N/C$ is a direct sum of at most $j \le m$ copies of $W$,
and thus is cyclic by Lemma~\ref{lem:Isotypical}.
Therefore $Q/C$ has an irreducible unitary representation whose restriction to $N/C$ is faithful,
by Lemma~\ref{lem:G-faithful-normal-subgroup} and Proposition~\ref{lem14BEHA}.
By construction, every element of $r(F' \smallsetminus \{b_y\}) \cup \{b'_y\}$
has a non-trivial image in $N/C$.
We conclude that the set $r(F' \smallsetminus \{b_y\}) \cup \{b'_y\}$ is irreducibly faithful in $Q$.
Therefore, as in the proof of the claim above,
we deduce that the set $r(F)$ is also irreducibly faithful in $Q$.
In particular $F$ is irreducibly faithful in $G$.
This final contradiction finishes the proof.
\end{proof}

\subsection{End of proof of Theorem~\ref{thm:CharP(n)}
and proof of Corollary~\ref{cor:P(n)-For-All-n}}

\begin{proof}[Proof of \ref{1DEthm:CharP(n)} $\Rightarrow$ \ref{2DEthm:CharP(n)} in Theorem~\ref{thm:CharP(n)}]
Let $n$ be a positive integer and $G$ a countable group for which
\ref{1DEthm:CharP(n)} of Theorem~\ref{thm:CharP(n)} holds,
i.e., a group which does not have Property $P(n)$.
Upon replacing $n$ by a smaller integer, we may assume that $G$ has Property $P(n-1)$.
(Recall that Property $P(0)$ holds for any group.)
\par

Let $F \subset G$ be an irreducibly unfaithful subset of size $n$.
We invoke Theorem~\ref{thm:UnfaithfulSetSize-n}.
This shows that $U = \llag F \rrag_G$ is a finite normal subgroup
which is an elementary abelian $p$-group,
and which is isomorphic to a direct sum of $\ell+1$ copies of a simple $\F_p[G]$-module $W$
of dimension $m$ over $\ko = \mathcal L_{\F_p[G]}(W)$, where $\ell \ge m$.
In particular $U$ has a submodule $V$
which is isomorphic to a direct sum of $m+1$ copies of $W$.
This proves that \ref{2DEthm:CharP(n)} holds.
\end{proof}

\begin{proof}[Proof of Corollary~\ref{cor:P(n)-For-All-n}]
That \ref{iDEcor:P(n)-For-All-n} implies \ref{iiDEcor:P(n)-For-All-n} is clear.
That \ref{iiDEcor:P(n)-For-All-n} implies \ref{iiiDEcor:P(n)-For-All-n}
follows from Theorem~\ref{thm:CharP(n)}. 
\par

Assume that \ref{iiiDEcor:P(n)-For-All-n} holds.
If $\MA (G) = \{e\}$, then \ref{iDEcor:P(n)-For-All-n} holds by Theorem~\ref{thm:Gasch}.
If not, let $A$ be a non-trivial finite abelian normal subgroup of $G$ contained in the mini-socle.
Let $p$ be a prime dividing $\vert A \vert$; let $A_p$ be the $p$-Sylow subgroup of $A$.
Then $A_p$ is a finite $\F_p[G]$-module,
which is semi-simple because~$A$, hence also $A_p$, is generated by mini-feet of $G$.
Since (iii) holds, 
$A_p$ is a finite simple $\F_p[G]$-module.
By Lemma~\ref{lem:cyclic1conjclass} and Proposition~\ref{prop:cyclicsemi-simple},
$A_p$ is generated by a single conjugacy class.
Since that holds for all $p$ dividing $\vert A \vert$,
it follows that $A$ is generated by a single conjugacy class
(see Lemmas~\ref{lem:cyclic1conjclass} and~\ref{lem:IsotypicalDecomposition}).
Therefore $G$ is irreducibly faithful by Theorem~\ref{thm:Gasch}.
Thus \ref{iDEcor:P(n)-For-All-n} holds. 
\end{proof}

\section{Irreducibly injective sets}
\label{SectionQ(n)}

\subsection{Property \texorpdfstring{$Q(n)$}{Q(n)}}

Recall from Subsection \ref{IntroQ(n)} that
a subset $F$ of a group $G$ is called \textbf{irreducibly injective}
if $G$ has an irreducible unitary representation $\pi$
such that the restriction $\pi \vert_F$ is injective.
We say that $G$ has Property $Q(n)$
if every subset of $G$ of size~$\le n$ is irreducibly injective. 
\par

As mentioned earlier, the fact that every group has $P(1)$
is a classical result of Gelfand--Raikov.
That every group has $Q(1)$ is a trivial fact.
\par

The goal of this section is to compare properties $P(m)$ and $Q(n)$.
For a group $G$ written multiplicatively and for a subset $F$ of $G$, we define
$$
FF^{-1} = \{z \in G \mid z = xy^{-1} \hskip.2cm \text{for some} \hskip.2cm x, y \in F \} .
$$
(When $G$ is abelian and written additively, this is the same as
the subset $F-F$ defined in Section \ref{Section:cyclic}.)
To a subset $F$ of $G$, we associate a subset $\binom{F}{2}$ of $G \smallsetminus\{e\}$
defined as follows.
Let $F^2_{\ne}$ be a subset of $F \times F$
consisting of exactly one of each $(x, y)$, $(y,x)$, for $x,y \in F$ with $x \ne y$.
Then
$$
\binom{F}{2} = \{z \in G \smallsetminus \{e\} \mid z = xy^{-1}
\hskip.2cm \text{for some} \hskip.2cm
(x,y) \in F^2_{\ne} \} .
$$
In particular, if $F$ is a singleton, then $\binom{F}{2}$ is empty;
if $F$ is finite of some size $n \ge 2$, then $\vert \binom{F}{2} \vert \le \binom{n}{2}$.
Note that $\binom{F}{2}$ involves an arbitrary choice (its dependence on $F$ is not canonical),
even though it is not apparent in the notation.
\par

The following lemma records the most straightforward implications
between Properties $P$ and $Q$. 

\begin{lem}
\label{lem:BasicPQ}
Let $G$ be a group and $n$ a positive integer.
\begin{enumerate}[label=(\roman*)]
\item \label{iDElem:BasicPQ}
Let $F$ be a finite subset of $G$ of size $n$;
let $E$ be a finite subset of the form~$\binom{F}{2}$.
Then $F$ is irreducibly injective if and only if $E$ is irreducibly faithful.
\item \label{iiDElem:BasicPQ}
If $G$ has $P\left( \binom{n}{2} \right)$, then $G$ has $Q(n)$. In particular $G$ has $Q(2)$. 
\item \label{iiiDElem:BasicPQ}
If $G$ has $Q(n+1)$, then $G$ has $P(n)$.
\end{enumerate}
\end{lem}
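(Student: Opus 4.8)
The three assertions build on one another, so the plan is to prove \ref{iDElem:BasicPQ} first and then deduce \ref{iiDElem:BasicPQ} and \ref{iiiDElem:BasicPQ} from it.

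For \ref{iDElem:BasicPQ}, the idea is that the \emph{same} representation witnesses both properties. Given an irreducible unitary representation $\pi$ of $G$, the restriction $\pi \vert_F$ fails to be injective precisely when $\pi(x) = \pi(y)$ for some distinct $x, y \in F$, equivalently when $\pi(xy^{-1}) = \id$ for some such pair. Since $\pi$ is a homomorphism, $\pi(z) = \id$ if and only if $\pi(z^{-1}) = \id$; hence the condition ``$\pi(z) \ne \id$ for all $z \in E$'' is insensitive to which of $(x,y)$, $(y,x)$ was retained in $F^2_{\ne}$, and it is equivalent to the injectivity of $\pi \vert_F$. Because $e \notin E$, this condition is exactly the statement that $\pi$ witnesses the irreducible faithfulness of $E$. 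Quantifying over all $\pi$ gives the desired equivalence.

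For \ref{iiDElem:BasicPQ}, let $F \subseteq G$ have size at most $n$. If $\vert F \vert \le 1$, then $\pi \vert_F$ is injective for any irreducible $\pi$, and such a $\pi$ exists (e.g.\ the trivial representation), so $F$ is irreducibly injective. If $\vert F \vert \ge 2$, form $E = \binom{F}{2}$; then $\vert E \vert \le \binom{n}{2}$ and $e \notin E$, so $E$ is irreducibly faithful by the hypothesis $P\!\left(\binom{n}{2}\right)$, and $F$ is irreducibly injective by \ref{iDElem:BasicPQ}. Thus $G$ has $Q(n)$. Taking $n = 2$ gives $\binom{n}{2} = 1$, and every group has $P(1)$ by Gelfand--Raikov, whence $Q(2)$.

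For \ref{iiiDElem:BasicPQ}, the one genuine idea is to realise an arbitrary small subset as a set of consecutive differences. Let $D \subseteq G$ have size at most $n$; since irreducible faithfulness ignores $e$, we may assume $D = \{d_1, \dots, d_k\} \subseteq G \smallsetminus \{e\}$ with $k \le n$. Set $g_0 = e$ and $g_i = d_i d_{i-1} \cdots d_1$ for $1 \le i \le k$, and let $F = \{g_0, g_1, \dots, g_k\}$, a subset of size at most $n+1$. By construction $g_i g_{i-1}^{-1} = d_i \ne e$, so $g_i \ne g_{i-1}$ for every $i$. Since $G$ has $Q(n+1)$, there is an irreducible unitary representation $\pi$ with $\pi \vert_F$ injective; applying injectivity to the distinct elements $g_i, g_{i-1}$ yields $\pi(d_i) = \pi(g_i)\pi(g_{i-1})^{-1} \ne \id$ for all $i$. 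Hence $\pi$ witnesses the irreducible faithfulness of $D$, and $G$ has $P(n)$. (Equivalently, one may invoke \ref{iDElem:BasicPQ}: choosing $F^2_{\ne}$ to retain the pairs $(g_i, g_{i-1})$ places each $d_i$, or its inverse, in $\binom{F}{2}$, and invariance under inverses does the rest.)

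The bulk of the argument is purely formal; the only step carrying a real idea is the partial-product construction in \ref{iiiDElem:BasicPQ}, and the only point demanding minor care is that the elements $g_i$ need not be pairwise distinct. This is harmless, since all that is used is $g_i \ne g_{i-1}$, which follows from $d_i \ne e$.
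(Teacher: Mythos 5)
Your proposal is correct, and parts \ref{iDElem:BasicPQ} and \ref{iiDElem:BasicPQ} follow the paper's proof essentially verbatim (the paper dismisses \ref{iDElem:BasicPQ} as immediate from the definitions; your elaboration of the inverse-invariance point and the role of $e \notin E$ is exactly what that dismissal hides). Where you genuinely diverge is \ref{iiiDElem:BasicPQ}. The paper's argument is a one-liner: given $F$ of size at most $n$, the set $F \cup \{e\}$ has size at most $n+1$, so by $Q(n+1)$ some irreducible $\pi$ is injective on it, and then $\pi(x) \ne \pi(e) = \id$ for every $x \in F \smallsetminus \{e\}$ — which is precisely irreducible faithfulness of $F$. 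You instead encode the target set $D = \{d_1, \dots, d_k\}$ as the consecutive differences of a chain of partial products $g_i = d_i d_{i-1} \cdots d_1$, and apply injectivity to the pairs $(g_i, g_{i-1})$. This is valid, and your care about possible coincidences among the $g_i$ is exactly right (only $g_i \ne g_{i-1}$ is needed). But the telescoping construction is extra machinery here: separating each $d_i$ from $e$ is already a special case of injectivity on $D \cup \{e\}$, since $\pi(e) = \id$. The chain trick is the sort of device that becomes genuinely necessary when one wants to realize a prescribed set as a difference set of something \emph{smaller} (the theme of the paper's Lemma \ref{lem:F-F} and the $\alpha_{(q,m)}$ discussion); for this implication it buys nothing over the direct argument, though it loses nothing either.
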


\begin{proof}
Claim \ref{iDElem:BasicPQ} follows from the definitions.
\par

For \ref{iiDElem:BasicPQ},
let $F \subset G$ be a subset of size at most $n$.
Let $E \subset G \smallsetminus \{e\}$ be a subset of the form $\binom{F}{2}$.
Since $G$ has $P\left( \binom{n}{2} \right)$, and as $\vert E \vert \le \binom{n}{2}$,
there exists an irreducible unitary representation $\pi$ of $G$
such that $\pi(z) \ne \id$ for all $z \in E$.
It follows that $\pi(xy^{-1}) \ne \id$ for all $(x,y) \in F^2_{\ne}$,
i.e., $\pi(x) \ne \pi(y)$ for all $(x,y) \in F^2$ with $x \ne y$.
Hence $G$ has $Q(n)$.
Applying this fact to $n=2$, and recalling that every group has $P(1)$,
we deduce that every group has $Q(2)$. 
\par

For \ref{iiiDElem:BasicPQ},
let $F \subset G$ be a subset of size at most $n$.
Since $G$ has $Q(n+1)$,
the set $F \cup \{e\}$ is irreducibly injective.
\end{proof}

Claim \ref{iiiDElem:BasicPQ}
will be strengthened in Proposition \ref{prop:Q(n)=>P(n)}.

\subsection{\texorpdfstring{$Q(n)$}{Q(n)} implies \texorpdfstring{$P(n)$}{P(n)}}

Using Theorem~\ref{thm:CharP(n)},
we obtain for countable groups the following small improvement
of Lemma~\ref{lem:BasicPQ}\ref{iiiDElem:BasicPQ}.

\begin{prop}
\label{prop:Q(n)=>P(n)}
If a countable group has $Q(n)$ for some $n \ge 1$, then it also has $P(n)$. 
\end{prop}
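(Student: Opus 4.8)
The plan is to prove the contrapositive: assuming $G$ fails $P(n)$, I will produce a subset of size at most $n$ that is not irreducibly injective, thereby showing $G$ fails $Q(n)$. First I would invoke the implication \ref{1DEthm:CharP(n)} $\Rightarrow$ \ref{2DEthm:CharP(n)} of Theorem~\ref{thm:CharP(n)} to extract a prime $p$, a finite elementary abelian normal subgroup $V \trianglelefteq G$, and a finite simple $\F_p[G]$-module $W$ with centralizer field $\ko = \mathcal L_{\F_p[G]}(W)$, dimension $m = \dim_\ko(W) \ge 1$, and $q = \vert \ko \vert$, such that $V$ is isomorphic to the direct sum of $m+1$ copies of $W$ and $q^m + \dots + q + 1 \le n$.

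The key step is to apply Lemma~\ref{lem:F-F} to the module $V$, which plays the role of the module $U = V_0 \oplus \dots \oplus V_\ell$ of that lemma with $\ell = m \ge 1$. This furnishes a subset $F \subseteq V$ of size exactly $q^m + \dots + q + 1 \le n$ whose difference set $FF^{-1}$ (written additively as $F - F$) contains a non-zero element of each simple $\F_p[G]$-submodule of $V$. I would then check that $F$ is not irreducibly injective: for an arbitrary irreducible unitary representation $\pi$ of $G$, Lemma~\ref{firstpartproofMainTheorem}\ref{iDEfirstpartproofMainTheorem} guarantees that $\Ker(\pi)$ contains at least one simple $\F_p[G]$-submodule $K$ of $V$; picking the promised non-zero $z \in K \cap (F - F)$ and writing $z = xy^{-1}$ with $x, y \in F$ (necessarily distinct, since $z \ne e$), I obtain $\pi(x) = \pi(y)$, so $\pi\vert_F$ is not injective. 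As this holds for every $\pi$, the set $F$, of size at most $n$, witnesses the failure of $Q(n)$.

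The only subtlety worth flagging is why this improves on the naive bound. The elementary Lemma~\ref{lem:BasicPQ}\ref{iiiDElem:BasicPQ} yields only $Q(n+1) \Rightarrow P(n)$, because to detect failure of $Q$ one needs a set whose \emph{difference set} meets every simple submodule, and the obvious candidate (a set meeting each submodule once, together with $e$) costs one extra element. The gain to $Q(n) \Rightarrow P(n)$ rests entirely on Lemma~\ref{lem:F-F} producing $F$ of size equal to the \emph{number} of simple submodules, i.e.\ matching $q^m + \dots + q + 1 \le n$ exactly rather than exceeding it by one. Thus the genuine difficulty is concealed in that projective-space style construction of $F$, which is already available; granting it, the proposition is a short assembly of Theorem~\ref{thm:CharP(n)}, Lemma~\ref{firstpartproofMainTheorem}, and Lemma~\ref{lem:F-F}.
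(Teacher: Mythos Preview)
Your proof is correct and follows essentially the same approach as the paper: invoke Theorem~\ref{thm:CharP(n)} to produce $V$, use Lemma~\ref{lem:F-F} to find a set $F \subseteq V$ of size $q^m+\dots+q+1 \le n$ whose difference set hits every simple submodule, and then apply Lemma~\ref{firstpartproofMainTheorem}\ref{iDEfirstpartproofMainTheorem} to conclude that no irreducible representation is injective on $F$. Your version is in fact marginally more direct: the paper first uses Lemma~\ref{lem:BasicPQ}\ref{iiiDElem:BasicPQ} to secure $P(n-1)$ and then argues that $q^m+\dots+q+1$ must equal $n$ exactly, whereas you observe that the inequality $q^m+\dots+q+1 \le n$ already suffices since $Q(n)$ concerns all subsets of size at most $n$.
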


\begin{proof}
Since every group has $Q(1)$ and $P(1)$,
we may assume that $n \ge 2$.
Let $G$ be a countable group satisfying $Q(n)$.
By Lemma~\ref{lem:BasicPQ}\ref{iiiDElem:BasicPQ}, the group $G$ has $P(n-1)$.
\par

Suppose for a contradiction that $G$ does not have $P(n)$.
We may then invoke Theorem~\ref{thm:CharP(n)}.
Let $V, m, q$ be as in Theorem~\ref{thm:CharP(n)}\ref{2DEthm:CharP(n)};
in particular $V$ is a finite abelian normal subgroup of $G$ and we have 
$q^m + \dots + q +1 \le n$.
If we had $q^m + \dots + q +1 < n$,
then the other implication of Theorem~\ref{thm:CharP(n)} would imply
that $G$ does not have $P(n-1)$, in contradiction with the previous paragraph.
We conclude that $q^m + \dots + q +1 = n$.
\par

By Lemma~\ref{lem:F-F},
the group $V$ has a subset $F$ of size $q^m + \dots + q +1$ such that
the set $F-F$ contains a non-zero element
of each abelian mini-foot of $G$ contained in $V$.
By Lemma~\ref{firstpartproofMainTheorem},
given an irreducible unitary representation $\pi$ of $G$,
the kernel $\Ker(\pi)$ intersects $V$ non-trivially.
More precisely, $\Ker(\pi)$ contains an abelian mini-foot of $G$ contained in $V$,
and hence a non-zero element of $F-F$.
Therefore $\pi(x) = \pi(y)$ for some $x \neq y \in F$.
This proves that $F$ is not irreducibly injective.
Since $\vert F \vert = n$,
we deduce that $G$ does not have $Q(n)$, a contradiction. 
\end{proof}

\subsection{The constant \texorpdfstring{$\alpha_{(q,m)}$}{alphaetc}}

Theorem \ref{thm:Q(n)small-n}, which is the main result of this section,
depends on technical results for which we introduce the following notation.
Let $q$ be a power of some prime $p$ and $m \ge 1$ be an integer.
Let $G_{(q, m)} = \mathrm{GL}(W) \ltimes V$ be the group defined in Example~\ref{exampleGqm},
whose notation is retained here.
We define 
$$
\alpha_{(q, m)}
$$
as the smallest cardinality of a subset $F \subset V$
such that the difference set $F-F$ contains a non-zero vector
of each of the $q^m + \dots + q +1$ simple $\F_p[G_{(q, m)}]$-submodules of $V$. 
Lemma~\ref{lem:F-F} implies that the inequality
$$
\alpha_{(q, m)} \le q^m + \dots + q +1
$$
holds for all $q$ and $m$.
The following result shows that the constant $\alpha_{(q, m)}$
is somehow independent of the group $G_{(q, m)}$.

\begin{lem}
\label{lem:alpha(q,m)}
Let $G$ be a group. Suppose that there exist
a prime $p$, a positive integer $m$,
a finite simple $\F_p[G]$-module $W$ of dimension $m$ over $\ko = \mathcal L_{\F_p[G]}(W)$,
and a finite normal subgroup $V$ of $G$ which is an elementary abelian $p$-group
and which is isomorphic as $\F_p[G]$-module to the direct sum of $m+1$ copies of $W$.
\par

Then $\alpha_{(q, m)}$ is equal to the smallest cardinality of a subset $F \subset V$
such that $F-F$ contains a non-zero element of each of the simple $\F_p[G]$-submodules of $V$.
\end{lem}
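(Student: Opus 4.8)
The plan is to show that the pair consisting of the abelian group $V$ together with its family of simple $\F_p[G]$-submodules depends, up to isomorphism, only on the two parameters $q$ and $m$, and not on the ambient group $G$. Since the quantity to be computed --- the least size of a subset $F$ whose difference set meets every simple submodule in a non-zero element --- is manifestly an invariant of this pair, it will then be the same for $G$ as for $G_{(q,m)}$, which is exactly the assertion.

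First I would fix an isomorphism of $\F_p[G]$-modules $V \cong V_0 \oplus \dots \oplus V_m$ with each $V_i \cong W$, and record the standard description of the simple submodules. By Schur's Lemma the centralizer $\ko = \mathcal L_{\F_p[G]}(W)$ is a field of order $q$, and $\mathrm{Hom}_{\F_p[G]}(W, V) \cong \ko^{m+1}$; consequently (this is the content of Lemma~\ref{lem:Goursat-for-2}, applied summand by summand) every simple $\F_p[G]$-submodule of $V$ is of the form
$$
S_{[\lambda]} = \{(\lambda_0 w, \lambda_1 w, \dots, \lambda_m w) \mid w \in W\}
$$
for some non-zero $(\lambda_0, \dots, \lambda_m) \in \ko^{m+1}$, with $S_{[\lambda]} = S_{[\mu]}$ if and only if $[\lambda] = [\mu]$ in the projective space $\mathbb{P}^m(\ko)$. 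Thus the simple submodules are parametrized by $\mathbb{P}^m(\ko)$, their number being $q^m + \dots + q + 1$, in agreement with Lemma~\ref{lem:counting}. The identical description holds for $G_{(q,m)}$, with its simple module $W_{(q,m)}$ and its own centralizer field, again of order $q$.

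The second, and central, step is to produce an additive bijection of $V$ onto the module $V_{(q,m)}$ of $G_{(q,m)}$ carrying the family of simple $\F_p[G]$-submodules onto the family of simple $\F_p[G_{(q,m)}]$-submodules. The two centralizer fields are finite fields of the same order $q$, hence there is a field isomorphism $\psi$ between them; I would fix one. Choosing a $\psi$-semilinear bijection $\theta \colon W \to W_{(q,m)}$ (which exists because both are $m$-dimensional over their respective order-$q$ fields), the map $\Phi(w_0, \dots, w_m) = (\theta(w_0), \dots, \theta(w_m))$ is an additive bijection $V \to V_{(q,m)}$, and semilinearity gives $\Phi(S_{[\lambda]}) = S_{[\psi(\lambda)]}$. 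Since $\psi$ induces a bijection of projective spaces, $\Phi$ maps the set of simple submodules of $V$ bijectively onto that of $V_{(q,m)}$.

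Finally I would transport the combinatorial condition along $\Phi$. Because $\Phi$ is additive one has $\Phi(F) - \Phi(F) = \Phi(F - F)$, and because $\Phi$ maps each simple submodule isomorphically onto a simple submodule (sending $0$ to $0$), the set $F - F$ meets every simple $\F_p[G]$-submodule of $V$ in a non-zero element if and only if $\Phi(F) - \Phi(F)$ meets every simple $\F_p[G_{(q,m)}]$-submodule of $V_{(q,m)}$ in a non-zero element. As $\Phi$ is a bijection, $\vert F \vert = \vert \Phi(F) \vert$, so the least admissible cardinality is the same on both sides; by definition this common value is $\alpha_{(q,m)}$, proving the lemma. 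The only point requiring care is the passage between the two centralizer fields in the second step: one must not assume they are literally equal, only isomorphic, and it is the use of a semilinear $\theta$ rather than a $\ko$-linear one that makes the identification legitimate.
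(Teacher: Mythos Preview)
Your proof is correct, but it takes a different route from the paper's. The paper observes that the $\ko$-linear $G$-action on $W$ yields a homomorphism $G \to \mathrm{GL}_{\ko}(W) = L$, so that $V$ carries simultaneously an $\F_p[G]$-module structure and an $\F_p[L]$-module structure on the \emph{same} underlying abelian group. Every simple $\F_p[G]$-submodule is then automatically $L$-stable (being of the form $S_{[\lambda]}$, on which $L$ acts diagonally and $\ko$-linearly), hence is an $\F_p[L]$-submodule; the counting in Lemma~\ref{lem:counting} shows the two families of simple submodules have the same cardinality, so they coincide as collections of subgroups of $V$. No bijection needs to be constructed, and no identification of fields is required: the combinatorial quantity is literally computed on the same set $V$ with the same family of distinguished subgroups.

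Your argument, by contrast, builds an explicit additive bijection $\Phi$ between $V$ and the module $V_{(q,m)}$ attached to $G_{(q,m)}$, taking care to use a $\psi$-semilinear $\theta$ so as not to conflate the two centralizer fields. This is more hands-on and makes every identification visible, which is pedagogically valuable; it also shows directly that the invariant depends only on the pair $(q,m)$ and on nothing else. The paper's approach is shorter and sidesteps the semilinear bookkeeping by never leaving the single abelian group $V$, at the cost of a one-line appeal to the counting lemma to close the argument.
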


\begin{proof}
The fact that $W$ is a $\ko[G]$-module yields a homomorphism
$G \to \mathrm{GL}_m(\ko) = \mathrm{GL}(W)$.
Set $L = \mathrm{GL}(W)$.
We may view $V$ both as an $\F_p[G]$-module and as an $\F_p[L]$-module.
Moreover, every simple $\F_p[G]$-submodule is also a simple $\F_p[L]$-submodule.
Since the number of simple $\F_p[G]$-submodules equals the number of simple $\F_p[L]$-sub\-modules
by Lemma~\ref{lem:counting},
we infer that every simple $\F_p[L]$-submodule of $V$
is also a simple $\F_p[G]$-submodule of $V$.
In particular, an additive subgroup of $V$ is a simple $\F_p[L]$-submodule
if and only if it is a simple $\F_p[G]$-submodule.
The required assertion follows.
\end{proof}

Clearly, we have 
$$
\binom{\alpha_{(q, m)}} 2 \ge q^m + \dots + q +1.
$$
The following lemma provides the values of $\alpha_{(q, m)}$ for some small $q$ and $m$.g
The proof of the last item was computer-aided. 
We are grateful to Max Horn for having independently checked the result.

\begin{lem}
\label{lem:alpha(q,m)small} 
With the notation $\alpha_{(q, m)}$ defined above, we have:
\begin{enumerate}[label=(\roman*)]
\item
$\alpha_{(2, 1)} = 3$.
\item 
$\alpha_{(3, 1)} = \alpha_{(4, 1)} = \alpha_{(5, 1)} = 4$.
\item
$\alpha_{(7, 1)} = \alpha_{(8, 1)} = \alpha_{(2, 2)} = 5$.
\item
$\alpha_{(9, 1)} = 6$.
\end{enumerate}
\end{lem}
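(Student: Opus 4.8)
The plan is to recast the definition of $\alpha_{(q,m)}$ as a concrete extremal problem about difference sets and directions, to settle the lower bounds by a counting inequality and the upper bounds by exhibiting small explicit configurations, and to concentrate the real work in the single value $\alpha_{(9,1)}$. Concretely, Lemma~\ref{lem:alpha(q,m)} lets me compute $\alpha_{(q,m)}$ inside the group $G_{(q,m)} = \mathrm{GL}(W) \ltimes V$ itself, so that it is the least size of a subset $F \subseteq V$ whose difference set $F - F$ meets each of the $q^m + \dots + q + 1$ simple submodules of $V$ in a non-zero vector. I would first make these submodules explicit: writing an element of $V = W^{m+1}$ as an $m \times (m+1)$ matrix over $\ko$ (the $m+1$ copies of $W = \ko^m$ being its columns), a non-zero element lies in a simple submodule exactly when it has rank one, in which case the submodule is recorded by the one-dimensional row space; thus $F - F$ must contain, for every line in $\ko^{m+1}$, a rank-one difference with that row space. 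For $m = 1$ this simplifies drastically: $V \cong \ko^2$, its simple submodules are the $q+1$ lines through the origin, and $\alpha_{(q,1)}$ is exactly the least number of points of the affine plane over $\ko$ whose pairwise differences realise all $q+1$ directions.

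For the lower bounds I would invoke the inequality $\binom{\alpha_{(q,m)}}{2} \ge q^m + \dots + q + 1$ recorded just before the statement, which holds because an $\alpha$-element set produces at most $\binom{\alpha}{2}$ difference-directions. This at once gives $\alpha_{(2,1)} \ge 3$, then $\alpha_{(3,1)}, \alpha_{(4,1)}, \alpha_{(5,1)} \ge 4$, then $\alpha_{(7,1)}, \alpha_{(8,1)}, \alpha_{(2,2)} \ge 5$, and for the last value only $\alpha_{(9,1)} \ge 5$.

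The matching upper bounds are obtained by displaying explicit witnesses and checking them, each check being finite. For instance $\{(0,0),(1,0),(0,1)\}$ realises all three directions over $\ko = \F_2$; with $\F_4 = \{0,1,\omega,\omega^2\}$ the set $\{(0,0),(1,0),(0,1),(1,\omega)\}$ realises all five; and for $m = 2$, $q = 2$ the five elements $(0,0,0),(e_1,0,0),(0,e_1,0),(0,0,e_1),(e_1,e_1,e_1)$ of $V = W^3$ (with $e_1 \in W = \F_2^2$) yield, among their pairwise differences, a rank-one matrix of each of the seven row-directions, so $\alpha_{(2,2)} = 5$. Similar small point sets handle $q = 3, 5, 7, 8$ for $m = 1$, and a six-point set handles $\alpha_{(9,1)} \le 6$, the verification in each case reducing to a finite inspection of difference-directions.

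The genuine obstacle is the lower bound $\alpha_{(9,1)} \ge 6$, since the counting inequality yields only $\ge 5$. Here $\binom{5}{2} = 10 = q + 1$, so a hypothetical five-point witness in the affine plane over $\F_9$ would have to realise the ten directions using its ten pairwise differences, forcing these to point in pairwise distinct directions; equivalently, its $\binom{5}{2}$ connecting lines would have to be pairwise non-parallel. To rule this out I would use that the property of determining all directions is invariant under $\mathrm{AGL}_2(\F_9)$: since no three points of such a set can be collinear (two collinear differences would be parallel), three of its points may be normalised to $(0,0),(1,0),(0,1)$, after which the two remaining points range over a finite set and the nonexistence of a valid configuration is confirmed by a direct search. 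This finite but sizeable search is the hard part of the argument, and is exactly the step carried out (and independently double-checked) by machine; combining it with the explicit six-point set gives $\alpha_{(9,1)} = 6$, and assembling all the bounds yields the four stated equalities.
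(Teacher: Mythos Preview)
Your proposal is correct and follows essentially the same approach as the paper: the counting inequality $\binom{\alpha_{(q,m)}}{2} \ge q^m + \dots + q + 1$ for the lower bounds, explicit witness sets for the upper bounds (your examples for $q=2,4$ and $(q,m)=(2,2)$ even coincide with the paper's), and a computer-aided exhaustion for the stubborn lower bound $\alpha_{(9,1)} \ge 6$. Your use of $\mathrm{AGL}_2(\F_9)$-invariance to normalise three of the five hypothetical points before searching is a nice reduction that the paper does not make explicit, but it is an optimisation of the same machine verification rather than a different argument.
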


\begin{proof}
Consider as above the group $G_{(q, m)} = \mathrm{GL}(W) \ltimes V$.
Recall that $q = \vert \ko \vert$, $m = \dim_{\ko}(W)$,
and $V = W \oplus \dots \oplus W$ ($m+1$ times).

\vskip.2cm

If $m =1$, the simple $\F_p[\ko^\times]$-submodules of $V$
coincide with the $1$-dimensional subspaces of $V = \ko^2$.
As noticed above, we have $\binom{\alpha_{(q, 1)}} 2 \ge q +1$.
For $q = 2$ [respectively, $3 \le q \le 5$, $7 \le q \le 9$], 
this implies $\alpha_{(2,1)} \ge 3$ 
[respectively $\alpha_{(q,1)} \ge 4$, $\alpha_{(q,1)} \ge 5$].
For $q \in \{2,3,4,5,7,8\}$, to show that this lower bound on $\alpha_{(q,1)}$ is attained,
it suffices to exhibit a subset $F \subset V$ of the corresponding size
such that $F-F$ has the required property. 
One can check that the following sets do the job,
where the elements of the prime field $\F_p$ are denoted by $0, 1, \dots, p-1$.
\par

For $q = 2$, we set $F = \{(0, 0), (1, 0), (0, 1)\}$.
\par

For $q = 3$, we set $F = \{(0,0), (0, 1), (1, 0), (1, 1)\}$.
\par

For $q = 4$, we set $F = \{(0, 0), (1,0), (0, 1), (1, x)\}$,
where $\ko$ has been identified with $\F_2[x] / (x^2+x+1)$.
\par

For $q=5$, we set $F = \{(0, 0), (1,0), (0,1), (3, 4)\}$.
\par

For $q=7$, we set $F = \{ (0, 0), (1,0), (0,1), (2, 3), (5, 2)\}$.
\par

For $q=8$, we set $F = \{ (0, 0), (1,0), (0,1), (1, x), (x^2+x, x^2)\}$,
where $\ko$ has been identified with $\F_2[x] / (x^3 +x +1)$.
\par

For $q = 9$, the situation is different.
We know that $\binom{\alpha_{(9, 1)}} 2 \ge 10$, so that $\alpha_{(9, 1)} \ge 5$.
With the help of a computer, we checked that no subset $F$ in $V$ of size $5$
is such that $F-F$ contains a non-zero vector
of each of the $10$ one-dimensional subspaces of $V$.
On the other hand, one verifies that the set
$$
F= \{ (0, 0), (1,0), (0,1), (0, 2), (0, x), (2, 2x+1) \}
$$
has this property, where $\ko$ has been identified with $\F_3[x] / (x^2-x-1)$.
Thus $\alpha_{(9, 1)} = 6$.

\vskip.2cm

Finally, consider the case of $m=2$ and $q=2$.
Since $\binom{\alpha_{(2, 2)}} 2 \ge 2^2 + 2 +1 = 7$,
we have $\alpha_{(2, 2)} \ge 5$.
Let $a$ be a non-zero vector in~$W$.
One checks that the set
$$
F = \{(0, 0, 0), (a, 0, 0), (0, a, 0), (0, 0, a), (a, a, a)\}
$$
satisfies the required condition, so that $\alpha_{(2, 2)} = 5$. 
\end{proof}

\subsection{\texorpdfstring{$P(\binom{n}{2} -1)$}{}
sometimes implies \texorpdfstring{$Q(n)$}{}}

We are now ready to present the main technical result of this section.
It may be viewed as a supplement to Lemma~\ref{lem:BasicPQ}\ref{iiDElem:BasicPQ}.

\begin{prop}
\label{prop:nonQ(n)}
Let $n$ be an integer, $n \ge 3$.
Let $G$ be a countable group with Property $P\left( \binom{n}{2} -1 \right)$.
Assume that, for all pairs $(q, m)$ consisting of
a prime power $q$ and an integer $m$ such that $q^m + \dots + q +1 = \binom{n}{2}$,
we have $\alpha_{(q, m)} > n$.
\par

Then $G$ has $Q(n)$.
\end{prop}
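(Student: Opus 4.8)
The plan is to argue by contradiction. Suppose $G$ does not have $Q(n)$, and fix a subset $F \subseteq G$ of size at most $n$ that is not irreducibly injective. By Lemma~\ref{lem:BasicPQ}\ref{iDElem:BasicPQ}, any choice of $E := \binom{F}{2}$ is then irreducibly unfaithful, and $\vert E\vert \le \binom{n}{2}$. Since $G$ has $P\bigl(\binom{n}{2}-1\bigr)$, a set of size $<\binom{n}{2}$ would be irreducibly faithful; hence $\vert E\vert = \binom{n}{2}$, which in turn forces $\vert F\vert = n$. I would then feed $E$ into Theorem~\ref{thm:UnfaithfulSetSize-n}, applied with ``$n$'' replaced by $\binom{n}{2}$ (legitimate, as $G$ has $P(\binom{n}{2}-1)$): this produces a prime $p$, a finite simple $\F_p[G]$-module $W$ with $\ko = \mathcal L_{\F_p[G]}(W)$, $m = \dim_\ko(W)$, $q = \vert\ko\vert$, such that $U := \llag E\rrag_G$ is elementary abelian, contained in $\MA(G)$, isomorphic to a direct sum of $\ell+1 \ge m+1$ copies of $W$, and $q^m+\dots+q+1 = \binom{n}{2}$. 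In particular the pair $(q,m)$ satisfies the standing hypothesis, so $\alpha_{(q,m)} > n$.

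The next step turns $E$ into a genuine difference set. Fixing any $f_0 \in F$ and setting $F' := \{xf_0^{-1} \mid x\in F\}$, the identity $xy^{-1} = (xf_0^{-1})(yf_0^{-1})^{-1}$ shows that $F' \subseteq FF^{-1} \subseteq U$, that $\vert F'\vert = n$, and that $F'-F' = FF^{-1} \supseteq E$ inside the abelian group $U$. Thus $E$ lies in the difference set of an $n$-element subset of $U$, which is exactly the kind of datum that $\alpha_{(q,m)}$ measures.

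The main obstacle is a mismatch of sizes: $\alpha_{(q,m)}$ is defined on a module with exactly $m+1$ copies of $W$, whereas $U$ carries $\ell+1$ copies with possibly $\ell > m$. I would bridge this by a maximality argument. Let $B_0 \le U$ be a submodule maximal among those with $E \cap B_0 = \varnothing$ (the zero submodule qualifies, since $e \notin E$). Maximality forces every simple $\F_p[G]$-submodule $\bar C \le U/B_0$ to meet the image $\bar E$ of $E$ in a non-zero element: otherwise the preimage of $\bar C$ would be a strictly larger member of the family. Since $E$ is irreducibly unfaithful, $U/B_0$ cannot be cyclic; indeed, were it cyclic, Proposition~\ref{lem14BEHA} and Lemma~\ref{lem:G-faithful-normal-subgroup} (applied to $G/B_0$ and its normal subgroup $U/B_0 \le \MA(G/B_0)$, using Proposition~\ref{structureminisocle}\ref{7DEstructureminisocle}) would yield an irreducible representation faithful on $U/B_0$ and hence non-trivial on every element of $E$, contradicting irreducible unfaithfulness. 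Writing $U/B_0 \cong jW$, Lemma~\ref{lem:counting} gives it $q^{j-1}+\dots+q+1$ simple submodules, pairwise disjoint off $0$, so distinct ones are hit by distinct elements of $\bar E$; hence $q^{j-1}+\dots+q+1 \le \vert E\vert = q^m+\dots+q+1$, i.e.\ $j \le m+1$. Combined with non-cyclicity ($j \ge m+1$) this pins down $U/B_0 \cong (m+1)W$, with $\bar E$ meeting each simple submodule non-trivially.

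Finally, let $\bar F'$ denote the image of $F'$ in $U/B_0$. Then $\vert\bar F'\vert \le \vert F'\vert = n$ and $\bar F'-\bar F' \supseteq \bar E$ contains a non-zero element of every simple submodule of $U/B_0 \cong (m+1)W$. Since $U$, and a fortiori $B_0$, acts trivially on $W$ by conjugation ($U$ being abelian), $W$ is a simple $\F_p[G/B_0]$-module with the same centralizer $\ko$, so Lemma~\ref{lem:alpha(q,m)} applies to $G/B_0$ and yields $\vert\bar F'\vert \ge \alpha_{(q,m)}$. This contradicts $\alpha_{(q,m)} > n \ge \vert\bar F'\vert$, completing the argument. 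The genuinely delicate point is the maximality step of the previous paragraph, where one must control the number of copies of $W$ in $U/B_0$ both from above (by counting simple submodules against $\vert E\vert$) and from below (by irreducible unfaithfulness), so as to land on exactly $m+1$ and make $\alpha_{(q,m)}$ usable.
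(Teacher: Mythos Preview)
Your proof is correct and follows essentially the same strategy as the paper: assume $Q(n)$ fails, pass to $E = \binom{F}{2}$, apply Theorem~\ref{thm:UnfaithfulSetSize-n} with $\binom{n}{2}$ in place of $n$ to obtain $U \cong (\ell+1)W$ and $q^m+\dots+q+1 = \binom{n}{2}$, reduce to a quotient isomorphic to $(m+1)W$, and invoke Lemma~\ref{lem:alpha(q,m)} to contradict $\alpha_{(q,m)} > n$.

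The only point of divergence is the reduction from $(\ell+1)W$ to $(m+1)W$. The paper argues by induction on $\ell - m$: as long as $\ell > m$ the number of simple submodules exceeds $\binom{n}{2} = \vert E\vert$, so one can quotient by a simple submodule disjoint from $E$ (hence keeping the map injective on $F$) and repeat until reaching exactly $m+1$ copies; it then invokes Lemma~\ref{firstpartproofMainTheorem}\ref{iiDEfirstpartproofMainTheorem} to see that the image of $E$ meets every simple submodule. Your maximality argument packages this in one step: a maximal $B_0$ with $E \cap B_0 = \varnothing$ forces every simple submodule of $U/B_0$ to meet $\bar E$, and the double bound on $j$ (from counting against $\vert E\vert$ above, from non-cyclicity below) pins down $j = m+1$ directly. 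Both arguments are short and equivalent in spirit; yours avoids the induction and makes the counting more explicit, while the paper's makes the injectivity-on-$F$ aspect slightly more transparent at each stage.
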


\begin{proof}
Suppose for a contradiction that $G$ does not have $Q(n)$.
Let $F \subset G$ be a subset of size $\le n$
which is not irreducibly injective in~$G$.
Upon replacing $F$ by $Fx^{-1}$ for some $x \in F$,
we may assume without loss of generality that $F$ contains the neutral element $e$.
\par

Let $E \subset G \smallsetminus \{e\}$ be a subset of the form $\binom{F}{2}$;
recall that $\vert E \vert \le \binom{n}{2}$.
Since $e \in F$, we may choose $E$ in such a way that $E$ contains $F \smallsetminus \{e\}$.
It follows from Lemma \ref{lem:BasicPQ}\ref{iDElem:BasicPQ} that $E$ is irreducibly unfaithful.
Since $G$ has $P\left( \binom{n}{2} -1 \right)$ by hypothesis,
we deduce that $\vert E \vert = \binom{n}{2}$.
Set $U = \llag E \rrag_G$.
Since $F \smallsetminus \{e\} \subset E$, we have $F \subset U$.
\par

We invoke Theorem~\ref{thm:UnfaithfulSetSize-n} and use its notation,
except for $F$ there being $E$ here.
In particular, there exist a prime $p$ and a simple $\F_p[G]$-module $W$
such that $U$ is isomorphic as an $\F_p[G]$-module
to the direct sum of $\ell+1$ copies of $W$ for some $\ell \ge m$.
By Theorem~\ref{thm:UnfaithfulSetSize-n}\ref{iiiDEthm:UnfaithfulSetSize-n},
we have $q^m + \dots + q +1 = \binom{n}{2}$.
Set $V = \bigoplus_0^m W$. 
\par

We next claim that there exists a surjective map of $\F_p[G]$-modules
$r \hskip.1cm \colon U \twoheadrightarrow V$
whose restriction to $F$ is injective.
If $\ell = m$, then $U = V$ and $r$ can be defined as the identity map.
If $\ell > m$, we proceed by induction on $\ell - m$.
Lemma~\ref{lem:counting} ensures
that the number of simple $\F_p[G]$-submodules of $U$
is strictly larger than $\binom{n}{2}$.
Since $\binom{n}{2} = \vert E \vert$,
there exists a simple $\F_p[G]$-submodule $U_0$ of $U$
such that $U_0 \cap E = \{0\}$.
If $r_0 \hskip.1cm \colon U \twoheadrightarrow U/U_0$
denotes the quotient map,
we have $\Ker(r_0) \cap E = \{0\}$,
and it follows that the restriction of $r_0$ to $F$ is injective.
Since $U/U_0$ is isomorphic to a direct sum of~$\ell$ copies of $W$,
the induction hypothesis guarantees the existence
of a surjective map of $\F_p[G]$-modules
$r_1 \hskip.1cm \colon U/U_0 \twoheadrightarrow V$
whose restriction to $r_0(F)$ is injective.
The map $r = r_1 \circ r_0 \hskip.1cm \colon U \twoheadrightarrow V$
satisfies the required property.
This proves the claim.
\par

Set $E' = r(E)$, $F' = r(F)$ and $K = \Ker(r)$.
Since $K$ is an $\F_p[G]$-submodule of $U$, we may view it as a normal subgroup of $G$.
We view $E'$, $F'$ and $V$ as subsets of the quotient group $G' = G/K$;
observe that $E' \subset G' \smallsetminus \{e\}$ is of the form $\binom{F'}{2}$.
Since $F$ is not irreducibly injective in $G$, it follows that $F'$ is not irreducibly injective in $G'$.
Hence $E'$ is not irreducibly faithful in~$G'$.
Therefore $E'$ contains a non-zero element in each of the simple submodules of $V$,
by Lemma~\ref{firstpartproofMainTheorem}.
Recalling that $E' \subset F' - F'$, we deduce from Lemma~\ref{lem:alpha(q,m)}
that $\alpha_{(q, m)} \le \vert F' \vert$.
Since $\vert F' \vert = \vert F \vert = n$,
this contradicts the hypothesis that $\alpha_{(q, m)} > n$.
\end{proof}

\begin{rem}
As mentioned in Section~\ref{subsec:1.a}, the Goormaghtigh Conjecture
predicts that for every integer $\ell$,
there exists at most one prime power $q$ and one positive integer $m$
such that $q^m + \dots + q + 1 = \ell$, except for $\ell = 31$.
Since $31$ is not of the form $n \choose 2$,
that conjecture predicts that the condition from Proposition~\ref{prop:nonQ(n)}
needs to be checked for at most one value of $q$ and $m$, once the integer $n$ is fixed.
\end{rem}

\begin{thm}
\label{thm:Q(n)small-n}
Let $G$ be a group. Then $G$ has Properties $P(2)$ and $Q(2)$.
Suppose moreover that $G$ is countable; then:
\begin{enumerate}[label=(\roman*)]
\item \label{iDEthm:Q(n)small-n}
$G$ has $Q(3)$ if and only if $G$ has $P(3)$;
\item \label{iiDEthm:Q(n)small-n}
$G$ has $Q(4)$ if and only if $G$ has $P(6)$. 
\item \label{iiiDEthm:Q(n)small-n}
$G$ has $Q(5)$ if and only if $G$ has $P(9)$. 
\end{enumerate}
\end{thm}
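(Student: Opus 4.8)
The plan is to assemble the three equivalences from the two easy implications already recorded (Lemma~\ref{lem:BasicPQ} and Proposition~\ref{prop:Q(n)=>P(n)}) together with the sharp criterion of Proposition~\ref{prop:nonQ(n)}, feeding in the explicit constants computed in Lemma~\ref{lem:alpha(q,m)small}. The opening assertion is immediate: every group has $P(2)$ by Walter's theorem and $Q(2)$ by Lemma~\ref{lem:BasicPQ}\ref{iiDElem:BasicPQ}. For \ref{iDEthm:Q(n)small-n} I would observe that $\binom{3}{2}=3$, so $P(3)\Rightarrow Q(3)$ is exactly Lemma~\ref{lem:BasicPQ}\ref{iiDElem:BasicPQ}, while $Q(3)\Rightarrow P(3)$ is the case $n=3$ of Proposition~\ref{prop:Q(n)=>P(n)}. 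Likewise, for \ref{iiDEthm:Q(n)small-n} the implication $P(6)\Rightarrow Q(4)$ is Lemma~\ref{lem:BasicPQ}\ref{iiDElem:BasicPQ} applied with $n=4$ and $\binom{4}{2}=6$. So the substantive work lies in the two implications $Q(4)\Rightarrow P(6)$, $Q(5)\Rightarrow P(9)$, and in the sharpened $P(9)\Rightarrow Q(5)$.

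For the implications $Q(n)\Rightarrow P(N)$ (with $(n,N)=(4,6)$ and $(5,9)$) I would argue by contraposition. Assuming $\neg P(N)$, choose $n_0\le N$ minimal with $\neg P(n_0)$, so that $G$ has $P(n_0-1)$. Theorem~\ref{thm:CharP(n)} then produces a prime $p$, a finite normal elementary abelian $V\le G$ and a simple $\F_p[G]$-module $W$ with $V\cong (m+1)W$ and $q^m+\dots+q+1\le n_0$, where $m=\dim_\ko W$ and $q=\vert\ko\vert$; minimality of $n_0$, via the converse implication of Theorem~\ref{thm:CharP(n)}, forces $q^m+\dots+q+1=n_0$. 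Enumerating the prime-power solutions of $q^m+\dots+q+1=n_0$ over $n_0\le N$ gives, for $N=6$, the pairs $(2,1),(3,1),(4,1),(5,1)$ (for $n_0=3,4,5,6$), and for $N=9$ the additional pairs $(2,2),(7,1),(8,1)$ (for $n_0=7,8,9$). Lemma~\ref{lem:alpha(q,m)small} then shows $\alpha_{(q,m)}\le n$ in every one of these cases, namely $\alpha_{(q,m)}\le 4$ when $N=6$ and $\alpha_{(q,m)}\le 5$ when $N=9$. Lemma~\ref{lem:alpha(q,m)} supplies a subset $F\subset V$ with $\vert F\vert=\alpha_{(q,m)}\le n$ whose difference set meets every simple $\F_p[G]$-submodule of $V$ non-trivially; hence $\binom{F}{2}$ is irreducibly unfaithful by Lemma~\ref{firstpartproofMainTheorem}\ref{iiDEfirstpartproofMainTheorem}, and by Lemma~\ref{lem:BasicPQ}\ref{iDElem:BasicPQ} the set $F$ is not irreducibly injective. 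As $\vert F\vert\le n$, this yields $\neg Q(n)$, as required.

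The one direction that is a genuine strengthening rather than a repackaging is $P(9)\Rightarrow Q(5)$, and here the exact value of the constant is decisive. I would invoke Proposition~\ref{prop:nonQ(n)} with $n=5$: its hypothesis $P\bigl(\binom{5}{2}-1\bigr)=P(9)$ is precisely what is assumed, and the only prime-power solution of $q^m+\dots+q+1=\binom{5}{2}=10$ is $(q,m)=(9,1)$, for which $\alpha_{(9,1)}=6>5$ by Lemma~\ref{lem:alpha(q,m)small}. The proposition then delivers $Q(5)$, completing \ref{iiiDEthm:Q(n)small-n} together with the contrapositive $\neg P(9)\Rightarrow\neg Q(5)$ established above.

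The delicate point, and the place where the whole statement could fail, is the bookkeeping of the numerical inputs: the thresholds $3,6,9$ are chosen so that each relevant $\alpha_{(q,m)}$ lands on the correct side of $n$. In particular the equivalence $Q(5)\Leftrightarrow P(9)$ holds with $9$ rather than the naive $\binom{5}{2}=10$ only because $\alpha_{(9,1)}=6>5$ while all smaller obstructions satisfy $\alpha_{(q,m)}\le 5$; a group with $P(9)$ but not $P(10)$ (failing exactly at the $C_9$-type obstruction of level $10$) still has $Q(5)$. Thus the hard part is not the logical scaffolding but the correctness and completeness of the computation in Lemma~\ref{lem:alpha(q,m)small}, notably the computer-aided value $\alpha_{(9,1)}=6$, on which the sharp threshold in \ref{iiiDEthm:Q(n)small-n} entirely rests.
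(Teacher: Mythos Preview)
Your proposal is correct and follows essentially the same route as the paper. The opening and part~\ref{iDEthm:Q(n)small-n} match exactly; for~\ref{iiDEthm:Q(n)small-n} and~\ref{iiiDEthm:Q(n)small-n} you package the contrapositive $\neg P(N)\Rightarrow\neg Q(n)$ uniformly via Theorem~\ref{thm:CharP(n)}, Lemma~\ref{lem:alpha(q,m)} and Lemma~\ref{lem:alpha(q,m)small}, whereas the paper spells out the case $n_0=4$ by hand (showing directly that $\Ker(\pi)$ must contain one of the four order-$3$ subgroups of $V\cong\F_3^2$) and then declares the remaining cases analogous; and the implication $P(9)\Rightarrow Q(5)$ via Proposition~\ref{prop:nonQ(n)} with $\alpha_{(9,1)}=6$ is identical in both.

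One small comment on presentation: your step ``hence $\binom{F}{2}$ is irreducibly unfaithful by Lemma~\ref{firstpartproofMainTheorem}\ref{iiDEfirstpartproofMainTheorem}'' tacitly uses that if $F-F$ meets a submodule $K$ nontrivially then so does $\binom{F}{2}$, since $V$ is abelian and $K$ is closed under inversion. This is routine, but the paper avoids it by arguing directly from Lemma~\ref{firstpartproofMainTheorem}\ref{iDEfirstpartproofMainTheorem}: any irreducible $\pi$ has a simple submodule $K\le\Ker(\pi)\cap V$, and a nonzero element of $(F-F)\cap K$ gives $x\neq y$ in $F$ with $\pi(x)=\pi(y)$, so $F$ is not irreducibly injective. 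Either phrasing is fine.
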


\begin{proof}
By Lemma~\ref{lem:BasicPQ}\ref{iiDElem:BasicPQ},
Property $P( \binom{n}{2} )$ implies $Q(n)$.
For $n=3$, and $4$,
this yields $P(3) \Rightarrow Q(3)$ and $P(6) \Rightarrow Q(4)$.
By Proposition~\ref{prop:Q(n)=>P(n)}, we have $Q(3) \Rightarrow P(3)$.
\par

Among other things, this proves~\ref{iDEthm:Q(n)small-n}.

\vskip.2cm

Let now $G$ be a countable group that does not satisfy $P(6)$.
To show \ref{iiDEthm:Q(n)small-n}, it remains to show
that $G$ does not have $Q(4)$.
We may assume that $G$ has $Q(3)$,
since otherwise we are already done.
Hence, $G$ has $P(3)$ by~\ref{iDEthm:Q(n)small-n}.
Let $n$ be the least integer such that $G$ does not have $P(n)$.
Hence $n$ is one of $4$, $5$, or $6$. 
\par

If $n = 4$, we deduce from Theorem~\ref{thm:CharP(n)}
that $G$ contains a normal subgroup $V$ isomorphic to $\F_3 \oplus \F_3$,
on which the $G$-action is by scalar multiplication.
Let $\pi$ be an irreducible unitary representation of $G$.
Set $Q = G/\Ker(\pi)$
and let $r \hskip.1cm \colon G \twoheadrightarrow Q$ be the canonical projection. 
By Proposition~\ref{structureminisocle}\ref{7DEstructureminisocle},
the subgroup $r(V) \le Q$ is generated by abelian mini-feet of $Q$,
and it is an elementary abelian $3$-group.
Suppose that $r(V)$ were isomorphic to $V$;
note that $Q$ would act on $V$ by scalar multiplication;
since $Q$ is irreducibly faithful, hence has Property $P(4)$,
this would contradict Theorem~\ref{thm:CharP(n)}.
Hence the restriction of $r$ to $V$ cannot be faithful.
(Note moreover that, for each of the simple $\F_3[G]$-modules $W$
contained in $V$, the restriction to $W$ of the projection $r$
is either injective or the zero map.)
Therefore $\Ker(r) = \Ker(\pi)$ contains
at least one of the $4$ cyclic subgroups of order~$3$ of $V$.
Lemma~\ref{lem:alpha(q,m)small} yields a subset $F$ of $V$ of size $4$
such that $F - F$ contains a non-trivial element
of each fo the $4$ cyclic subgroups of order~$3$ of $V$.
Therefore $\pi(a) = \pi(b)$ for some $a, b$ distinct in $F$.
This shows that $G$ does not have Property $Q(4)$. 
\par

If $n = 5$ and $n = 6$, similar arguments using
Lemmas \ref{lem:alpha(q,m)} and \ref{lem:alpha(q,m)small}
apply, each time with $\vert F \vert = 4$.
This confirms that \ref{iiDEthm:Q(n)small-n} holds.

\vskip.2cm 

Arguing similarly using Theorem~\ref{thm:CharP(n)} and Lemma~\ref{lem:alpha(q,m)small},
we see that $Q(5)$ implies $P(9)$.
Conversely, invoking Proposition~\ref{prop:nonQ(n)} for $n = 5$,
we deduce that $P(9)$ implies $Q(5)$
since $\alpha_{(9, 1)} = 6$ by Lemma~\ref{lem:alpha(q,m)small}.
This proves \ref{iiiDEthm:Q(n)small-n}.
\end{proof}

\subsection{From \texorpdfstring{$Q(n)$}{} to additive combinatorics}
\label{AdditiveComb}

Theorem~\ref{thm:Q(n)small-n} suggests the following question.

\begin{ques}
\label{ques:Q(n)}
Can we characterize Property $Q(n)$ by an algebraic property of $G$,
in the same vein as in Theorem~\ref{thm:CharP(n)}? 
\par

In particular, is it true that, for each $n \ge 1$, there exists an integer $f(n) \ge 1$
such that a countable group $G$ has Property $Q(n)$
if and only if it has Property~$P(f(n))$ ?
\end{ques}

The proof of Theorem~\ref{thm:Q(n)small-n} 
suggests that an answer to Question~\ref{ques:Q(n)} might require
to compute the numbers $\alpha_{(q, m)}$ for all $(q, m)$.
This is confirmed by the following observation.

\begin{obs}
\label{obs:n_p}
The group $G_{(q, m)}$ of Example~\ref{exampleGqm} has Property $Q(\alpha_{(q, m)}-1)$,
but not $Q(\alpha_{(q, m)})$.
\end{obs}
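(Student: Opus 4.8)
The plan is to establish the two non-containments separately. Writing $\alpha=\alpha_{(q,m)}$ and keeping the notation of Example~\ref{exampleGqm} (so $V\cong(m+1)W$ as an $\F_p[G_{(q,m)}]$-module and every simple submodule of $V$ is isomorphic to $W$), the failure of $Q(\alpha)$ is immediate from the definition of the constant: there is a set $F\subseteq V$ with $\vert F\vert=\alpha$ whose difference set $F-F$ contains a non-zero vector of each of the $q^m+\dots+q+1$ simple submodules of $V$. For any irreducible unitary representation $\pi$ of $G_{(q,m)}$, Lemma~\ref{firstpartproofMainTheorem}\ref{iDEfirstpartproofMainTheorem} provides a simple submodule $K\le V$ with $K\le\Ker(\pi)$; choosing $x,y\in F$ with $x-y$ a non-zero element of $K$ gives $\pi(x)=\pi(y)$. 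Thus no irreducible representation separates $F$, so $F$ of size $\alpha$ witnesses the failure of $Q(\alpha)$.

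The substantial half is that every $F\subseteq G_{(q,m)}$ with $\vert F\vert\le\alpha-1$ is irreducibly injective. Let $\phi\colon G_{(q,m)}\twoheadrightarrow\mathrm{GL}(W)$ be the quotient with kernel $V$, and partition $F=\bigsqcup_j F_j$ along the fibres of $\phi$ (the cosets of $V$). The first step is to locate a simple submodule avoided by every ``within-coset'' difference. Fixing $g_j\in F_j$ and setting $\tilde F_j=g_j^{-1}F_j\subseteq V$, normality and commutativity of $V$ give $\{xy^{-1}:x,y\in F_j\}=\phi(g_j)\cdot(\tilde F_j-\tilde F_j)$; since the $G_{(q,m)}$-action on $V$ factors through $\mathrm{GL}(W)$, every simple submodule of $V$ is $\mathrm{GL}(W)$-invariant, so this set meets a simple submodule $K$ exactly when $\tilde F_j-\tilde F_j$ does. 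With $\tilde F=\bigcup_j\tilde F_j$ one has $\vert\tilde F\vert\le\sum_j\vert F_j\vert=\vert F\vert\le\alpha-1<\alpha$, so $\tilde F-\tilde F\supseteq\bigcup_j(\tilde F_j-\tilde F_j)$ cannot meet every simple submodule, by minimality in the definition of $\alpha$. Hence some simple submodule $K\le V$ is disjoint from all within-coset differences.

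The second step builds the separating representation. As $K\cong W$ and $V\cong(m+1)W$, the quotient $V/K\cong mW$ is a cyclic $\F_p[G_{(q,m)}/K]$-module by Lemma~\ref{lem:Isotypical}. Running the argument of Example~\ref{exampleGqm} for $G_{(q,m)}/K\cong\mathrm{GL}(W)\ltimes(V/K)$ shows that its abelian mini-socle equals $V/K$, and each finite normal subgroup contained in it is a submodule isomorphic to some $jW$ with $j\le m$, hence cyclic and so generated by a single conjugacy class (Lemma~\ref{lem:cyclic1conjclass}). Theorem~\ref{thm:Gasch} then makes $G_{(q,m)}/K$ irreducibly faithful; let $\pi_0$ be a faithful irreducible representation, regarded as an irreducible representation of $G_{(q,m)}$ with $\Ker(\pi_0)=K$. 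For distinct $x,y\in F$ in a common coset, $xy^{-1}\in V\smallsetminus K$ is a within-coset difference, so $xy^{-1}\notin\Ker(\pi_0)$; for $x,y$ in distinct cosets, $xy^{-1}\notin V\supseteq K$. In either case $\pi_0(x)\ne\pi_0(y)$, so $\pi_0$ separates $F$, and $G_{(q,m)}$ has $Q(\alpha-1)$.

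The main obstacle is precisely the cross-coset behaviour of $F$: elements outside $V$ may have large normal closures, so one cannot argue inside $V$ alone, and naïve attempts to reduce to $V$ destroy the difference-set structure on which $\alpha$ depends. The coset decomposition dissolves this difficulty, since $\mathrm{GL}(W)$-invariance of the simple submodules makes the within-coset part of the obstruction independent of the coset and reduces it exactly to the difference-set problem defining $\alpha$, while choosing a representation with kernel \emph{equal} to $K$ disposes of all cross-coset pairs for free. I expect the only delicate point to be the irreducible faithfulness of $G_{(q,m)}/K$: it is essential that $K$ is a genuine simple submodule, so that $V/K$ retains only $m$ isotypic copies of $W$ and the single obstruction identified by Theorem~\ref{thm:CharP(n)} disappears.
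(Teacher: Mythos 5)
Your proof is correct and follows essentially the same route as the paper: both halves rest on the same ingredients — the definition of $\alpha_{(q,m)}$ together with Lemma~\ref{firstpartproofMainTheorem} for the failure of $Q(\alpha_{(q,m)})$, and, for $Q(\alpha_{(q,m)}-1)$, the coset-of-$V$ decomposition of $F$, the minimality of $\alpha_{(q,m)}$ to produce a simple submodule $K$ avoided by all within-coset differences, and the irreducible faithfulness of $G_{(q,m)}/K$. The only differences are organizational: the paper argues by contradiction that $FF^{-1}$ is irreducibly faithful (using right translates $F_ig_i^{-1}$, which avoids the conjugation twist you handle via $\mathrm{GL}(W)$-invariance), whereas you directly construct the separating representation with kernel exactly $K$ and spell out the Gasch\"utz argument that the paper only cites.
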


\begin{proof}
That $G = G_{(q, m)}$ does not have $Q(\alpha_{(q, m)})$
follows from the definition and from Lemma~\ref{lem:BasicPQ},
in view of Theorem~\ref{thm:CharP(n)}.
\par

In order to show that $G_{(q, m)}$ has $Q(\alpha_{(q, m)}-1)$,
we fix a subset $F$ of $G$ such that $\vert F \vert < \alpha_{(q, m)}$.
We shall prove that $FF^{-1}$ is irreducibly faithful.
This implies that $F$ is irreducibly injective, as required.
Modules below refer to the ring $\F_p[G]$.
\par

Notice that $FF^{-1}$ remains unchanged when $F$ is replaced by a translate $Fg$,
for some $g \in G$.
Without loss of generality we may thus assume that $F$ contains~$e$.
In particular $F \subseteq FF^{-1}$. 
\par

Let $\{g_1, \dots , g_k\} \subset G$ be a set of minimal cardinality
such that $F \subset \bigcup_{i=1}^k V g_i$.
For each $i$, set $F_i = F \cap V g_i$.
Notice that if $x \in F_i$ and $y \in F_j$ with $i \neq j$,
then $xy^{-1} \not \in V$ because $g_ig_j^{-1} \notin V$.
Therefore the intersection $FF^{-1} \cap V$ coincides with $\bigcup_{i=1}^k F_i F_i^{-1}$.
For each $i$, we set $F'_i = F_i g_i^{-1}$, and set $F' = \bigcup_{i=1}^k F'_i$.
Hence
$$
\vert F' \vert \le \vert F \vert , \hskip.2cm
F' \subseteq V 
\hskip.2cm \text{and} \hskip.2cm
F'(F')^{-1} \supseteq FF^{-1} \cap V .
\leqno{(\sharp)}
$$
\par

We next observe that, if $W$ is any simple submodule of $V$,
then the quotient group $G/W$ is irreducibly faithful.
This follows from Theorem~\ref{thm:CharP(n)} and Corollary~\ref{cor:P(n)-For-All-n}
(using a similar argument as in the discussion of Example~\ref{exampleGqm}). 
Therefore, if $FF^{-1}$ were not irreducibly faithful,
then it would contain a non-zero element of each simple submodule of $V$.
By ($\sharp$), $F'(F')^{-1}$ would also contain
a non-zero element of each simple submodule of $V$.
This would contradict the sequence of inequalities
$\vert F' \vert \le \vert F \vert < \alpha_{(q,m)}$.
It follows that $F F^{-1}$ is irreducibly faithful, and this ends the proof.
\end{proof}

In particular, answering Question~\ref{ques:Q(n)} for $C_p \times C_p = G_{(p, 1)}$
amounts to compute $\alpha_{(p, 1)}$. 
This happens to be an open problem in additive combinatorics,
see Question~5.2 in \cite{CrSL--07}.
As pointed out in this reference,
the value of $\alpha_{(p, 1)} = n_p$ can be estimated as follows.
On the one hand, since
$$
\frac {n_p^2} 2 > \binom{n_p}{2} \ge p+1 > p,
$$
we have $n_p > \sqrt{2p}$.
On the other hand, using Theorems 1.2 and 2.1 from \cite{FiJa--00},
we obtain the upper bound
$$
n_p \le 2 \lceil \sqrt p \rceil +1.
$$
However, determining the exact value of $n_p$ remains an open problem.
We are grateful to Ben Green for point out the reference \cite{CrSL--07}
and for discussing it with us.

\appendix

\section{A finite group all of whose irreducible representations have non-abelian kernels}
\label{Section:appendixA}

We know from Proposition~\ref{prop:Kernel}
that every countable group $G$ has an irreducible unitary representation
whose kernel is contained in $\Fsol (G)$,
and we have cited in Remark \ref{BrolineGarrison} the result according to which
every finite group has an irreducible representation with nilpotent kernel.
Short of having found in the literature appropriate references
for groups without irreducible representations having abelian kernels,
we indicate here an example, long known to experts.

\vskip.2cm

Let $D_8$ denote the dihedral group of order~$8$.
The centre of $D_8$ is cyclic of order~$2$.
For $i = 1, 2, 3$, let $H_i$ be a group isomorphic to $D_8$,
and let $z_i$ be the non-trivial element of the centre of $H_i$.
We set
$$
G = (H_1 \times H_2 \times H_3) / \langle z_1 z_2 z_3 \rangle.
$$
Thus $G$ is a nilpotent group of order $2^{8} = 256$.
Its centre $Z(G)$ is isomorphic to $C_2 \times C_2$.
The socle of $G$ coincides with its centre,
and $\Fsol (G)$ is the group $G$ itself (see Example \ref{exe:socletrunk}(6)).

\begin{prop}
\label{prop:AbelianNormal}
For every abelian normal subgroup $N$ in $G$, the centre of the quotient 
$G/N$ is not cyclic.
\end{prop}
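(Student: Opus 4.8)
The plan is to translate the statement into the language of the commutator form and then split according to the size of $N \cap Z$. Since $G$ is nilpotent of class $2$ with $[G,G] = Z \cong C_2 \times C_2$ (both the derived subgroup and the centre are the image of $\langle z_1, z_2, z_3\rangle$, a group of order $4$), the commutator map induces a nondegenerate alternating form $\beta \colon V \times V \to Z$ on $V = G/Z \cong \F_2^6$. Writing $V_i$ for the image of $H_i$, one has an orthogonal decomposition $V = V_1 \perp V_2 \perp V_3$ in which $\beta$ restricted to $V_i$ is the symplectic form on $V_i \cong \F_2^2$ with values in $\langle \bar z_i \rangle$, and $\bar z_1 + \bar z_2 + \bar z_3 = 0$ in $Z$. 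First I would record the reformulations valid for an abelian normal subgroup $N$: setting $N_0 = N \cap Z$ and $\bar N = NZ/Z$, normality means $\beta(\bar N, V) \subseteq N_0$ and commutativity means $\bar N$ is $\beta$-isotropic; moreover $Z(G/N) = C/N$, where $C = \{g \in G : [g,G] \subseteq N\}$ equals $\{g : [g,G] \subseteq N_0\}$ (as $[g,G] \subseteq Z$) and is the preimage of $\bar C = \{v \in V : \beta(v, V) \subseteq N_0\}$. Since $G$ is nonabelian, $N \neq G$, so $G/N$ is a nontrivial $2$-group and $Z(G/N) \neq \{e\}$; I will assume for contradiction that it is cyclic.

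The two easy cases are $N_0 = \{e\}$ and $N_0 = Z$. If $N_0 = \{e\}$, the image of $Z$ in $G/N$ is central and isomorphic to $Z \cong C_2 \times C_2$, so $Z(G/N)$ is not cyclic. If $N_0 = Z$, then $Z \subseteq N$ forces $G/N \cong V/\bar N$ to be elementary abelian and equal to its own centre, so cyclicity would require $\dim_{\F_2} \bar N \ge 5$. I would rule this out by noting that the $\bar z_1$-component of $\beta$ is an alternating form whose radical is exactly $V_2$, hence descends to a nondegenerate symplectic form on the $4$-dimensional space $V/V_2$, whose isotropic subspaces have dimension at most $2$; since $\bar N$ is $\beta$-isotropic, its image in $V/V_2$ is isotropic of dimension at least $5-2 = 3$, a contradiction.

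The hard case, and the main obstacle, is $\dim_{\F_2} N_0 = 1$. Using the $S_3$-symmetry of $G$ permuting the three factors (which acts on $Z$ as $\mathrm{GL}_2(\F_2)$, transitively on the nonzero vectors), I may assume $N_0 = \langle \bar z_3 \rangle$. A direct computation then gives $\bar C = V_3$: the condition $\beta(v, V) \subseteq \langle \bar z_3 \rangle$ is equivalent to the $\bar z_1$- and $\bar z_2$-components of $\beta(v, \cdot)$ coinciding, i.e.\ to $v$ lying in the radical of $\beta_1 \oplus \beta_2$, which is $V_3$. Hence $C \supseteq Z$ has order $\vert Z \vert \cdot \vert V_3 \vert = 16$, its centre is $Z$ (as $\beta$ is nondegenerate on $V_3$), and it is nonabelian since it contains a copy of $H_3 \cong D_8$. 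The key observation is that every $g \in C$ squares into $N_0$: writing $g = h\zeta$ with $h$ in the image of $H_3$ and $\zeta \in Z$, one gets $g^2 = h^2 \in Z(D_8) = \langle \bar z_3 \rangle = N_0 \subseteq N$. Therefore $C/N = Z(G/N)$ has exponent $2$, so it is cyclic only if $\vert N \vert \ge 8$. But $\vert N \vert = 8$, together with $\bar N \subseteq V_3$ and $N \cap Z = N_0$, forces $\bar N = V_3$, hence $C = NZ$; as $Z = Z(C)$ is central, $C$ would then be abelian, contradicting that it contains $D_8$. Thus $\vert N \vert \le 4$, so $\vert C/N \vert \ge 4$ is a nontrivial elementary abelian $2$-group, hence not cyclic. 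This final contradiction completes the proof.
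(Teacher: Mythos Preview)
Your proof is correct, and it takes a genuinely different route from the paper's. The paper argues directly with group elements: it picks, for each $i$, an element $h_i \in H_i$ whose image lies in the centre of $r(H_i)$ and hence of $G/N$; then, using that some $z_j \in N$, it shows $r(H_j)$ has order~$2$, and from $r(h_i)^k r(h_j)=e$ for $i\neq j$ it derives that $h_j$ commutes with the index-$2$ subgroup $N\cap H_j$, forcing $H_j\cong D_8$ to be abelian. Your argument instead encodes the class-$2$ structure via the commutator form $\beta\colon G/Z \times G/Z \to Z$ and proceeds by a case split on $\dim_{\F_2}(N\cap Z)$, using the $S_3$-symmetry and symplectic linear algebra over $\F_2$. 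The paper's proof is shorter and entirely elementary; yours is longer but more structural, and makes transparent why the obstruction sits in the $2$-dimensional $Z$ (the exponent-$2$ observation $C^2\subseteq N_0$ is the real punchline). One minor remark: in the case $N_0=\{e\}$ your normality translation already gives $\beta(\bar N,V)=0$, hence $\bar N=0$ and $N=\{e\}$, so that case is even simpler than you wrote.
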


\begin{proof}
The natural homomorphism $H_1 \times H_2 \times H_3 \to G$
induces an embedding $H_i \to G$ for each $i$.
We identify $H_i$ with its image in $G$.
In particular we view $z_1, z_2, z_3$ as elements of $G$.
The centre of $G$ is $Z(G) = \{e, z_1, z_2, z_3\}$.
\par

We assume for a contradiction that $N$ is an abelian normal subgroup of 
$G$ such that $G/N$ has a cyclic centre.
Since the centre of $G$ is not cyclic, we have $N \neq \{e\}$.
Let $r \hskip.1cm \colon G \twoheadrightarrow G/N$ be the canonical projection.
\par

Let $i \in \{1, 2, 3\}$.
Since $N$ is abelian and $H_i$ is not,
$r(H_i ) \cong H_i / H_i \cap N$ is non-trivial.
In particular $r(H_i )$ has a non-trivial centre.
We may thus choose an element $h_i \in H_i$
such that $r(h_i)$ is a non-trivial element of the centre $Z(r(H_i))$.
Since $H_1, H_2$ and $H_3$ commute pairwise in $G$,
and since $G$ is generated by these subgroups,
we have $Z(H_i) \le Z(G)$ and $Z(r(H_i)) \le Z(r(G))$.
In particular $r(h_i) \in Z(G/N)$.
\par

Since $G$ is a $2$-group, every non-trivial normal subgroup
has a non-trivial intersection with the centre $Z(G)$.
Thus there exists $j \in \{1, 2, 3\}$ such that $z_j \in N$.
Since $Z(G/N)$ is cyclic and $Z(r(H_j)) \le Z(G/N)$, it follows that $Z(r(H_j))$ is cyclic.
Since $N \cap H_j$ is a non-trivial normal subgroup of $H_j \cong D_8$,
the quotient $ r(H_j) \cong H_j / (N \cap H_j)$ is abelian.
Therefore, it coincides with its centre, hence it is cyclic.
The only abelian normal subgroups of $H_j \cong D_8$ affording a cyclic quotient group
are its subgroups of index~$2$.
Thus $r(H_j) \cong H_j/ (N \cap H_j)$ is of order~$2$.
In particular $N \cap H_j$ is a maximal subgroup of $H_j$, and $r(h_j)$ is of order~$2$.
Moreover we have
$H_j = \langle h_j \rangle (N \cap H_j)$ since $r(h_j) \neq e$
and hence $h_j \not \in N \cap H_j$.
\par

Let now $i \in \{1, 2, 3\}$ such that $i \neq j$.
We know that $Z(G/N)$ is cyclic,
and that $r(h_i)$ and $r(h_j)$ are two non-trivial elements in $Z(G/N)$.
Since moreover $r(h_j)$ is of order~$2$, we infer that 
$r(h_i)^k r(h_j) = e$ for some integer $k$.
In other words $h_i^k h_j \in N$.
Since $N$ is abelian, it follows that $h_i^k h_j$ commutes with $N \cap H_j$.
Moreover $h_i$ commutes with $H_j$, hence $h_i^k$ commutes with $N \cap H_j$.
It follows that $h_j$ commutes with $N \cap H_j$. 
Since $N$ is abelian and since $H_j = \langle h_j \rangle (N \cap H_j)$, 
it follows that $H_j \cong D_8$ is abelian, which is absurd.
\end{proof}

By Schur's Lemma, the image $\pi(G)$ of $G$
under any irreducible representation $\pi$ has cyclic centre.
It follows from Proposition~\ref{prop:AbelianNormal}
that the kernel of $\pi$ cannot be abelian. Thus we obtain:

\begin{cor}
\label{cor:Appendix}
Every irreducible representation of $G$ has a non-abelian kernel.
\end{cor}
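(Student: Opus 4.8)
The plan is to derive Corollary~\ref{cor:Appendix} directly from Proposition~\ref{prop:AbelianNormal}, using Schur's Lemma to control the centre of the image of an irreducible representation. All the structural work on $G$ has already been absorbed into the proposition, so the corollary should follow in a couple of lines.

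First I would fix an arbitrary irreducible unitary representation $\pi$ of $G$ on a Hilbert space $\mathcal H$ and set $N = \Ker(\pi)$, a normal subgroup of $G$. The key observation is that the centre of the image $\pi(G)$ is cyclic. Indeed, an element $\pi(g)$ lying in the centre of $\pi(G)$ commutes with every $\pi(h)$, $h \in G$, hence belongs to the commutant of $\pi(G)$; by Schur's Lemma this commutant consists of scalar operators, so $\pi(g) = \chi(g)\,\id$ for some $\chi(g) \in \T$. Thus $Z(\pi(G))$ embeds into $\T$, and since $G$ is finite it is a finite subgroup of $\T$, hence cyclic. As $\pi$ induces an isomorphism $\pi(G) \cong G/N$, I conclude that $Z(G/N)$ is cyclic.

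Second I would apply Proposition~\ref{prop:AbelianNormal} in its contrapositive form. The proposition asserts that for every abelian normal subgroup $N$ of $G$, the centre of $G/N$ fails to be cyclic. Combining this with the previous paragraph, if $N = \Ker(\pi)$ were abelian we would obtain that $Z(G/N)$ is non-cyclic, contradicting the cyclicity established via Schur's Lemma. Hence $\Ker(\pi)$ is non-abelian, which is exactly the assertion of the corollary; since $\pi$ was arbitrary, this holds for all irreducible representations of $G$.

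The only point requiring a little care is the Schur's Lemma step: one must check that central elements of $\pi(G)$ genuinely act as scalars rather than merely normalising the generated algebra, and that a finite subgroup of $\T$ is cyclic. Both are standard, and the paper has already recorded the relevant form of Schur's Lemma just before the proof of Corollary~\ref{finitenormalcentral}. I therefore do not expect any real obstacle at this stage, since the entire difficulty of the example is concentrated in Proposition~\ref{prop:AbelianNormal}, whose proof exploits the specific structure of $G$ as a central quotient of a product of three copies of $D_8$.
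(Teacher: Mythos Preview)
Your proposal is correct and matches the paper's own argument essentially verbatim: the paper also invokes Schur's Lemma to conclude that $Z(\pi(G))$ is cyclic, and then applies the contrapositive of Proposition~\ref{prop:AbelianNormal}. The paper additionally offers a second, independent proof via an explicit enumeration of the irreducible representations of $H_1 \times H_2 \times H_3$ and their kernels, but your approach coincides with its primary derivation.
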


Here is another proof of Corollary \ref{cor:Appendix}.
Let $z$ denote the non-trivial element of the centre of $D_8$.
The group $D_8$ has $4$ irreducible representations of degree $1$,
of which the kernels contain $z$,
and one irreducible representation $\pi$ of degree $2$,
such that $\pi(z) = -\id$.
Consequently, the group $H_1 \times H_2 \times H_3$ has
\begin{enumerate}
\item[---]
$64$ irreducible representations of degree $1$, 
\item[---]
48 irreducible representations of degree $2$, 
\item[---]
$12$ irreducible representations of degree $4$, 
\item[---]
$1$ irreducible representation of degree $8$, 
\end{enumerate}
and the irreducible representations having kernels containing $z_1z_2z_3$
are precisely those of dimensions $1$ and $4$.
It follows that $G$ has $64$ irreducible representations of degree $1$,
none of them with abelian kernel,
and $12$ irreducible representations of degree $3$,
each having a kernel isomorphic to $D_8$.

\section{On the collection of kernels of irreducible unitary representations}
\label{Section:appendixB}

Given a group $G$, we denote by $\Sub(G)$ the set of all subgroups of $G$,
endowed with the \textbf{Chabauty topology}.
In this appendix, we collect some observations
concerning the subspace $\mathcal K_G$ of $\Sub(G)$
of all kernels of irreducible unitary representations of $G$,
for comparison with the situation in the particular case of finite groups.

\vskip.2cm

We begin by a short reminder on the space $\Sub(G)$.
In a group $G$, every subset can be identified, in a canonical way,
with a function $f \hskip.1cm \colon G \to \{0, 1\}$.
The set $\{0, 1\}^G$ of all such functions,
endowed with the topology of pointwise convergence,
is compact by Tychonoff's theorem.
The set $\Sub(G)$ is closed in $\{0, 1\}^G$,
because being a subgroup is a pointwise condition.
Endowed with the induced topology,
the space $\Sub(G)$ is called the \textbf{Chabauty space} of subgroups of $G$.
\par

By definition of the topology, a sequence $(K_n)_{n \ge 1}$ in $\Sub(G)$
converges to a subgroup $K \le G$
if and only if, for every finite subset $F \subset G$,
the intersection $K_n \cap F$ is equal to $K \cap F$ for all sufficiently large $n$. 
When $G$ is countable, it suffices to check the latter condition on the finite sets $F$
belonging to some ascending chain $F_1 \subset F_2 \subset \dots$
of finite subsets of $G$ such that $\bigcup_{m \ge 1} F_m = G$.

\begin{exe}
\label{exKGnotclosed}
Our first observation is that the subspace $\mathcal K_G$
need not be a closed subset of the Chabauty space $\Sub(G)$.
\par
 
For this, consider the group
$$
G = \langle x, y_n \mid n \ge 1 \rangle
$$
of Example~\ref{exe:socletrunk}(7).
Recall that $G$ is a subgroup of $P = \prod_{n \ge 1} H_n$, where
$$
H_n = \langle x_n, y_n, z_n \mid 
x_n^3, \hskip.1cm y_n^3, \hskip.1cm
[x_n, y_n]z_n^{-1}, \hskip.1cm 
[x_n, z_n], \hskip.1cm [y_n, z_n] \rangle
$$
for each $n \ge 1$, and $x = (x_n)_{n \ge 1}$.
For $m \ge 1$, define the subgroup
$$
K_m = \langle
y_j^{-1}z_{m+1}, \hskip.1cm y_{m+1}^{-1} y_k, \hskip.1cm z_j, \hskip.1cm z_{m+1}^{-1} z_k
\mid
1 \le j \le m, \hskip.1cm k \ge m+2 \rangle
$$
of $G$. In Lemma \ref{lemmaforB1} and just after, we will show that:
\begin{enumerate}
\item[---]
for all $m \ge 1$, the group $K_m$ is normal in $G$ and is in $\mathcal K_G$;
\item[---]
the sequence $(K_m)_{m \ge 1}$ converges in $\Sub(G)$
to a normal subgroup $K$ of $G$ which is not in $\mathcal K_G$.
\end{enumerate}
\end{exe}

\begin{lem}
\label{lemmaforB1}
Let the notation be as just above. Let $m \ge 1$.
\begin{enumerate}[label=(\roman*)]
\item \label{iDElemmaforB1}
The group $K_m$ is normal in $G$.
\item \label{iiDElemmaforB1}
We have $G = K_m \rtimes \langle x, y_{m+1}, z_{m+1} \rangle$.
In particular $G/K_m$ is of order $27$, it is isomorphic to $H_1$,
which is a Heisenberg group over $\F_3$.
\item \label{iiiDElemmaforB1}
The assignments
$$
\left\{
\begin{array}{rcll}
x & \mapsto & x_1 \\
y_j & \mapsto & z_1 & \text{for all } j \le m \\
y_j & \mapsto & y_1 & \text{for all } j \ge m+1 \\
z_j & \mapsto & e & \text{for all } j \le m \\
z_j & \mapsto & z_1 & \text{for all } j \ge m+1
\end{array}
\right.
$$
extend to a uniquely defined group homomorphism 
$\rho_m \hskip.1cm \colon G \twoheadrightarrow H_1$
which is surjective.
\end{enumerate}
\end{lem}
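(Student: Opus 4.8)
The plan is to first pin down the internal structure of $G$ and then read off all three assertions from it. I would begin by recording that $G$ is nilpotent of class $2$ and of exponent $3$, that $z_n=[x,y_n]$ lies in $G$ for every $n$, and that the relations $[x,y_n]=z_n$, $[y_i,y_j]=e$ hold with every $z_n$ central. These are immediate from the coordinatewise description $G\le P=\prod_n H_n$, since a cube or a commutator can be computed one factor $H_n$ at a time. The crucial structural point is a \emph{normal form}: every element of $G$ is $x^{\alpha}\prod_n y_n^{\beta_n}\prod_n z_n^{\gamma_n}$ with all but finitely many exponents zero and each exponent in $\{0,1,2\}$, and this expression is unique. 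Existence follows by collecting (class $2$, together with $[y_i,y_j]=e$); uniqueness is where the embedding does the work, since projecting such a word into the factor $H_n$ gives $x_n^{\alpha}y_n^{\beta_n}z_n^{\gamma_n}$, and the $27$ elements $x_n^{a}y_n^{b}z_n^{c}$ are pairwise distinct in $H_n$, so vanishing in every coordinate forces all exponents to vanish. This normal form is equivalent to the presentation
$$
G \cong \langle\, x,\ y_n\ (n\ge 1)\ \mid\ x^3,\ y_n^3,\ [y_i,y_j],\ [[x,y_n],x],\ [[x,y_n],y_i] \,\rangle,
$$
the presented group surjecting onto $G$ because $G$ satisfies the relations, and this surjection being injective because the presented group has the very same normal form and so maps bijectively onto $G$.

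Granting this presentation, assertion \ref{iiiDElemmaforB1} is a direct application of von Dyck's theorem. I would define $\rho_m$ on the generators as prescribed and check that the images satisfy every defining relation inside $H_1$: the images of the $y_n$ lie in $\{z_1,y_1\}$, so $[\rho_m(y_i),\rho_m(y_j)]=e$ because $z_1$ is central and $[y_1,y_1]=e$; the cubes vanish as $z_1,y_1$ have order $3$; and $[x_1,\rho_m(y_n)]\in\{e,z_1\}$ is central, so the two triple-commutator relations hold as well. Uniqueness is automatic and surjectivity is clear since $\rho_m(x)=x_1$ and $\rho_m(y_{m+1})=y_1$ generate $H_1$. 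I would also record, via $z_n=[x,y_n]$, that $\rho_m$ sends $z_j\mapsto e$ for $j\le m$ and $z_j\mapsto z_1$ for $j\ge m+1$, which is exactly what forces each listed generator of $K_m$ into $\Ker(\rho_m)$.

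For assertion \ref{iDElemmaforB1} I would show directly that $K_m$ is conjugation-stable. Since $G$ is nilpotent of class $2$ with central commutator subgroup, for $k\in K_m$ and $g\in G$ one has $gkg^{-1}=k\,[k,g^{-1}]$ with $[k,g^{-1}]\in[G,G]=\langle z_n\rangle$; by bilinearity of the commutator it then suffices to check $[s,t]\in K_m$ for each generator $s$ of $K_m$ and each $t\in\{x,y_n\}$. The only nontrivial cases are $[y_j^{-1}z_{m+1},x]=z_j$ (in $K_m$ since $j\le m$) and $[y_{m+1}^{-1}y_k,x]=z_{m+1}z_k^{-1}=(z_{m+1}^{-1}z_k)^{-1}$ (in $K_m$ since $k\ge m+2$); the generators $z_j$ and $z_{m+1}^{-1}z_k$ are central. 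Hence $K_m\trianglelefteq G$.

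Finally, for assertion \ref{iiDElemmaforB1} I would combine the previous steps. First, $\langle x,y_{m+1},z_{m+1}\rangle=\langle x,y_{m+1}\rangle$ (as $z_{m+1}=[x,y_{m+1}]$) is a copy of $H_1$ of order $27$: the normal form bounds its order by $27$, while projecting to $H_{m+1}$ shows it surjects onto $H_{m+1}$, forcing equality. Next, $G=K_m\langle x,y_{m+1}\rangle$, since each generator lies in this product: $y_j=z_{m+1}(y_j^{-1}z_{m+1})^{-1}$ for $j\le m$ and $y_j=y_{m+1}(y_{m+1}^{-1}y_j)$ for $j\ge m+2$, the parenthesised factors being in $K_m$. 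As $\rho_m$ restricts to a surjection $\langle x,y_{m+1}\rangle\to H_1$ between sets of equal cardinality $27$, hence an isomorphism, and $K_m\le\Ker(\rho_m)$, we get $K_m\cap\langle x,y_{m+1}\rangle=\{e\}$; with the generation statement this gives $G=K_m\rtimes\langle x,y_{m+1}\rangle$, whence $G/K_m\cong H_1$ and in fact $\Ker(\rho_m)=K_m$. The main obstacle is the first paragraph: everything downstream is formal once the normal form of $G$ (equivalently, the well-definedness of $\rho_m$) is secured, and it is precisely there that the concrete embedding $G\le\prod_n H_n$ must be invoked to rule out hidden relations among the generators.
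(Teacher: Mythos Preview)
Your proof is correct. The main structural difference from the paper is in the order and the tools used:

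The paper proves the three parts in the order (i)\,$\to$\,(ii)\,$\to$\,(iii). For (ii) it exploits the abelian subgroup $A=\langle y_j,z_k\mid j,k\ge 1\rangle$ of index~$3$, views $A$ as an $\F_3$-vector space with basis $\{y_1,z_1,y_2,z_2,\dots\}$, and reads off $A=\langle y_{m+1},z_{m+1}\rangle\times K_m$ directly from that basis; then (iii) is obtained simply as the quotient map $G\to G/K_m$ composed with the isomorphism $G/K_m\cong H_1$ coming from (ii). No presentation of $G$ is written down.

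You instead invest upfront in a normal form (equivalently, a presentation) for $G$, secure (iii) first via von Dyck, and then use the homomorphism $\rho_m$ as the engine for (ii): the trivial intersection $K_m\cap\langle x,y_{m+1}\rangle=\{e\}$ falls out of the fact that $\rho_m$ is injective on $\langle x,y_{m+1}\rangle$ while $K_m\le\Ker(\rho_m)$. Your route makes the well-definedness of $\rho_m$ the pivotal step and yields the extra information $\Ker(\rho_m)=K_m$ as a by-product; the paper's route is shorter because it avoids proving a presentation, reading the product decomposition of $A$ directly off the basis of an $\F_3$-vector space.
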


\begin{proof}
Recall that we have defined in Example~\ref{exe:socletrunk}(7) the subgroup
$$
A = \langle y_j, z_k \mid j, k \ge 1 \rangle .
$$
It is abelian, normal and of index $3$ in $G$.
Observe that $K_m \le A$. 

\vskip.2cm

\ref{iDElemmaforB1}
The generators defining $K_m$ are either central in $G$,
or of the form $y_j^{-1}z_{m+1}$ for $j \le m$,
or of the form $y_{m+1}^{-1} y_k$ for $k \ge m+2$.
Every conjugate of $y_j^{-1}z_{m+1}$ in $G$
belongs to the coset $y_j^{-1} z_{m+1} \langle z_j \rangle$,
which is entirely contained in $K_m$ since $j \le m$.
Similarly, the conjugacy class of $y_{m+1}^{-1} y_k$ in $G$ is of size at most~$3$
(because the centralizer $C_G(y_{m+1}^{-1} y_k)$ contains $A$,
which is of index~$3$ in $G$),
and is contained in the coset $y_{m+1}^{-1} y_k \langle z_{m+1}^{-1} z_k \rangle$,
which is entirely contained in $K_m$ as well since $k \ge m+2$.
Thus $K_m$ contains the conjugacy class of each of its generators,
and therefore $K_m$ is normal in $G$.

\vskip.2cm

\ref{iiDElemmaforB1}
Let $D = \langle x, y_{m+1}, z_{m+1} \rangle$.
The natural projection $P \twoheadrightarrow H_{m+1}$
restricts to an isomorphism $D \overset{\cong}{\longrightarrow} H_{m+1}$.
In particular $D \cong H_{m+1} \cong H_1$. We must show that $G = K_m \rtimes D$.
\par

We have $A = \langle y_{m+1}, z_{m+1}\rangle K_m$. 
Viewing $A$ as a vector space over $\F_3$
with basis $\{y_1, z_1, y_2, z_2, \dots \}$,
we obtain a direct product decomposition $A = \langle y_{m+1}, z_{m+1} \rangle \times K_m$. 
Since $A \cap D = \langle y_{m+1}, z_{m+1} \rangle$
because $x \not \in A$ as observed in Example~\ref{exe:socletrunk}(7),
we deduce that $K_m \cap D = \{e\}$.
Since $K_m D$ contains $A$ as a proper subgroup, and since $[G : A]= 3$,
we infer that $G = K_m D$.
This confirms that $G$ is the semi-direct product
$K_m \rtimes \langle x, y_{m+1}, z_{m+1} \rangle$.

\vskip.2cm

\ref{iiiDElemmaforB1}
Observe that, modulo $K_m$, we have
$$
\left\{
\begin{array}{rcllll}
& y_j &\equiv &z_{m+1} &\pmod{K_m} &\hskip.2cm \text{for all} \hskip.2cm j \le m \\
& y_j &\equiv &y_{m+1} &\pmod{K_m} &\hskip.2cm \text{for all} \hskip.2cm j \ge m+1 \\
& z_j &\equiv &e &\pmod{K_m} &\hskip.2cm \text{for all} \hskip.2cm j \le m \\
& z_j &\equiv &z_{m+1} &\pmod{K_m} &\hskip.2cm \text{for all} \hskip.2cm j \ge m+1
\end{array}
\right.
$$
The homomorphism $\rho_m$ is the composition
of the canonical projection $G \twoheadrightarrow G/K_m$
with the isomorphism $G/K_m \to H_1$
mapping $xK_m $ to $x_1$, and $y_{m+1}K_m$ to $y_1$, and $z_{m+1}K_m$ to $z_1$. 
\end{proof}

\begin{proof}[End of proof of the claims of Example \ref{exKGnotclosed}]
The group $H_1$ is irreducibly faithful; 
this can be seen on the character table of the group
(see, for example, Page 216 in \cite{Kowa--14});
alternatively, it follows from Theorem~\ref{thm:Gasch}
because $H_1$ is a nilpotent group with cyclic centre.
Therefore $K_m = \Ker(\rho_m)$ is in $\mathcal K_G$ for all $m \ge 1$.
\par

Let now $\rho \hskip.1cm \colon G \to H_1$
be the group homomorphism defined by the assignments
$$
\left\{
\begin{array}{rcll}
x & \mapsto & x_1 \\
y_j & \mapsto & z_1 & \text{for all } j \ge 1 \\
z_j & \mapsto & e & \text{for all } j \ge 1.
\end{array}
\right.
$$
Thus $\rho(G) = \langle x_1, z_1\rangle \cong C_3 \times C_3$.
In particular $K := \Ker(\rho) \not \in \mathcal K_G$.
\par 

It remains to observe that the sequence $(\rho_m)_{m \ge 1}$ converges,
in the topology of pointwise convergence, to the homomorphism $\rho$.
This readily implies that $(K_m)_{m \ge 1}$ converges to $K$
in the Chabauty topology.
Thus $(K_m)_{m \ge 1}$ is a sequence in $\mathcal K_G$
converging to a normal subgroup of $G$ not belonging to $\mathcal K_G$. 
\end{proof}

Despite the fact that it need not be closed in $\Sub (G)$,
the set $\mathcal K_G$ always contains minimal elements. 

\begin{prop}
\label{prop:Chabauty}
Let $G$ be a countable group
and $\mathcal K_G$ be as above
the set of all kernels of irreducible unitary representations of $G$.
\par 

Then, for every descending chain $(K_i)_{i \in I}$ in $\mathcal K_G$,
the intersection $\bigcap_{i \in I} K_i$ belongs to $\mathcal K_G$.
In particular every $K \in \mathcal K_G$ contains a minimal $K_0 \in \mathcal K_G$. 
\end{prop}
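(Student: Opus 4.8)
The plan is to prove the equivalent statement that the quotient $G/K$, where $K := \bigcap_{i\in I}K_i$ (a normal subgroup of $G$, being an intersection of normal subgroups), is irreducibly faithful: a faithful irreducible unitary representation of $G/K$ pulls back along $G\twoheadrightarrow G/K$ to an irreducible unitary representation of $G$ with kernel exactly $K$, whence $K\in\mathcal K_G$. We may assume the chain is non-empty, the case $I=\varnothing$ giving $K=G$, which is the kernel of the trivial representation. By Theorem~\ref{thm:Gasch} it then suffices to verify that every finite normal subgroup $\bar A$ of $G/K$ contained in the abelian mini-socle $\MA(G/K)$ is generated by a single conjugacy class.

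First I would fix such an $\bar A$ and let $A\le G$ be its preimage, so that $A$ is normal in $G$ with $A/K=\bar A$ finite. The crucial observation is that the chain is \emph{exhausted} on the finite group $\bar A$: the subgroups $(A\cap K_i)/K$ form a chain in the finite group $A/K$ whose intersection is $(A\cap K)/K=\{e\}$, and a descending chain of subgroups of a finite group stabilises, so there is an index $i_0$ with $A\cap K_{i_0}=K$. Consequently the restriction to $A$ of the projection $r\colon G\twoheadrightarrow G/K_{i_0}$ has kernel exactly $K$; being equivariant for the conjugation actions, it induces an isomorphism of $\Z[G]$-modules $A/K\cong r(A)$.

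Next I would transport the Gasch\"utz criterion from $G/K_{i_0}$, where it is available, to $G/K$. Since $K\le K_{i_0}$, Proposition~\ref{structureminisocle}\ref{7DEstructureminisocle} applied to $G/K\twoheadrightarrow G/K_{i_0}$ shows $r(A)\le\MA(G/K_{i_0})$; and $G/K_{i_0}$ is irreducibly faithful because $K_{i_0}\in\mathcal K_G$. Hence Theorem~\ref{thm:Gasch} together with Proposition~\ref{lem14BEHA} shows that $r(A)$ is a cyclic $\Z[G]$-module. Cyclicity passes through the isomorphism $A/K\cong r(A)$, so $\bar A$ is a cyclic $\Z[G]$-module, and Proposition~\ref{lem14BEHA} applied inside $G/K$ (using $\bar A\le\MA(G/K)$) yields that $\bar A$ is generated by a single conjugacy class, as required; thus $K\in\mathcal K_G$. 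The final assertion then follows from Zorn's lemma applied to the set $\{L\in\mathcal K_G \mid L\le K\}$ ordered by reverse inclusion: every chain has an upper bound, namely its intersection, which lies in $\mathcal K_G$ by what precedes, and a maximal element for this order is a minimal $K_0\in\mathcal K_G$ contained in $K$.

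I expect the main obstacle, and the only genuinely new input, to be the collapse of the chain at a single index $i_0$: the finiteness of $\bar A$ is exactly what forces $A\cap K_{i_0}=K$, which in turn lets us import the already-known irreducible faithfulness of $G/K_{i_0}$ instead of arguing with an infinite intersection directly. It is likewise essential that the relevant equivalences of Proposition~\ref{lem14BEHA} be read at the level of $\Z[G]$-modules, so that cyclicity transports across the conjugation-equivariant isomorphism $A/K\cong r(A)$ without any direct manipulation of characters or of the countability of $G$ beyond what is already built into Theorem~\ref{thm:Gasch}.
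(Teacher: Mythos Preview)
Your proof is correct and follows essentially the same approach as the paper: reduce to Theorem~\ref{thm:Gasch}, fix a finite $\bar A\le\MA(G/K)$, use finiteness to find $i_0$ with $A\cap K_{i_0}=K$, push $\bar A$ into $\MA(G/K_{i_0})$ via Proposition~\ref{structureminisocle}\ref{7DEstructureminisocle}, and import the single-conjugacy-class generation from the irreducibly faithful quotient $G/K_{i_0}$. The only cosmetic difference is that the paper phrases the transport directly in terms of conjugacy classes rather than passing through cyclicity of $\Z[G]$-modules via Proposition~\ref{lem14BEHA}, but the content is identical.
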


\begin{proof}
Let $(K_i)_{i \in I}$ be a descending chain in $\mathcal K_G$.
Set $K = \bigcap_{i \in I} K_i$.
We must show that $G/K$ is irreducibly faithful. 
To that end, we consider a normal subgroup $A$ of $G$ containing $K$,
such that $A/K$ is a finite abelian normal subgroup of $G/K$ contained in $\MA(G/K)$.
\par

We have $K \le A \cap K_i \le A$ for all $i$. 
Since $A/K$ is finite, it follows that, for all sufficiently large $i$,
we have $A \cap K_i = K$, and therefore $A/ (A \cap K_i) = A/K$.
Since $A/K \le \MA(G/K)$,
we have $A/ (A \cap K_i) \le \MA(G/ (A \cap K_i))$ for $i$ large enough.
By Proposition~\ref{structureminisocle}\ref{7DEstructureminisocle}
applied to $G/ (A \cap K_i) \twoheadrightarrow G/K_i$, 
the quotient $A/ (A \cap K_i) \cong AK_i/K_i$ is in $\MA(G/K_i)$.
Since $G/K_i$ is irreducibly faithful by the definition of $\mathcal K_G$,
the normal subgroup $AK_i/K_i $ is generated by a single conjugacy class.
Since the canonical isomorphism $A/ (A \cap K_i) \cong AK_i/K_i$ is $G$-equivariant,
it follows that $A/ (A \cap K_i)$ is generated by a single conjugacy class.
Thus the same holds for $A/K$.
It follows that $G/K$ is irreducibly faithful by Theorem~\ref{thm:Gasch}. 
\par 

The second assertion follows from the first via Zorn's Lemma. 
\end{proof}

\begin{ques}
\label{QuestionBroGar}
In the case of a finite group $G$, 
the theorem of Broline and Garrison quoted in Remark \ref{BrolineGarrison}
ensures that every minimal element $K \in \mathcal K_G$ is nilpotent.
For a countable group $G$, 
we already know from Proposition~\ref{prop:Kernel} that
some element of $\mathcal K_G$ is contained in $\Fsol(G)$. 
\par 

For a countable group $G$, can we describe more precisely
the algebraic structure of the minimal elements of $\mathcal K_G$ ?
Can we always find an element $K \in \mathcal K_G$
contained in the characteristic subgroup $\Fnil (G)$
generated by all finite nilpotent normal subgroups of $G$ ? 
In case $G$ is finite, the subgroup $\Fnil (G)$ coincides with the \textbf{Fitting subgroup},
i.e., the largest nilpotent normal subgroup of $G$.
\end{ques}	

We finish by mentionning that, in the case of infinite groups,
some minimal elements of $\mathcal K_G$ may fail to be nilpotent
and also fail to be contained in the torsion FC-centre $W(G)$;
a fortiori it need not be contained in $\Fnil (G)$,
as defined in Question \ref{QuestionBroGar},
or $\Fsol (G)$, as defined in Lemma \ref{lem:N1Nk}.
The construction of such examples is based on the following.

\begin{lem}
\label{lem:MinimalKernels}
Let $p$ be a prime. For each $n \in \N$,
let $G_n$ be a (possibly infinite) non-trivial nilpotent $p$-group.
Let $K$ be the restricted sum $\prod'_{n \in \N} G_n$,
and set $G = K \times C_p$.
\par 

Then $K$ is a minimal element of $\mathcal K_G$.
\end{lem}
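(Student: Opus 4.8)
The plan is to prove two things separately: that $K$ belongs to $\mathcal K_G$, and that no proper subgroup of $K$ does. The first is immediate. Since $G = K \times C_p$ we have $G/K \cong C_p$, and $C_p$ carries a faithful unitary character $\psi$; composing the projection $G \twoheadrightarrow G/K \cong C_p$ with $\psi$ yields a one-dimensional, hence irreducible, unitary representation of $G$ whose kernel is exactly $K$. Thus $K \in \mathcal K_G$.

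For minimality I would take an arbitrary irreducible unitary representation $\pi$ of $G$ with $\Ker(\pi) \le K$ and show $\Ker(\pi) = K$. Since $C_p$ is a central direct factor of $G$, Schur's Lemma provides a character $\chi \colon C_p \to \T$ with $\pi(c) = \chi(c)\id$ for all $c \in C_p$. If $\chi$ were trivial then $C_p \le \Ker(\pi) \le K$, which is absurd as $C_p \cap K = \{e\}$ and $C_p \ne \{e\}$; hence $\chi$ is nontrivial, and being a character of a cyclic group of prime order it is faithful, with image the full group of $p$-th roots of unity. Writing $\sigma = \pi\vert_K$, we have $\pi(k,c) = \chi(c)\sigma(k)$; moreover $\sigma$ is irreducible, since the operators $\pi(C_p)$ are scalar, so the commutant of $\sigma(K)$ equals that of $\pi(G)$. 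From $\Ker(\pi) \le K$ one reads off $\Ker(\pi) = \Ker(\sigma)$, so everything reduces to showing that $\sigma$ is the trivial representation.

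The crux is the following claim: a nontrivial irreducible unitary representation $\sigma$ of $K$ must take some scalar value $\mu\,\id$ with $\mu$ a nontrivial $p$-th root of unity. Granting this, choose $k_0 \in K$ with $\sigma(k_0) = \mu\,\id$ and $c_0 \in C_p$ with $\chi(c_0) = \mu^{-1}$; then $\pi(k_0,c_0) = \id$ while $c_0 \ne e$, so $(k_0,c_0) \in \Ker(\pi) \smallsetminus K$, contradicting $\Ker(\pi) \le K$. Hence $\sigma$ is trivial and $\Ker(\pi) = \Ker(\sigma) = K$. To prove the claim I would first record that $K$ is hypercentral: every element of $K = \prod'_n G_n$ has finite support, hence lies in a finite subproduct $G_0 \times \dots \times G_m$, which is nilpotent of some finite class $c$; writing $K = (G_0\times\dots\times G_m) \times \prod'_{n>m} G_n$ and using $Z_j(A\times B) = Z_j(A)\times Z_j(B)$, this element lies in $Z_c(K)$, so the upper central series of $K$ exhausts $K$. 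Since hypercentrality passes to quotients and a nontrivial hypercentral group has nontrivial centre, the quotient $K/\Ker(\sigma)$, on which $\sigma$ induces a faithful irreducible representation $\bar\sigma$, has nontrivial centre. Choosing a central element of order $p^b$ there and applying Schur's Lemma, $\bar\sigma$ sends it to a nontrivial scalar $\lambda\,\id$ with $\lambda^{p^b}=1$; a suitable $p$-power of this element then maps to a primitive $p$-th root of unity times $\id$, and lifting it to $K$ furnishes the required $k_0$.

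The main obstacle is exactly this last claim, and within it the structural input that $K$ is hypercentral, equivalently that every nontrivial quotient of $K$ has nontrivial centre; this is what forces a nontrivial irreducible representation to detect a central element of $p$-power order, and with it a nontrivial $p$-th root of unity among its scalar values. One must also take care to invoke Schur's Lemma in its infinite-dimensional form (the commutant of an irreducible unitary representation consists of scalars) and to use that $K$ is a torsion $p$-group, so that all relevant orders are powers of $p$; the latter holds because each $G_n$ is a $p$-group and $K$ is their restricted sum, whence $K$ is even locally finite.
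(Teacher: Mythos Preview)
Your proof is correct. The core mechanism coincides with the paper's: a central element of order~$p$ coming from the nilpotent side forces, via Schur's Lemma, a clash with the $C_p$ factor. The difference is organizational. The paper argues in the quotient group $G/K_0$: for each $n$ it notes that a nontrivial image $r(G_n)$, being a nontrivial nilpotent $p$-group, contributes a central element of order~$p$ to $G/K_0$, which together with $r(C_p)$ would put a copy of $C_p\times C_p$ in the centre of the irreducibly faithful group $G/K_0$, contradicting Corollary~\ref{cor:P(3)}. You instead work directly with the representation: you restrict $\pi$ to an irreducible $\sigma$ of $K$, verify that $K$ is hypercentral (so any nontrivial quotient has nontrivial centre), and extract a nontrivial $p$-th-root scalar from $\sigma$ by Schur. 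Your route is a touch more self-contained (it does not invoke the paper's $C_p\times C_p$ obstruction as a black box), at the cost of the extra step checking hypercentrality of the full restricted sum; the paper's route avoids that by treating each nilpotent factor $G_n$ separately. Both arguments are short and equivalent in strength.
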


\begin{proof}
Since $G/K \cong C_p$, it is clear that $K$ belongs to $\mathcal K_G$.
Let $K_0 \in \mathcal K_G$ be such that $K_0 \le K$,
and let $r \hskip.1cm \colon G \twoheadrightarrow G/K_0$ denote the canonical projection.
\par

Assume for a contradiction that $r(G_n)$ is non-trivial for some $n \in \N$.
On the one hand, the centre of $r(G_n)$ is an abelian $p$-group,
hence it contains some element of order $p$.
That element commutes with $r(G_m)$ for all $m$, and also with $r(C_p)$.
Thus it belongs to the centre of $r(G) = G/K_0$.
On the other hand, the factor $C_p$ is central in $G$,
hence $r(C_p)$ is also contained in the centre of $r(G)$.
Since $r(G)$ does not contain any group isomorphic to $C_p \times C_p$,
it follows that $r(G_n)$ and $r(C_p)$ have a non-trivial intersection.
Therefore $\Ker(r) = K_0$ contains an element of the form $xy$
with $x \in G_n \smallsetminus \{e\}$ and $y \in C_p \smallsetminus \{e\}$.
This is impossible since $K_0 \le K$.
We have thus proven that $r(G_n) = \{e\}$ for all $n$.
\par

It follows that $K \le K_0$, hence that $K = K_0$.
This confirms that $K$ is minimal in $\mathcal K_G$. 
\end{proof}

\begin{exe}
Consider the group $G$ of Example~\ref{exe:socletrunk}(7),
which is an infinite countable nilpotent $3$-group; denote it now by $G_0$.
For each positive integer $n$, let $G_n$ be a finite nilpotent $3$-group;
assume that the derived length of $G_n$ tends to infinity with $n$.
Let $K$ be the restricted direct sum $\prod'_{n \in \N} G_n$
By Lemma~\ref{lem:MinimalKernels},
the countable group $K \times C_3$ has the following property:
\par
\begin{center}
\emph{the subgroup $K \le K \times C_3$ is a minimal element in $\mathcal K_{K \times C_3}$
which is
\\
neither soluble, nor contained in the torsion FC-centre $W(K \times C_3)$.}
\end{center}
\noindent
Indeed $K$ has an infinite conjugacy class,
because $G_0$ has one,
since (with the notation of Example~\ref{exe:socletrunk}(7))
$y_j^{-1}xy_j = xz_j$ for all $j \ge 1$. 
\end{exe}

\end{document}